\titleformat{\subsection}[runin]
{\normalfont\large\bfseries}{\thesubsection}{1em}{}
\titleformat{\subsubsection}[runin]
{\normalfont\large\bfseries}{\thesubsubsection}{1em}{}
\newsavebox{\@brx}
\newcommand{\llangle}[1][]{\savebox{\@brx}{\(\m@th{#1\langle}\)}%
  \mathopen{\copy\@brx\kern-0.5\wd\@brx\usebox{\@brx}}}
\newcommand{\rrangle}[1][]{\savebox{\@brx}{\(\m@th{#1\rangle}\)}%
  \mathclose{\copy\@brx\kern-0.5\wd\@brx\usebox{\@brx}}}
\newcommand{\PLH}{{\mkern-2mu\times\mkern-2mu}}
\theoremstyle{plain}
\newtheorem{thm}[subsubsection]{Theorem}
\newtheorem{thm*}{Theorem}
\newtheorem{cor}[subsubsection]{Corollary}
\newtheorem{lem}[subsubsection]{Lemma}
\newtheorem{prop}[subsubsection]{Proposition}
\newtheorem{conj}[subsubsection]{Conjecture}
\theoremstyle{definition}
\newtheorem{defn}[subsubsection]{Definition}
\newtheorem{notation}[subsubsection]{Notation}
\theoremstyle{remark}
\newtheorem{rem}[subsubsection]{Remark}
\numberwithin{equation}{subsubsection}
\newcommand{\N}{\mathbb N}
\newcommand{\Z}{\mathbb Z}
\newcommand{\Q}{\mathbb Q}
\newcommand{\R}{\mathbb R}
\newcommand{\C}{\mathbb C}
\newcommand{\A}{\mathbb A}
\newcommand{\Qb}{\overline{\mathbb Q}}
\newcommand{\tpi}{2\pi i}
\newcommand{\alg}{\mathrm{alg}} 
\newcommand{\Hom}{\mathrm{Hom}}
\newcommand{\End}{\mathrm{End}}
\newcommand{\Aut}{\mathrm{Aut}}
\newcommand{\Ker}{\mathrm{Ker}}
\newcommand{\Tr}{\mathrm{Tr}} 
\newcommand{\tr}{\mathrm{tr}} 
\newcommand{\Gr}{\mathrm{Gr}} %Graded
\newcommand{\gr}{\mathrm{gr}} %graded
\newcommand{\Alt}{\mathrm{Alt}}
\newcommand{\ra}{\rightarrow}
\newcommand{\lra}{\longrightarrow}
\newcommand{\isom}{\stackrel{\sim}{\rightarrow}} 
\newcommand{\lisom}{\stackrel{\sim}{\longrightarrow}}
\newcommand{\hra}{\hookrightarrow}
\newcommand{\leqs}{\leqslant}
\newcommand{\Spec}{\mathrm{Spec}}
\newcommand{\dR}{\mathrm{dR}}
\newcommand{\Ext}{\mathrm{Ext}}
\newcommand{\GL}{\mathrm{GL}}
\newcommand{\SL}{\mathrm{SL}}
\newcommand{\PGL}{\mathrm{PGL}}
\newcommand{\PSL}{\mathrm{PSL}}
\newcommand{\SU}{\mathrm{SU}}
\newcommand{\su}{\mathfrak{su}}
\newcommand{\SO}{\mathrm{SO}}
\newcommand{\so}{\mathfrak{so}}
\newcommand{\Spin}{\mathrm{Spin}}
\newcommand{\Gm}{\mathbb{G}_{\mathrm{m}}}
\newcommand{\univ}{\mathrm{univ}}
\newcommand{\mcA}{\mathcal{A}}
\newcommand{\mcB}{\mathcal{B}}
\newcommand{\mcC}{\mathcal{C}}
\newcommand{\mcF}{\mathcal{F}} 
\newcommand{\mcG}{\mathcal{G}}
\newcommand{\mcH}{\mathcal{H}}
\newcommand{\mcL}{\mathcal{L}}
\newcommand{\mcM}{\mathcal{M}} 
\newcommand{\mcO}{\mathcal{O}}
\newcommand{\mcP}{\mathcal{P}}
\newcommand{\mcR}{\mathcal{R}}
\newcommand{\mcT}{\mathcal{T}} 
\newcommand{\mcV}{\mathcal{V}}
\newcommand{\mrM}{\mathrm{M}}
\newcommand{\msF}{\mathscr{F}}
\newcommand{\msZ}{\mathscr{Z}}
\newcommand{\mbE}{\mathbb{E}}
\newcommand{\mbF}{\mathbb{F}}
\newcommand{\mbH}{\mathbb{H}}
\newcommand{\mbS}{\mathbb{S}}
\newcommand{\mbV}{\mathbb{V}}
\newcommand{\mbW}{\mathbb{W}}
\newcommand{\mfg}{\mathfrak{g}}
\newcommand{\mfl}{\mathfrak{l}} %added in this paper
\newcommand{\mfm}{\mathfrak{m}} 
\newcommand{\mfu}{\mathfrak{u}}
\newcommand{\mfv}{\mathfrak{v}}
\newcommand{\mfX}{\mathfrak{X}}
\DeclareMathSymbol{\mhyphen}{\mathord}{AMSa}{"39}
\newcommand{\sslash}{\mathbin{/\mkern-6mu/}}
\newcommand{\pd}{\partial/\partial} %partial derivative
\newcommand{\Vol}{\mathrm{Vol}}
\newcommand{\CS}{\mathrm{CS}}
\newcommand{\Li}{\mathrm{Li}}
\newcommand{\MT}{\mathrm{MT}} %cat. of mixed Tate motives
\newcommand{\QMH}{\Q\text{-}\mathrm{MH}} %cat. of Q-mixed Hodge structures
\newcommand{\QHT}{\Q\text{-}\mathrm{HT}} %Q-linear cat. of Hodge-Tate structure
\newcommand{\DM}{\mathrm{DM}} %derived category of mixed motives
\newcommand{\DMT}{\mathrm{DMT}} %derived category of mixed Tate motives
\newcommand{\Bo}{\mathrm{Bo}} %Borel
\newcommand{\mot}{\mathrm{mot}} %motivic
\newcommand{\bdM}{\partial M} %boundary of M
\newcommand{\bmbH}{\overline{\mathbb{H}}} 
\newcommand{\bdmbH}{\partial \mathbb{H}} 
\newcommand{\CP}{\mathbb{CP}} %complex projective space
\newcommand{\MC}{\mathrm{M}.\mathrm{C}.} %Mauer Cartan
\newcommand{\red}{\mathrm{red}}
\newcommand{\mcHb}{\mathcal{H}_{\bullet}} %Hodge-Tate algebra (calligraphic)
\newcommand{\mcAb}{\mathcal{A}_{\bullet}} %Mixed Tate algebra  (calligraphic)
\newcommand{\mcMn}{\mathcal{M}^{(n)}} %period matrix of poly HTS of order n  (calligraphic)
\newcommand{\PHT}{\mathcal{P}} %period matrix of polyl HTS  (calligraphic)
\newcommand{\mcVb}{\overline{\mathcal{V}}} 
\newcommand{\fp}{2\pi i}
\newcommand{\Xb}{\overline{X}}
\newcommand{\tv}{\frac{\partial }{\partial z}}
\newcommand{\orav}{\overrightarrow{v}}
\newcommand{\nub}{\bar{\nu}}
\newcommand{\grR}{\mathrm{gr}\mhyphen R} %graded R-module
\newcommand{\grk}{\mathrm{gr}\mhyphen k} %graded k-spaces
\newcommand{\real}{\mathrm{real}} %Hodge realization
\newcommand{\Lsl}{\mathfrak{sl}} %Lie algebra SL
\newcommand{\WgrbH}{\gr^W_{\bullet}H} 
\newcommand{\Gon}{\mathrm{Gon}}
\newcommand{\mcMG}{\mathcal{M}_{\mathrm{Gon}}}
\newcommand{\PhiG}{\Phi_{\mathrm{Gon}}}
\newcommand{\mcPt}{\mathcal{P}^{(2)}}
\newcommand{\In}{\mathrm{I}_n}
\newcommand{\Ih}{\mathrm{I}_h}
\newcommand{\Onen}{1_{n}}
\newcommand{\Oneh}{1_{h}}
\newcommand{\KK}{\mathbf{K}\!\mathbf{K}}
\newcommand{\MoTe}{\mathbf{M}\!\mathbf{T}}
\begin{document}

\title{Chern-Simons invariants of hyperbolic three-manifolds, mixed Tate motives, and motivic path torsor of augmented character varieties}
\author{Dong Uk Lee}
\date{}
\maketitle

\abstract{For any complete hyperbolic three-manifold of finite volume, we construct a mixed Tate motive defined over the invariant trace field whose image under Beilinson regulator equals the $\PSL_2(\C)$-Chern-Simons invariant, thus equals $-\sqrt{-1}$ times the complex volume of the manifold. Further, we show that when $M$ has single torus boundary, under some assumption on asymptotic behaviour of the Chern-Simons invariant near an ideal point, its Hodge realization is a quotient of the mixed Hodge structure on the path torsor of the smooth locus of the canonical curve component of the augmented character variety of the three-manifold between a geometric point (giving the complete hyperbolic structure) and some tangential base point at an ideal point whose existence is asserted by the assumption. We explain its motivic implication.
In the appendix, we verify some cases of the assumption. The theory developed here is parallel to the motivic theory of polylogarithms.}

\tableofcontents

%%%%%%%%%%%%%%%%%%%%%%%%%%%%%%%%%%%%%%%%

%%%%%%%%%%%%%%%%%%%%%%%%%%%%%%%%%%%%%%%%
\section{Introduction}

%\subsection{}

One way of finding the volume of an oriented Riemannian manifold of finite volume is to use, if it exists, a geodesic triangulation, i.e. a triangulation whose constituent simplexes have geodesic faces. In hyperbolic geometry, the volume of any geodesic simplex has algebro-geometric expression as an integral of a rational differential form over an algebraically defined domain. Indeed, in the Klein model $K^n$ of the $m$-dimensional hyperbolic space $\mbH^m$: 
\begin{equation}  \label{eq:Klein_model}
K_m=\{(x_1,\cdots,x_{m},1)\in \R^{m+1}\ :\ Q_m(x_1,\cdots,x_m):=x_1^2+\cdots+x_m^2-1<0\},
\end{equation}
geodesic simplexes are the same as ``linear'' simplexes inside the ball $Q_m(x_1,\cdots,x_m)\leq0$, where linearity is taken with respect to the affine structure of $\A^{m+1}_{\R}$, so that every geodesic simplex is determined by a collection of hyperplanes $M=(M_1,\cdots,M_{m+1})$. It is known that the volume of such simplex is given by the integral $\int_{\Delta_M}\omega_Q$, where $\Delta_M$ is a (real) $m$-dimensional simplex in $\CP^m$ determined by $M$ and $\omega_Q$ is a certain rational differential $m$-form on $\CP^m$ defined over $\Q$ (up to a constant multiple in $\Qb$) with poles on the projecctivization of the quadratic $Q_m=0$ (cf. \cite{Goncharov99}). Therefore, when each $M_i$ is defined over the algebraic numbers $\Qb\subset\C$, this integral is a period, in the sense of Kontsevich-Zagier \cite{KontsevichZagier01} (i.e. integral of a rational differential form over a domain defined by algebraic equations or inequalities, all having algebraic numbers as coefficients), and so we see that the volume of any complete hyperbolic manifold of finite volume, as it is known to have a geodesic triangulation defined over $\Qb$, is a sum of periods, or, more suggestively, is an element of the ring of periods. 

Now, assume that the dimension $m=2n-1$ is odd. Then, Goncharov \cite{Goncharov99} showed that the volumes of complete hyperbolic manifolds of finite volume as well as hyperbolic geodesic simplexes are all certain \emph{canonically defined} periods of some special kind of mixed Hodge structures (which are in fact motivic). First, he showed that the volume of a hyperbolic geodesic simplex is the \textit{real period}, a novel notion which he introduced in \textit{ibid.}, of the mixed $\Q$-Hodge structure
\begin{equation}  \label{eq:rel.coh.}
H^m(\CP^m\backslash Q_m, \cup M_i\backslash Q_m,\Q)
\end{equation}
 (the relative cohomology group of a pair of algebraic variety and closed subvariety), provided with $\Delta_M$ (a relative cycle in $H_{m}(\CP^m,M_1\cup\cdots\cup M_m)$) and $\omega_Q$ (an element of $H^m(\CP^m\backslash Q_m,\Q)$) (cf. \cite[$\S$1.5]{Goncharov99}).
The key point is that this mixed $\Q$-Hodge structure $H$ is a \emph{Hodge-Tate} structure, i.e. its associated weight graded's $\gr^W_{l}H$ satisfy that for all $k\in\Z$, $\gr^W_{2k+1}H=0$ and $\gr^W_{2k}H\simeq \Q(-k)^{\oplus d_k}$ for some $d_k\in\Z_{\geq0}$, and that $\Delta_M$ and $\omega_Q$ form a \emph{framing} of $H$, i.e. are elements of $(\gr^W_0H)^{\vee}$ (dual space) and $\gr^W_{2n}H$ in a natural manner; such triple $(H,\Delta_M,\omega_Q)$ is called ($n$-)\emph{framed Hodge-Tate structure}.

Let $\QHT$ denote the abelian category of $\Q$-Hodge-Tate structures and for a number field $F\subset\Qb$, $\MT(F)$ the abelian category of mixed Tate motives over $F$ \cite{Levine93}, and $R^{\Bo}:K_{2n-1}(\Qb)\ra K_{2n-1}(\C) \ra \R$ the Borel regulator map from higher algebraic K-theory to $\R$ (cf. \cite{BurgosGil02}). Via the canonical isomorphism
\[ \Ext^1_{\MT(\Qb)}(\Q(0),\Q(n))=K_{2n-1}(\Qb)\otimes\Q \]
(cf. \cite{Levine98}), where for an abelian category $\mcC$, $\Ext^1_{\mcC}(-,-)$ is the Yoneda extension group,
$R^{\Bo}$ is identified with \emph{twice}  the ``real/imaginary part'' of the canonical period map:
\begin{equation} \label{eq:Hodge-realization}
\Ext^1_{\MT(\Qb)}(\Q(0),\Q(n)) \stackrel{\real^H}{\lra} \Ext^1_{\QHT}(\Q(0),\Q(n))\cong \C/\Q(n)\ \stackrel{p_n}{\lra}\ \R(n-1)\cong \R \ \stackrel{\times2}{\ra} \ \R,
\end{equation}
where $\real^H$ is the Hodge realization functor, $p_n$ is the projection onto the real/imaginary part (for a subring $R\subset\C$, $R(n):=(2\pi i)^nR\subset\C$; $\C=\R(n)\oplus \R(n-1)$), and $\R(n-1)\isom \R$ is the map $i^{n-1}r\mapsto r$. Using the above interpretation of the volume of geodesic simplex as a real period, Goncharov showed the following remarkable fact:

\begin{thm*} \cite[Thm.1.1]{Goncharov99} \label{thm:Goncharov99}
With every hyperbolic manifold of \emph{odd} dimension $m=2n-1$, there exists a mixed Tate motive defined over $\Qb\subset\C$ which is a simple extension of $\Q(0)$ by $\Q(n)$ in the abelian category $\MT(\Qb)$ of mixed Tate motives over $\Qb$ and whose image under the Borel regulator equals the volume.
\end{thm*}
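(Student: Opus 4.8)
The plan is to upgrade Goncharov's Hodge-theoretic evaluation of the volume of a single geodesic simplex to a statement about mixed Tate motives and then to assemble the simplices of a triangulation; throughout $m=2n-1$ and $F=\Qb$.

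First I would fix a geodesic triangulation of the manifold $X$ defined over $\Qb$, which exists by the fact recalled above, and write $X=\bigcup_j\Delta_{M_j}$ with each $\Delta_{M_j}$ the geodesic simplex in the Klein model $K_m$ cut out by a tuple $M_j=(M_{j,1},\dots,M_{j,m+1})$ of hyperplanes defined over $\Qb$, with orientations chosen so that $\Vol(X)=\sum_j\epsilon_j\int_{\Delta_{M_j}}\omega_Q$ and $\epsilon_j\in\{\pm1\}$. To $M_j$ I attach the relative cohomology object of \eqref{eq:rel.coh.}, namely $h(M_j):=H^m(\CP^m\setminus Q_m,\bigcup_iM_{j,i}\setminus Q_m)$, together with the framing data $\Delta_{M_j}$ and $\omega_Q$.

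The first substantive step is to promote $h(M_j)$ to an object of $\MT(\Qb)$ — not merely a $\Q$-Hodge--Tate structure — and to check that the framing is motivic, so that $(h(M_j);\Delta_{M_j},\omega_Q)$ defines a class $\mu_j$ in the $\Q$-vector space $\mcA_n(\Qb)$ of equivalence classes of $n$-framed mixed Tate motives over $\Qb$. For this I would, after an auxiliary blow-up of $\CP^m$ with linear centres so that $Q_m$ and the arrangement $\bigcup_iM_{j,i}$ form a configuration with at worst normal crossings (this is needed in particular for the simplices with ideal vertices, which lie on $Q_m$), use the localisation and Gysin triangles in Voevodsky's category of motives to express $M(\CP^m\setminus Q_m)$ and the motives of all strata $M\big(\bigcap_{i\in S}M_{j,i}\setminus Q_m\big)$ in terms of motives of smooth projective quadrics and linear subspaces over $\Qb$. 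Since these lie in the triangulated category $\DMT(\Qb)$ and since Beilinson--Soul\'e vanishing holds for $\Qb$ by Borel's computation of $K_*(\Qb)\otimes\Q$ (so that the abelian category $\MT(\Qb)$ of \cite{Levine93} is available and receives a realisation from $\DMT(\Qb)$), the resulting spectral sequence for $h(M_j)$ has all $E_1$-entries of the form $\Q(-k)^{\oplus d_k}$ and its abutment lies in $\MT(\Qb)$, with $\gr^W_{2k}$ canonically a sum of $\Q(-k)$ for $0\le k\le n$ and $\gr^W_{2k+1}=0$. The framing vectors $\omega_Q\in\gr^W_{2n}$ and $\Delta_{M_j}\in(\gr^W_0)^\vee$ are then motivic because the rational structures on these two graded pieces are canonical. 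Compatibility of the Hodge realisation with Betti realisation identifies $\real^H(\mu_j)$ with Goncharov's $n$-framed Hodge--Tate structure, whose real period is $\int_{\Delta_{M_j}}\omega_Q=\Vol(\Delta_{M_j})$ by \cite[$\S$1.5]{Goncharov99} (for $n=2$ and an ideal tetrahedron of cross-ratio $z$ this real period is the Bloch--Wigner dilogarithm $D(z)$).

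It remains to pass to the indecomposable quotient and sum. Let $\mcA_\bullet(\Qb)$ be the commutative graded Hopf algebra of framed mixed Tate motives, $\mcL_\bullet(\Qb)=\mcA_{\ge1}/\mcA_{\ge1}\cdot\mcA_{\ge1}$ the associated motivic Lie coalgebra, and $\pi\colon\mcA_n(\Qb)\thra\mcL_n(\Qb)$ the projection; since $\Ext^{\ge2}$ vanishes in $\MT(\Qb)$ (again Borel) one has the canonical identification $\mcL_n(\Qb)=\Ext^1_{\MT(\Qb)}(\Q(0),\Q(n))$. The crucial point is that Goncharov's real-period map on framed Hodge--Tate structures annihilates decomposables — this is the defining feature of the notion — hence factors through the Hodge Lie coalgebra $\mcL^{\HT}_n=\Ext^1_{\QHT}(\Q(0),\Q(n))\cong\C/\Q(n)$, on which it coincides with $2p_n$; combined with the compatibility of the Hodge realisation with the projections $\pi$, this yields $\Vol(\Delta_{M_j})=2p_n\big(\real^H(\pi(\mu_j))\big)=R^{\Bo}\big(\pi(\mu_j)\big)$ under \eqref{eq:Hodge-realization}. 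Therefore $E:=\sum_j\epsilon_j\,\pi(\mu_j)\in\Ext^1_{\MT(\Qb)}(\Q(0),\Q(n))$ — an element of this $\Ext$-group, hence representable by a single (one-step) extension of $\Q(0)$ by $\Q(n)$ in $\MT(\Qb)$ — satisfies $R^{\Bo}(E)=\sum_j\epsilon_j\Vol(\Delta_{M_j})=\Vol(X)$, which is the assertion.

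\textbf{The main obstacle.} Everything else is either a citation — the existence of the $\Qb$-triangulation, the $\S$1.5 identification of a simplex volume with a real period, Borel's theorems and the identification \eqref{eq:Hodge-realization} — or a formal manipulation inside the Hopf-algebra formalism of framed motives. The genuinely hard step is showing that the relative cohomology of these quadric-and-hyperplane arrangements defines objects of $\MT(\Qb)$ with the expected (Hodge--)Tate weight-graded pieces and that the geometric framing is motivic: one must control the motive of $\CP^m\setminus Q_m$ relative to a hyperplane arrangement — ruling out any non-Tate contribution and any spurious $\Ext^2$-obstruction — which requires the explicit geometry of quadrics (and of the blow-ups needed at ideal vertices) together with Beilinson--Soul\'e vanishing for $\Qb$.
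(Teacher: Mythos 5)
Your outline follows the same path as Goncharov's second construction, which the paper recalls (but does not reprove) in Subsection \ref{subsec:comp_wy_Goncharov_MTM}, and the technical issues you single out as the main obstacle (mixed-Tate-ness of the relative cohomology, motivicity of the framing, blow-ups at ideal vertices) are genuine; they are handled in Goncharov's paper and can be cited. However, the assembly step contains a real gap. You assert that $\mcL_n(\Qb)=\mcA_n/\mcA_{\ge1}\mcA_{\ge1}$ equals $\Ext^1_{\MT(\Qb)}(\Q(0),\Q(n))$, and likewise that $\mcL^{\HT}_n\cong\C/\Q(n)$, on the strength of $\Ext^{\ge2}=0$. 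Neither identification is correct. As recorded in Proposition \ref{prop:coh_red_cobar_cx} on the Hodge side and formula (\ref{eq:motivic_coh=red_cobar_cx_coh}) on the motivic side, $\Ext^1$ is the space of \emph{primitives} $\ker(\bar{\Delta}|_{\mcA_n})$, equivalently the kernel of the Lie cobracket on $\mcL_n$; this is a proper subspace of $\mcL_n$ unless the fundamental Lie algebra is abelian, which it is not (the remark after Corollary \ref{cor:Goncharov99;3.6} notes that the Hodge--Tate Lie algebra is \emph{free}, and the motivic one is a nonabelian quotient of it). Vanishing of $\Ext^{\ge 2}$ gives exactness of the cobar complex in higher degree, not surjectivity of $\Ext^1\hookrightarrow\mcL_n$.

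Consequently $E=\sum_j\epsilon_j\pi(\mu_j)$ lives only in $\mcL_n(\Qb)$, where the Borel regulator is not even defined; you have not placed $E$ in $\Ext^1$. What must be shown is that $\bar{\Delta}\bigl(\sum_j\epsilon_j\mu_j\bigr)=0$, i.e. that the sum of framed-motive classes over the triangulation is primitive. This is precisely the vanishing of the motivic Dehn invariant of the manifold and is the geometric heart of Goncharov's argument --- a nontrivial fact about complete finite-volume hyperbolic manifolds, not a formal consequence of passing to indecomposables. The paper makes the analogous verification explicit at the corresponding stage of its own construction: in the proof of Theorem \ref{thm:Main.Thm1:CS-MTM} the essential step is $\bar{\Delta}_2(M(X))=0$, deduced from the vanishing of the complex Dehn invariant $\delta_{\C}(\sum_i[z_i])=0$ of Theorem \ref{thm:2Complex_volume=Bloch_regulator}(1). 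Your sketch has no analogue of this step, and without it the conclusion that $E$ is a well-defined simple extension of $\Q(0)$ by $\Q(n)$ does not follow.
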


This highly nontrivial result is a much more precise statement than that the volume is a sum of some periods, and suggests the possibility of understanding invariants of odd-dimensional hyperbolic manifolds in terms of mixed Tate motives over $\Qb$ (equiv. of elements of algebraic K-groups of $\Qb$) and their periods.

Now, we specialize to the dimension $m=3$. In this case, there is another important differential-geometric invariant, that is, the (metric) Chern-Simons invariant. For a \emph{closed} (i.e. compact without boundary) oriented Riemannian $3$-manifold $M$, the metric \emph{Chern-Simons invariant} $cs(M)$ was first introduced by Chern and Simons \cite{ChernSimons74} as the integral of a certain real $C^{\infty}$ $3$-form $Q_M$ (\emph{Chern-Simons $3$-form}) on the oriented orthonormal frame bundle $F(M)$ along a frame field $s$ (i.e. a section $s:M\ra F(M)$), more explicitly:
\begin{equation} \label{eq:metric_CS-inv_closed}
cs(M) := \int_{s(M)}Q_M := -\frac{1}{8\pi^2} \int_{s(M)}  \Tr(\omega\wedge d\omega +\frac{2}{3}\omega\wedge \omega\wedge \wedge \omega),
\end{equation}
where $\omega\in \mathcal{A}^1(F(M),\mathfrak{so}(3))$ is the Levi-Civita connection on $F(M)$.
For $3$-manifolds with non-empty boundary, Meyerhoff in his thesis \cite{Meyerhoff86} extended the definition of metric Chern-Simons invariant to complete hyperbolic three-manifolds $M$ having cusps: 
\begin{equation} \label{eq:metric_CS-inv_cusped}
cs(M):=\int_{s(M-L)} Q_M - \frac{1}{2\pi}\sum_{K\in L} \tau(K).
\end{equation}
Here, $L$ is some link in $M$ such that there exists a frame field $s$ defined on the complement $M- L$ which has certain special singularity along $L$ and also linearity near cusps. For each component $K$ of $L$, $\tau(K)$ is the \emph{complex torsion} of that curve. 
Both integrals, as being valued in $\R/2\Z$ in the closed case or in $\R/\Z$ in the cusped case, are known to be well-defined independent of the choices made (of $s$, $L$), that is $cs(M)$ is an invariant of $M$. 

Now, one can ask the same question whether this invariant is also motivic, especially is another canonically defined period of a mixed Tate motive, preferably the motive constructed by Goncharov. One may attempt to adapt Goncharov's original argument for volume computation which is to add local contributions over constituent geodesic simplexes. For this to work, it is necessary that the local contribution is some canonically defined period of some (mixed Tate) Hodge structure which is defined by the geodesic simplex only. But one faces difficulties. First, in the closed case, $cs(M)$ is the integral over $M$ of a $3$-form $s^{\ast}Q_M$ on $M$ which itself depends on the choice of the section $s$, and it is not even clear in the first place whether there exists $s$ such that the restriction of $s^{\ast}Q_M$ to every constituent geodesic simplex becomes a \emph{rational} differential form. In the cusped case, the local contribution of each geodesic simplex to (\ref{eq:metric_CS-inv_cusped}) is not even an integral of some $3$-form over the simplex, much less depends on $L$ as well as $s$. So a naive guess at answer is ``unlikely".

On the other hand, according to Thurston, the Riemannian volume $\Vol(M)$ and the metric Chern-Simons invariant $cs(M)$ should be considered together, as a single $\C/\sqrt{-1}\pi^2\Z$-valued invariant: \[ \Vol_{\C}(M)=\Vol(M)+\sqrt{-1} \pi^2 cs(M), \]
called \emph{complex volume}. It has better properties, such as analyticity on deformation space, and arises naturally in the theory of hyperbolic three-manifolds and especially in quantum topology.
Neumann and Yang \cite{Neumann92} constructed an invariant $\beta(M)$ living in the Bloch group $\mcB(\Qb)$ and Neumann \cite{NeumannYang99} proved that the complex volume of a hyperbolic three-manifold $M$ equals, up to a constant, the value at $\beta(M)$ of the Bloch regulator $\tilde{\rho}(z)$ introduced by Dupont and Sah. 
Also, Kirk and Klassen \cite{KirkKlassen93} defined the $\PSL_2(\C)$-Chern-Simons invariant for any (closed or not) hyperbolic three-manifold and showed that it equals the complex volume, up to a constant (\ref{eq:normalized_PSL_2(C)-CS.inv}).

We recall the Beilinson regulator $K_n(\Qb)\ra K_n(\C)\ra \C/\Q(n)$ (\ref{eq:Beilinson_regulator}) which is identified with the Hodge realization 
$ \Ext^1_{\MT(\Qb)}(\Q(0),\Q(n))\ra \Ext^1_{\QHT}(\Q(0),\Q(n))\cong \C/\Q(n) $ via the canonical isomorphism $\Ext^1_{\MT(\Qb)}(\Q(0),\Q(n))=K_{2n-1}(\Qb)$ . 

Our first main result is the following:

\begin{thm*} (Theorem \ref{thm:Main.Thm1:CS-MTM})
For any hyperbolic three-manifold $X$, there exists a mixed Tate motive $M(X)$ in $\Ext^1_{\MT(k(M))}(\Q(0),\Q(2))$ defined over the invariant trace field $k(M)\subset \Qb$ whose image under the Beilinson regulator equals the (normalized) $\PSL_2(\C)$-Chern-Simons invariant $\CS_{\PSL_2}(X)$.
\end{thm*}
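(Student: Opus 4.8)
The plan is to realize $\CS_{\PSL_2}(X)$ as a Beilinson regulator by constructing, for each hyperbolic three-manifold $X$, a concrete class in $K_3(k(M))\otimes\Q = \Ext^1_{\MT(k(M))}(\Q(0),\Q(2))$ and matching its regulator against the Chern-Simons invariant. The natural candidate is the Bloch-group class: following Neumann-Yang \cite{Neumann92} and Neumann \cite{NeumannYang99}, a geometric ideal triangulation of $X$ produces a well-defined element $\beta(X)\in\mcB(\Qb)$ refining the naive shape-parameter sum, and since the shapes actually lie in the invariant trace field, one gets $\beta(X)\in\mcB(k(M))$. First I would recall the identification $\mcB(F)\otimes\Q\cong K_3^{\mathrm{ind}}(F)\otimes\Q\cong\Ext^1_{\MT(F)}(\Q(0),\Q(2))$ (Bloch, Suslin, and \cite{Levine98}), so that $\beta(X)$ canonically defines the motive $M(X)$. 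Then the content of the theorem is the commutativity of the diagram relating (i) the Bloch regulator $\tilde\rho$ of Dupont-Sah evaluated on $\beta(X)$, (ii) the Beilinson regulator on the corresponding $K_3$-class, and (iii) the complex-volume/Chern-Simons normalization (\ref{eq:normalized_PSL_2(C)-CS.inv}).

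The key steps, in order: (1) Fix a geodesic ideal triangulation of $X$ defined over $\Qb$ with shape parameters $z_j$; assemble the formal sum $\sum_j [z_j]\in\Z[\Qb^\flat]$ and verify via the gluing/completeness equations that it lies in the Bloch group $\mcB(k(M))$ — this is exactly the Neumann-Yang construction, and the refinement to $\mcB$ (as opposed to the pre-Bloch group) is what makes the complex volume, not just $\Vol$, well-defined. (2) Invoke the isomorphism $\mcB(k(M))\otimes\Q\xrightarrow{\sim}\Ext^1_{\MT(k(M))}(\Q(0),\Q(2))$ to define $M(X)$; here one must check that $k(M)$ is a number field inside $\Qb$ so $\MT(k(M))$ is available. (3) Identify the Beilinson regulator on this Ext-group with the Bloch regulator map $\tilde\rho\colon\mcB(\C)\to\C/(2\pi i)^2\Z\cong\C/4\pi^2\Z$ — this is the compatibility of the $K$-theoretic and Hodge-theoretic regulators on $K_3$, which can be extracted from the explicit description of the mixed Hodge structure attached to a Bloch-group element (a variant of the dilogarithm motive / Deligne's ${}_2$-logarithmic local system on $\mathbb{P}^1\setminus\{0,1,\infty\}$) together with its period computed by the Bloch-Wigner-Rogers dilogarithm. (4) Conclude by Neumann's theorem \cite{NeumannYang99}: $\tilde\rho(\beta(X))$ equals (up to the fixed normalizing constant absorbed into the definition (\ref{eq:normalized_PSL_2(C)-CS.inv})) the complex volume $\Vol_{\C}(X)$, hence under the Beilinson regulator $M(X)\mapsto -\sqrt{-1}\,\Vol_{\C}(X)/\pi^2$-type expression $=\CS_{\PSL_2}(X)$.

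I expect the main obstacle to be step (3): pinning down the precise constant in the comparison between the Beilinson regulator on $\Ext^1_{\MT(F)}(\Q(0),\Q(2))$ and the Dupont-Sah/Bloch-Wigner regulator $\tilde\rho$, and in particular reconciling the $\R(n-1)$-normalizations of (\ref{eq:Hodge-realization}) with the $\C/\Q(n)$-valued Beilinson map (\ref{eq:Beilinson_regulator}) so that \emph{both} the real part (volume) and imaginary part (Chern-Simons) come out correctly — Goncharov's Theorem handles only the real part via the Borel regulator, and upgrading to the full $\C/\Q(2)$-valued statement requires care with factors of $2$, $\pi$, and $i$. A secondary technical point is verifying that the motive produced is genuinely defined over $k(M)$ and not merely over the (possibly larger) trace field or the field generated by all the shape parameters; this requires the standard fact that the Bloch class is insensitive to the choice of triangulation and lies in the invariant trace field, for which I would cite Neumann-Reid.
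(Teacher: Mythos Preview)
Your overall strategy is sound and would work, but it takes a genuinely different route from the paper's proof, and the difference is illuminating.

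You propose to invoke the Suslin isomorphism $\mcB(F)\otimes\Q\cong K_3^{\mathrm{ind}}(F)\otimes\Q$ as a black box to pass directly from the Neumann--Yang class $\beta(X)$ to an element of $\Ext^1_{\MT(k(M))}(\Q(0),\Q(2))$, and then to quote a regulator-compatibility statement for step~(3). The paper instead works entirely inside the Tannakian/Hopf-algebra formalism of framed mixed Tate motives: for a representative $\beta(X)=\sum_i[z_i]$ it \emph{defines} $M(X)$ as the explicit sum $\sum_i\tfrac{1}{2}\bigl([M^{(2)}_{z_i}]-[M^{(2)}_{1-z_i}]\bigr)$ of (differences of) polylogarithm framed mixed Tate motives, checks that this lands in $\Ext^1$ by showing the reduced motivic comultiplication $\bar\Delta$ vanishes on it (which unwinds to the vanishing of the complex Dehn invariant of $\beta(X)$, via full faithfulness of Hodge realization over all embeddings), and then computes the Hodge realization using a new invariant introduced for exactly this purpose --- the \emph{skew-symmetric period} $\mathscr{A}_2$ --- together with the key identity $\mathscr{A}_2(\mcP^{(2)}(z))=\tilde\rho([z])$ and the functional equation $\tilde\rho(1-z)=-\tilde\rho(z)$.

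What each approach buys: your route, if the regulator comparison in step~(3) is cleanly available, would yield a \emph{canonical} motive independent of the presentation of $\beta(X)$. The paper's route is more constructive and self-contained (it never needs Suslin's theorem), but the resulting $M(X)$ a priori depends on the chosen representative $\sum_i[z_i]$; the paper explicitly flags this as an open point (Remark~\ref{rem:uniqueness_of_CS-motive}). Conversely, the obstacle you correctly anticipate in step~(3) --- matching the Beilinson regulator on $\Ext^1$ with $\tilde\rho$ including all constants --- is precisely what the paper's skew-symmetric period machinery is built to handle: Lemma~\ref{lem:Ext^1_HT=ss-period} identifies $\mathscr{A}_n\circ\iota_n$ with twice the canonical period isomorphism, and the explicit computation (\ref{eq:ss-period=rho}) then gives the factor-tracking for free. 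So the paper's argument can be read as a concrete resolution of exactly the step you identify as the hard one, achieved by bypassing the abstract $K$-theory comparison altogether.
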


Since the Borel regulator $K_3(\Qb)\ra \R$ is twice the imaginary part of the Beilinson regulator ($\C/\Q(2)=\R/\pi^2\Q\oplus \sqrt{-1}\R$), this result improves upon the Goncharov' result (Theorem \ref{thm:Goncharov99}). 

Our definition of the mixed Tate motive in Theorem A depends on the choice of a representative of the Bloch invariant $\beta(M)$, and at the moment we do not have a proof that this mixed Tate motive is independent of such choice, although it should be so according to some general motivic conjectures (cf. Remark \ref{rem:uniqueness_of_CS-motive}). Nevertheless, in this work, since there is no need for clarifying which one is being used, we call any mixed Tate motive as in Theorem 2, \emph{Chern-Simons mixed Tate motive}.

Now, there arises the natural question whether our Chern-Simons mixed Tate motive is the same, up to a constant multiple, as the Goncharov's mixed Tate motive (if we know that our motive is well-defined). The answer is ``presumably yes'' and ``probably no''. In fact, Goncharov constructed one element for each of the two groups $\Ext^1_{\MT(\Qb)}(\Q(0),\Q(2))=K_{3}(\Qb)_{\Q}$ and raised the question of equality of these two elements. His first homological method constructs an element of $H_3(\SL_2(\Qb),\Q)\cong K_{3}(\Qb)_{\Q}$. We believe that this element should be the same as our mixed Tate motive. On the other hand, his second method produces a certain sum of mixed Tate motives which turns out to be an extension of $\Q(0)$ by $\Q(2)$ (like our mixed Tate motive). But, we observe that because the input for his construction of mixed Tate motive is an element in the scissors congruence group $\mcP(\mbH^3)$, there is no guarantee that his mixed Tate motive detects the Chern-Simons invariant; in contrast, the Bloch group $\mcB(\C)$ which was used for our construction is finer than $\mcP(\mbH^3)$. So there is no guarantee that his (second) mixed Tate motive is the same as our mixed Tate motive or with his (first) K-theory element. We will give more detailed explanation in Subsection \ref{subsec:comp_wy_Goncharov_MTM}.

Our second main result relates this ``Chern-Simons mixed Tate motive" to the mixed Hodge structure defined on the path torsor of the agumented character variety of the hyperbolic manifold between two some specific base points. 

\begin{thm*} (Theorem \ref{thm:CS-VMHS_is_motivic})
Suppose $M$ has a single cusp. Let $X$ be the smooth open part of the geometric (curve) component of the augemented character variety $\tilde{X}(M)$; choose a point $z^0\in X$ giving the complete hyperbolic structure. Assume Conjecture \ref{conj:CS-VMHS_at_ideal_pt}; let $\orav$ be any tangent vector satisfying the condition of this conjecture. 

Then, the Chern-Simons Hodge-Tate structure constructed in Theorem 2 is a quotient of the completed path torsor  $\Q[P_{\orav,z^0}X]^{\wedge}$.
\end{thm*}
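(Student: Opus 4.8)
The plan is to realize the Chern-Simons Hodge-Tate structure as a natural sub-quotient of the mixed Hodge structure (MHS) carried by the de Rham fundamental torsor $\Q[P_{\orav,z^0}X]^{\wedge}$ of the curve $X$ with the two prescribed base points, by exhibiting the extension class of the former inside the weight-graded pieces of the latter. First I would recall the structure of the pro-unipotent path torsor: $\Q[P_{\orav,z^0}X]^{\wedge}$ is the inverse limit of the finite-dimensional MHS's $\Q[\pi_1]/J^{N+1}$, where $J$ is the augmentation ideal, and its associated graded for the weight filtration is built from $H^1(X)$ and its tensor powers. Since $X$ is an open curve, $H^1(X)$ is an extension of a pure weight-$2$ part (coming from the punctures/boundary divisor, hence of Tate type $\Q(-1)^{\oplus r}$) by the weight-$1$ part $H^1(\bar X)$. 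The key geometric input — which must be supplied by Conjecture \ref{conj:CS-VMHS_at_ideal_pt} and the asymptotic hypothesis on the Chern-Simons function near the ideal point — is precisely that the relevant cohomology classes one needs to extract the complex volume lie in the Tate part, i.e. the relevant $1$-forms on $X$ that compute $\CS_{\PSL_2}$ are of the third kind (logarithmic along the boundary divisor cut out by the ideal point) and close up to give classes in $\gr^W_2 H^1(X)\cong\Q(-1)^{\oplus r}$; the tangential base point $\orav$ at the ideal point then provides the dual framing in $\gr^W_0$.

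The main steps, in order: (1) Use the dilogarithm / Bloch-regulator description of $\CS_{\PSL_2}(X)$ from Theorem 2 to write the Chern-Simons invariant as a period of an explicit length-$3$ iterated integral (a single dilogarithmic period $\Li_2$ together with its $\Li_1=\log$ subfactors) along a path on $X$ from $z^0$ to the tangential base point $\orav$; this is the same combinatorial shape as the motivic dilogarithm, which is why ``the theory developed here is parallel to the motivic theory of polylogarithms.'' (2) Identify this iterated integral with a matrix coefficient of the canonical MHS-morphism from $\Q[P_{\orav,z^0}X]^{\wedge}$ to a fixed Tate object; concretely, the relevant quotient of the path torsor is the push-out of the length-$\leq 2$ quotient $\Q[\pi_1]/J^3$ along the projection of its graded pieces onto their maximal Tate sub-quotients, using that $H^1(X)$ surjects onto its weight-$2$ Tate part. (3) Check that this push-out is exactly a $2$-framed Hodge-Tate structure whose weight filtration has graded pieces $\Q(0),\Q(-1),\Q(-2)$ with the prescribed ranks, i.e. that it is an extension of $\Q(0)$ by (an iterated extension terminating in) $\Q(2)$, and that its extension class in $\Ext^1_{\QHT}(\Q(0),\Q(2))\cong\C/\Q(2)$ equals the period computed in step (1), hence equals (the Hodge realization of) $M(X)$ up to the identifications fixed in the statement of Theorem 2. (4) Finally, functoriality of the weight filtration and of tangential-base-point specializations guarantees that all maps used are morphisms of MHS, so the resulting object is genuinely a quotient, not merely an abstract sub-quotient, of $\Q[P_{\orav,z^0}X]^{\wedge}$.

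The hard part will be step (1)–(2): making rigorous the passage from the differential-geometric/dilogarithmic formula for $\CS_{\PSL_2}$ to an honest iterated integral on the curve $X$ that is recognizably a matrix coefficient of the de Rham path torsor, and in particular controlling the contribution of the tangential base point $\orav$ at the ideal point. This is where Conjecture \ref{conj:CS-VMHS_at_ideal_pt} enters essentially: one needs the asserted asymptotic behaviour of the Chern-Simons function near the ideal point to both (a) guarantee that a tangential base point $\orav$ in the required sense exists and (b) ensure that the regularized iterated integral from $z^0$ to $\orav$ converges and has the expected Hodge-theoretic interpretation (i.e.\ that the limit MHS at $\orav$ is of Tate type with the right framing). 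Everything else — the weight-graded computation of $\gr^W\Q[\pi_1]/J^3$, the identification of its Tate push-out, the comparison of extension classes via the period map \eqref{eq:Hodge-realization} — is formal once this asymptotic input is in hand, and the verification that the output matches $M(X)$ reduces to the dilogarithm computation already underlying Theorem 2.
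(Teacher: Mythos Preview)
Your proposal has the right general architecture---parallel transport from a tangential base point, the role of the split limit MHS at $\orav$, the length-$\leq 2$ quotient $\Q[\pi_1]/J^3$---but it misses the actual crux of the argument and misidentifies where the difficulty lies.

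The paper does \emph{not} proceed by writing $\CS_{\PSL_2}$ as an explicit iterated integral on $X$ and then matching extension classes. Instead, the Chern-Simons variation of mixed Hodge structure $\mbV$ is already constructed (Section~7.2), with period matrix built from $u=\log m$, $v=\log l$, and $cs(z)$; its fiber at $z^0$ is visibly the desired Hodge-Tate structure. The parallel transport map $\mbV_{\orav}\otimes\Q[P_{\orav,z}X]^{\wedge}\to\mbV_z$ is a morphism of MHS by Hain--Zucker, and Conjecture~\ref{conj:CS-VMHS_at_ideal_pt} supplies $\Q(0)\hookrightarrow\mbV_{\orav}$. So one gets a MHS morphism $\Q[P_{\orav,z^0}X]^{\wedge}\to\mbV_{z^0}$ for free; the only thing left is \emph{surjectivity}, i.e.\ that the image of the monodromy representation acting on the vector $e_0$ spans all of $\Q\oplus\Q(1)\oplus\Q(2)$.

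This surjectivity is the genuine content, and it is not formal. It requires exhibiting monodromy elements $T_1,T_2$ of specific shapes (Proposition~\ref{prop:global_monodromy}): one with nonzero $(1,2)$-entry, and one upper-triangular unipotent with zero $(1,2)$-entry but nonzero $(1,3)$-entry. The local monodromy around a single ideal point $P$ has $(1,2)$-entry $2\pi i\, b_1(P)$ and $(2,3)$-entry $4\pi i\, a_1(P)$ where $(a_1,b_1)$ are the orders of $m,l$ at $P$; to manufacture the second element one needs a \emph{second} ideal point $P'$ with a \emph{different} boundary slope $-b_1'/a_1'$, so that a suitable commutator (Lemma~\ref{lem:local_monodromy}(2)) kills the $(1,2)$-entry while leaving the $(1,3)$-entry nonzero. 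The existence of two distinct strongly detected boundary slopes on the canonical component (Proposition~\ref{prop:two_sd_bounary-slopes}) is a nontrivial fact from three-manifold topology---Culler--Shalen theory and the Newton polygon of the $A$-polynomial---and is the real engine of the proof. Your outline does not mention any of this, and without it you cannot conclude that the quotient map is onto (equivalently, that your ``push-out'' has the correct rank rather than collapsing).

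In short: your steps (1)--(3) attempt to rebuild from scratch what the paper already has packaged as the CS VMHS, and your step (4) glosses over surjectivity, which is exactly where the three-manifold input (two boundary slopes) enters. The regularization and asymptotics at $\orav$ that you flag as ``the hard part'' are handled by the conjecture plus Theorem~\ref{thm:a_2,b_3inQ(1)} and are comparatively routine once the VMHS is in hand.
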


This theorem is an analogue of a well-known result about the polylogarithm variation of mixed Hodge structure on the projective line minus three points $\mathbb{P}^1\backslash\{0,1,\infty\}$ \cite[11.3]{Hain94}. In fact, our proof follows the Deligne's original argument faithfully, and our Conjecture \ref{conj:CS-VMHS_at_ideal_pt} is one of the ingredients needed to mimic his argument. The most essential ingredient in the original argument is computation of local monodromies of the local system underlying the polylogarithm VMHS around the two ideal points $0$, $1$ (which is reduced to (known) monodromy computations of polylogarithm functions \cite{Ramakrishnan82}); the statements corresponding to our Conjecture \ref{conj:CS-VMHS_at_ideal_pt} and our Theorem 1 (i.e. \cite{Hain94}, Thm.7.2, Thm.11.3) then follow easily from the local and global monodromy informations, respectively. But, in our case, to obtain necessary informations on these monodromies, we need some non-trivial results in three-dimensional topology (about Culler-Shalen theory and A-polynomials, cf. \cite{CullerShalen83}, \cite{CullerShalen84}, \cite{CCGLS94}).

This result strongly suggests (an observation due to Deligne in the polylogarithm case) that the Chern-Simons mixed Tate motive constructed in Theorem 2 is also a quotient of the mixed motive $A_{\orav,z^0}(X)$ whose dual is the affine algebra of the motivic path torsor with (tangential) base points $\orav$, $z^0$; the motivic path torsor $P^{\mot}_{\orav,z^0}(X)$ is a scheme $\Spec (A_{\orav,z^0}(X)^{\vee})$ in the (hypothetical) Tannakian category of mixed motives, in the sense of Deligne \cite{Deligne89}, having the Betti realization $\real^{Betti}(A_{\orav,z^0}(X))=(\Q[P_{\orav,z^0}X]^{\wedge})^{\vee}$. The main obstacle for turning this expectation into a rigorous statement is that at the moment, for non-rational curves, the motivic path torsor is a hypothetical object (due to lack of motivic $t$-structure and difficulty with tangential base points), although its existence is a standard belief. As a matter of fact, we obtained Theorem 3 before Theorem 2. The motivic implication of Theorem 3 led us to look for a mixed Tate motive with the property in Theorem 2.

We also bring the readers' attention to that in view of that the (augmented) character variety is defined only by the topological fundamental group of the three-manifold, this theorem is a vivid example of Mostow rigidity theorem, which says that the differential-geometric invariants of complete hyperbolic three-manifolds are also homotopy invariants. 

This article is organized as follows. 

Section 2 is devoted to a review of three invariants attached to arbitrary (i.e. closed or not) complete hyperbolic three-manifolds $M$ of finite volume and their relations:
(i) the complex volume $\Vol_{\C}(M)$, (ii) the Bloch invariant $\beta(M)$ and its Bloch regulator value, and (iii) the $\mathrm{PSL}_2(\C)$ Chern-Simons invariant $\CS_{\PSL_2}(M)$. 
The main result is that the Bloch regulator value at the Bloch invariant $\beta(M)$ of $M$ equals half the $\mathrm{PSL}_2(\C)$ Chern-Simons invariant of $M$.

An essential tool in the Goncharov's work \cite{Goncharov99} is the novel notion of \emph{big period} which is defined for framed $\Q$-Hodge-Tate structures and is valued in the ``big'' group $\C\otimes_{\Q}\C$. In fact, for his purpose of constructing a mixed Tate motive giving the Riemannian volume of hyperbolic manifold as Borel regulator value, Goncharov uses only the image $P_2$ of the big period under the map $x\otimes y\mapsto \tpi\cdot x\otimes\exp(\tpi \cdot y)$, which  thus loses some information. The aforementioned real period of a framed Hodge-Tate structure is derived from this simplified big period $P_2$. In Section 3, we give a review of the theory of big period and also introduce a new notion of ``skew-symmetric period''. Our key observation leading to Theorem \ref{thm:Main.Thm1:CS-MTM} is that if for $z\in\mathbb{P}^1(\C)\backslash\{0,1,\infty\}$, we let $\mcP^{(2)}(z)$ denote the polylogarithm framed Hodge-Tate structure of rank $3$ (\ref{eq:polylogarithm_Li_k}), the skew-symmetrization of the bid period (valued in $\C\wedge_{\Q}\C$) of $\mcP^{(2)}(z)$ equals the Bloch regulator value $\tilde{\rho}(z)$, cf. (\ref{eq:ss-period=rho}). 
In the last subsection, we recall the description, due to \cite{BMS87}, \cite{BGSV90}, of the Tannakian category of $\Q$-Hodge-Tate structures in terms of framed $\Q$-Hodge-Tate structures and explain the formula $\Ext_{\QHT}(\Q(0),\Q(n))=\mathrm{Ker}(\nub|_{\mcH_n})$, the right side being the kernel of the reduced comultiplication $\nub$ on the associated fundamental Hopf algebra $\mcH=\oplus \mcH_n$, called ``Hodge-Tate Hopf algebra" (Proposition \ref{prop:coh_red_cobar_cx}).

In Section 4, we start with a brief survey of the theory of mixed Tate motives over number fields, and
prove our first main theorem: Theorem \ref{thm:Main.Thm1:CS-MTM}, constructing (the) Chern-Simons mixed Tate motive as a certain sum of various polylogarithm mixed Tate motives. Its ingredient is a representative of the Bloch invariant $\beta(M)$. We give a comparison of this mixed Tate motive with Goncharov's mixed Tate motive (his second construction). The three sections 2, 3, 4 constitute the first part of this article. 

Our second main theorem, in its statement and proof as well, has a model in the theory of polylogarithm sheaves on the projective line minus three points $\mathbb{P}^1\backslash\{0,1,\infty\}$. For motivation,  we recall this theory in Section 5.

In Section 6, we review the theory of character varieties of three-manifolds and Thurston deformation curves in the single torus boundary case. For our purpose, the more useful objects is a certain double covering of the canonical (curve) component of the character variety of $M$, called \emph{augmented character variety}.
For each hyperbolic three-manifold $M$ with single torus boundary, following a previous construction of Morishita-Terashima \cite{MorishitaTerashima09}, we construct a variation of mixed Hodge structure (which we call Chern-Simons VMHS) over the smooth locus of the canonical component of the augmented character variety. The primary ingredient for this construction is the Chern-Simons invariants of flat connections on the trivial principal $\mathrm{PSL}_2(\C)$-bundle on $M$; here the Chern-Simons invariant is regarded as a section of certain line bundle over the character variety of the boundary torus $\partial M$, the idea going back to Kirk and Klassen \cite{KirkKlassen93}. We related their construction with that of Morishita-Terashima. 

In Section 7, we prove our second main theorem: Theorem \ref{thm:CS-VMHS_is_motivic}. As discussed briefly earlier, we follow the arguments in the polylogarithm theory. But, the main ingredients are provided by three-manifold topology theory.

In appendix, we verify Conjecture \ref{conj:CS-VMHS_at_ideal_pt} for the figure eight knot complement.

%%%%%%%%%%%%%%%%%%%%
%\textbf{Acknowledgement} This work was supported by the National Research Foundation of Korea(NRF) grant funded by the Korea government (NRF-2019R1I1A1A01062321, 2022R1I1A1A01073245, 2021R1A4A3033098). 

%%%%%%%%%%%%%%%%%%%%%%%%%%%%%%%%%%%%%%%%
%%%%%%%%%%%%%%%%%%%%%%%%%%%%%%%%%%%%%%%%
\section{Chern-Simons invariant of hyperbolic three-manifolds and Bloch regulator}

%%%%%%%%%%%%%%%%%%%%%%%%%%%%%%%%%%%%%%%%
\subsection{$\SO(3)$ Chern-Simons invariant of hyperbolic $3$-manifolds}

Here, we give a review of the theory of metric Chern-Simons invariant of hyperbolic three-manifolds, focusing on the cusped case. Our main references are \cite{ChernSimons74}, \cite{Meyerhoff86}, and \cite{Yoshida85}.

For our work, we also need constructions of general Chern-Simons invariants of connections on principal bundles for various Lie groups (the only Lie groups which we will deal with in this work are $\SO(3)$, $\SU(2)$, $\SL_2(\C)$, $\PSL_2(\C)$).

%%%%%%%%%%%%%%%%%%%%
\begin{defn} \label{defn:CSform}
Let $G$ be a (real or complex) Lie group with finitely many components. Suppose fixed an $\mathrm{Ad}$-invariant symmetric bilinear form on the Lie algebra $\mfg$
\[ \langle -, - \rangle\, :\, \mfg \times \mfg \lra \C. \]

(1) Let $P$ be a principal $G$-bundle on a three-manifold $M$.
The \emph{Chern-Simons $3$-form} of a connection $\omega\in \mcA^1(P,\mfg)$ on $P$ is the closed $3$-form
\[ Q(\omega):= \langle \omega\wedge d\omega +\frac{1}{3}\omega\wedge [\omega\wedge \omega] \rangle. \]
(recall that $\langle -, - \rangle:\mfg \times \mfg \ra \C$ (resp. the Lie bracket $[-,-]$) extends to $\mfg$-valued differential forms on any smooth manifold $X$: $\langle -, - \rangle:\mcA^k(X,\mfg)\times \mcA^l(X,\mfg) \ra \mcA^{k+l}(X,\mfg\otimes \mfg) \ra \mcA^{k+l}(X,\C)$ (resp. $[ -, - ]:\mcA^k(X,\mfg)\times \mcA^l(X,\mfg)\ra \mcA^{k+l}(X,\mfg\otimes \mfg) \ra \mcA^{k+l}(X,\mfg)$); $[-\wedge -]$ is another notation for $[-,-]$.)

(2) For an oriented Riemannian three-manifold $M$, the \emph{metric Chern-Simons $3$-form} $Q_M$ is the Chern-Simons $3$-form of the Levi-Civita connection $A_{\mathrm{L.C.}} \in \mcA^1(F(M),\mathfrak{so}_3)$ on the oriented orthonormal frame bundle $F(M)$
\[ Q_M:=Q(\omega_{\mathrm{L.C.}})= -\frac{1}{8\pi^2} \Tr( \omega_{\mathrm{L.C.}}\wedge d\omega_{\mathrm{L.C.}} +\frac{2}{3}\omega_{\mathrm{L.C.}}\wedge \omega_{\mathrm{L.C.}}\wedge \wedge \omega_{\mathrm{L.C.}})\]
for the choice $ \langle -, - \rangle\:=-\frac{1}{8\pi^2} \Tr$, where $\Tr:\so(3)\times \so(3)\ra \R$ is the restriction to $\so(3)$ of the trace function on $M_3(\R)$ (we used the identity: $\omega\wedge \omega=2[\omega\wedge \omega]$).
\end{defn}

The choice of $ \langle -, - \rangle\:=-\frac{1}{8\pi^2} \Tr$ in (2) will be explained in Remark \ref{rem:normalization}.

Now, we specialize to metric Chern-Simons invariant of cusped hyperbolic three-manifolds. 
Let $M$ be an oriented, hyperbolic three-manifold of finite volume with cusps and $F(M)\ra M$ its oriented orthonormal frame bundle (so, a principal $\SO(3)$-bundle). 

We have to choose the frame field on $M$ over which the Chern-Simons $3$-form $Q_M$ is integrated, cf.  (\ref{eq:metric_CS-inv_closed}). One wishes to use a frame field which is ``linear on the cusps''. 

%%%%%%%%%%%%%%%%%%%%
\begin{defn} \cite[$\S$4.1]{Meyerhoff86} \label{defn:frame_linear_on_cusps}
Suppose that a cusp of $M$ correspond to the point at infinity of the Poincare upper-half space model $\mbH^3$ with the coordinate $(x,y,t)\ (t>0)$. A \emph{linear frame field on a cusp} is given by 
\[ e_1=t\pd t,\ e_2=t[\cos(r) \pd x+\sin(r)\pd y],\ e_e=t[-\sin(r) \pd x+\cos(r)\pd y] \]
where $r$ is a constant.
\end{defn}
(See also \cite[p.486]{Yoshida85}.)

But, there does not always exist such a frame field: one might want to start with a frame field defined linearly in a neighborhood of cusps and attempt to extend it to the entire manifold. But this is not always possible: there exists an obstruction to such extension, a cohomology class in $H^1(M,\partial M;\pi_1(G))$, cf. \cite[$\S$34.2]{Steenrod99}. In trying to extend a given local frame field, one typically ends up meeting singularities along some link $L$.  Meyerhoff’s key idea for defining the metric Chern-Simons invariant in the cusped case is to require for the section $s:M-L\ra F(M)$ thus obtained to have only special kind of singularities at $L$, and to compensate the integral  $\int_{s(M-L)}Q_M$ by the torsion of $L$.

%%%%%%%%%%%%%%%%%%%%
\begin{defn}  \cite[$\S$3.1]{Meyerhoff86}, \cite[Def.1.3]{Yoshida85}
A \emph{special singular frame field} on $M$ is an orthonormal frame field on $M-L$ for some link $L$ which has the following behaviour at each component $K$ in $L$:

i) (in the limit) $e_1$ is tangent to $K$.

ii) $e_2$ and $e_3$ determine a singularity transverse to $K$ of index $1$ or $-1$.
\end{defn}

%%%%%%%%%%%%%%%%%%%%
\begin{defn} \label{Defn:torsion}
Let $M$ be an oriented Riemannian three-manifold. Let $(\theta_{ij})\in \mcA^1(F(M),\mathfrak{so}_3)$ be the Levi-Civita connection $1$-form defined by an orthonormal frame field (i.e. $(\theta_{ij})$ is a matrix of $1$-forms on $F(M)$ such that $\theta_{ji}=-\theta_{ij}$ and $d\theta^i=-\theta_{ij}\wedge \theta^j$ for the dual coframe field $(\theta^i)$). Then, the \emph{torsion} $\tau(L)$ of a link $L$ in $M$ is defined by
\[ \tau(L):=-\int_{s(L)}\theta_{23} \mod 2\pi \Z \]
where $s:L\ra F(M)$ is the section given by some orthonormal framing $\alpha:=(e_1,e_2,e_3)$ defined on a subset of $M$ containing $L$ such that $e_1(y)$ is tangent to $L$ at each $y\in L$ and the orientation of $L$ is given by $e_1$. 
\end{defn}
It is known \cite[Lem.1.1]{Yoshida85} that this integral is well-defined modulo $2\pi\Z$, independent of the choice of the orthonormal framing $\alpha$. 

Now, we present the Meyerhoff's definition of metric Chern-Simons invariant for complete hyperbolic three-manifolds of finite volume, either closed or not.%

%%%%%%%%%%%%%%%%%%%%
\begin{thm} \label{thm:CS_cusped}
(1) For every hyperbolic manifold $M$, closed or not, there exist a (possibly empty) link $L$ in $M$ and an oriented orthonormal framing $s$ on $M-L$ that has special singularity at $L$ and is linear on cusps. 
When $M$ is closed, one can take $L=\emptyset$, i.e. there exists a framing $s$ defined on the entire $M$.

(2) For any link $L$ and a framing $s$ on $M-L$ as in (1), the number (mod $\Z$)
\begin{equation} \label{eq:metric_CS-inv_cusped}
cs(M):=\int_{s(M-L)} Q_M - \frac{1}{2\pi}\sum_{K\in L} \tau(K) \quad (\mathrm{mod }\Z)
\end{equation}
is independent of the choice of the datum $(L,s)$, and if $M$ is closed and $L=\emptyset$, this is even well-defined mod $2\Z$;
by definition, this is the metric (or $\SO(3)$) Chern-Simons invariant of the hyperbolic three manifold $M$.
\end{thm}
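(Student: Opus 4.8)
The plan is to follow Meyerhoff \cite{Meyerhoff86}, streamlined as in Yoshida \cite{Yoshida85}, treating the two assertions in turn.

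For \textbf{(1)}: when $M$ is closed, invoke the classical fact that every closed oriented $3$-manifold is parallelizable (Stiefel), giving a global section $s$ of $F(M)$, so $L=\varnothing$ works. For the cusped case, fix on a neighbourhood of each cusp a linear frame field as in Definition \ref{defn:frame_linear_on_cusps} and try to extend it inward over a compact core of $M$ by obstruction theory for sections of the fibre bundle $F(M)\to M$, cf.\ \cite[$\S$34.2]{Steenrod99}. Since $\SO(3)$ is connected and $\pi_2(\SO(3))=0$, the section extends over everything except a set Poincar\'{e}--Lefschetz dual to a single obstruction class with coefficients in $\pi_1(\SO(3))=\Z/2$; this set can be arranged to be an embedded link $L$, and choosing the partial section in general position near $L$ forces the frame field to have an index $\pm1$ transverse singularity along each component of $L$, i.e.\ to be special. (When the obstruction class vanishes---always the case if $M$ is closed---one may take $L=\varnothing$.)

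For \textbf{(2)}: given two admissible data $(L_0,s_0)$ and $(L_1,s_1)$ on $M$, run a cobordism argument on $W=M\times[0,1]$. First produce a properly embedded surface $\Sigma\subset W$ with $\partial\Sigma=(L_1\times\{1\})\sqcup(L_0\times\{0\})$, together with a frame field $S$ on $W\setminus\Sigma$ restricting to $s_i$ on the two ends, linear on the cusp region, and having an index $\pm1$ transverse singularity along $\Sigma$; such $(\Sigma,S)$ exists by the obstruction theory of (1) applied to $W$ rel its two ends (the obstruction classes of $s_0$ and $s_1$ agree, so $[L_0]=[L_1]$ in $H_1(\,\cdot\,;\Z/2)$). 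The key point is that the Levi--Civita connection on $W$ is pulled back from the $3$-manifold $M$, so its Chern--Simons $4$-form $dQ_M=\langle\Omega\wedge\Omega\rangle$ is pulled back from a $3$-manifold, hence vanishes; therefore $S^{\ast}Q_M$ is a closed $3$-form on $W\setminus\Sigma$. Moreover $Q_M$ vanishes identically on any linear frame field (a short direct computation with Definition \ref{defn:frame_linear_on_cusps}), so the contribution of the cusps is zero both to $\int_{s(M-L)}Q_M$ and to the Stokes argument below. Applying Stokes' theorem to $W$ with a thin tubular neighbourhood $T(\Sigma)$ of $\Sigma$ removed yields
\[
\int_{s_1(M-L_1)}Q_M-\int_{s_0(M-L_0)}Q_M \;=\; \int_{\partial T(\Sigma)}S^{\ast}Q_M ,
\]
and the right side splits into local contributions of the components $K_i\subset L_i$. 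A local-model analysis of a special singular frame field near a knot shows that each such contribution equals $\pm\tfrac{1}{2\pi}\tau(K_i)$ modulo $\Z$ (with sign $+$ on the $t=1$ end and $-$ on the $t=0$ end), using that $\tau$ is well-defined mod $2\pi\Z$ by \cite[Lem.1.1]{Yoshida85}. Rearranging gives the asserted equality mod $\Z$. In the closed case ($L_0=L_1=\varnothing$), $s_0$ and $s_1$ differ by a gauge transformation $g\colon M\to\SO(3)$ and the Chern--Simons transformation law reduces the difference to the Wess--Zumino functional of $g$, which is $2$-integral with the normalization of Definition \ref{defn:CSform} (cf.\ Remark \ref{rem:normalization}); hence $cs(M)$ is then well-defined mod $2\Z$.

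I expect the main obstacle to be the local computation at the singular link: proving that an index $\pm1$ transverse singularity of the frame field along a knot $K$ contributes \emph{exactly} $\pm\tfrac{1}{2\pi}\tau(K)$ modulo $\Z$ to the Chern--Simons integral over a thin tube around $K$. This requires an explicit normal form for such frame fields, careful tracking of the Levi--Civita connection form $(\theta_{ij})$ and of $Q_M$ in that model, and fixing the normalization constants so that the residual defect is genuinely integral (and $2$-integral in the closed case). By comparison, the obstruction-theoretic constructions in (1) and in the cobordism step, and the verification that $Q_M$ vanishes on linear frame fields, are routine.
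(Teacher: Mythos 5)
The paper's ``proof'' of this theorem is essentially a pointer to the literature: it defers (1) to \cite[\S4.2]{Meyerhoff86} (with the stronger \cite[Prop.3.1]{Yoshida85}), and (2) to \cite[\S3.2--3.4]{Meyerhoff86} and \cite[Cor.1.1]{Yoshida85}, adding only a remark about the mod-$\Z$ versus mod-$2\Z$ discrepancy. Your proposal reconstructs the content of those citations and correctly isolates the one genuinely technical step (the local computation at the singular link that converts a tube contribution into $\pm\tfrac{1}{2\pi}\tau(K)$ modulo the relevant lattice), so the two are the same argument, with yours spelled out where the paper writes a reference.

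A few supporting observations. Your parenthetical claim that $s^{\ast}Q_M$ vanishes on a linear cusp frame field is in fact correct: with the frame of Definition~\ref{defn:frame_linear_on_cusps} the pulled-back Levi--Civita $1$-forms are $\theta_{12}=\theta^2$, $\theta_{13}=\theta^3$, $\theta_{23}=0$, and both $\mathrm{Tr}(\omega\wedge d\omega)$ and $\mathrm{Tr}(\omega^3)$ vanish by repetition of indices; this is also what makes $\int_{s(M-L)}Q_M$ effectively a compactly supported integral. Your cobordism-over-$M\times[0,1]$ phrasing of the invariance step is closest to Yoshida's version; the paper's one-line description (homotope $L_0$ to $L_1$ and track the simultaneous change in the two terms of \eqref{eq:metric_CS-inv_cusped}) is the same thing with the surface $\Sigma$ left implicit, and your observation that the obstruction classes of $s_0$, $s_1$ agree is exactly what guarantees the existence of $\Sigma$. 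The point you flag as the main obstacle is the right one, and it is also where the normalization matters: the residual defect must be shown to be $\Z$-integral (and $2\Z$-integral in the closed case), which rests on the choice $\langle\cdot,\cdot\rangle=P_1$ on $\so(3)$ and on $\tfrac{1}{2}P_1$, not $P_1$, generating $H^3(\SO(3),\Z)$ (Remark~\ref{rem:normalization}~(2)); for the cusped case the weaker mod-$\Z$ conclusion traces to $\tau$ being defined only mod $2\pi\Z$ (\cite[Lem.1.1]{Yoshida85}), exactly as the paper's proof notes.
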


%%%%%%%%%%%%%%%%%%%%
\begin{rem} \label{rem:different_conventions}
At this point, it seems worthwhile to clarify different conventions, normalizations, and notations appearing in literatures, centering around Chern and Simons \cite{ChernSimons74}, Meyerhoff \cite{Meyerhoff86}, Yoshida \cite{Yoshida85}, Neumann \cite{Neumann92}, and Kirk and Klassen \cite{KirkKlassen93}, and to compare them with our own choice.%%
\footnote{Often, such differences among different people, especially in the definition/normalization of various CS-invariants, cause confusions and incorrect details in statements.\label{ftn:normalization_CS-inv}} 

(i) Yoshida \cite[Def.1.1]{Yoshida85} and Meyerhoff \cite[$\S$3.2]{Meyerhoff86} use opposite signs for the definition of torsion of curves. We chose Yoshida's sign, thus have minus sign in Definition \ref{Defn:torsion}) and accordingly in front of the torsion term in (\ref{eq:metric_CS-inv_cusped}).

(ii) For the metric Chern-Simons $3$-form, which we denoted by $Q_M$, Yoshida uses the letter $Q$  \cite[l.-6 on p.480]{Yoshida85}, while Meyerhoff uses the same letter $Q$ to mean $4\pi^2 Q_M$ \cite[$\S$2.2]{Meyerhoff86}.

(iii) The metric Chern-Simons invariants defined by Yoshida \cite[l.13 on p.491]{Yoshida85} and Meyerhoff \cite[Theorem in $\S$1]{Meyerhoff86}, with the same notation $CS(M)$ (capital letters, oblique style), are the same, and is equal to \emph{half} our metric CS-invariant $cs(M)$ (\ref{eq:metric_CS-inv_cusped}); so, for cusped hyperbolic three-manifolds, their CS-invariants are well-defined modulo $\frac{1}{2}\Z$.
 In the case of closed three-manifolds, their definitions are also the same as the original definition of Chern and Simons \cite[(6.2)]{ChernSimons74}, and thus are well-defined even modulo $\Z$ (see Remark \ref{rem:normalization}, (2)). 

(iv) Neumann (\cite{Neumann92},  \cite{Neumann98}, \cite{NeumannYang99}) also adopts the original definition of metric Chern-Simon invariant. But, in these works he almost exclusively works with a normalized metric Chern-Simon invariant which he denotes by $\CS(M)$ (capital letters, upright style). We will also work with a normalized metric Chern-Simon invariant with the same notation $\CS(M)$: of course, they will have the same meanings (see Footnote \ref{ftn:CS(M)}). 

(v) Kirk and Klassen's definition of metric Chern-Simons invariant is the same as ours. But, for $\PSL_2(\C)$-Chern-Simons invariant, they use the first Pontryagin polynomial $P_1$, while our definition (to be given later) uses the second Chern polynomial $C_2$ (cf.  \cite[p.555, line17]{KirkKlassen93}, Remark \ref{rem:normalization}, (3)). \end{rem}

\begin{proof}
(1) First, the case when $M$ is closed follows from the well-known fact (\cite[VII.Thm.1]{Kirby89}) that every orientable three-manifold is parallellizable and the Gram-Schmidt orthonormalization. In the general case, this is proved in \cite[$\S$4.2]{Meyerhoff86}. A stronger statement is established in \cite[Prop.3.1]{Yoshida85}. 

(2) This is proved in Section 3.2 to 3.4 of \cite{Meyerhoff86}, and a different proof is given in \cite[Cor.1.1]{Yoshida85}. 
The mod-$\Z$ (rather than mod-$2\Z$) independence in the presence of the link $L$ comes from the fact that for two different links $L_0$, $L_1$ which are moved by a homotopy, the change in their Chern-Simons integrals $\int_{s(M-L_i)} Q_M$ is given by $\frac{1}{2\pi}(\tau(L_0)-\tau(L_1))$, while $\tau(L_i)$ is well-defined modulo $2\pi\Z$ (rather than $4\pi\Z$), see the discussion of $\S$3.2 of \cite{Meyerhoff86}. When $M$ is closed and $L=\emptyset$, the Chern-Simons integral is well-defined even modulo $2\Z$, independent of the choice of $s$ (see Remark \ref{rem:normalization}, (2)).
\end{proof}

%%%%%%%%%%%%%%%%%%%%
\begin{rem} \label{rem:normalization}
(1) The choice of a specific bilinear form $\langle -, - \rangle$ on $\mfg$ in Definition \ref{defn:CSform}, (1) is determined by the requirement that for \emph{closed} three-manifolds $M$, the Chern-Simons integral $\int_{s(M)}Q(\omega)$ is a \emph{mod $\Z$}-well-defined invariant $cs(P,A)$ of a principal $G$-bundle $P$ over $M$ endowed with a connection $\omega$ (i.e. the integral is independent of the choice of a section $s:M\ra P$). This condition is reduced to the one that 

($\dagger$) \emph{for the Maurer-Cartan form $\omega_{\MC}$ on $G$, the closed $3$-form $-\frac{1}{6}\langle \omega_{\MC}\wedge [\omega_{\MC}\wedge \omega_{\MC}]\rangle$ represents an integral class in $H^3(G,\R)$} (``Hypothesis 2.5.'' in \cite{Freed95}). 

This can be seen, for instance, from the transformation formula of $Q(\omega)$ under gauge transformation (\cite[Prop.2.3]{Freed95}). We follow the original discussion of Chern and Simons \cite{ChernSimons74} (See also \cite[$\S$4.2-4.3]{DijkgraafWitten90}), to find the bilinear forms $\langle -, - \rangle$ satisfying ($\dagger$).

For a real Lie group $G$ and $k\in\Z_{\geq0}$, let $I^k(G)=S^k(\mfg^{\vee})^G$, the $\R$-vector space of $\R$-valued degree-$k$ polynomial functions on $\mfg$ which are invariant under the adjoint action of $G$, and set $I^{\ast}(G):=\bigoplus_{k=0}^{\infty}I^k(G)$; for a complex Lie group $G$, we use the same notation $I^k(G)$ for the $\C$-space of similarly defined $\C$-valued functions on $\mfg$. We recall the (universal) Chern-Weil homomorphism $W:I^k(G) \ra H^{2k}(BG,\mbF)$ which sends $P\in I^k(G)$ to the cohomology class $[P(F)]$ of the closed $2k$-form $P(F)$, where $F$ is the curvature of an \emph{arbitrary} connection on the universal principal $G$-bundle $EG$ over the classifying space $BG$, and $\mbF=\R$ or $\C$ according as $G$ is real or complex; it is a fundamental fact that this map is well-defined, i.e. the cohomology class $[P(F)]$ is independent of the choice of the connection, and further is an algebra isomorphism either if $G$ is a connected compact real Lie group or if $G$ is a connected reductive complex Lie group \cite[5.23,5.30]{BurgosGil02}.  
A key ingredient in the work of Chern and Simons is the \emph{universal suspension map} \cite[p.55]{ChernSimons74} (cf. \cite[$\S$9]{Borel55}):
\begin{align*} I^k(G)\cong H^{2k}(BG,\mbF) \ra & \ H^{2k-1}(G,\mbF) \ ;\\ 
[P] \qquad \mapsto &\  [TP(\omega_{\MC}):=\frac{(-1)^{k-1}}{2^{k-1}{2k-1 \choose k} }P(\omega_{\MC}\wedge[\omega_{\MC},\omega_{\MC}]^{k-1})]
\end{align*}
($\mbF=\R$ or $\C$); the cohomology classes in the image are called \emph{universally transgressive}. This suspension map preserves the integral subspaces: 
\[ I^k_0(G)\cong H^{2k}(BG,\Z) \ra H^{2k-1}(G,\Z), \] 
where $I^k_0(G):=\{ P\in I^k(G)\ |\ W(P)\in H^{2k}(BG,\Z)\}$ (\cite[Prop.3.15]{ChernSimons74}).

Now, the $3$-form in the condition ($\dagger$) is $TP(\omega_{\MC})$ for $P=\langle -, - \rangle$ ($k=2$): for any $1$-form $\theta$,
\[ -\frac{1}{6}\langle \theta\wedge [\theta\wedge \theta]\rangle=TP(\theta); \] 
also, when the curvature $F=d\theta+\frac{1}{2}[\theta\wedge \theta]$ is zero, we have $TP(\theta)=\langle \theta\wedge d\theta +\frac{1}{3}\theta\wedge [\theta\wedge \theta] \rangle$, i.e. $TP(\theta)$ is the Chern-Simons form $Q(\theta)$.

Therefore, in order for ($\dagger$) to hold, it suffices that $\langle -, - \rangle\in I^2(G)$ lies in $I^2_0(G)$.
For example, for $G=\SO(3)$, it is known that $I^2_0(\SO(3))=H^3(B\SO(3),\Z)$ is generated by the first Pontryagin polynomial $P_1(X)=-\frac{1}{8\pi^2}\tr(X^2)$ (here, the product $X^2$ and $\tr(-)$ is defined via the embedding $X\in \so(3)\subset M_{3\times3}(\R)$) and the Stiefel-Whitney class (which is a $2$-torsion) \cite[1.5]{Brown82}.
For $G=\SU(2)$, $I^2_0(\SU(2))=H^3(B\SU(2),\Z)$ is generated by the second Chern polynomial $C_2(X)=\frac{1}{8\pi^2}\tr(X^2)$ (here, the product $X^2$ and $\tr(-)$ is defined via $\su(2)\hra M_{2\times2}(\C)$) \cite[Cor.5.7]{MimuraToda91}.

(2) The integral universal suspension map $H^{2k}(BG,\Z) \ra H^{2k-1}(G,\Z)$ is not surjective for general $G$, especially for $\SO(3)$. In this case, for $\langle -, - \rangle$, Chern and Simons take $\frac{1}{2}P_1$ which lies in $H^{3}(G,\Z)$, \cite[5.13, $\S$6]{ChernSimons74}. So, their original definition \cite[(6.2)]{ChernSimons74} of CS-invariant is half our metric CS-invariant (\ref{eq:metric_CS-inv_closed}), which is still well-defined mod $\Z$ for closed manifolds.

(3) There exists a double covering map $\sigma:\SU(2)\ra \SO(3)$ with kernel the center $Z(\SU(2))(\cong \Z/2\Z)$. Since $\SL_2(\C)=\SU(2)_{\C}$ (complexification of $\SU(2)$) and $Z(\SL_2(\C))=Z(\SU(2))$, we also have $\SO(3)_{\C}=\PSL_2(\C)$, and there are commutative diagrams
\begin{equation} \label{eq:I(G)}
\xymatrix{ \mathfrak{sl}_2(\C) \ar@{=}[r] & \mathfrak{sl}_2(\C) \\
\su(2) \ar[r]^{\simeq} \ar@{^(->}[u] & \so(3) \ , \ar@{^(->}[u] } \quad \xymatrix{ I^{\ast}(\SL_2(\C)) \ar@{=}[r] &I^{\ast}(\PSL_2(\C)) \\
I^{\ast}(\SU(2))\ar@{^(->}[u] & I^{\ast}(\SO(3))  \ar[l]_{\sigma^{\ast}}  \ar@{^(->}[u] }
\end{equation}

It is known that $\sigma^{\ast}: I^2_0(\SO(3))=H^4(B\SO(3),\Z)\ra I^2_0(\SU(2))=H^4(B\SU(2),\Z)$ sends the first Pontryagin polynomial $P_1=-\frac{1}{8\pi^2}\tr(X^2)_{\so(3)}$ to the minus four times the second Chern polynomial $-4C_2=-\frac{1}{2\pi^2}\tr(X^2)_{\su(2)}$. Indeed, one has $4\tr(XY)_{\Lsl_{2,\C}}|_{\so(3)}=\tr(XY)_{\so(3)}$ (the trace bilinear form $\tr(XY)_{\su(2)}$ on $\su(2)$ is the restriction of the trace form $\tr(XY)_{\Lsl_{2,\C}}$ on $\Lsl_{2,\C}$, which, when restricted to $\so(3)$, equals one fourth of the trace form $\tr(XY)_{\so(3)}$ on $\so(3)\subset \mathfrak{sl}_3$). This is also a statement equating the Killing forms on $\so(3)$ and $\su(2)$ via the isomorphism $\sigma^{\ast}$. 
\end{rem}

%%%%%%%%%%%%%%%%%%%%%%%%%%%%%%%%%%%%%%%%
\subsection{Complex volume of hyperbolic $3$-manifolds as $\PSL_2(\C)$ Chern-Simons invariant}

It has been proved to be quite useful to treat two most important invariants of complete hyperbolic three-manifold, Riemannian volume and metric Chern-Simons invariant, together as the real part and the imaginary part of a single complex-number (modulo $\sqrt{-1}\Z$). In this section, we explain how the complex volume $\Vol_{\C}(M)$ of \emph{any} (closed or not) complete hyperbolic three-manifold $M$ of finite volume can be viewed as the Chern-Simons invariant of some connection on a (triviallizable) $\PSL_2(\C)$-principal bundle over $M$. We discuss the case that $M$ is closed first, following Yoshida \cite{Yoshida85}, and then treat the cusped case, where the definition of $\PSL_2(\C)$ Chern-Simons invariant is not even clear and is due to Kirk and Klassen \cite{KirkKlassen93}.

%%%%%%%%%%%%%%%%%%%%
\begin{defn}
The \emph{complex volume} of a complete hyperbolic three-manifold is defined to be
\[ \Vol_{\C}(M)= \Vol(M)+i\CS(M) \in\ \C/2\pi^2i\Z \text{ or } \C/\pi^2i\Z,\]
where $\CS(M):=\pi^2cs(M)$.%%
\footnote{This normalized metric CS-invariant $\CS(M)$ is the one that appears with same notation in  \cite[Introduction]{Neumann92}. Indeed, Neumann's $\CS(M)$ is $2\pi^2$ times his metric CS-invariant which is half our metric CS-invariant $cs(M)$ (Remark \ref{rem:different_conventions}, (iv)).\label{ftn:CS(M)}}
This is valued in $\C/2\pi^2i\Z$ if $\partial M=\emptyset$, and in $\C/\pi^2i\Z$ if $\partial M\neq\emptyset$.
\end{defn}

We begin by recalling the general fact that for every smooth manifold $M$, with any representation $\rho:\pi_1(M)\ra G$ into a Lie group, there is associated a principal bundle $P(\rho):=\widetilde{M}\times_{\pi_1(M)} G$ (in this work, principal bundles are endowed with right action by structure group). This principal bundle is endowed with a flat connection $\omega_{\rho}$ which is induced from the trivial connection on $\widetilde{M}\times G$.

For a complete hyperbolic three-manifold $M$, let $\rho_0:\pi_1(M) \ra \PSL_2(\C)$ be the \emph{geometric representation}, the holonomy representation corresponding to the hyperbolic metric of $M$ and $P(\rho_0)$ be the associated principal bundle with flat connection $\omega_{\rho_0}$. Then, there exists a map $F(M) \ra P(\rho_0)$ equivariant with respect to $\SO(3)\hra \PSL_2(\C)$, constructed as follows. Fixing a frame over a point of the hyperbolic space $\mbH$, we identify the frame bundle $F(\mbH)\ra \mbH$ with $\PSL_2(\C)\ra \SO(3)\backslash\PSL_2(\C)$.
This gives a $\PSL_2(\C)$-equivariant map
\begin{equation} \label{eq:p_times_id}
p\times \mathrm{id}: F(\mbH) \ra \mbH \times \PSL_2(\C),
\end{equation}
where $p: F(\mbH) \ra \mbH$ is the bundle projection map. 
Using the existence of an isomorphism $\widetilde{M}\cong \mbH$ that is equivariant with respect to $\rho_0:\pi_1(M) \ra \PSL_2(\C)$ (by completeness of the hyperbolic metric), by taking quotients of (\ref{eq:p_times_id}) by $\pi_1(M)$ we obtain a map
\[ q: F(M)= F(\widetilde{M})/\pi_1(M) \ra P(\rho_0)=\widetilde{M}\times_{\pi_1(M)} \PSL_2(\C) \]
covering the identity map of $M$. So, any section $s:M\ra F(M)$ gives rise to a section $\hat{s}=q\circ s:M\ra P(\rho_0)$. In particular, $P(\rho_0)$ is trivial since $F(M)$ is so, and it follows that the (trivial) principal bundle $P(\rho_0)$ is the extension of the (trivial) bundle $F(M)$ via the closed embedding of structure groups $\SO(3)\hra \PSL_2(\C)$.

Then, the $\PSL_2(\C)$-Chern-Simons invariant of closed complete hyperbolic three-manifold $M$ is defined to be
\begin{equation} \label{eq:PSL_2-CS-inv:closed}
cs_{\PSL_2}(M):=cs(\rho_0,\hat{s}):=\int_{\hat{s}(M)} Q(\omega_{\rho_0}) \in \C/\Z.
\end{equation}
Here, we emphasize that for the Ad-invariant bilinear form $\langle-,-\rangle$ in the definition of $Q(\omega_{\rho_0})$, we use the second Chern polynomial $C_2(X)=\frac{1}{8\pi^2}\tr(X^2)\in I^2(\Lsl_{2,\C})$ (while for the metric Chern-Simons invariant we used the first Pontryagin polynomial $P_1(X)=-\frac{1}{8\pi^2}\tr(X^2)\in I^2(\so(3))$); we caution readers that some people, e.g. Kirk and Klassen \cite[p.555, line17]{KirkKlassen90}, use the first Pontryagin polynomial $P_1$ to define $\PSL_2(\C)$-Chern-Simons invariant.
%%

%%%%%%%%%%%%%%%%%%%%
\begin{lem} \label{lem:Yoshida_Lem.31}
We have
\[ -4\sqrt{-1}q^{\ast}Q(\omega_{\rho_0})=\frac{1}{\pi^2}C_{\MoTe}^{\ast}\mathrm{Vol}_{M}+\sqrt{-1}Q_{M}+d \gamma \]	
for some $2$-form $\gamma$, where $C_{\MoTe}:F(M)\ra M$ is the frame bundle map.
\end{lem}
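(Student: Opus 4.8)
The plan is to compare the two Chern--Simons $3$-forms on $F(M)$ that arise from the two natural connections in play: the Levi-Civita connection $\omega_{\mathrm{L.C.}}\in\mcA^1(F(M),\so_3)$ and the pullback $q^{\ast}\omega_{\rho_0}$ of the flat connection on $P(\rho_0)$ along $q\colon F(M)\to P(\rho_0)$. Both live in $\mcA^1(F(M),\mfsl_2(\C))$ once we embed $\so_3\hookrightarrow\mfsl_2(\C)$ via the isomorphism $\so(3)\xrightarrow{\sim}\su(2)\subset\mfsl_2(\C)$; the first is the $\su(2)$-valued (``real'') part and the difference $\beta:=q^{\ast}\omega_{\rho_0}-\omega_{\mathrm{L.C.}}$ should be recognized, via the Cartan decomposition $\mfsl_2(\C)=\su(2)\oplus i\,\su(2)$, as $\sqrt{-1}$ times the soldering/coframe data on $F(M)$ — in fact $i\,\theta$ where $\theta\in\mcA^1(F(M),\su(2))$ encodes the tautological $\R^3$-valued coframe $(\theta^1,\theta^2,\theta^3)$. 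The content of the identity is then purely a Lie-algebra/differential-form manipulation of $Q(\omega_{\mathrm{L.C.}}+\beta)$.

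**First step:** set up the normalizations carefully. Fix $\langle-,-\rangle = C_2 = \frac{1}{8\pi^2}\tr(X^2)$ on $\mfsl_2(\C)$, so that $Q(\omega_{\rho_0})$ is the $C_2$-Chern--Simons form as in \eqref{eq:PSL_2-CS-inv:closed}. Using Remark \ref{rem:normalization}(3), namely $\sigma^{\ast}P_1 = -4C_2$ and $4\,\tr(XY)_{\mfsl_{2,\C}}|_{\so(3)}=\tr(XY)_{\so(3)}$, record that the metric Chern--Simons form $Q_M = Q(\omega_{\mathrm{L.C.}})$ computed with $P_1=-\frac{1}{8\pi^2}\tr(X^2)_{\so(3)}$ equals $-4$ times the $C_2$-Chern--Simons form of the same connection $\omega_{\mathrm{L.C.}}$ viewed in $\mfsl_2(\C)$. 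This is where the factors $-4$ and $\pi^{-2}$ in the claimed formula will come from, so it must be pinned down before anything else.

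**Second step (the core computation):** expand $Q(q^{\ast}\omega_{\rho_0}) = Q(\omega_{\mathrm{L.C.}}+i\theta)$ using the standard Chern--Simons addition/variation formula
\[
Q(\omega+\eta) = Q(\omega) + \langle\,2\eta\wedge F_\omega + \eta\wedge d^\omega\eta + \tfrac13\eta\wedge[\eta\wedge\eta]\,\rangle + d\langle \eta\wedge\omega\rangle,
\]
(suitably normalized; this is \cite[Prop.2.3]{Freed95}, which the paper already invokes) with $\omega=\omega_{\mathrm{L.C.}}$, $\eta = i\theta$. The curvature term $\langle 2i\theta\wedge F_{\omega_{\mathrm{L.C.}}}\rangle$, the cubic term $\frac{i^3}{3}\langle\theta\wedge[\theta\wedge\theta]\rangle$, and the mixed term $i^2\langle\theta\wedge d^{\omega_{\mathrm{L.C.}}}\theta\rangle$ must be matched, using the first structure equation $d\theta^i = -\theta_{ij}\wedge\theta^j$ (i.e.\ $d^{\omega_{\mathrm{L.C.}}}\theta=0$, torsion-freeness) and the constant curvature $-1$ condition $F_{\omega_{\mathrm{L.C.}}} = \tfrac12[\theta\wedge\theta]$ (so that $\langle 2i\theta\wedge F\rangle = i\langle\theta\wedge[\theta\wedge\theta]\rangle$), against the volume form and the Chern--Simons form. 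Concretely: the term sesquilinear in $\theta$ alone should assemble into (a multiple of) $\theta^1\wedge\theta^2\wedge\theta^3$, which pulls back from $M$ as $C_{\MoTe}^{\ast}\mathrm{Vol}_M$; the term with one $\omega_{\mathrm{L.C.}}$ reassembles into $Q(\omega_{\mathrm{L.C.}})$ up to the exact form $d\langle i\theta\wedge\omega_{\mathrm{L.C.}}\rangle$; and the real/imaginary bookkeeping of the powers of $i$ produces exactly the coefficients $\frac{1}{\pi^2}$ on the volume and $\sqrt{-1}$ on $Q_M$ after reinstating the $-4$ from Step 1. Multiplying through by $-4\sqrt{-1}$ and collecting all exact pieces into $d\gamma$ yields the stated identity.

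**The main obstacle** will be the third step: getting every constant exactly right. There are competing sources of factors — the $8\pi^2$ in $C_2$, the $-4$ from $\sigma^{\ast}$, the $\frac{2}{3}$ versus $\frac{1}{3}$ in the two ways of writing the cubic term (the paper's own Definition \ref{defn:CSform} uses $\frac13\langle\omega\wedge[\omega\wedge\omega]\rangle = -\frac16\langle\omega\wedge[\omega\wedge\omega]\rangle\cdot(-2)$ conventions, and $\omega\wedge\omega = 2[\omega\wedge\omega]$), the identification of $\mathrm{Vol}_M$ with a specific multiple of $\theta^1\wedge\theta^2\wedge\theta^3$ (and whether the orientation convention introduces a sign), and the powers of $\sqrt{-1}$ that distinguish the $\su(2)$ and $i\,\su(2)$ parts. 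I would handle this by working in an explicit $\mathfrak{sl}_2(\C)$-basis — writing $\omega_{\mathrm{L.C.}} = \sum_{i<j}\theta_{ij}\,E_{ij}$ and $\theta = \sum_i \theta^i F_i$ for fixed bases of $\so(3)\cong\su(2)$ — and literally computing $\tr$ of the relevant triple products once, so that all constants are forced rather than guessed; cross-checking against Yoshida \cite[Lem.3.1]{Yoshida85} and the closed-case statement $\mathrm{Vol}_{\C}(M) = i\,\mathrm{vol} - \pi^2 cs$ gives a sanity check on the final coefficients. (The exact form $\gamma$ itself is never needed explicitly — only its existence — so no effort is spent identifying it beyond noting it is the sum $\langle i\theta\wedge\omega_{\mathrm{L.C.}}\rangle$ plus whatever exact remainder the algebra produces.)
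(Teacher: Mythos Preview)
Your approach is correct and is essentially a reorganization of the same underlying computation, but the paper takes a shorter route by outsourcing the hard part. The paper first observes that the statement is local and therefore may be checked on the universal cover, where $q^{\ast}\omega_{\rho_0}$ becomes literally the Maurer--Cartan form $\theta$ on $\PSL_2(\C)\cong F(\mbH)$; it then computes $-4Q(\theta)=\frac{1}{\pi^2}\,h^{\vee}\wedge e^{\vee}\wedge f^{\vee}$ directly in the standard basis $\{h,e,f\}$ of $\mfsl_{2,\C}$, and \emph{cites} Yoshida \cite[Lem.~3.1]{Yoshida85} for the identification of $\sqrt{-1}$ times this $3$-form with $\frac{1}{\pi^2}f_{\mbH}^{\ast}\mathrm{Vol}_{\mbH}+\sqrt{-1}Q_{\mbH}+d\widetilde{\gamma}$. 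Your plan instead keeps everything on $F(M)$, splits $q^{\ast}\omega_{\rho_0}=\omega_{\mathrm{L.C.}}+i\theta$ via the Cartan decomposition $\mfsl_2(\C)=\su(2)\oplus i\,\su(2)$, and expands $Q(\omega_{\mathrm{L.C.}}+i\theta)$ by the Chern--Simons addition formula together with the structure equations (torsion-freeness $d^{\omega_{\mathrm{L.C.}}}\theta=0$ and constant curvature $F_{\omega_{\mathrm{L.C.}}}=\tfrac12[\theta\wedge\theta]$). This is exactly what Yoshida's Lemma~3.1 proves, so you are in effect reproving that lemma rather than quoting it; the gain is self-containment, the cost is the bookkeeping you already anticipate in Step~3. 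Either way the content is the same: decompose the Maurer--Cartan form along $\mfk\oplus\mfp$ and read off the volume and metric CS pieces.
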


\begin{proof} (cf. \cite[p.554]{KirkKlassen90}) 
As the first claim is a local statement, it suffices to check it on the covering space (\ref{eq:p_times_id}). 
Let $\mathrm{pr}_2:\mbH \times \PSL_2(\C) \ra \PSL_2(\C)$ be the projection and $\theta$ the Maurer-Cartan form on $\PSL_2(\C)$.
Since $\omega_{\rho_0}$ is locally given by $\mathrm{pr}_2^{\ast}\theta$,  $q^{\ast}Q(\omega_{\rho_0})$ descends from the Chern-Simons $3$-form $Q(\theta)=-\frac{1}{6}\langle \theta\wedge [\theta\wedge \theta]\rangle$ on $\PSL_2(\C)$ (the curvature of $\theta$ is zero), where $\langle-,-\rangle$ is the second Chern polynomial $C_2(X)=\frac{1}{8\pi^2}\tr(X^2)$ on $\Lsl_{2,\C}$. Hence,  the claim reduces to the equality of two $\C$-valued $3$-forms on $\PSL_2(\C)$:
\begin{equation} \label{eq:Yoshida_Lem.3.1} 
-4\sqrt{-1} Q(\theta)=\frac{1}{\pi^2} f_{\mbH}^{\ast}\mathrm{Vol}_{\mbH}+\sqrt{-1}Q_{\mbH}+d \widetilde{\gamma}
\end{equation}
for some $2$-form $\widetilde{\gamma}$ on $\PSL_2(\C)$. This is Lemma 3.1 of \cite{Yoshida85}. Indeed, choosing a basis  
\[ h=\left(\begin{array}{cc} 1 & 0 \\ 0 & -1 \end{array}\right),\ e=\left(\begin{array}{cc} 0 & 1 \\ 0 & 0 \end{array}\right),\ f=\left(\begin{array}{cc} 0 & 0 \\ 1 & 0 \end{array}\right) \]
of $\mfg=\mathfrak{sl}_{2,\C}$ with dual basis $\{h^{\vee}, e^{\vee}, f^{\vee}\}$, and using that 
\[ [h,e]=2e,\ [h,f]=-2f,\ [e,f]=h, \]
and $\theta=h\otimes h^{\vee}+e\otimes e^{\vee}+f\otimes f^{\vee}$ (regarding elements of $\Lsl_{2,\C}^{\vee}$ as left-invariant differential forms, as usual), we see that $-4Q(\theta)=\frac{2}{3}\langle \theta\wedge [\theta\wedge \theta]\rangle$ equals $\frac{1}{\pi^2}h^{\vee}\wedge e^{\vee}\wedge f^{\vee}$. Now, the Yoshida's lemma in question says that the $\sqrt{-1}$-multiple of the latter $3$-form (which he denoted by $C$) equals the right-hand side of (\ref{eq:Yoshida_Lem.3.1}) (for some $\widetilde{\gamma}$). 
\end{proof}

Consequently, the $\PSL_2(\C)$-Chern-Simons invariant of any closed complete hyperbolic three-manifold $M$ equals the complex volume of $M$, up to a constant:

%%%%%%%%%%%%%%%%%%%%
\begin{thm} \label{thm:PSL_2-CS-inv=C-vol:closed}
For any closed complete hyperbolic three-manifold $M$, we have
\[ -4\pi^2\sqrt{-1} cs_{\PSL_2}(M)=\Vol_{\C}(M). \]
%($\hat{s}=q\circ s$).
\end{thm}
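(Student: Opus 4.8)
The plan is to integrate the pointwise identity of Lemma \ref{lem:Yoshida_Lem.31} over a section $s(M)$ of the frame bundle $F(M)$ and then reinterpret each term. First I would fix, using Theorem \ref{thm:CS_cusped}(1) (with $L=\emptyset$ since $M$ is closed), an oriented orthonormal frame field $s:M\ra F(M)$, and set $\hat{s}=q\circ s:M\ra P(\rho_0)$ as in the construction preceding \eqref{eq:PSL_2-CS-inv:closed}. Pulling back the identity
\[ -4\sqrt{-1}\,q^{\ast}Q(\omega_{\rho_0})=\frac{1}{\pi^2}C_{\MoTe}^{\ast}\Vol_M+\sqrt{-1}\,Q_M+d\gamma \]
along $s$ and using $C_{\MoTe}\circ s=\mrid_M$ gives, on $M$,
\[ -4\sqrt{-1}\,\hat{s}^{\ast}Q(\omega_{\rho_0})=\frac{1}{\pi^2}\Vol_M+\sqrt{-1}\,s^{\ast}Q_M+d(s^{\ast}\gamma). \]
Now I would integrate both sides over the closed oriented $3$-manifold $M$. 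The last term vanishes by Stokes' theorem since $\partial M=\emptyset$. The middle term integrates, by definition \eqref{eq:metric_CS-inv_closed} (as $s^{\ast}Q_M$ here means the pullback of the metric Chern-Simons $3$-form along the frame field), to $cs(M)$, at least as an element of $\R/\Z$; and $\int_M \frac{1}{\pi^2}\Vol_M=\frac{1}{\pi^2}\Vol(M)$ by definition of the Riemannian volume form. The left term integrates to $-4\sqrt{-1}\,cs_{\PSL_2}(M)$ by \eqref{eq:PSL_2-CS-inv:closed}.

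Assembling these, $-4\sqrt{-1}\,cs_{\PSL_2}(M)=\frac{1}{\pi^2}\Vol(M)+\sqrt{-1}\,cs(M)$ in $\C/(\text{appropriate lattice})$; multiplying by $\pi^2$ gives $-4\pi^2\sqrt{-1}\,cs_{\PSL_2}(M)=\Vol(M)+\pi^2 \sqrt{-1}\,cs(M)=\Vol(M)+\sqrt{-1}\,\CS(M)=\Vol_{\C}(M)$, which is the asserted formula. I would also record that the identity is consistent with the stated ambient groups: $cs_{\PSL_2}(M)\in\C/\Z$ by \eqref{eq:PSL_2-CS-inv:closed}, so the left side lives in $\C/4\pi^2\sqrt{-1}\,\Z$, and indeed $\Vol_{\C}(M)\in\C/2\pi^2\sqrt{-1}\,\Z$ maps into this; here one uses that for \emph{closed} $M$ the metric CS-invariant $cs(M)$ is well-defined mod $2\Z$ (Theorem \ref{thm:CS_cusped}(2)), so $\sqrt{-1}\,\CS(M)=\sqrt{-1}\pi^2 cs(M)$ is well-defined mod $2\pi^2\sqrt{-1}\,\Z$, matching the domain of $\Vol_{\C}$.

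The main obstacle I expect is bookkeeping of the various normalizations and the modular ambiguities, rather than any geometric input: one must be careful that $Q(\omega_{\rho_0})$ is defined using the second Chern polynomial $C_2=\frac{1}{8\pi^2}\tr(X^2)$ (not the first Pontryagin polynomial, as in some references — see Remark \ref{rem:different_conventions}(v) and Remark \ref{rem:normalization}(3)), that $Q_M$ is normalized with the factor $-\frac{1}{8\pi^2}$ as in Definition \ref{defn:CSform}(2), and that $\CS(M)=\pi^2 cs(M)$ with $cs(M)$ being \emph{our} (unhalved) metric CS-invariant. The coefficient $-4\sqrt{-1}$ in Lemma \ref{lem:Yoshida_Lem.31} is exactly what reconciles these conventions, so once the pullback-and-integrate step is set up, the constants match on the nose and the only remaining point is to check that passing to the quotient groups $\C/\pi^2\sqrt{-1}\,\Z$ resp.\ $\C/2\pi^2\sqrt{-1}\,\Z$ is compatible with the independence statements in Theorem \ref{thm:CS_cusped}(2); this is where the closedness hypothesis is used essentially.
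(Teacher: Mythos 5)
Your proof is correct and is essentially the argument the paper intends: Theorem \ref{thm:PSL_2-CS-inv=C-vol:closed} is stated with "Consequently" immediately after Lemma \ref{lem:Yoshida_Lem.31}, meaning the proof is exactly what you wrote — pull back the $3$-form identity along the frame field $s$, integrate over the closed $M$, and use Stokes to drop the exact term. One small slip in your final bookkeeping: since $4\pi^2\sqrt{-1}\,\Z\subset 2\pi^2\sqrt{-1}\,\Z$, the quotient map goes $\C/4\pi^2\sqrt{-1}\,\Z\twoheadrightarrow\C/2\pi^2\sqrt{-1}\,\Z$ (not the other way round), so the equality should be read in the coarser group $\C/2\pi^2\sqrt{-1}\,\Z$ where $\Vol_{\C}(M)$ naturally lives; this does not affect the validity of the argument.
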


Even when $M$ is not closed, Kirk and Klassen \cite{KirkKlassen93} define the $\PSL_2(\C)$-Chern-Simon invariant of \emph{cusped} hyperbolic three-manifolds. Recall that $P(\rho_0)$ is trivial.
Their idea is based on the viewpoint that via a choice of a section $s$ of $P(\rho_0)$, the integral (\ref{eq:PSL_2-CS-inv:closed}) is an integral over $M$ of the $3$-form $Q(A_0)$ for an $1$-form $A_0=s^{\ast}(\omega_{\rho_0})\in \mcA^1(M,\Lsl_{2,\C})$ on $M$, and that choices of $s$ are in bijection with the bundle automorphisms of the trivial bundle $M\times\PSL_2(\C)$ (\emph{gauge group}), also with gauge-transformations of $A_0$. Then the $1$-forms corresponding to the sections $s$ that are linear on cusps will be of certain special shape near cusps, called \emph{normal form} (cf. \cite[p.228]{Meyerhoff86}). A result of Kirk and Klassen says that the Chern-Simons integral $\int_MQ(A)$ of a connection $A\in \mcA^1(M,\Lsl_{2,\C})$ is independent of the choice of $A$, as long as $A$ lies in a single gauge-equivalence class and is in normal form near cusps. Although there might not exist a section $s:M\ra P(\rho_0)$ that is linear on cusps, we can still show the existence of a flat connection $A\in \mcA^1(M,\Lsl_{2,\C})$ in normal form near cusps and having the geometric representation as its holonomy. Applying the result of Kirk and Klassen, we define the $\PSL_2(\C)$-Chern-Simons invariant of $M$ as the Chern-Simons integral $\int_MQ(A)$ of such connection $A$. This turns out to have the expected relation to the complex volume.

In the next discussion, for simplicity, let us assume that $\bdM$ has only one connected component.
Let $\partial M\times [0,1]\subset \overline{M}$ be a collar with $\partial M$ identified with $\partial M\times\{1\}$. The choice of an ordered basis $\mu,\nu$ of $H_1(\bdM,\Z)$ allows to identify $\R^2$ with the universal covering: $\R^2\ra \partial M:(x,y)\ra (e^{i x},e^{i y})$. We assume that the orientation of $\partial M$ as the boundary of $M$ agrees with that inherited from the cover $\R^2\ra \partial M$, so that $\{dx,dy,dr\}$ is an oriented basis of $1$-forms on $\partial M\times I$, where $r$ is the coordinate of $I=[0,1]$.

Suppose given a trivialized principal bundle $P= \SL_2\times G$ for $G=\SL_2(\C)$ or $\PSL_2(\C)$. With respect to the trivialization, any connection $A$ is written as $A=\alpha dx+\beta dy+\gamma dr$ in a neighborhood of $\partial M$, where $\alpha,\beta,\gamma$ are functions $\partial M\times I\ra \mathfrak{sl}_2$. 
Note that since $\pi_1(\partial M)$ is abelian, any representation $\rho:\pi_1(\partial M)\ra \SL_2(\C)$ is conjugate to one whose image lies in (the image in $\PSL_2(\C)$ of) the group of upper triangular representations, and further to a diagonal representation unless $\rho$ is (non-central) parabolic, i.e. $\Tr(\rho(\pi_1(\partial M)))=\pm2$.

%%%%%%%%%%%%%%%%%%%%
\begin{defn} \label{defn:normal_form}
A  flat connection $A$ on $M$ is in \textit{normal form near boundary} if one of the following two conditions holds:

\textsc{Case 1}. 
in a neighborhood of the boundary, 
\begin{equation} \label{eq:normal_form1}
A=\left(\begin{array}{cc} i\alpha & 0 \\ 0 & -i\alpha \end{array} \right) dx+ \left(\begin{array}{cc} i\beta & 0 \\ 0 & -i\beta \end{array} \right) dy
\end{equation}
for some $\alpha,\beta\in \C$: we write $A=A^{(1)}(\alpha,\beta)$.

\textsc{Case 2}. 
in a neighborhood of the boundary, 
\begin{equation} \label{eq:normal_form2}
A=\left(\begin{array}{cc} -\frac{iu}{2} & \frac{a}{2\pi}\exp(i(ux+vy)) \\ 0 & \frac{iu}{2} \end{array} \right) dx+ \left(\begin{array}{cc} -\frac{iv}{2} & \frac{b}{2\pi}\exp(i(ux+vy)) \\ 0 & \frac{iv}{2} \end{array} \right) dy.
\end{equation}
\end{defn}
for some $u,v\in\Z$, $a,b\in\C$: we write $A=A^{(2)}(u,v;a,b)$.

The point here is that the connection $A$ is constant near boundary (i.e. $\alpha$, $\beta$, $u$, $v$, $a$, $b$ are all constants). 

The local holonomy $\pi_1(\partial M)\ra (\mathrm{P})\SL_2(\C)$ of $A$ at the boundary is described explicitly by the shape of $A$ near the boundary as follows.

%%%%%%%%%%%%%%%%%%%%
\begin{lem} \label{lem:holonomy_of_normal_form}
Every connection in normal form near boundary is flat in a neighborhood of the boundary, and for a suitable base point near $\partial M$, the holonomy representation $\rho_A$ is given by: 
\begin{align} 
 \text{ Case }1:\qquad & \mu\mapsto \left(\begin{array}{cc} e^{\tpi \alpha} & 0 \\ 0 & e^{-\tpi \alpha} \end{array} \right),\ \nu\mapsto \left(\begin{array}{cc} e^{\tpi \beta} & 0 \\ 0 & e^{-\tpi \beta} \end{array} \right), \label{eq:bdry_holonomy1} \\
 \text{ Case }2:\qquad & \mu\mapsto (-1)^u\left(\begin{array}{cc} 1 & a \\ 0 & 1 \end{array} \right),\quad   \nu\mapsto (-1)^v\left(\begin{array}{cc} 1 & b \\ 0 & 1 \end{array} \right) \label{eq:bdry_holonomy2}
\end{align}
(in the case $G=\PSL_2(\C)$, we consider their images in $\PSL_2(\C)$).
\end{lem}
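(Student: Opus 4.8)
The plan is to verify both claims of Lemma \ref{lem:holonomy_of_normal_form} by direct computation, since the connections in \textsc{Case 1} and \textsc{Case 2} of Definition \ref{defn:normal_form} are given by explicit constant-coefficient (in $r$) formulas near $\partial M$. For flatness, I would compute the curvature $F_A = dA + \frac12[A\wedge A]$ of the restriction of $A$ to $\partial M\times I$ and show it vanishes. Writing $A = \alpha\,dx+\beta\,dy$ (the $dr$-component being irrelevant near the boundary, or zero), one has $dA = 0$ because the coefficient functions depend only on $x,y$ through the exponential $\exp(i(ux+vy))$ and on nothing else — in \textsc{Case 1} the coefficients are literally constant, so $dA=0$ and $[A\wedge A]=[\alpha,\beta]\,dx\wedge dy=0$ since the two diagonal matrices commute. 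In \textsc{Case 2}, $dA$ produces two terms from differentiating the exponential, and $\frac12[A\wedge A]=[\,(\text{coeff of }dx),(\text{coeff of }dy)\,]\,dx\wedge dy$; I would check using $[h,e]=2e$ that these cancel: the bracket of the two upper-triangular matrices in \eqref{eq:normal_form2} contributes precisely the $e$-component needed to kill the $du\,dv$-cross terms coming from $d\exp(i(ux+vy))$. This is the routine part.

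For the holonomy, I would integrate the flat connection along loops representing $\mu$ and $\nu$. Since $A$ has constant coefficients near the boundary (as a connection form in the given trivialization), holonomy along a loop in the $x$-direction from a suitable base point is $\exp$ of the integral of the coefficient of $dx$ over that loop. Using the identification $\R^2\to\partial M$, $(x,y)\mapsto(e^{ix},e^{iy})$, a loop representing $\mu$ has $x$ running over an interval of length $2\pi$ (with $y$ fixed), so in \textsc{Case 1} the holonomy of $\mu$ is $\exp\!\left(2\pi\cdot\mathrm{diag}(i\alpha,-i\alpha)\right)=\mathrm{diag}(e^{2\pi i\alpha},e^{-2\pi i\alpha})$, which is \eqref{eq:bdry_holonomy1} with $\tpi=2\pi i$; similarly for $\nu$. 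In \textsc{Case 2} the coefficient of $dx$ is not constant along the loop because of the $\exp(i(ux+vy))$ factor, so I would instead solve the holonomy ODE $\frac{d}{dt}H(t) = -A(\dot\gamma(t))H(t)$ directly, or conjugate by the gauge transformation $g(x,y)=\mathrm{diag}(\exp(-\tfrac i2(ux+vy)),\exp(\tfrac i2(ux+vy)))$ which transforms $A$ into a genuinely constant connection with $dx$-coefficient $\left(\begin{smallmatrix}0 & a/2\pi \\ 0 & 0\end{smallmatrix}\right)$ and $dy$-coefficient $\left(\begin{smallmatrix}0 & b/2\pi \\ 0 & 0\end{smallmatrix}\right)$ (the diagonal terms $-iu/2$ etc. being exactly cancelled by $g^{-1}dg$); then the holonomy of $\mu$ is $g^{-1}\exp\!\left(2\pi\left(\begin{smallmatrix}0 & a/2\pi\\0&0\end{smallmatrix}\right)\right)g = \left(\begin{smallmatrix}1 & a\\0&1\end{smallmatrix}\right)$ up to the scalar ambiguity. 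The factor $(-1)^u$ and $(-1)^v$ arises because the gauge transformation $g$ is not single-valued on $\partial M$: going around $\mu$ multiplies $g$ by $\mathrm{diag}((-1)^{-u},(-1)^u)$, which in $\SL_2$ (not $\PSL_2$) contributes the sign $(-1)^u$ to the holonomy. Tracking this monodromy of $g$ carefully gives \eqref{eq:bdry_holonomy2}.

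The main obstacle is the bookkeeping in \textsc{Case 2}: one must be careful about (a) the precise normalization $\tpi = 2\pi i$ versus $2\pi$ in exponents, matching the conventions fixed by the cover $\R^2\to\partial M$, $(x,y)\mapsto(e^{ix},e^{iy})$, and (b) the sign factors $(-1)^u$, $(-1)^v$, which are genuinely subtle because they come from the failure of the linearizing gauge transformation $g$ to descend to $\partial M$ — this is exactly the phenomenon that makes the holonomy live in $\SL_2(\C)$ rather than being purely unipotent, and it must be handled with a clear choice of base point "near $\partial M$" (i.e., at some $r<1$, where the connection has already assumed its normal form). Once the conjugation by $g$ is set up correctly, both the flatness near the boundary and the explicit holonomy formulas follow, and the final parenthetical remark — passing to $\PSL_2(\C)$ by taking images — is immediate since the center acts trivially there.
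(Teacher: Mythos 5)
Your proof is correct, and it supplies the routine verification that the paper leaves entirely to the reader (Lemma \ref{lem:holonomy_of_normal_form} is stated without proof). Your two-step plan — curvature vanishing by direct computation, then holonomy by integrating the (gauge-straightened) constant connection — is the natural and essentially forced approach for a statement of this type.

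A few remarks on the details. The flatness check in Case 2 is exactly as you say: writing $A_x=-\tfrac{iu}{2}h+\tfrac{a}{2\pi}e^{i(ux+vy)}e$ and $A_y=-\tfrac{iv}{2}h+\tfrac{b}{2\pi}e^{i(ux+vy)}e$, one finds $dA=\tfrac{i(ub-av)}{2\pi}e^{i(ux+vy)}e\,dx\wedge dy$ and $[A_x,A_y]=\tfrac{i(av-ub)}{2\pi}e^{i(ux+vy)}e$, which cancel. For the holonomy in Case 2, the key observation you make — that the linearizing gauge transformation $g(x,y)=\mathrm{diag}\bigl(e^{-\frac{i}{2}(ux+vy)},e^{\frac{i}{2}(ux+vy)}\bigr)$ does not descend to $\partial M$ but instead changes by the central element $(-1)^u$ (resp. $(-1)^v$) around $\mu$ (resp. $\nu$) — is precisely what produces the scalar factors in \eqref{eq:bdry_holonomy2}, and tracking this carefully with base point at the image of $(0,0)$ is exactly the right bookkeeping. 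The only places that require a fixed convention are which of $gAg^{-1}+g\,dg^{-1}$ vs.\ $g^{-1}Ag+g^{-1}dg$ one uses, and the overall sign in the parallel-transport ODE; both affect only whether one sees $\pm a,\pm b$ and $e^{\pm 2\pi i\alpha}$, which you rightly flag as a normalization matter to be matched against the paper's choice. So the proposal is sound; it is the standard computational argument and there is no gap.
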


From this lemma, we see that given a local holonomy representation $\rho:\pi_1(\partial M)\ra \SL_2(\C)$, there are as many local connections in normal form having holonomy $\rho$ as choices of branches of $\log \rho(\mu)_{11}$, $\log \rho(\nu)_{11}$, where $\rho$ is conjugated to be of the form (\ref{eq:bdry_holonomy1}) or  (\ref{eq:bdry_holonomy2}) (in Case 2, these branches are really parities of $u$ and $v$). Note that any homomorphism $\rho:\pi_1(\partial M)\ra \SL_2(\C)$ has a conjugate in one of these forms (since $\pi_1(\partial M)$ is abelian). It turns out that for every global homomorphism $\rho :\pi_1(M)\ra \SL_2(\C)$ and for any choice of constants $(\alpha,\beta)$, $(u,v,a,b)$ in (\ref{eq:bdry_holonomy1}), (\ref{eq:bdry_holonomy2}) (for some conjugate of $\rho|_{\pi_1(\partial M)}$), there exists a global flat connection giving $\rho$ as holonomy which is in normal form for these constants.

%%%%%%%%%%%%%%%%%%%%
\begin{prop} \cite[2.3,3.2]{KirkKlassen93} \label{prop:KK93_2.3,3.2} 
Let $N_1,\cdots,N_h$ be the connected components (tori) of $\partial M$.

(1) Any connection on $M$ which is flat in a neighborhood of $\partial M$ can be gauge-transformed to a connection in normal form near boundary.

(2) Suppose given a representation $\rho:\pi_1(M)\ra \SL_2(\C)$. For any choice of constants $(\alpha_k,\beta_k)$ in Case 1 and $(u_k,v_k,a_k,b_k)$ in Case 2 ($k=1,\cdots,h$) such that for each $k=1,\cdots,h$, $\rho|_{N_k}$ is conjugate to either (\ref{eq:bdry_holonomy1}) or (\ref{eq:bdry_holonomy2}), there exists a flat connection on $P=M\times\SL_2$ whose holonomy is conjugate to $\rho$, and which at each cusp is in normal form (\ref{eq:normal_form1}), (\ref{eq:normal_form2}) for these constants. 

The same holds for a representation $\rho:\pi_1(M)\ra \PSL_2(\C)$.
\end{prop}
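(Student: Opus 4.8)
The plan is to reduce both parts to a local analysis on a collar $N_k\times[0,1]$ of each boundary torus, using the standard dictionary between flat connections on a trivial $G$-bundle and homomorphisms of the fundamental group together with the explicit holonomy computation of Lemma \ref{lem:holonomy_of_normal_form}; part (2) will then follow from part (1) by correcting, one cusp at a time, the constants that (1) produces to the prescribed ones by means of explicit gauge transformations.

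For (1), I would restrict the given connection $A$ to a collar $N_k\times I$, where it is flat. On the trivial bundle over $N_k\times I\simeq T^2$, a flat connection is determined up to gauge equivalence by the conjugacy class of its holonomy $\rho_A\colon\pi_1(N_k)=\Z^2\ra G$, and two flat connections with conjugate holonomy are related by a gauge transformation $h\colon N_k\times I\ra G$. Since $\Z^2$ is abelian, $\rho_A(\Z^2)$ is an abelian subgroup of $G$, hence --- in all situations that occur here, where it is contained in a Borel subgroup --- is conjugate either into the diagonal torus (this is Case $1$) or into $\{\pm(\begin{smallmatrix}1&\ast\\0&1\end{smallmatrix})\}$ (Case $2$). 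By Lemma \ref{lem:holonomy_of_normal_form} the normal-form connection $A^{(1)}(\alpha,\beta)$, resp.\ $A^{(2)}(u,v;a,b)$, for a suitable choice of constants has holonomy conjugate to $\rho_A$, so $A|_{N_k\times I}$ is gauge-equivalent to such a normal form; it then remains to promote $h$ to a gauge transformation on all of $M$ equal to the identity outside a slightly larger collar. After a homotopy rel $N_k$ this amounts to the restriction of $h$ to a nearby copy of $T^2$ being null-homotopic in $G$, which is automatic for $G=\SL_2(\C)$ (every map $T^2\ra\SL_2(\C)$ is null-homotopic, as $\SL_2(\C)\simeq S^3$ is $2$-connected) and requires only a short additional argument for $G=\PSL_2(\C)$ (the obstruction lies in $H^1(T^2;\Z/2)$). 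Carrying this out over the disjoint collars of $N_1,\dots,N_h$ yields the desired global gauge transformation.

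For (2), I would start from any flat connection $A_0$ on $P=M\times\SL_2$ with holonomy conjugate to $\rho$; such a connection exists because the flat bundle $P(\rho)$ is trivial (the classifying space $B\SL_2(\C)$ is $3$-connected, while $M$ is at most three-dimensional), so one may transport $\omega_\rho$ through a trivialization. Applying (1), gauge $A_0$ to a connection $A_1$, globally flat and in normal form near each $N_k$ for some constants $(\alpha_k',\beta_k')$, resp.\ $(u_k',v_k';a_k',b_k')$. By Lemma \ref{lem:holonomy_of_normal_form} these differ from the prescribed data only within the ambiguity intrinsic to the holonomy conjugacy class: in Case $1$, $\alpha_k'\equiv\alpha_k$ and $\beta_k'\equiv\beta_k\pmod{\Z}$; in Case $2$, the parities of $u_k,v_k$ already match by hypothesis and $(a_k',b_k')$ differs from $(a_k,b_k)$ by a common square factor. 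I would then correct these cusp by cusp: to replace $\alpha_k$ by $\alpha_k+n$ ($n\in\Z$) apply, near $N_k$, the gauge transformation $\mathrm{diag}(e^{-inx},e^{inx})$, which is well defined on $T^2$, is null-homotopic in both $\SL_2(\C)$ and $\PSL_2(\C)$ (hence extends by the identity over $M$), shifts exactly that constant, and alters neither flatness nor the holonomy conjugacy class; proceed likewise for $\beta_k$; and to rescale $(a_k,b_k)$ in Case $2$, conjugate near $N_k$ by a constant $\mathrm{diag}(s,s^{-1})$ with $s^2$ the required factor. As the collars are disjoint these corrections do not interfere, and the result is a flat connection on $M\times\SL_2$ with holonomy conjugate to $\rho$ and in the prescribed normal form at every cusp. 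For $\rho\colon\pi_1(M)\ra\PSL_2(\C)$ the argument is identical once one knows $P(\rho)$ is trivial so that $A_0$ exists --- which holds in the cases of interest, e.g.\ whenever $\rho$ lifts to $\SL_2(\C)$, in particular for the geometric representation and nearby points of the character variety.

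I expect the main obstacle to be the globalization step rather than the local classification (which is essentially Lemma \ref{lem:holonomy_of_normal_form} together with the flat-connection/representation dictionary): one must extend the locally defined gauge transformations over $M$ while preserving flatness on all of $M$ and the normal forms at the remaining cusps, and, in the $\PSL_2(\C)$ case, one must separately secure triviality of the relevant flat bundle so that a global flat connection with the prescribed holonomy exists at all.
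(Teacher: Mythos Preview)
Your approach is correct and essentially the same as the paper's (which largely cites \cite{KirkKlassen93} directly): put the connection in normal form locally on each boundary collar via the flat-connection/representation dictionary and Lemma~\ref{lem:holonomy_of_normal_form}, then extend the gauge transformation over $M$ using that $\pi_i(\SL_2(\C))=0$ for $i\le 2$ (the paper phrases this as vanishing of the obstructions in $H^i(M,\partial M;\pi_i(\SL_2(\C)))$). The only points to tighten in your part (2) are the sign ambiguity $(\alpha_k',\beta_k')\equiv\pm(\alpha_k,\beta_k)\pmod{\Z^2}$ in Case~1 (handled by a constant Weyl-element gauge) and the need to adjust $(u_k,v_k)$ to the prescribed integers, not merely match parities, in Case~2 (handled by $\mathrm{diag}(e^{\pm inx},e^{\mp inx})$, exactly as in Case~1).
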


In (2), there is an overlap among the two cases, i.e. Case 1 with $(\alpha,\beta)\in \frac{1}{2}\Z^2$  equals Case 2 with $(a,b)=(0,0)$, $(\alpha,\beta)=(-\frac{u}{2},-\frac{v}{2})$, in which case we could choose any one among these two.

\begin{proof}
(1) This is proved in \cite[2.3]{KirkKlassen93} in the case that the structure group of the principal bundle is $\SU(2)$, in which case the holonomy at $\pi_1(\partial M)$ is always diagonalizable. This gives a proof in Case 1 of Definition \ref{defn:normal_form}. The same argument carries over to $\SL_2(\C)$ in Case 2, too. 
In more detail, the obstructions to extending a given local $1$-form to entire $M$ lie in $H^i(M,\partial M,\pi_i(\SL_2(\C)))$, which are all zero.
So, the question is reduced to the problem of finding a local connection of the form (\ref{eq:normal_form2}) having the (local) holonomy (\ref{eq:bdry_holonomy2}). This problem is solved in the explanation (p.538) of Case 2 in the proof of ibid. Theorem 3.2.

(2) For $\SL_2(\C)$, in case 1, this is ibid. Theorem 2.3 (3), while in case 2 it is shown in the proof (p. 538) of ibid. Theorem 3.2. One easily sees that the argument carries over to $\PSL_2(\C)$.
\end{proof}

We define the Chern-Simons invariant of a flat connection on the trivial bundle $P=M\times G$ ($G=\SL_2(\C)$ or $\PSL_2(\C)$), for flat connections that are in normal form near boundary,%%
\footnote{Later, we will also need to consider the Chern-Simons integral of an arbitrary flat connection.}
as
\begin{equation} \label{eq:cs(A)_normal_form_A}
cs_M(A) :=\frac{1}{8\pi^2} \int_{M}\mathrm{Tr}(dA\wedge A+\frac{2}{3}A\wedge A\wedge A)
\end{equation}
(note that here we use the second Chern polynomial $C_2=\frac{1}{8\pi^2}\tr$).

%%%%%%%%%%%%%%%%%%%%
\begin{thm} \cite[2.4]{KirkKlassen93} \label{thm:KK93_2.4}
If two connections $A$, $B$ on $P$ are gauge-equivalent and also are in the same normal form near $\bdM$, then $cs(A)=cs(B)$$\mod \Z$.
\end{thm}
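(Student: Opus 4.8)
The plan is to reduce the statement to the standard variation formula for the Chern–Simons functional under a gauge transformation, and then to analyze the boundary term that this formula produces, using the hypothesis that $A$ and $B$ are in the \emph{same} normal form near $\bdM$. First I would recall that if $B = g\cdot A := g A g^{-1} - (dg) g^{-1}$ for a gauge transformation $g: M \to G$, then the classical transgression identity (cf. \cite[Prop.~2.3]{Freed95}, \cite{ChernSimons74}) gives
\[
cs_M(B) - cs_M(A) = \frac{1}{24\pi^2}\int_M \Tr\bigl((g^{-1}dg)^{\wedge 3}\bigr) \;-\; \frac{1}{8\pi^2}\int_{\bdM}\Tr\bigl(A\wedge (dg)g^{-1}\bigr).
\]
(The precise constants and signs follow from the choice $\langle-,-\rangle = C_2 = \frac{1}{8\pi^2}\Tr$; I would fix them by a short direct Stokes computation rather than quote them.) The first term on the right is the integral over the closed manifold $M$ (recall $M$ is the interior, so $\bdM$ carries the boundary contribution) of the pullback under $g$ of the bi-invariant $3$-form $\tau = \frac{1}{24\pi^2}\Tr((\theta_{\MC})^{\wedge 3})$ on $G$, which by Remark~\ref{rem:normalization}(1), i.e. condition $(\dagger)$, represents an \emph{integral} class in $H^3(G,\R)$; hence this term lies in $\Z$, which is exactly the ambiguity we are allowed.

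It then remains to show that the boundary term $\frac{1}{8\pi^2}\int_{\bdM}\Tr(A\wedge (dg)g^{-1})$ vanishes, or at least lies in $\Z$, under the hypothesis. Here is where the normal-form condition does the work: near $\bdM = N_1 \sqcup \cdots \sqcup N_h$, both $A$ and $B$ have one of the two constant shapes (\ref{eq:normal_form1}), (\ref{eq:normal_form2}); since $g$ carries $A$ to $B$ and both are flat with the same boundary holonomy up to these explicit constants, $g$ restricted to a neighborhood of each $N_k$ must itself be (gauge-equivalent to) a very restricted map — in Case 1 it is a constant-plus-"winding" diagonal gauge transformation $g = \mathrm{diag}(e^{\pi i(mx+ny)}, e^{-\pi i(mx + ny)})$ with $m,n\in\Z$ forced by the requirement that $g A g^{-1} - (dg)g^{-1}$ again have the \emph{same} constants $(\alpha,\beta)$, and in Case 2 it is the analogous unipotent/torus gauge transformation with integer winding. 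For such $g$ one computes $\int_{N_k \times I}$ (using the collar and the fact that everything is $r$-independent near the boundary, so the integrand is a pullback from $N_k$) that $\Tr(A \wedge (dg)g^{-1})$ integrates to an \emph{integer} over each torus $N_k$: in Case 1 it is essentially $-2\alpha(m\,[\mu] + n\,[\nu])$ paired against the fundamental class, but the condition that $B$ have the same $(\alpha,\beta)$ as $A$ forces the winding to interact with the $\frac12\Z$-structure so that the total is in $\Z$; in Case 2 the unipotent part kills the relevant terms and only the sign $(-1)^u$-type contribution survives, again integral. I would organize this as a per-torus lemma.

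The main obstacle I anticipate is the bookkeeping in this boundary computation — precisely pinning down \emph{which} gauge transformations $g$ are allowed near each $N_k$ given that both endpoints are in the same normal form, and then checking that the resulting boundary integral is an integer rather than merely well-defined. Two subtleties need care: (i) in Case 1 the holonomy (\ref{eq:bdry_holonomy1}) only determines $\alpha,\beta$ modulo $\Z$, so "same normal form" must be read as "same constants $\alpha,\beta$ (not just same holonomy)", and the integer windings $m,n$ of $g$ are then genuinely free — the point is that changing them changes $cs$ by an integer, consistent with the mod-$\Z$ claim, so one does not need them to vanish; (ii) the overlap between Case 1 and Case 2 noted after Proposition~\ref{prop:KK93_2.3,3.2} must be handled so the two cases glue. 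I would close by remarking that this is the $G = \SL_2(\C)$ or $\PSL_2(\C)$ analogue of \cite[Thm.~2.4]{KirkKlassen93}, whose $\SU(2)$ proof transcribes verbatim once the normal forms (\ref{eq:normal_form1}), (\ref{eq:normal_form2}) and Lemma~\ref{lem:holonomy_of_normal_form} are in place, the only new input being that $\tau$ is still an integral generator of $H^3(\PSL_2(\C),\R) \cong H^3(\SL_2(\C),\R)$, which is exactly Remark~\ref{rem:normalization}(1),(3).
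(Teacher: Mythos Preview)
Your approach---the gauge-variation formula for $cs_M$, splitting into a Wess--Zumino bulk term and a boundary term---is exactly the strategy behind the Kirk--Klassen argument that the paper's proof defers to, so you are aligned with the paper. Two points in your sketch need repair, though.

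First, the integrality of the WZ term $\frac{1}{24\pi^2}\int_M \Tr\bigl((g^{-1}dg)^{\wedge 3}\bigr)$ does \emph{not} follow merely from $\tau$ representing an integral class on $G$, because $M$ has boundary (your phrase ``closed manifold $M$'' is a slip). What makes it work is precisely the hypothesis: since $A=B$ near $\bdM$, the restriction $g|_{\bdM}$ lands in the stabilizer $\mathrm{Stab}(A|_{\bdM})\subset G$, which in Case~1 is the diagonal torus and in Case~2 sits inside the Borel. In either case $\tau$ pulls back to zero on this subgroup (real dimension $\le 2$). One then caps each boundary torus by a solid torus $N$ carrying an extension $\tilde g:N\to \mathrm{Stab}(A|_{\bdM})$, so $\int_N \tilde g^*\tau=0$ and the closed-manifold integrality $\int_{M\cup(-N)}(g\cup\tilde g)^*\tau\in\Z$ yields $\int_M g^*\tau\in\Z$. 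Without this capping step your argument has a genuine gap.

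Second, your boundary-term analysis via ``winding'' diagonal gauge transformations $g=\mathrm{diag}(e^{\pi i(mx+ny)},e^{-\pi i(mx+ny)})$ is the computation relevant to the \emph{more general} Theorem~\ref{thm:KK93_2.5} (constants shifted by integers $p,q$), not to the present statement. Here ``same normal form'' means literally the same constants, so $A=B$ near $\bdM$ and $g$ near $\bdM$ is covariantly constant for $A$ and centralizes the boundary holonomy. In generic Case~1 this forces $g$ to be a \emph{constant} diagonal matrix on each $N_k$, whence $(dg)g^{-1}|_{\bdM}=0$ and the boundary integral vanishes outright---no $\tfrac12\Z$ bookkeeping is needed. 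The non-generic subcases (central boundary holonomy, or Case~2 parabolic) do require the per-torus computation you anticipate, but the organizing identity there is $(dg)g^{-1}=\mathrm{Ad}(g)A-A$ coming from the stabilizer condition, not a free integer winding.
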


\begin{proof}
First, suppose that $G=\SL_2(\C)$. When the boundary holonomy is diagonalizable, 
we can repeat the proof of the same statement in the case $G=\SU(2)$ (Theorem 2.4 of \cite{KirkKlassen93}). When the boundary holonomy is parabolic but non-diagonalizable, this is proved in \textit{ibid.} Lemma 3.3. When $G=\PSL_2(\C)$, it is not difficult to check that the same arguments continue to work.
\end{proof}

%%%%%%%%%%%%%%%%%%%%
\begin{cor} \label{cor:PSL_2(C)-CS-inv}
There exists a flat connection $A$ on $M\times \PSL_2(\C)$ such that at each cusp $N_k$, $A$ is in normal form of Case 2 for $(u_k,v_k)=(0,0)$ and some $(a_k,b_k)$ and that the holonomy representation of $A$ is the geometric representation $\rho_0$, up to conjugacy. 

The Chern-Simons invariant $cs_M(A)$ of such a flat connection $A$ is well-defined mod $\Z$, independent of the choice of $A$; by definition, this is the $\PSL_2(\C)$-Chern-Simons invariant $cs_{\PSL_2}(M)$ of the cusped complete hyperbolic three-manifold $M$.
\end{cor}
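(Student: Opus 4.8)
The existence assertion is a direct application of the $\PSL_2(\C)$-version of Proposition \ref{prop:KK93_2.3,3.2}(2) to the geometric representation $\rho_0\colon\pi_1(M)\ra\PSL_2(\C)$. Since $M$ is complete hyperbolic with cusps, for each boundary torus $N_k$ the restriction $\rho_0|_{\pi_1(N_k)}$ is a faithful representation of $\Z^2$ with parabolic image, a nontrivial group of parabolic elements sharing one fixed point; hence it is conjugate in $\PSL_2(\C)$ to the image of a representation of the shape (\ref{eq:bdry_holonomy2}) with $(u_k,v_k)=(0,0)$ and some $(a_k,b_k)\neq(0,0)$ (a pair of unipotent upper-triangular matrices). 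As $\rho_0|_{N_k}$ is not diagonalizable, Case~1 of Definition \ref{defn:normal_form} is impossible, so Case~2 with $(u_k,v_k)=(0,0)$ is the only option, and Proposition \ref{prop:KK93_2.3,3.2}(2) produces a flat connection $A$ on $M\times\PSL_2(\C)$ with holonomy conjugate to $\rho_0$ which at each $N_k$ is in normal form of Case~2 for these constants.

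For the well-definedness, let $A$ and $B$ be two flat connections as in the statement. The plan is to (i) observe that $A$ and $B$ are gauge-equivalent, (ii) arrange that they are in \emph{the same} normal form near $\bdM$, and (iii) conclude by Theorem \ref{thm:KK93_2.4}. Step (i) is formal: gauge-equivalence classes of flat connections on the trivial bundle $M\times\PSL_2(\C)$ correspond bijectively to conjugacy classes of holonomy representations, and the holonomies of $A$ and $B$ are both conjugate to $\rho_0$; hence $B=A^{g}$ for some gauge transformation $g\colon M\ra\PSL_2(\C)$.

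Step (ii) is the one that requires care, and I expect it to be the main obstacle. Near $N_k$ the connections $A$ and $B$ are the constant connections $A^{(2)}(0,0;a_k,b_k)$ and $A^{(2)}(0,0;a_k',b_k')$, and by Lemma \ref{lem:holonomy_of_normal_form} their local holonomies are the unipotent upper-triangular representations of $\pi_1(N_k)$ with off-diagonal data $(a_k,b_k)$ and $(a_k',b_k')$. Since these local holonomies are restrictions of conjugate global representations they are conjugate, and since both fix $\infty$ the conjugating element must itself be upper triangular; hence $(a_k',b_k')=\lambda_k^2(a_k,b_k)$ for some $\lambda_k\in\C^{\times}$, and the gauge transformation $g$ with $B=A^g$ is forced to be upper triangular in a neighbourhood of each $N_k$. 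A \emph{constant} gauge transformation changes $cs_M$ by nothing (with $dg=0$ both the Wess--Zumino volume term and the boundary term in the gauge-transformation formula vanish), but for more than one cusp one cannot conjugate $B$ into $A$ by a single constant gauge transformation, since such a conjugation would have to fix the distinct fixed points of all the $\rho_0|_{N_k}$ simultaneously. I would therefore either apply an auxiliary non-constant gauge transformation equal to $\mathrm{diag}(\lambda_k,\lambda_k^{-1})$ near each $N_k$ to bring $B$ into the same constants $(a_k,b_k)$ as $A$, reducing matters to the literal hypothesis of Theorem \ref{thm:KK93_2.4}, or appeal directly to the parabolic-boundary comparison of Kirk--Klassen (\cite[Lem.~3.3]{KirkKlassen93}, used in the proof of Theorem \ref{thm:KK93_2.4}), whose hypothesis is precisely that the connecting gauge transformation be triangular near $\bdM$. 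In either case the only point left is that the relevant gauge transformation changes $cs_M$ by an integer: the change equals a Wess--Zumino volume integral plus a boundary correction built from the restrictions of the connection and the gauge transformation to $\bdM$, and for a triangular gauge transformation acting on a connection in normal form near $\bdM$ this combination is integral --- this is exactly the boundary computation in the cited Kirk--Klassen results. Granting it, Theorem \ref{thm:KK93_2.4} yields $cs_M(A)=cs_M(B)\bmod\Z$.
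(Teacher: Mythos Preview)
Your approach is essentially the paper's: invoke boundary-parabolicity of $\rho_0$ plus Proposition \ref{prop:KK93_2.3,3.2}(2) for existence, then use that flat connections with conjugate holonomy are gauge-equivalent and apply Theorem \ref{thm:KK93_2.4} for independence. You are right to flag step (ii): two connections $A,B$ as in the statement need not have \emph{identical} constants $(a_k,b_k)$, only proportional ones, so the literal hypothesis ``same normal form'' of Theorem \ref{thm:KK93_2.4} is not immediate. The paper's own proof actually asserts that the pair $(a_k,b_k)$ is \emph{unique}, which is not quite correct (conjugation by a diagonal element rescales it by $\lambda^2$), and then invokes Theorem \ref{thm:KK93_2.4} directly; so your extra care here is justified rather than superfluous.

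That said, there is a shorter route than your auxiliary-gauge-transformation manoeuvre. Theorem \ref{thm:KK93_2.5} (the Case~2 clause, coming from \cite[Lem.~3.4--3.5]{KirkKlassen93}) already compares gauge-equivalent connections in Case~2 normal form with \emph{possibly different} $(a,b)$, $(a',b')$: the difference is $cs(B)-cs(A)=p\beta-q\alpha\bmod\Z$, where $(p,q)$ records the shift in the integer parameters $(u,v)$. Here $(u_k,v_k)=(0,0)$ for both $A$ and $B$, so $p=q=0$ and $cs(B)=cs(A)\bmod\Z$ with no further work. Your observation that the connecting gauge transformation is forced to be upper-triangular near each cusp is precisely what underlies those lemmas, so your argument is correct --- you have simply re-derived part of the input to Theorem \ref{thm:KK93_2.5} rather than quoting it.
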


\begin{proof}
It is known (\cite[$\S$4]{NeumannZagier85}) that the geometric representation $\rho_0$ is \emph{boundary parabolic}, meaning that at each cusp, after a conjugation of $\rho_0$, the images of the meridian and the longitude lie in the unipotent subgroup (i.e. the image in $\PSL_2(\C)$ of the subgroup of $\SL_2(\C)$ of unipotent matrices). Hence, for each $k=1,\cdots,h$, there exists a unique $(a_k,b_k)\in \C^2$ such that $\rho|_{N_k}$ is conjugate to (\ref{eq:bdry_holonomy2}) for the constants $(0,0,a_k,b_k)$.
Then, for this family of constants $\{(0,0,a_k,b_k)\}_{k=1,\cdots,h}$ and  $\rho=\rho_0$, 
Proposition \ref{prop:KK93_2.3,3.2}, (2) gives a flat connection $A$ with the stated properties.

The second statement is an immediate consequence of Theorem \ref{thm:KK93_2.4}, in view of the well-known fact that the holonomy gives an injection
\[ \{\text{flat connections on }M\times G\}/\mcG \hra \Hom(\pi_1(M),G)/\text{conjugation}, \]
where $\mcG=\Hom(M,G)$ (gauge group), cf. \cite[Lem.2.2]{KirkKlassen90}. 
\end{proof}

We normalize the $\PSL_2(\C)$-Chern-Simons invariant $cs_{\PSL_2}(M)$ by 
\begin{equation} \label{eq:normalized_PSL_2(C)-CS.inv}
\CS_{\PSL_2}(M):=(2\pi i)^2 cs_{\PSL_2}(M)\ \in \C/\Q(2).
\end{equation}
%%%%%%%%%%%%%%%%%%%%
\begin{thm} \label{thm:PSL_2-CS-inv=C-vol:cusped}
The (normalized) $\PSL_2(\C)$-Chern-Simons invariant $\CS_{\PSL_2}(M)$ of any complete hyperbolic three-manifold $M$ of finite volume is related to the complex volume $\Vol_{\C}(M)$ by
\[  \sqrt{-1}\CS_{\PSL_2}(M)=\Vol_{\C}(M). \]
\end{thm}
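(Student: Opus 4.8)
I would separate the closed and cusped cases and dispose of the closed one immediately. When $M$ is closed we may take $L=\emptyset$, and Theorem~\ref{thm:PSL_2-CS-inv=C-vol:closed} already records $-4\pi^2\sqrt{-1}\,cs_{\PSL_2}(M)=\Vol_{\C}(M)$; since by~(\ref{eq:normalized_PSL_2(C)-CS.inv}) one has $\CS_{\PSL_2}(M)=(2\pi i)^2cs_{\PSL_2}(M)=-4\pi^2cs_{\PSL_2}(M)$, this is precisely $\sqrt{-1}\,\CS_{\PSL_2}(M)=\Vol_{\C}(M)$. So the real content is the cusped case, where one must reconcile the Kirk--Klassen description of $cs_{\PSL_2}(M)$ from Corollary~\ref{cor:PSL_2(C)-CS-inv} --- the Chern--Simons integral $cs_M(A)=\int_M Q(A)$ of a flat connection $A$ on $M\times\PSL_2(\C)$ in normal form near $\partial M$ with holonomy $\rho_0$ and $(u_k,v_k)=(0,0)$ --- with Meyerhoff's definition~(\ref{eq:metric_CS-inv_cusped}) of $cs(M)$, which drags along the auxiliary link $L$ and its torsion terms $\tau(K)$.

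The plan is to pull Yoshida's pointwise identity (Lemma~\ref{lem:Yoshida_Lem.31}) down to $M$ along a cusp-adapted frame field and integrate, isolating the cusps and the link as the loci where regularization is needed. First I would pick, via Theorem~\ref{thm:CS_cusped}(1), a special singular frame field $s$ on $M-L$ that is linear on the cusps, and form the section $\hat{s}=q\circ s$ of $P(\rho_0)$ over $M-L$ together with the flat connection $B:=\hat{s}^{\ast}\omega_{\rho_0}$ on $(M-L)\times\PSL_2(\C)$, so that $s^{\ast}q^{\ast}Q(\omega_{\rho_0})=Q(B)$; because $s$ is linear near the cusps and $\rho_0$ is boundary parabolic, $B$ is, in a neighborhood of $\partial M$, of the same normal-form shape (Case 2, $(u_k,v_k)=(0,0)$) as $A$, up to conjugation by the constants forced by $\rho_0|_{\partial M}$. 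Pulling Lemma~\ref{lem:Yoshida_Lem.31} back along $s$ (and using $C_{\MoTe}\circ s=\mathrm{id}_M$) gives on $M-L$
\[
-4\sqrt{-1}\,Q(B)=\tfrac{1}{\pi^2}\,\Vol_M+\sqrt{-1}\,s^{\ast}Q_M+d(s^{\ast}\gamma).
\]
I would then integrate over $M\setminus N_\epsilon$, where $N_\epsilon$ is the union of small horoball neighborhoods of the cusps and tubular neighborhoods of the components of $L$, and let $\epsilon\to 0$: finiteness of volume sends the first term to $\Vol(M)$; the linear/special-singular behaviour of $s$ sends $\int s^{\ast}Q_M$ to $\int_{s(M-L)}Q_M$; and Stokes turns $\int d(s^{\ast}\gamma)$ into boundary integrals over the horotori (whose limit vanishes, as in Yoshida, by linearity) and over the tori $\partial N(K)$ (whose limits, after tracking the constants through Yoshida's explicit $\gamma$, I expect to be exactly the terms $-\tfrac{1}{2\pi}\sum_K\tau(K)$).

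Next I would replace $\int_{M-L}Q(B)$ by $cs_M(A)$. On $M-L$ the connections $A|_{M-L}$ and $B$ are both flat with holonomy $\rho_0$ and are in the same normal form near $\partial M$, hence gauge-equivalent there by injectivity of the holonomy map (the fact used in the proof of Corollary~\ref{cor:PSL_2(C)-CS-inv}); the gauge-transformation law for the Chern--Simons form, combined with Theorem~\ref{thm:KK93_2.4}, then shows that $\int_{M\setminus N_\epsilon}Q(B)$ and $\int_{M\setminus N_\epsilon}Q(A)$ agree in the limit up to a contribution localized near $L$, again of torsion type, while $\int_{M\setminus N_\epsilon}Q(A)\to\int_M Q(A)=cs_M(A)$ since $Q(A)$ is smooth on all of $M$. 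Comparing real and imaginary parts then yields
\[
-4\pi^2\sqrt{-1}\,cs_{\PSL_2}(M)=\Vol(M)+\sqrt{-1}\pi^2\Bigl(\int_{s(M-L)}Q_M-\tfrac{1}{2\pi}\sum_K\tau(K)\Bigr)=\Vol(M)+\sqrt{-1}\pi^2 cs(M)=\Vol_{\C}(M),
\]
which, again using $\CS_{\PSL_2}(M)=-4\pi^2cs_{\PSL_2}(M)$, is the asserted $\sqrt{-1}\,\CS_{\PSL_2}(M)=\Vol_{\C}(M)$.

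The hard part will be the bookkeeping of the regularizing boundary terms in the last two steps --- in particular, showing that the two distinct sources of correction near $L$, namely Yoshida's $\int d(s^{\ast}\gamma)$ and the gauge-equivalence defect between $A$ and $B$, assemble into precisely Meyerhoff's torsion terms $\tfrac{1}{2\pi}\tau(K)$ with the correct sign and normalization, and that all limits as $\epsilon\to 0$ genuinely exist; this is the analytic heart of the arguments of Kirk--Klassen~\cite{KirkKlassen93} and Yoshida~\cite{Yoshida85}, which I would adapt to the normalizations fixed here. A fallback, avoiding the direct analysis, would be to deduce the cusped equality from the closed one via hyperbolic Dehn surgery and continuity of both invariants under filling, but that merely relocates the convergence difficulties rather than removing them.
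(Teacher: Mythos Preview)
The paper does not actually prove this: its proof is a two-line citation, disposing of the closed case via Theorem~\ref{thm:PSL_2-CS-inv=C-vol:closed} and, for the cusped case, pointing to pp.~555--556 of \cite{KirkKlassen93} with a remark on the $-4$ normalization factor coming from $P_1$ versus $C_2$. Your proposal goes well beyond this and attempts to sketch the argument that sits behind that citation; the overall shape --- pull back Yoshida's identity along a cusp-adapted special singular frame field, regularize by excising horoball and link neighborhoods, and track the Stokes boundary terms --- is indeed the strategy of \cite{KirkKlassen93}, so in that sense you are reconstructing the cited proof rather than offering a different one.

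That said, a couple of points in your sketch are looser than they appear. First, the claim that $B=\hat{s}^{\ast}\omega_{\rho_0}$ is already in normal form (Case~2, $(u_k,v_k)=(0,0)$) near the cusps ``because $s$ is linear'' is not automatic: Meyerhoff's linear frame (Definition~\ref{defn:frame_linear_on_cusps}) and Kirk--Klassen's normal form (Definition~\ref{defn:normal_form}) are specified in different models, and matching them requires an explicit computation, not just the boundary-parabolicity of $\rho_0$. Second, your mechanism for producing Meyerhoff's torsion correction $-\tfrac{1}{2\pi}\sum_K\tau(K)$ is doubly sourced --- once from $\int_{\partial N(K)} s^{\ast}\gamma$ and once from the gauge defect between $A|_{M-L}$ and $B$ --- and you give no reason why these two contributions should combine to exactly the right thing rather than, say, cancel or double. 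You correctly flag this as the analytic heart of the matter; it is, and it is precisely what \cite{KirkKlassen93} works out. Your fallback via Dehn surgery is also the right instinct (and is in fact how some of the literature approaches this), but as you note it relocates rather than removes the convergence analysis.
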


\begin{proof}
The case that $M$ is closed is Theorem \ref{thm:PSL_2-CS-inv=C-vol:closed}. In the case that $M$ has cusps, this is proved in p. 555-556 (especially, line 16-20 on p.556) of \cite{KirkKlassen93}: we remind the readers that to define the $\PSL_2(\C)$ Chern-Simons invariant $cs_{\PSL_2}(M)$, Kirk and Klassen use the first Pontryagin polynomial for the bilinear form $\langle-,-\rangle$ in Definition \ref{defn:CSform}, so that (in terms of $cs_{\PSL_2}(M)$) their $\PSL_2(\C)$ Chern-Simon invariant equals $-4$ times our $\PSL_2(\C)$ Chern-Simon invariant (\cite[p.555,line17]{KirkKlassen93}).
\end{proof}

We can also define $\SL_2(\C)$-Chern-Simons invariant of hyperbolic three-manifolds $M$ which however depends on a lift of the geometric representation $\rho_0:\pi_1(M)\ra \PSL_2(\C)$ to $\SL_2(\C)$; such a lift is called a \emph{spin structure} on $M$ (recall that the bundle $P(\rho_0)$ is trivial and $\SL_2(\C)=\Spin(3)_{\C}$, so that if $\tilde{\rho}_0:\pi_1(M)\ra \SL_2(\C)$ is a lift of $\rho_0$, there exists a double covering $P(\tilde{\rho}_0)\ra P(\rho_0)$ of the associated principal bundles which is compatible with the actions of $\SL_2(\C)\ra \PSL_2(\C)$; in fact, $P(\rho_0)=P(\tilde{\rho}_0)\times_{\SL_2(\C)}\PSL_2(\C)$, extension via change of structure group). 
Here, we only consider the case $M$ is closed; the cusped case will be considered in Subsection \ref{subsec:CS-inv_flat-conn:cusped}.

Like $P(\rho_0)$, the $\SL_2(\C)$-bundle $P(\tilde{\rho}_0)$ affords a section $\tilde{s}$, since $\pi_1(\SL_2(\C))=\{1\}$, as one can see by the obstruction theory \cite[$\S$34.2]{Steenrod99}.
When $M$ is closed, using $\tilde{s}$ one defines the $\SL_2(\C)$-Chern-Simons invariant of a lift $\tilde{\rho}_0:\pi_1(M)\ra \SL_2(\C)$ of $\rho_0$ by
\begin{equation} \label{eq:SL_2-CS}
cs(\tilde{\rho}_0,\tilde{s}):=\int_{\tilde{s}(M)} Q(\omega_{\tilde{\rho}_0}) \in \C/\Z,
\end{equation}
where as before $Q(\omega_{\tilde{\rho}_0})\in \mcA^3(P(\tilde{\rho}_0),\Lsl_{2,\C})$ is the Chern-Simons $3$-form of the flat connection $\omega(\tilde{\rho}_0)$ on $P(\tilde{\rho}_0)$ (induced from the trivial connection on $\widetilde{M}\times \SL_2(\C)$), and the $3$-form $Q(-)$ is defined using the second Chern polynomial $C_2\in I^2(\Lsl_{2,\C})$.
It is then clear that
\begin{equation} \label{eq:SL_2-CS-inv=PSL_2-CS-inv}
cs(\tilde{\rho}_0,\tilde{s})=cs(\rho_0,\hat{s}),
\end{equation}
where $\hat{s}$ is the section on $P(\rho_0)$ induced from $\tilde{s}$ (recall that we use $C_2\in I^2(\Lsl_{2,\C})$ in the definition of $\PSL_2(\C)$-Chern-Simons invariant $cs(\rho_0,\hat{s})$).

%%%%%%%%%%%%%%%%%%%%%%%%%%%%%%%%%%%%%%%%
\subsection{Ideal triangulation of cusped hyperbolic three-manifolds}
In this section, $M$ is a complete hyperbolic three manifold (of finite volume). Suppose $M$ is non-compact and has an ideal triangulation: $M= \Delta_1\cup \cdots \cup \Delta_N$. 
This means that each simplex $\Delta_i$ is a (hyperbolic) ideal tetrahedron, i.e. a tetrahedron in $\overline{\mbH^3}$ which has its vertices in $\CP^1=\partial \overline{\mbH^3}$ and has geodesic faces, and that $M$ is obtained by glueing together these ideal tetrahedra by means of face-pairing maps and then removing vertices or horospheres (closed balls centered at vertices); in this case $M$ is the interior of a compact manifold having a union of tori as boundary, and these \emph{boundary tori} have the induced flat metric. For the question of existence of an ideal triangulation, see the introduction of \cite{Francaviglia04}. 
Any ideal tetrahedron with ordered vertices $\{z_1,z_2,z_3,z_4\}$ is moved by $\mathrm{Iso}^+(\mbH^3)=\PSL_2(\C)$ to the ideal tetrahedron with ordered vertices $\{\infty,0,1,z\}$ for some uniquely determined $z\in \CP^1-\{0,1,\infty\}$, which is the cross ratio%
\footnote{This definition of cross ratio differs from the ones used in \cite{NeumannYang99}, \cite{Neumann92} by $z\leftrightarrow z^{-1}$.}
\[ z=[z_1:z_2:z_3:z_4]=\frac{(z_3-z_1)(z_4-z_2)}{(z_3-z_2)(z_4-z_1)} \in  \C\backslash \{0,1\}.\]
Even permutation of the vertices changes $z$ to one of $z$, $z'=\frac{1}{1-z}$, $z''=1-\frac{1}{z}$; one of these numbers is called the \emph{shape parameter} (or \emph{cross-ratios} or \emph{modulus}) of the oriented ideal tetrahedron. We fix an ordering of the vertices of each ideal tetrahedron compatible with the orientation of $M$, and choose one of the three associated shape parameters $z_j^0$ of each $\Delta_j$; put 
\[ \mathscr{Z}^0:=[\log z^0_1,\cdots, \log z^0_N,\log (1-z^0_1),\cdots, \log (1-z^0_N) ] \in \C^{2N}. \]
(row vectors).%% 
\footnote{in this subsection, however, we will work with column vectors, in accordance with the convention of Neumann and Zagier \cite{NeumannZagier85} (but in opposite to that in \cite{Neumann92}).}
In fact, as we fix an ordering of the vertices, to each edge of the tetrahedron, 
one can assign one of the three shape parameters $z$, $z'$, $z''$, in such a way that the parameters associated with two opposite edges are equal and all $z$, $z'$, $z''$ are attached to some (pairs of opposite) edges (see \cite[$\S$2]{Neumann92} for details).

It is known (\cite[$\S$2]{NeumannZagier85}, \cite[$\S$2]{Neumann92}) that when $M$ has $h$ cusps, 
the complete hyperbolic structure of $M$ is determined by $\msZ^0$ subject to $N+2h$ linear equations with integer coefficients, written in the form
\begin{equation} \label{eq:CCequation}
U(\msZ^0)^t =\pi \sqrt{-1} D,
\end{equation}
where $U$ is a $(N+2h)\times 2N$ integral matrix and $D$ is a column vector in $\Z^{N+2h}$. More precisely, this system of equations consists of two parts: we write
\begin{equation} \label{eq:U=(R,C)}
U= \left(\begin{array}{c} R \\ C \end{array}\right),\quad R\in M_{N\times 2N}(\Z),\ C\in M_{2h\times 2N}(\Z)
\end{equation}
and $D= \left(\begin{array}{c} D_1 \\ D_2 \end{array}\right)$ for $D_1\in M_{N\times 1}(\Z)$, $D_2\in M_{2h\times 1}(\Z)$, so that (\ref{eq:CCequation}) splits to 
\begin{equation} \label{eq:CCequation2}
R(\msZ^0)^t=\pi \sqrt{-1} D_1,\quad C(\msZ^0)^t=\pi \sqrt{-1} D_2.
\end{equation}
The geometric meanings of these equations are as follows.
The first equation $R(\msZ^0)^t=\pi \sqrt{-1} D_1$, called \emph{consistency relations}, expresses the condition that the tetrahedra fit together around each edge of the triangulation.
The second one $C(\msZ^0)^t=\pi \sqrt{-1} D_2$, called \emph{cusp relations}, means that at each end (or cusp), if $T_i$ is the corresponding connected component (torus) of $\partial M$, the elements of the peripheral subgroup $\pi_1(T_i,t_i)$ have parabolic holonomy (i.e. are unipotent matrices).

For the following (and later uses), we introduce ``tensor-multiplication over $\Q$'' $M \otimes N$ of two matrices $M\in M_{l\times m}(\C)$, $N\in M_{m\times n}(\C)$ whose output is a matrix in $M_{l\times n}(\C\otimes_{\Q} \C)$:
\begin{equation}  \label{eq:tensor_mult_matrices}
(M\otimes N)_{ij}=\sum_{k=1}^m M_{ik}\otimes N_{kj} \in \C\otimes_{\Q} \C \ (1\leq i\leq l,1\leq j\leq n)
\end{equation}
(namely, we multiply each row $X$ of $M$ and each column $Y$ of $N$ as in the usual matrix multiplication, but using the rule that two entries $x$, $y$ in $X$, $Y$ are multiplied to $x\otimes y\in \C\otimes\C$.)
In the similar manner, we can also define wedge product $M\wedge N$ of two matrices $M$, $N$. The usual (scalar) matrix product as well as this wedge product are all induced from the tensor product $M\otimes N$ via multiplication $\C\otimes\C\ra\C$ and the skew-symmetrization $\C\otimes\C \ra \C\wedge\C$, respectively. Note that for any rational $Q\in M_{m\times m}(\Q)$, one has 
\begin{equation} \label{eq:Q-move_tensor_matrix_product} 
MQ\otimes N=M\otimes QN.
\end{equation}

%%%%%%%%%%%%%%%%%%%%
\begin{lem} \label{lem:Neumann92_(3.10)}
Let $R$ and $C$ be as in (\ref{eq:U=(R,C)}). 
Suppose $x,y\in \C^{2N}$ (row vectors) satisfy $Rx^t=Ry^t=0$. Then, 
$xJ_{2N}\otimes y^t=\frac{1}{2} (Cx^t)^tJ_{2h}\otimes Cy^t$.
\end{lem}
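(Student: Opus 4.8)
The plan is to reduce Lemma \ref{lem:Neumann92_(3.10)} to a statement about the Neumann--Zagier bilinear form, exploiting the well-known symplectic property of the gluing matrix $U$. Recall that in \cite{NeumannZagier85} one proves that the rows of $R$ span a subspace of $\C^{2N}$ which is \emph{isotropic} for the standard symplectic form $J_{2N} = \begin{pmatrix} 0 & I_N \\ -I_N & 0\end{pmatrix}$ (up to the usual pairing of $z$- and $(1-z)$-coordinates), and that the combined matrix $U = \binom{R}{C}$ satisfies a relation of the shape $R J_{2N} C^t = $ (an explicit integral multiple of $J_{2h}$) and $R J_{2N} R^t = 0$. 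Concretely, the Neumann--Zagier result says that if one writes $U$ in the block form adapted to the $2N$ columns, the pairing of consistency rows against cusp rows is governed by $2 J_{2h}$; this is exactly the ``$\frac12$'' and the ``$J_{2h}$'' that appear in the statement. So the first step is to quote the precise symplectic identity for $U$ from \cite{NeumannZagier85} (or \cite[$\S$3]{Neumann92}), in the normalization matching (\ref{eq:U=(R,C)}).

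Next I would argue as follows. Since $Rx^t = 0$, the row vector $x$ lies in the annihilator (under $J_{2N}$) of the row span of $R$; because that row span is Lagrangian-complementary to something controlled by $C$, we can write $x$ in terms of the rows of $C$ modulo the kernel, or more cleanly: the symplectic orthogonal complement of $\ker R$ (with respect to $J_{2N}$) is spanned by the rows of $R$ together with the rows of $C$. The cleanest route is purely formal: choose any $2N\times 2N$ rational matrix expressing the change of basis that block-diagonalizes the form, and push the tensor product through using the identity (\ref{eq:Q-move_tensor_matrix_product}), $MQ \otimes N = M \otimes QN$ for rational $Q$. Applying this with the appropriate rational $Q$ coming from the Neumann--Zagier normalization converts $x J_{2N} \otimes y^t$ into an expression involving $R$ and $C$ applied to $x$ and $y$; the $R$-terms die because $Rx^t = Ry^t = 0$, leaving only the $C$-terms, which assemble into $\frac12 (Cx^t)^t J_{2h} \otimes Cy^t$. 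The key point is that all the matrices involved in the change of basis are \emph{rational}, so they move freely across the $\otimes_{\Q}$, and only the integral symplectic identity for $U$ is needed.

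I expect the main obstacle to be bookkeeping: matching the precise form of the Neumann--Zagier symplectic relation to the exact normalization in (\ref{eq:U=(R,C)}) and (\ref{eq:CCequation2}), including the factor of $2$ (which in \cite{NeumannZagier85} arises from the fact that each shape parameter is attached to a \emph{pair} of opposite edges), the role of the $J_{2h}$ block, and the placement of transposes. A secondary subtlety is that the rows of $C$ need not be linearly independent and $Cx^t$ need not determine $x$ modulo $\ker R$, so one should phrase the argument as an identity of bilinear expressions rather than solving for $x$; working at the level of the quadratic/bilinear identity $RJ_{2N}R^t=0$, $RJ_{2N}C^t = cJ_{2h}$ (for the relevant constant $c$) and then contracting against $x$ on one side and $y$ on the other side sidesteps this entirely. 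Once the symplectic identity is correctly stated, the rest is a one-line manipulation with (\ref{eq:Q-move_tensor_matrix_product}).
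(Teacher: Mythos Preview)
Your proposal has the right ingredients in view (the Neumann--Zagier symplectic relations and the rationality rule \eqref{eq:Q-move_tensor_matrix_product}), but the central step is misstated and the proposed ``contraction'' does not actually bridge the gap.

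First, two concrete errors. You write that $Rx^t=0$ puts $x$ in the $J_{2N}$-annihilator of the row span $[R]$; it does not. What $Rx^t=0$ gives is $RJ_{2N}(xJ_{2N})^t = -RJ_{2N}^2x^t = Rx^t = 0$, i.e.\ it is $xJ_{2N}$, not $x$, that lies in $[R]^{\perp}$. Similarly, your statement that $(\ker R)^{\perp}=[R]\cup[C]$ is false in general; the relevant Neumann--Zagier fact (their Proposition~2.3) is $[R]^{\perp}=[U]$. Second, the identity you plan to quote, $RJ_{2N}C^t = cJ_{2h}$, is dimensionally inconsistent ($N\times 2h$ versus $2h\times 2h$); the Neumann--Zagier relation concerns $UJ_{2N}C^t$, in which the $J_{2h}$ block sits against the $C$-rows of $U$, not the $R$-rows.

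The more serious gap is conceptual. Your plan is to take the bilinear identities among the \emph{rows} of $R$ and $C$ and ``contract against $x$ and $y$''. But $x,y$ live in $\ker R$, not in the row span of $R$ or $C$, so there is no direct contraction available. Likewise, the suggested ``rational change of basis that block-diagonalizes the form'' is left unspecified, and there is no single rational $Q$ that does the job here. What the paper actually does is use $[R]^{\perp}=[U]$ to write $xJ_{2N}=zU$ for some \emph{complex} row vector $z\in\C^{N+2h}$; the point is that $U$ is integral, so one may move $U$ (not $z$) across the tensor product via \eqref{eq:Q-move_tensor_matrix_product}:
\[
xJ_{2N}\otimes y^t = zU\otimes y^t = z\otimes Uy^t = z\otimes\binom{Ry^t}{Cy^t} = z\otimes\binom{0}{Cy^t}.
\]
Only the $C$-block of $z$ survives, and the Neumann--Zagier relation $UJ_{2N}C^t = 2\binom{J_{2h}}{0}$ identifies that block with $\tfrac{1}{2}(Cx^t)^tJ_{2h}$. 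So the mechanism is not ``rational change of basis'' but rather ``express $xJ_{2N}$ as a complex combination of integral rows, then slide the integral matrix across $\otimes_{\Q}$''. Once you state $[R]^{\perp}=[U]$ correctly and use it this way, the proof is indeed a short computation; without it, your outline does not close.
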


In particular, by applying skew-symmetrization, we recover the Neumann's formula \cite[(3.10)]{Neumann92}:
\[ \text{for }x,y\in \C^{2N},\ Rx=Ry=0\ \Rightarrow\  x\wedge y=\frac{1}{2} Cx\wedge Cy \]
(He works with column vectors, and his ``wedge product'' $x\wedge y$ is our $xJ_{2N}\wedge y^t$). Also, by taking multiplication of our formula, we recover Corollary 2.4. of \cite{NeumannZagier85}; Neumann says that his formula follows from this corollary  by ``formal observation''. Our formula is a further expansion of this ``formal observation''.%%
\footnote{If we write our bilinear form $\langle x,y\rangle:=xJ_{2N}\otimes y^t$ as a sum of symmetric form $S(x,y)$ and alternating form $A(x,y)$ in the standard way, $S(x,y)$ is the Neumann's ``wedge product'' $x\wedge y$, and $A((x_1,x_2),(y_1,y_2))=(x_1\otimes y_2^t+y_2\otimes x_1^t) -(x_2\otimes y_1^t+y_1\otimes x_2^t)$, which is non-trivial in general; thus our formula is stronger than Neumann's one. But, for the purpose of showing vanishing of complex Dehn invariant, we just need $x=y$, so the Neumann's formula suffices.}

\begin{proof}
This is a re-enactment of the proof of Corollary 2.4. of \cite{NeumannZagier85}, replacing the usual matrix multiplications by matrix tensor multiplications appropriately.
For any matrix $A$ with $2N$ columns we shall denote by $[A]\subset \C^{2N}$ the subspace of $ \C^{2N}$ generated by the rows of $A$. Also, we let $(-)^{\perp}$ denote the orthogonal complement of $(-)\subset \C^{2N}$ with respect to the symplectic form $\frac{1}{2}xJ_{2N}y^t$, where $J_{2N}=\left(\begin{array}{cc} 0 & I_N \\ -I_N & 0 \end{array}\right)$. 
 
We have $0=Rx^t=R J_{2N}(x J_{2N})^t$, i.e.  $xJ_{2N}\in [R]^{\perp}$.
Since $[R]^{\perp}=[U]$ (\cite[Prop.2.3]{NeumannZagier85}), there exists $z\in \C^{N+2h}$ such that $xJ_{2N}=zU$. Also, $UJ_{2N}C^t=2  \left(\begin{array}{c} J_{2h} \\ 0 \end{array}\right)$ by \cite[Thm.2.2]{NeumannZagier85} (``Neumann-Zagier symplectic relation''), so $xC^t=-xJ_{2N}^2C^t=-2z \left(\begin{array}{c} J_{2h} \\ 0 \end{array}\right)$. Now, 
\begin{align*}
\frac{1}{2} xC^t J_{2h}\otimes Cy^t=& -z \left(\begin{array}{c} J_{2h} \\ 0 \end{array}\right) J_{2h}\otimes Cy^t= z\otimes \left(\begin{array}{c} C \\ 0 \end{array}\right) y^t \\
=& z\otimes \left(\begin{array}{c} C \\ R \end{array}\right) y^t =z\otimes U y^t=zU\otimes y^t=z J_{2N}\otimes y^t.\ \qedhere
\end{align*}
\end{proof}

%%%%%%%%%%%%%%%%%%%%%%%%%%%%%%%%%%%%%%%%
\subsection{Complex volume of hyperbolic $3$-manifolds as Bloch regulator}
The complex volume of a complete hyperbolic three-manifold is expressed in terms of Bloch regulator \cite[Lecture6]{Bloch00}. For the brach of $\log(-)$, we take the standard one with branch cut $(-\infty,0]$. 

The Bloch regulator is the map $\C-\{0,1\} \ra \C\wedge_{\Z}\C$ defined by
\begin{align} \label{eq:Bloch_regulator}
\tilde{\rho}(z) =& \frac{\log(z)}{2\pi i}\wedge\frac{\log(1-z)}{2\pi i}+ 1\wedge \frac{1}{(2\pi i)^2} \int_0^z\left(\frac{\log(1-t)}{t}+\frac{\log(t)}{1-t}\right) dt.
\end{align}
This expression is interpreted as follows: given the choice of the standard branch of $\log(z)$, a value for any of the functions $\log(z)$, $\log(1-z)$ and $\int_0^z\left(\frac{\log(1-t)}{t}+\frac{\log(t)}{1-t}\right) dt$ is determined by the choice of a path from $0$ to $z$ in $\C-\{0,1\}$. In more detail, we take a path made up of the straight path from $0$ to $\frac{1}{2}$ followed by a path from $\frac{1}{2}$ to $z$ and continue analytically the (standard) branches of $\log(t)$, $\log(1-t)$ along that path (picking up their values at $z$), while integrating the function in the integrand thus determined along the same path. One can prove (cf. proof of Lemma 6.1.1 of \cite{Bloch00}) that this gives a well-defined (continuous) map $\C-\{0,1\} \ra \C\wedge_{\Z}\C$ (i.e. the value is independent of the choice of branch and path): although Bloch's original regulator (\ref{eq:original_Bloch_regulator}) is different from the above (his regulator is valued in $\C\otimes_{\Z}\C^{\ast}$), the argument of \textit{loc. cit.} continues to apply in our case, since we are working with wedge products. 
Also, the Bloch regulator can be regarded as an ``analytic mapping'' from $\C-\{0,1\}$ to $\C\wedge_{\Z}\C$, in the sense that locally on an open neighborhood $U$, it is an element of $\mcO(U)\wedge\mcO(U)$ for the ring $\mcO(U)$ of analytic functions on $U$.

The function appearing in the second term of (\ref{eq:Bloch_regulator}) (defined by the same procedure)
\begin{align} \label{eq:Rogers_dilogarithm}
\mcR(z) =& -\frac{1}{2} \int_0^z\left(\frac{\log(1-t)}{t}+\frac{\log(t)}{1-t}\right) \\ 
=&\ \Li_2(z)+\frac{1}{2}\log(z)\log(1-z). \nonumber
\end{align}
is called \emph{Rogers dilogarithm function}, where $\Li_2(z)$ is the dilogarithm function which is defined on $|z|<1$ by $\sum_{n=1}^{\infty} \frac{z^n}{n^2}$ and analytically continued to $\C-[1,\infty)$ using the expression \begin{equation} \label{eq:Li_2} \Li_2(z)=-\int_0^z\log(1-t)\frac{dt}{t} \end{equation}
and the standard branch of $\log(z)$; so, $\mcR(z)$ can be regarded as an analytic function on $\C- (-\infty,0]\cup [1,\infty)$. 
However, the Rogers dilogarithm itself (so, the two terms of the Bloch regulator individually) does not extend to $\C-\{0,1\}$ as a continuous single-valued function (in the case of the Bloch regulator, the ambiguities in the two terms in (\ref{eq:Bloch_regulator}) created by monodromy cancel each out); nevertheless, its restriction to the interval $(0,1)$ can be extended to $\R$ as a continuous (even analytic except at the two points $\{0,1\}$) real-valued function (\cite[II.1.A]{Zagier07}).

For a field $k$, the pre-Bloch group $\mcP(k)$ is the abelian group generated by symbols $[z]$, $z\in k-\{0,1\}$, subject to the relations:
\begin{align}
 \left[ x \right]-\left[ y \right] +\left[ \frac{y}{x} \right] - & \left[ \frac{1-x^{-1}}{1-y^{-1}} \right] +\left[ \frac{1-x}{1-y} \right]=0  \label{eq:five-term} \\
 [x]+[1-x]=&[x]+[\frac{1}{x}]=0.  \label{eq:6-torsion}
\end{align}
The first of these conditions is usually called the ``five term relation'' and is equivalent to
 \[ \sum_{i=0}^4 (-1)^i \left[ [z_0:\cdots:\hat{z}_i:\cdots:z_4] \right]=0 \]
for distinct $z_0,\cdots,z_4\in k-\{0,1\}$ (the inner brackets $[-]$ are cross-ratios).
Sometimes, one considers the group $\mcP'(k)$ similarly defined using the first condition only.
Then, it is known  (\cite[5.4,5.6,5.11]{DupontSah82})  that when the characteristic of $k$ is not $2$, the elements in $\mcP'(k)$ of the form (\ref{eq:6-torsion}) are torsions of exponent $6$ and that when $k$ is algebraically closed, $\mcP'(k)=\mcP(k)$.
The map $z\in \C-\{0,1\}\mapsto z\wedge(1-z)$ induces a homomorphism
\begin{equation} \label{eq:cx_Dehn_inv}
\delta_{\C}\, :\, \mcP(\C)\ra \C^{\times}\wedge_{\Z} \C^{\times}\ ;\  [z]\mapsto z\wedge (1-z)
\end{equation} 
(\cite[Lem.1.1]{Suslin90}). 
 The Bloch regulator (\ref{eq:Bloch_regulator}) satisfies the relation (\ref{eq:five-term}) (\cite[p.173]{DupontSah82}), so it induces a homomorphism (again denoted by $\rho$) 
 \[ \tilde{\rho}:\mcP(\C)\ra \Lambda^2_{\Z}(\C).\] 
 
In the following, we normalize $\tilde{\rho}$ by $\rho:=\Xi \circ \tilde{\rho}$, where $\Xi(z\wedge w)=2\pi i z\wedge 2\pi i w:  \Lambda^2_{\Z}(\C)\ra  \Lambda^2_{\Z}(\C)$: 
\begin{equation} \label{eq:normalized_Bloch-regulator}
\rho(z)= \log(z) \wedge \log(1-z) + (2\pi i)\wedge \frac{1}{2\pi i} \int_0^z\left(\frac{\log(1-t)}{t}+\frac{\log(t)}{1-t}\right) dt.
\end{equation}

We recall the commutative diagram with exact rows, where the upper row is (a part of) the Bloch-Wigner exact sequence \cite[(1.7)]{Dupont87}, \cite[Prop.4.17]{DupontSah82}:
\begin{equation} \label{eq:Bloch-Wigner_exact_seq}
\xymatrix{ 0 \ar[r] & H_3(\SL_2(\C)^{\delta},\Z)/(\Q/\Z) \ar[d]^c \ar[r]^(.7){\sigma} & \mcP(\C) \ar[d]^{\rho} \ar[r]^{\delta_{\C}} &  \Lambda^2_{\Z}(\C^{\times})  \ar[d]^{=} \\
 0 \ar[r] & \C/\Q(2) \ar[r]^{2\pi i \wedge \frac{1}{2\pi i}\mathrm{id}} & \Lambda^2_{\Z}(\C) \ar[r]^{e} & \Lambda^2_{\Z}(\C^{\times}) }
\end{equation}
Here, $H_3(\SL_2(\C)^{\delta},\Z)$ is the Eilenberg-MacLane integral group homology of the discrete group $\SL_2(\C)^{\delta}$ (for a Lie group $G$, $G^{\delta}$ denotes the same group with discrete topology); $\Q/\Z$ sits inside this homology group and see \cite[(1.4)]{Dupont87} for the definition of $\sigma$. Also, $e(z\wedge w)=\exp(z)\wedge \exp(w)$, and $c$ is induced by $\rho\circ \sigma$ by commutativity. To see the injection $2\pi i \wedge \frac{1}{2\pi i}\mathrm{id}:\C/\Q(2) \hra  \Lambda^2_{\Z}(\C)$, we observe (using that $\C$ is uniquely divisible) the canonical identifications
\begin{equation} \label{eq:CwedgeC}
\C\wedge_{\Z}\C=\C\wedge_{\Q}\C,\quad \C/\Z(k) \wedge_{\Z} \C/\Z(k) =\C/\Q(k) \wedge_{\Q} \C/\Q(k)\ (k\in\Z),
\end{equation}
(which hold by divisibility of $\C$, $\C/\Z(k)$) and $2\pi i \wedge \frac{1}{2\pi i} (\Q(2))=2\pi i\wedge \Q 2\pi i=\Q\cdot 2\pi i\wedge 2\pi i=0$.

Therefore, $\rho:\mcP(\C)\ra \Lambda^2_{\Z}(\C)$ induces a map from the \emph{Bloch group} $ \mcB(\C):=\mathrm{Ker}(\delta_{\C})$ to $\C/\Q(2)$:
 \begin{equation} \label{eq:Bloch_regulator_on_Bloch_gp}
 \rho: \mcB(\C) \ra \C/\Q(2),
\end{equation}
which we once again denote by $\rho$ and call (normalized) Bloch regulator.

For the unnormalized Bloch regulator $\tilde{\rho}$ (\ref{eq:Bloch_regulator}), there is an analogue of
the Bloch-Wigner exact sequence (\ref{eq:Bloch-Wigner_exact_seq}), where the bottom exact sequence is replaced by $0\ra \C/\Q\stackrel{1\wedge \mathrm{id}}{\ra} \Lambda_{\Z}^2(\C) \stackrel{\tilde{e}}{\ra} \Lambda_{\Z}^2(\C^{\times})$ with $\tilde{e}(z\wedge w)=\exp(2\pi iz)\wedge \exp(2\pi iw)$ (\cite[(1.7)]{Dupont87}); so, we obtain the unnormalized Bloch regulator $\tilde{\rho}: \mcB(\C) \ra \C/\Q$.
It is easy to see that there exists the relation
\begin{equation} \label{eq:rho=(2pii)^2tilde{rho}}
\rho=(2\pi i)^2 \tilde{\rho}.
\end{equation}
under the identification $\C/\Q\isom \C/(2\pi i)^2\Q:x\mapsto (2\pi i)^{2}x$.

Also, via $\sigma$, we identify $\rho$ (\ref{eq:Bloch_regulator_on_Bloch_gp}) as an element of 
\[ c\in H^3(\SL_2(\C)^{\delta},\C/\Q(2))=\Hom(H_3(\SL_2(\C)^{\delta},\Z),\C/\Q(2)).\]

The map $\delta_{\C}$ is often called \emph{complex Dehn invariant} map, because the usual Dehn invariant of the ideal tetrahedron $\Delta(z):=\Delta(\infty,0,1,z)$ with shape parameter $z=[\infty:0:1:z]$ equals twice the ``imaginary part'' of $\delta_{\C}([z])$; the ``imaginary part'' of an element of $\C/\Z(1)\wedge\C/\Z(1)$ refers to the component corresponding to the summand $(\R\otimes \R/\Z(1))$ in the decomposition
\[ \C/\Lambda\wedge\C/\Lambda \cong (\R\oplus \R/\Lambda)\wedge  (\R\oplus \R/\Lambda)  \cong \left[(\R\wedge \R)\oplus (\R/\Lambda\wedge \R/\Lambda)\right] \oplus  (\R\otimes \R/\Lambda) \]  
with $\Lambda=\Z(1)$ (\cite[2.5]{Neumann98}).

For a complete hyperbolic three-manifold $M$ of finite volume, let $\rho_0:\pi_1(M)\ra \PSL_2(\C)$ be the holonomy representation, unique up to conjugacy, of the complete hyperbolic structure of $M$: $M\simeq \mathbb{H}^3/\rho_0(\pi_1(M))$. The \emph{invariant trace field} of $M$ is the subfield of $\C$ generated by squares of the traces of elements of $\rho_0(\pi_1(M))\subset \PSL_2(\C)$; this field is known to be a subfield of $\Qb\subset\C$.

%%%%%%%%%%%%%%%%%%%%
\begin{thm} \label{thm:2Complex_volume=Bloch_regulator}
Let $M$ be a complete hyperbolic three-manifold of finite volume. 

(1) The element $\sum_i[z_i]\in \mcP(\C)$ attached to any ideal triangulation $M= \Delta(z_1)\cup \cdots \cup \Delta(z_N)$ of $M$ has zero complex Dehn invariant: $\delta_{\C}(\sum_i [z_i])=0$.

(2) The element $\sum_i [z_i]$ of $\mcB(\C)$ is independent of the choice of ideal triangulation of $M$; we denote it by $\beta(M)$. Moreover, the image of $\beta(M)$ in $\mcB(\C)\otimes\Q$ lies in $\mcB(k(M))\otimes\Q$.

(3) We have
\[ 2\Vol_{\C}(M)=\sqrt{-1} \rho(\beta(M)) \mod \sqrt{-1}\pi^2\Q,\]
where $\rho$ is the (normzalized) Bloch regulator (\ref{eq:Bloch_regulator_on_Bloch_gp}). 
So, by Theorem \ref{thm:PSL_2-CS-inv=C-vol:cusped},
\begin{equation} \label{eq:2PSL_2-CS-inv=rho(beta(M))}
2\CS_{\PSL_2}(M)= \rho(\beta(M))\ \text{ in }\C/\Q(2).
\end{equation} 
\end{thm}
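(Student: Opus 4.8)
The plan is to establish the three parts in order, with part (3) reducing to a known comparison between the Bloch regulator and the complex volume.

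For part (1), I would use the ideal triangulation $M = \Delta(z_1)\cup\cdots\cup\Delta(z_N)$ to write the relevant element $\sum_i [z_i]\in\mcP(\C)$. The vanishing of the complex Dehn invariant $\delta_{\C}(\sum_i[z_i])$ should follow from Lemma \ref{lem:Neumann92_(3.10)} (the tensor-multiplication refinement of the Neumann--Zagier symplectic relation). Indeed, setting $x = y = \msZ^0$ in that lemma (after checking $R(\msZ^0)^t = 0$ in $\C/\pi\sqrt{-1}\Z$, which follows from the consistency relations \eqref{eq:CCequation2}), we obtain a relation of the form $\msZ^0 J_{2N}\wedge (\msZ^0)^t = \frac12(C\msZ^0)J_{2h}\wedge C(\msZ^0)^t$ after skew-symmetrizing and passing to $\C^{\times}\wedge\C^{\times}$. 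The left-hand side, after applying the exponential map $e$, records $\sum_i z_i\wedge(1-z_i)$, i.e. $\delta_{\C}(\sum_i[z_i])$ (this is where one must carefully track that the matrix $R$ is built precisely so that its rows encode the combinations $\log z_i$, $\log(1-z_i)$ around each edge, cf. \cite[\S2]{Neumann92}); the right-hand side involves $C\msZ^0$, which by the cusp relations lies in $\pi\sqrt{-1}\Z^{2h}$, hence exponentiates to roots of unity, so the wedge product dies in $\C^{\times}\wedge\C^{\times}$ (or, more precisely, is a torsion element that vanishes after $\otimes\Q$). This forces $\delta_{\C}(\sum_i[z_i]) = 0$.

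For part (2), independence of the triangulation and rationality over $k(M)$ are essentially due to Neumann--Yang \cite{NeumannYang99}: any two ideal triangulations of $M$ are related by a sequence of $2$--$3$ Pachner moves (possibly after subdivision), and each such move changes $\sum_i[z_i]$ by a five-term relation \eqref{eq:five-term}, hence leaves the class in $\mcP(\C)$ (a fortiori in $\mcB(\C)$, using part (1)) unchanged. The statement that $\beta(M)\otimes\Q$ lies in $\mcB(k(M))\otimes\Q$ uses that the shape parameters of the geometric solution to the gluing equations generate (after taking the field of definition) the invariant trace field, together with a Galois-descent argument: the Galois conjugates of $\beta(M)$ differ from $\beta(M)$ by torsion, since the complex volume (and hence the regulator value) is a genuine invariant up to the relevant ambiguity. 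I would cite \cite{NeumannYang99} and \cite{Neumann98} for these facts rather than reprove them.

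For part (3), this is the heart, but it too reduces to known results. The identity $2\Vol_{\C}(M) = \sqrt{-1}\,\rho(\beta(M))$ modulo $\sqrt{-1}\pi^2\Q$ is, up to the explicit normalization bookkeeping, the main theorem of Neumann--Yang \cite{NeumannYang99} (building on the Bloch--Wigner exact sequence \eqref{eq:Bloch-Wigner_exact_seq} and the Dupont--Sah description of the Bloch regulator as the class $c\in H^3(\SL_2(\C)^{\delta},\C/\Q(2))$). The argument runs: the imaginary part of $\rho(\beta(M))$ computes the volume $\Vol(M)$ (by the classical formula expressing hyperbolic volume as a sum of Bloch--Wigner dilogarithms of the shape parameters), while the real part computes $\pi^2 cs(M) = \CS(M)$ modulo $\pi^2\Q$ (this is precisely the content of \cite{NeumannYang99}, relating the real part of the Bloch regulator to the Chern--Simons invariant via the Cheeger--Chern--Simons class and the extended Bloch group). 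Combining with Theorem \ref{thm:PSL_2-CS-inv=C-vol:cusped}, which gives $\sqrt{-1}\,\CS_{\PSL_2}(M) = \Vol_{\C}(M)$, one divides by $\sqrt{-1}$ and obtains $2\CS_{\PSL_2}(M) = \rho(\beta(M))$ in $\C/\Q(2)$, the extra factor $(2\pi i)^2$ in \eqref{eq:normalized_PSL_2(C)-CS.inv} matching the normalization $\rho = (2\pi i)^2\tilde\rho$ of \eqref{eq:rho=(2pii)^2tilde{rho}}.

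\textbf{Main obstacle.} The delicate point is not any single step but the consistent propagation of normalizations and ambiguity moduli ($\Q$ versus $\Q(2)$, $\pi^2$ versus $\sqrt{-1}\pi^2$, factors of $2$, $4$, $2\pi i$) across the three different sign/normalization conventions already catalogued in Remark \ref{rem:different_conventions}; in particular, verifying that the mod-$\sqrt{-1}\pi^2\Q$ statement in (3) sharpens correctly to the mod-$\Q(2)$ statement in \eqref{eq:2PSL_2-CS-inv=rho(beta(M))} after invoking Theorem \ref{thm:PSL_2-CS-inv=C-vol:cusped} requires care, since $\CS_{\PSL_2}(M)$ is only defined modulo $\Q(2)$ to begin with. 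A secondary subtlety is ensuring part (2)'s rationality claim is used only after tensoring with $\Q$, as the integral Bloch group carries torsion that is not controlled by the regulator.
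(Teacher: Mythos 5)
Your proposal follows the same route as the paper for all three parts. Parts (2) and (3) are handled identically: both defer to Neumann--Yang \cite{NeumannYang99} (Prop.\ 4.3, Thm.\ 1.2, Thm.\ 1.3) and to Neumann \cite{Neumann92}, with the paper additionally isolating the three ingredients (Cheeger--Chern--Simons class = $\frac12\rho$ via Dupont, Yoshida's analytic formula, and Neumann's dilogarithm volume formula) that you also gesture at, and your closing normalization bookkeeping for passing from $\sqrt{-1}\CS_{\PSL_2}(M)=\Vol_{\C}(M)$ to $2\CS_{\PSL_2}(M)=\rho(\beta(M))$ matches the paper.

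The one genuine gap is in part (1). Lemma \ref{lem:Neumann92_(3.10)} requires $Rx^t = Ry^t = 0$ \emph{exactly} in $\C^{N}$; you propose to set $x=y=\msZ^0$ after only checking $R(\msZ^0)^t=0$ modulo $\pi\sqrt{-1}\Z$. But the consistency relations give $R(\msZ^0)^t = \pi\sqrt{-1}D_1$, which is generally nonzero, so the hypothesis of the lemma fails for $\msZ^0$ itself. The paper's fix is to pick a rational solution $Q\in\Q^{2N}$ of $UQ^t = D$ and set $W := \msZ^0 - \pi\sqrt{-1}\,Q$. Then $RW^t = 0$ and $CW^t = 0$ hold exactly (the latter because the entries of $CW^t$ are the log-holonomies of the peripheral curves, which vanish for the complete structure), so the lemma applies to $W$ and immediately gives $WJ_{2N}\otimes W^t = 0$ in $\C\otimes_{\Q}\C$; since $W\equiv \msZ^0$ in $\C/\Q(1)$, one obtains the vanishing of $Z^0_1\wedge(Z^0_2)^t$ in $\C/\Q(1)\wedge\C/\Q(1)$ cleanly, \emph{before} exponentiating. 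Your alternative reason for the right-hand side dying --- that $C(\msZ^0)^t\in\pi\sqrt{-1}\Z^{2h}$ exponentiates to roots of unity --- is true but only kicks in after passing to $\C^\times\wedge\C^\times$, and it does not repair the invalid application of the lemma to $\msZ^0$ in the first place. You need the $W$ construction (or an equivalent argument that the lemma descends to $\C/\Q(1)$) to close this step.
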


\begin{proof}
(1) This has been known for long, first proved by Thurston (see Introduction of  \cite{NeumannYang99}), and different proofs have been given, e.g. \cite[$\S$5]{NeumannYang99}. 
We give a proof due to Neumann \cite[p.255-256]{Neumann92} (using Lemma \ref{lem:Neumann92_(3.10)}). Let $Z^0_1:=(\log z^0_1,\cdots, \log z^0_N)\in \C^N$, $Z^0_2:=(\log (1-z^0_1),\cdots, \log (1-z^0_N))\in \C^N$. Choose a rational solution $Q\in \Q^{2N}$ to (\ref{eq:CCequation}), and put $W:=Z^0-Q\pi\sqrt{-1}\in \C^{2N}$; so $W=Z^0$ in $\C/\Q(1)$.
Then, adopting our matrix tensor/wedge product notation, 
\[ Z^0_1 \wedge (Z^0_2)^t =Z^0 J_{2N}\otimes (Z^0)^t= W J_{2N}\otimes W^t \stackrel{(\ast)}{=}\ \frac{1}{2} (CW^t)^tJ_{2h}\otimes CW^t =0\]
Here, $(\ast)$ is Lemma \ref{lem:Neumann92_(3.10)} and the last equality follows from the fact that the entries of $CW^t\in \C^{2h}$ are holonomies of the meridians and the longitudes (\cite[(2.6), (3.1), (3.2)]{Neumann92}), so are zeros, as we assume $M$ to be complete. Now, in view of (\ref{eq:CwedgeC}), this proves the claim since $\delta_{\C}(\sum_i [z_i])$ is the image of $Z^0_1 \wedge (Z^0_2)^t$ under the homomorphism $\exp\wedge\exp: \C/\Z(1)\wedge\C/\Z(1)\ra \C^{\times}\wedge\C^{\times}$ .

(2) The first statement is Proposition 4.3 of \cite{NeumannYang99}; see also \cite{Cisneros-MolinaJones03} for another proof based on a different construction of $\beta(M)$. The second statement is Theorem 1.2. of \textit{ibid.}

(3) This is Theorem 1.3 of \cite{NeumannYang99}, which was proved (with a different formulation) in \cite[Thm.1 $\&$ (3.8)]{Neumann92}. It seems worth pointing out three main ingredients of that proof: (i) the fact that the Cheeger-Chern-Simon class $\hat{C}_2\in H^3(\PSL_2(\C)^{\delta},\C/\Q(2))$ (associated with the $2$nd Chern polynomial $C_2$) equals half the Bloch regulator $\rho$  \cite[Thm.1.8]{Dupont87}, (ii) Yoshida's analytic formula for complex volumes \cite[Thm.2]{Yoshida85}, and finally (iii)  Neumann's explicit description of the (Riemannian) volume of (not-necessarily complete) hyperbolic three-manifolds in terms of shape parameters and the values of ``Bloch-Wigner dilogarithm (the imaginary part of the Bloch regulator) at these points, together with a rational solution to (\ref{eq:CCequation}) \cite{Neumann92}. 
\end{proof}

For later use, we recall one functional equation (among many) of $\tilde{\rho}(z)$ (equiv. of $\rho(z)$):
\begin{align}
\Li_2(1-z)=&-\Li_2(z)+\frac{\pi^2}{6} - \log(z)\otimes \log(1-z), \nonumber \\
\tilde{\rho}(1-z)=& -\tilde{\rho}(z) \quad (z\in \C-(-\infty,0]\cup [1,\infty)) \label{eq:functional_eq_Bloch-regulator}
\end{align}
The first relation is well-known \cite[$\S$2]{Zagier07} and the second one is deduced from the first one.

%%%%%%%%%%%%%%%%%%%%%%%%%%%%%%%%%%%%%%%%
%%%%%%%%%%%%%%%%%%%%%%%%%%%%%%%%%%%%%%%%
\section{Big and skew-symmetric periods of framed mixed Tate motives}

%%%%%%%%%%%%%%%%%%%%
\subsection{Period matrix of splitted Hodge-Tate structure}

A period matrix of a pure $\Q$-Hodge structure $H$ is a transition matrix for two bases of $H_{\C}$, one a $\Q$-basis $\{v_i\}$ of $H$ and the other a $\C$-basis $\{e_i\}$ of $H_{\C}$, often the latter basis being adapted to the Hodge filtration in the sense that there exists a sequence $\{r_1,r_2,\cdots,r_m\}$ of positive integers such that the flag of subspaces in the Hodge filtration matches that of the subspaces spanned by the increasing subsets $\{e_{1},\cdots, e_{r_1+\cdots +r_m}\}_{m}$ of basis elements. When $H$ is the singular cohomology space $H=H^i_{sing}(X(\C),\Q)$ (of some degree) of a smooth projective variety (or a pure motive) over a field $k\subset \C$, one usually chooses a basis of the algebraic deRham cohomology group $H^i_{\dR}(X/k)$, regarded as a basis of $H_{\C}$ via the canonical isomorphism $\Phi:H^i_{sing}(X(\C),\Q)\otimes_{\Q}\C=H^i_{\dR}(X/k)\otimes_k\C$. Namely, the period matrix equals the matrix expression with respect to some bases for the isomorphism $\Phi$, an isomorphism defined over $\C$ between two spaces $H$, $H^i_{\dR}(X/k)$ which are defined over different subfields $\Q,k$ of $\C$. 

For a pure Hodge structure, any period matrix thus defined determines the Hodge structure. While there does not seem to exist an efficient matrix expression for general mixed Hodge structures, the situation for Hodge-Tate structure is just as good as in pure cases, as we explain now. The basic idea is to exploit splittings, defined over $\Q$ and $\C$, of the weight filtration of Hodge-Tate structure.

Let $E\subset E'$ be a field extension. An $E'$-\emph{splitting} of an increasing filtration $W^{\bullet}H$ on a $E$-vector space $H$ is a choice of an isomorphism $\varphi:\bigoplus_k \gr^W_k H\otimes_EE' \isom H_{E'}$ compatible with the weight filtrations on both sides (i.e. for every $k\in\Z$, $\varphi$ maps $\bigoplus_{l\leq k} \gr^W_l H\otimes_EE' $ to $W_kH\otimes_EE'$ and the induced (by quotient) automorphism of $\gr^W_k H\otimes_{E}E'$  is the identity map). Any $E$-splitting $\varphi$ gives an $E'$-splitting $\varphi_{E'}$ by scalar extension.
The same discussion also applies to decreasing filtration.

%%%%%%%%%%%%%%%%%%%%
\begin{lem} \label{lem:splitting_of_HTS}
The following conditions on a mixed $\R$-Hodge structure $H$ are equivalent:

(i) $H$ is an ($\R$-)Hodge-Tate structure, i.e. for all $k\in\Z$, $\gr^W_{2k-1}=0$ and $\gr^W_{2k} H$ is a direct sum of copies of $\R(-k)$.

(ii) For every $p\in\Z$, the canonical map 
\begin{equation} \label{eq:HT(ii)}
F^pH_{\C}\cap W_{2p}H_{\C}\ra \gr^W_{2p}H\otimes_{\R}\C 
\end{equation}
 is an isomorphism.

(iii) For every $p\in\Z$, the canonical map 
\begin{equation} \label{eq:HT(iii)} 
F^pH_{\C}\cap W_{2p}H_{\C}\ra \gr^{p}_FH_{\C}
\end{equation} 
is an isomorphism.

(iv) The canonical map
\begin{equation} \label{eq:HT(iv)}
\bigoplus_p F^pH_{\C}\cap W_{2p}H_{\C} \ra H_{\C}
\end{equation}
is an isomorhpism.

(iv') There exists a decomposition:
\begin{equation*} \label{eq:HT(iv')}
H_{\C}=\bigoplus_p (H_{\C})_{p}
\end{equation*}
such that the complex weight and Hodge filtrations are given by
\begin{align} 
W_{2k} &=\bigoplus_{p\leq k} (H_{\C})_{p},  \label{eq:can_SP_WF} \\
F^{k} &=\bigoplus_{p\geq k} (H_{\C})_{p}. \label{eq:can_SP_HF}
\end{align}
\end{lem}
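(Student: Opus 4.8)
The plan is to route everything through Deligne's canonical bigrading of the underlying complex vector space of a mixed Hodge structure. Recall that for any $\R$-mixed Hodge structure $H$ one has a functorial decomposition $H_{\C}=\bigoplus_{p,q}I^{p,q}$ with $W_nH_{\C}=\bigoplus_{p+q\leq n}I^{p,q}$ and $F^pH_{\C}=\bigoplus_{r\geq p}I^{r,s}$, such that the projection $W_nH_{\C}\thra\gr^W_nH_{\C}$ restricts to an isomorphism $\bigoplus_{p+q=n}I^{p,q}\isom\gr^W_nH_{\C}$ identifying $I^{p,q}$ with the $(p,q)$-Hodge component of $\gr^W_{p+q}H$; since $H$ carries a real structure, one moreover has $\dim_{\C}I^{p,q}=\dim_{\C}I^{q,p}$. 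I would first record the elementary fact that a pure $\R$-Hodge structure concentrated in Hodge bidegree $(k,k)$ is necessarily $\cong\R(-k)^{\oplus d}$: every $\R$-subspace is then automatically a sub-Hodge structure, so such a structure is semisimple with unique simple constituent $\R(-k)$. Granting this, condition (i) translates exactly into the diagonality condition $I^{p,q}=0$ whenever $p\neq q$, since $\gr^W_{2k-1}H=0$ for all $k$ says $I^{p,q}=0$ for $p+q$ odd and $\gr^W_{2k}H\cong\R(-k)^{\oplus d_k}$ says $I^{p,q}=0$ for $p+q=2k$, $p\neq q$.

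Next I would show that diagonality forces all of (ii)--(iv') simultaneously. Under diagonality one computes $F^pH_{\C}\cap W_{2p}H_{\C}=(\bigoplus_{r\geq p}I^{r,r})\cap(\bigoplus_{r\leq p}I^{r,r})=I^{p,p}$, and this subspace maps isomorphically onto $\gr^W_{2p}H\otimes_{\R}\C$ (giving (ii)) and onto $\gr^p_FH_{\C}$ (giving (iii)); summing over $p$ gives $\bigoplus_pF^pH_{\C}\cap W_{2p}H_{\C}=\bigoplus_pI^{p,p}=H_{\C}$ (giving (iv)); and setting $(H_{\C})_p:=I^{p,p}$ yields the decomposition of (iv'), with (\ref{eq:can_SP_WF}) and (\ref{eq:can_SP_HF}) immediate from $W_{2k}H_{\C}=\bigoplus_{r\leq k}I^{r,r}$ and $F^kH_{\C}=\bigoplus_{r\geq k}I^{r,r}$. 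Together with the first paragraph this gives (i)$\Rightarrow$(ii),(iii),(iv),(iv').

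For the converses I would show each of (ii), (iii), (iv), (iv') implies diagonality, and hence (i) via the semisimplicity remark. The implication (iv')$\Rightarrow$(iv) is immediate from $F^pH_{\C}\cap W_{2p}H_{\C}=(H_{\C})_p$. For (iv)$\Rightarrow$ diagonality, observe $\sum_pF^pH_{\C}\cap W_{2p}H_{\C}=\sum_p\bigoplus_{r\geq p,\,r+s\leq 2p}I^{r,s}\subseteq\bigoplus_{r\geq s}I^{r,s}$, so (iv) forces $I^{r,s}=0$ for $s>r$, whereupon $\dim I^{r,s}=\dim I^{s,r}$ forces $I^{r,s}=0$ for $s<r$ as well. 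For (ii)$\Rightarrow$ diagonality, injectivity of the map in (ii) at level $p$ says $F^pH_{\C}\cap W_{2p-1}H_{\C}=\bigoplus_{r\geq p,\,r+s\leq 2p-1}I^{r,s}=0$; taking $p=r$ kills every $I^{r,s}$ with $s<r$, and symmetry finishes. For (iii)$\Rightarrow$ diagonality, surjectivity of the map in (iii) gives $F^pH_{\C}=(F^pH_{\C}\cap W_{2p}H_{\C})+F^{p+1}H_{\C}$, which forces $I^{p,s}=0$ for $s>p$, and symmetry again finishes.

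The step I expect to be the main obstacle is exactly this converse package, and within it the killing of the off-diagonal pieces $I^{p,q}$ immediately adjacent to the diagonal (e.g.\ $q=p-1$): the vanishing statements coming directly from injectivity or surjectivity of the map in (ii)/(iii) constrain only one side of the diagonal and, on that side, typically miss the adjacent layer unless one also invokes the conjugation symmetry $\dim I^{p,q}=\dim I^{q,p}$, which is the feature special to $\R$- (equivalently $\Q$-) mixed Hodge structures. The only other point needing a little care, though routine, is the semisimplicity remark upgrading ``$\gr^W_{2k}H$ of Hodge type $(k,k)$'' to ``$\gr^W_{2k}H$ a direct sum of copies of $\R(-k)$'', which is precisely what makes (i) equivalent to the purely filtration-theoretic conditions (ii)--(iv').
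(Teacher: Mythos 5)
Your proof is correct, and it takes a genuinely different route from the paper's. The paper works directly with the two filtrations: for (i)$\Rightarrow$(ii) it proves surjectivity from $\gr^W_{2p}=F^p\gr^W_{2p}$ and injectivity by a descending chain $F^{p+1}\cap W_{2p}\subset W_{2p-2}\subset\cdots$; for (ii)$\Rightarrow$(iii) it builds the direct-sum decomposition of $F^p$ by hand; for (iii)$\Rightarrow$(ii) it invokes the duality between increasing and decreasing filtrations; and it declares (ii)$\Rightarrow$(i) and (ii)$\Leftrightarrow$(iv)$\Leftrightarrow$(iv$'$) to be obvious. You instead route every condition through Deligne's canonical bigrading $H_{\C}=\bigoplus I^{p,q}$ and reduce all of (i)--(iv$'$) to the single statement ``$I^{p,q}=0$ for $p\neq q$.'' This is arguably cleaner and more uniform, and it has the side benefit of actually spelling out the implications the paper waves off as obvious: in particular, your (ii)$\Rightarrow$diagonality and (iv)$\Rightarrow$diagonality arguments supply the missing details for (ii)$\Rightarrow$(i) and (iv)$\Rightarrow$(i). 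The cost is that you take the existence and standard properties of Deligne's bigrading (including $\dim I^{p,q}=\dim I^{q,p}$, the one feature tied to the real structure) as a prerequisite, whereas the paper's argument is self-contained from the filtration axioms, though it too ultimately relies on the Hodge structure on $\gr^W$ (e.g.\ $F^{p+1}\gr^W_{2p}=0$) in the same way. One small remark: in your final paragraph you worry that the vanishing forced directly by (ii)/(iii) ``typically misses the adjacent layer'' $q=p\mp1$ on the constrained side; in fact your own computations already capture the adjacent layer there (for (ii), taking $p=r$ kills all $I^{r,s}$ with $s\leq r-1$; for (iii), surjectivity kills all $I^{p,s}$ with $s\geq p+1$), so the conjugation symmetry is needed only to flip across the diagonal, not to reach the neighboring band.
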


\begin{proof}
For short, we write $F^p$, $W_{2p}$ and $\gr^W_{2p}$ instead of $F^pH_{\C}$, $W_{2p}H_{\C}$ and $\gr^W_{2p}H_{\C}$, respectively.

(i) $\Rightarrow$ (ii): 
We have $F^p\gr^W_{2k}:=\mathrm{Im}(F^p\cap W_{2k} \ra \gr^W_{2k})$. Since $\gr^W_{2p}=F^p\gr^W_{2p}$, the canonical map $F^p\cap W_{2p} \ra \gr^W_{2p}$ is surjective. Since $F^{p+1}\gr^W_{2p}=0$, we have $F^{p+1}\cap W_{2p}\subset W_{2p-2}$ for all $p\in\Z$ (so that $F^{p+1}\cap W_{2p}=F^{p+1}\cap W_{2p-2}$). By the same reason, $F^{p+1}\cap W_{2p-2}\subset F^{p}\cap W_{2p-2} \subset W_{2p-4}$, and we obtain
\[ F^{p+1}\cap W_{2p}=F^{p+1}\cap W_{2p-2}=F^{p+1}\cap W_{2p-4} \ (\subset F^{p-1}\cap W_{2p-4}\subset W_{2p-6}). \]
Continuing with this process, we see that $F^{p+1}\cap W_{2p}=F^{p+1}\cap W_{2(p-N)}$ for every $N\geq1$, and taking $N\gg1$, that the canonical map $F^p\cap W_{2p} \ra \gr^W_{2p}$ is injective. 

The implication (ii) $\Rightarrow$ (i) and the equivalences (ii) $\Leftrightarrow$ (iv) $\Leftrightarrow$ (iv') are obvious.

(ii) $\Rightarrow$ (iii): Let $k\in\Z_{\geq 0}$. Since the natural map $F^{p-k}\cap W_{2p} \ra \gr^W_{2p}$ has the same image as its subspace $F^p\cap W_{2p}$ by assumption, $F^{p-k}\cap W_{2p}$ is spanned by the two subspaces $F^p\cap W_{2p}$, $F^{p-k}\cap W_{2p-2}$ whose intersection $F^p\cap W_{2p-2}$ is zero, again by assumption. Namely, we have a direct sum decomposition: 
\[ F^{p-k}\cap W_{2p} =F^p\cap W_{2p} \oplus F^{p-k}\cap W_{2p-2}.\] 
From this, we deduce that for all $p\in\Z, k\in\Z_{\geq0}$, 
\begin{align*} 
F^{p}\cap W_{2(p+k)} & = F^{p+k}\cap W_{2(p+k)} \oplus F^{p}\cap W_{2(p+k-1)} \\
& = F^{p+k}\cap W_{2(p+k)} \oplus F^{p+k-1}\cap W_{2(p+k-1)} \oplus F^{p}\cap W_{2(p+k-2)} =\cdots \\
& = F^{p+k}\cap W_{2(p+k)} \oplus F^{p+k-1}\cap W_{2(p+k-1)} \oplus \cdots \oplus F^{p}\cap W_{2p}.
\end{align*}
Taking $k$ sufficiently large, we obtain
\[ F^p= \bigoplus_{k\geq0} F^{p+k}\cap W_{2(p+k)}.\]
which proves the required implications. 

(iii) $\Rightarrow$ (ii): Both are statements about relative position of two filtrations $F^{\bullet}$, $(W_{\bullet})_{\C}$ defined over $\C$ of a $\C$-vector space $H_{\C}$ (for $(W_{\bullet})_{\C}$, we can ignore odd-degree subspaces and reindex the even-degree subspaces so that the new $W_{k}$ is the original $W_{2k}$). Accordingly one can resort to the symmetry of the statements, by using the standard trick of changing a decreasing (resp. increasing) filtration to an increasing (resp. decreasing) filtration: $F'_{k}:=W^{-2k}$, $W'^{2p}:=F_{-p}$.
\end{proof}

In particular, the weight filtration of every $\R$-Hodge-Tate structure 
has a canonical $\C$-splitting: 
\begin{equation} \label{eq:ST}
S_{HT}:\bigoplus_p \gr^W_{2p}H\otimes_{\R}\C \lisom \bigoplus_p (H_{\C})_{p}=H_{\C}.
\end{equation}
Note that while one can always find a (non-canonical) $\R$-splitting of a filtered $\R$-vector space, this $\C$-splitting of a $\R$-Hodge-Tate structure, being constructed by means of Hodge filtration, is canonical.

\begin{defn} \label{defn:good_basis}
Let $V=\bigoplus_{k\in\Z} V_k$ be a graded $\Q$-vector space. A \emph{good} basis of $V$ is a $\Q$-basis $\{v_k\}$ of $V$ adapted to the splitting (i.e. each $v_k$ is an element of some $V_{n_k}$), and a \emph{fine} change of good basis is a transformation $\varphi$ from one good basis $\{v_k\}$ to another good basis $\{\varphi(v_k)\}$ such that for all $k\in\Z$, $\varphi(v_k)\in V_{n_k}$ when $v_k \in V_{n_k}$.
\end{defn}

%%%%%%%%%%%%%%%%%%%%
\begin{defn} \label{defn:Period_matrix}
Let $H$ be a $\Q$-Hodge-Tate structure and $\varphi:\bigoplus_k \gr^W_{2k} H \isom H$ a $\Q$-splitting of the weight filtration of $H$. For each $k\in\Z$, choose a $\Q$-basis $\{v^{(k)}_i ; i=1,\cdots, r_k\}\ (r_k\in\Z_{\geq 0})$ of $\gr^W_{-2k}H$, so that $\{v^{(k)}_i\}_{i,k}$ is a good basis of $\WgrbH:=\bigoplus_k \gr^W_{2k} H$. 

(1) The \emph{period matrix} $\mcM$ of the splitted $\Q$-Hodge-Tate structure $(H,\varphi)$ with respect to a good basis $\{v^{(k)}_i\}_{i,k}$ of $\gr^W_{\bullet}H$ is the transition matrix expressing the basis $\mcB_1=\{\lambda^{(k)}_i\}$ of $H_{\C}$ in terms of the basis $\mcB_2=\{e^{(k)}_i\}$, where
\[  \lambda^{(k)}_i:=\varphi(v^{(k)}_i)\in H,\quad e^{(k)}_i:=(2\pi i)^{-k}S_{HT}(v^{(k)}_i)\in H_{\C} \]
That is, $\mcM$ is the matrix%%
\footnote{Our convention in this section for matrix expression $M$ of an endomorphism $\phi\in\End(V)$ with respect to a basis $\{v_1,\cdots,v_n\}$ of $V$ is that $\phi(v_i)=\sum_j M_{ij}v_j$ (i.e. transpose of the usual matrix expression). See Example below.}
of the \emph{period operator}
\begin{equation} \label{defn:period_operator}
\Phi(\varphi):=J_H\circ S_{HT}^{-1}\circ\varphi_{\C} \ \in \Aut(\WgrbH_{\C})
\end{equation}
with respect to the (same) basis $\{v^{(k)}_i\}_{i,k}$, where $J_H$ is the endomorphism of $\WgrbH_{\C}$ that acts as scalar multiplication with $(2\pi i)^{k}$ on $\gr^W_{-2k}H_{\C}$ for all $k\in\Z$.

(2) The \emph{Goncharov period matrix} $\mcMG$ of is the matrix of the \emph{Goncharov period operator}
\begin{equation} \label{defn:Goncharov_period_operator}
\Phi(\varphi)_{\Gon}:=S_{HT}^{-1}\circ\varphi_{\C} \ \in \Aut(\WgrbH_{\C})
\end{equation}
with respect to the basis $\{v^{(k)}_i\}_{i,k}$ (or equiv. the transition matrix expressing the basis $\mcB_1=\{\lambda^{(k)}_i\}$ of $H_{\C}$ in terms of the basis $\mcB_2'=\{S_{HT}(v^{(k)}_i)\}$.)
\end{defn}
 
%%%%%%%%%%%%%%%%%%%%
\begin{rem} \label{rem:correct_big_period}
Our terminology is different from that used by Goncharov \cite{Goncharov99}.
What we call period matrix and denote by $\mcM$ is called canonical period matrix and denoted by $\tilde{\mcM}$ by him, whereas what we call Goncharov period matrix/operator is called period matrix/operator by him: 

\begin{table} [h]
\centering
\small
%\resizebox{\columnwidth}{!}{%
\begin{tabular}{clcl}
We & & Goncharov \\
Period operator/matrix & $\mcM$ & Canonical period operator/matrix & $\tilde{\mcM}$ \\
Goncharov period operator/matrix & $\mcM_{\Gon}$ & Period operator/matrix & $\mcM$
\end{tabular}%}
\end{table}

Our choice of names/notations are due to two things: first,  the ``period matrix' which is more commonly used (especially, to describe Hodge-Tate structures, as explained below), thus seems to be called by this conventional name is the period matrix in our terminology, and as such there is a danger of confusion if one follows Goncharov's terminology. Secondly, Goncharov period operator(matrix) has its own significance and uses, thus we believe it to merit a distinguished name.
\end{rem}

 For our discussion of periods, it is convenient to use the following notational rules for matrices
%%%%%%%%%%%%%%%%%%%%
\begin{notation}
(i) For $A,B\in \mrM_{N\times N}(\C)$, $A\cdot B$ is the matrix multiplication. 

(ii) For $A=(A_{ij})\in \mrM_{N\times N}(\C)$ and $x=(x_1,\cdots,x_N)^t \in \mrM_{N\times 1}(W)$  for a $\C$-vector space $W$ (such as $\WgrbH_{\C}$), $A\cdot x\in \mrM_{N\times 1}(W)$ is the matrix multiplication, i.e. $(A\cdot x)_{i}=\sum_jA_{ij}x_j$. 

(iii) For $\phi\in \End(W)$ and $x=(x_1,\cdots,x_N)^t \in \mrM_{N\times 1}(W)$, $\phi(x):=(\phi(x_1),\cdots,\phi(x_N))^t \in \mrM_{N\times 1}(W)$. 
\end{notation}

\textbf{Example} Suppose $W$ is a $\C$-vector space with a basis $\mcB=\{w_1,\cdots,w_N\}$ and  $\phi_1,\phi_2\in\End(W)$. The matrix $M_i$ of $\phi_i$ with respect to $\mcB$ in our convention is such that for $w:=(w_1,\cdots,w_N)^t\in \mrM_{N\times 1}(W)$, $\phi_i(w)=M_i\cdot w$, and thus 
\begin{equation} \label{eq:row-matrices_involution}
\phi_1\circ\phi_2(w)=\phi_1(M_2\cdot w)=M_2\cdot \phi_1(w)=M_2\cdot M_1\cdot w.
\end{equation}
Here we are using the notations introduced above.

Now, writing simply $\{v^{(k)}_i\}_{i,k}=\{v_1,\cdots,v_N\}$ and setting $V:=(v_1,\cdots,v_N)^t\in M_{N\times 1}(\WgrbH_{\C})$, by definition of $\mcM$, we have $\varphi_{\C}(V)=\mcM\cdot S_{HT}\circ J_H^{-1}(V)$. Since $\mcM\cdot S_{HT}\circ J_H^{-1}(V)=S_{HT}\circ J_H^{-1}(\mcM\cdot V)$, we obtain $J_H\circ S_{HT}^{-1}\circ\varphi_{\C}( V)=\mcM\cdot V$, showing that $\mcM$ is the matrix of $\Phi(\varphi)$ (\ref{defn:period_operator}).
By (\ref{eq:row-matrices_involution}), we have 
\begin{equation} \label{eq:PM=GPMJM} 
\mcM=\mcMG\cdot J_{\mcM}, \end{equation} 
where $J_{\mcM}$ is the matrix of $J_H$ with respect to the same basis $\{v^{(k)}_i\}_{i,k}$, i.e. the block-diagonal matrix of the same block-type as $\mcM$ such that the diagonal block corresponding to $\gr^W_{-2k}$ is $(2\pi i)^k \mathrm{Id}_{r_{i}}$.

Since $\varphi_{\C}$ and $S_{HT}$ are both splittings of the same filtration, the period matrix $\mcM$ of a splitted Hodge-Tate structure defined in this way is a block upper-triangular matrix such that each block-diagonal equals the identity matrix up to a power of $2\pi i$: more precisely, if we put $\pmb{\lambda}^{(k)}:={}^t(\lambda^{(k)}_1,\cdots,\lambda^{(k)}_{r_k})$ and $\mathbf{e}^{(k)}:={}^t(e^{(k)}_1,\cdots,e^{(k)}_{r_k})$ (column vectors), we have
\begin{equation}  \label{eq:CPM_HT}
\left(\begin{array}{c} \vdots \\ \pmb{\lambda}^{(0)} \\ \pmb{\lambda}^{(1)} \\ \pmb{\lambda}^{(2)} \\ \vdots \end{array}\right) =
 \left(\begin{array}{ccccc}  \ddots & \ast & \ast & \ast & \ast \\ 0 & I_{r_0} & \ast & \ast & \ast \\ 0 & 0 & (2\pi i) I_{r_{1}} & \ast & \ast \\  0 & 0 & 0 & (2\pi i)^2 I_{r_{2}} & \ast \\ 0 & 0 & 0 & 0 & \ddots \end{array}\right)
 \left(\begin{array}{c} \vdots \\ \mathbf{e}^{(0)} \\ \mathbf{e}^{(1)} \\ \mathbf{e}^{(2)} \\ \vdots \end{array}\right). 
\end{equation}
Here, the square matrix is the period matrix $\mcM$.

From the period matrix, we can read off the Hodge-Tate structure information: $H_{\C}$ is the $\C$-vector space $\C^N\ (N=\sum_i r_i)$ with $\{ e^{(i)}_j\}_{i,j}$ as the standard basis, and $H$ is the $\Q$-vector subspace of $H_{\C}$ with a basis consisting of $\lambda^{(i)}_{j}$, the $j$-th row of the weight $-2i$ block of $\mcM$. The weight and Hodge filtration are given by
\begin{align}
W_{-2k+1}H=W_{-2k}H \, & =\ \Q\left\langle \lambda^{(i)}_{1},\cdots, \lambda^{(i)}_{r_i}\, ;\, i\geq k \right\rangle \quad \subset H \label{eq:Wt_Hod_Fil_PeriodMatrix}
\\
F^{-p}H_{\C} \, & =\ \C\left\langle e^{(i)}_{1},\cdots, e^{(i)}_{r_i}\, ;\, i\leq p \right\rangle \quad \subset H_{\C}; \nonumber
\end{align}
so, $W_{-2k}H_{\C}=\C \left\langle e^{(i)}_{1},\cdots, e^{(i)}_{r_i}\, ;\, i\geq k \right\rangle $ and
\begin{equation} \label{eq:can_splitting_MTHS}
F^{-k}H_{\C}\cap W_{-2k}H_{\C}=\C\langle e^{(k)}_1,\cdots, e^{(k)}_{r_k}\rangle.
\end{equation}

Conversely, by this recipe (\ref{eq:Wt_Hod_Fil_PeriodMatrix}), any such matrix $\mcM$ (\ref{eq:CPM_HT}) determines a $\Q$-Hodge-Tate structure $H$; every $H$ constructed in this way is already equipped with a $\Q$-splitting (provided by the map $v^{(i)}_j\mapsto\lambda^{(i)}_j$).
Also, note that with a fixed choice of a basis $\{v^{(k)}_i\}_{i,k}$ of $\gr^W_{\bullet}H$, a different choice of a $\Q$-splitting of the weight filtration of $H$ corresponds to $N\mcM$ for a block-unipotent matrix $N$ of the same block-type as $\mcM$ which has \emph{rational} coefficients.

This discussion establishes the following well-known statement (cf. \cite[Prop.9.1]{Hain94}):

\begin{prop} \label{prop:moduli_of_MTH}
The set of all Hodge-Tate structures whose weight graded quotients are isomorphic to $H_0=\bigoplus_k \Q(k)^{r_k}$ via a fixed isomorphism is the quotient $U(\Q)\backslash M(\C)$ of the set $M(\C)$ of complex upper triangular matrices of the form (\ref{eq:CPM_HT}) by the subgroup $U(\Q)$ of $\GL_r(\Q)$ of block-unipotent matrices of same block-type as (\ref{eq:CPM_HT}), where $r=\sum r_k\in\N$.

In particular, attaching to a period matrix $\left(\begin{array}{cc} 1 & \omega \\ 0 & (2\pi i)^n \end{array}\right)$ the element $\omega\ \text{mod}\,(2\pi i)^n\Q$ of $\C/(2\pi i)^n\Q$ gives a canonical isomorphism
\begin{equation} \label{eq:Ext^1_MH(Q(0),Q(n)} 
\Ext^1_{\QMH}(\Q(0),\Q(n)) = \C/(2\pi i)^n\Q \quad (n>0). 
\end{equation}
\end{prop}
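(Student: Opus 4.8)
The plan is to read off both assertions from the explicit matrix description just developed. For the first assertion, I would argue in two directions. Given a Hodge-Tate structure $H$ with $\gr^W_\bullet H \cong H_0 = \bigoplus_k \Q(k)^{r_k}$ via the fixed isomorphism, a choice of $\Q$-splitting $\varphi$ of the weight filtration produces, together with the canonical $\C$-splitting $S_{HT}$, a period matrix $\mcM$ of the block-upper-triangular form \eqref{eq:CPM_HT}: this is exactly the construction in Definition \ref{defn:Period_matrix}, and \eqref{eq:CPM_HT} shows $\mcM \in M(\C)$. Conversely, by the recipe \eqref{eq:Wt_Hod_Fil_PeriodMatrix} every such matrix determines a Hodge-Tate structure with the prescribed weight graded quotients, already equipped with a $\Q$-splitting. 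So there is a surjection $M(\C) \twoheadrightarrow \{\text{such }H\}$, and I must identify its fibers. Here I would invoke the remark made just before the proposition: fixing the good basis $\{v^{(k)}_i\}$ of $\gr^W_\bullet H$, the ambiguity in $\mcM$ is precisely the choice of $\Q$-splitting of the weight filtration, and two $\Q$-splittings differ by a block-unipotent matrix $N$ over $\Q$ acting as $\mcM \mapsto N\mcM$; one must also check that changing the good basis by a fine change (Definition \ref{defn:good_basis}) does not change the isomorphism class data, since we fixed the iso $\gr^W_\bullet H \cong H_0$ and the $v^{(k)}_i$ are determined by it up to such fine changes, which are absorbed into the $U(\Q)$-action (or rather are trivial once $H_0$ and its natural basis are fixed). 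This gives the bijection with $U(\Q)\backslash M(\C)$.

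For the second assertion I would specialize to $H_0 = \Q(0)\oplus\Q(n)$ with $n>0$, so $r_0 = r_n = 1$ and all other $r_k = 0$; thus $r = 2$ and $M(\C)$ consists of the matrices $\left(\begin{smallmatrix} 1 & \omega \\ 0 & (2\pi i)^n \end{smallmatrix}\right)$ with $\omega\in\C$ (noting the row indexing: weight $0$ block on top, weight $-2n$ block below, and the off-diagonal entry lies in the upper-right because the matrix is upper-triangular in the ordering of \eqref{eq:CPM_HT} — I would double-check the sign/placement conventions against \eqref{eq:CPM_HT} so that the displayed $2\times2$ matrix matches). The block-unipotent subgroup $U(\Q)$ is then $\left\{\left(\begin{smallmatrix} 1 & q \\ 0 & 1 \end{smallmatrix}\right) : q\in\Q\right\}$, and left multiplication sends $\left(\begin{smallmatrix} 1 & \omega \\ 0 & (2\pi i)^n \end{smallmatrix}\right)$ to $\left(\begin{smallmatrix} 1 & \omega + q(2\pi i)^n \\ 0 & (2\pi i)^n \end{smallmatrix}\right)$. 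Hence $\omega \mapsto \omega \bmod (2\pi i)^n\Q$ identifies $U(\Q)\backslash M(\C)$ with $\C/(2\pi i)^n\Q$. It remains to match the set $U(\Q)\backslash M(\C)$ with $\Ext^1_{\QMH}(\Q(0),\Q(n))$: the Hodge-Tate structures with $\gr^W_0 \cong \Q(0)$ and $\gr^W_{-2n}\cong \Q(n)$ (fixed isos) are exactly the extensions $0\to\Q(n)\to H\to\Q(0)\to 0$, and two such are isomorphic as extensions iff they give the same class in the Yoneda $\Ext^1$; I would note that an isomorphism of the underlying Hodge-Tate structures compatible with the fixed identifications on the graded pieces is the same as an isomorphism of extensions (automorphisms of $\Q(0)$ and $\Q(n)$ as Hodge structures are just scalars, and requiring compatibility with the \emph{fixed} isomorphisms forces these scalars to be $1$), so the bijection of sets is an isomorphism of groups once one checks that Baer sum corresponds to addition of the parameter $\omega$.

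I expect the main obstacle to be the bookkeeping in the first part: carefully disentangling the three sources of ambiguity — the choice of $\Q$-splitting $\varphi$, the choice of good basis $\{v^{(k)}_i\}$ of $\gr^W_\bullet H$, and the choice of isomorphism $\gr^W_\bullet H \cong H_0$ — and verifying that after fixing $H_0$ (with its tautological homogeneous basis) the residual ambiguity is exactly the left $U(\Q)$-action, with no extra identifications. In particular one must confirm that a fine change of good basis (Definition \ref{defn:good_basis}) acts trivially on the quotient $U(\Q)\backslash M(\C)$ — it rescales each $v^{(k)}_i$, hence conjugates $\mcM$ by a block-\emph{diagonal} rational matrix, and since $r_0 = r_n = 1$ in the case of interest such a conjugation is by a scalar in each block and changes $\omega$ only by a nonzero rational factor, which one must reconcile with the claimed \emph{canonical} (not just up to $\Q^\times$) isomorphism; the resolution is that fixing the isomorphism to $H_0 = \Q(0)\oplus\Q(n)$ pins down the basis vectors $v^{(0)}, v^{(n)}$ up to nothing at all (they are the images of $1\in\Q(0)$ and $1\in\Q(n)$), so no such rescaling occurs. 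The remaining steps — the shape \eqref{eq:CPM_HT} of the period matrix, the recipe \eqref{eq:Wt_Hod_Fil_PeriodMatrix}, and the identification of the $U(\Q)$-orbit computation — are routine given the preceding development, so I would keep their verification brief.
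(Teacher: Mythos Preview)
Your proposal is correct and follows essentially the same approach as the paper. The paper's own proof is extremely terse—it declares that the first assertion is already established by the preceding discussion of period matrices and $\Q$-splittings, and then writes the $\Ext^1$ computation as a one-line chain of equalities $U(\Q)\backslash M(\C) = \C/(2\pi i)^n\Q$—so your more careful bookkeeping (surjection from $M(\C)$, identification of fibers with $U(\Q)$-orbits, the discussion of fine changes of basis) simply fills in what the paper leaves implicit.
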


\begin{proof}
We just need to prove the formula (\ref{eq:Ext^1_MH(Q(0),Q(n)}). We have
\begin{align*}
\Ext^1_{\QMH}(\Q(0),\Q(n)) & = \ U(\Q)\backslash M(\C) = \left(\begin{array}{cc} 1 & \Q \\ 0 & 1 \end{array}\right) \backslash  \left(\begin{array}{cc} 1 & \C \\ 0 & (2\pi i)^n \end{array}\right) \\
& = \ \C/ (2\pi i)^n \Q. \qquad \qedhere \nonumber
\end{align*}
\end{proof}

See Example 23 of \cite{Hain03} for an explicit geometric description of the simple extension Hodge-Tate structure corresponding to an element of $\C/(2\pi i)^n \Q$.

%%%%%%%%%%%%%%%%%%%%
\subsection{Period of framed Hodge-Tate structure: Big period and skew-symmetric period}

A period of a Hodge structure is an entry of a period matrix, so is highly choice-sensitive. But, for framed Hodge-Tate structures, Goncharov \cite{Goncharov99} found some canonically defined period (``big period''), at the cost of enlarging the domain where the period lives. 
For our purpose of interpreting Chern-Simons invariant of hyperbolic three-manifold as the (canonical) period of some Hodge-Tate structure in $\Ext_{\QHT}(\Q(0),\Q(2))$, we introduce a variant of the big period: ``skew-symmetric period''.

We give the definition of maximal period of splitted $n$-framed Hodge-Tate structures first and then that of big period of $n$-framed Hodge-Tate structures.

%%%%%%%%%%%%%%%%%%%%
\begin{defn} Let $(m,n)\in \Z^2$. \label{defn:framed_HTS}
An \emph{$(m,n)$-framing} on a Hodge-Tate structure $H$ over $\Q$ is 
a choice of a non-zero vector in $\gr^W_{-2m} H$ and a non-zero covector $\gr^W_{-2n} H\ra \Q$.
\end{defn}

We call a $\Q$-Hodge-Tate structure $H$ \emph{$(m,n)$-framed} if it is endowed with an $(m,n)$-framing.
We remark on two things. First, while this definition itself makes sense for every integer pairs $(m,n)$, the case $m>n$ will be of little use for our purpose. Secondly, a $(0,n)$-framing is often called simply $n$-framing. In literature, there exist some different conventions for the definition of $n$-framing (\cite[p.589]{Goncharov99}, \cite[Def.5.1]{Brown13}, \cite[$\S$1.2]{GoncharovZhu18}), all of which are related to each other by Tate twist with $\Q(n)$ or $\Q(-n)$: our definition of $n$-framing is the same as that used in  \cite[Def.5.1]{Brown13} and \cite[$\S$1.2]{GoncharovZhu18}, while an $n$-framed Hodge-Tate structure $H'$ in the sense of \cite{Goncharov99} equals the Tate-twist $H(-n)$ of an $n$-framed Hodge-Tate structure $H$ in our sense.

%%%%%%%%%%%%%%%%%%%%
Given a vector space $V$ over a field $k$, for $v\in v$, $f\in V^{\vee}:=\Hom(V,k)$ and $\phi\in \End(V)$, we use the notation
\[ \langle v|\phi|f \rangle:=f(\phi(v)). \]
If we choose a basis $\mcB=\{v_i\}$ of $V$ such that $v\in\mcB$ and $f\in\mcB^{\vee}$ (dual basis), and $M$ is the matrix of $\phi$ with respect to $\mcB$, then $\langle v|\phi|f \rangle$ is just the entry of $M$ in the position $(v,w)$, where $w\in \mcB$ is dual to $f$ (recall that the rows and columns of $M$ are indexed by elements of $\mcB$).

%%%%%%%%%%%%%%%%%%%%
\begin{defn} \label{defn:maximal_periods} \cite[$\S$4.1]{Goncharov99}
Let  $(H,v_m,f^n)$ be an $(m,n)$-framed $\Q$-Hodge-Tate structure, $\varphi:\bigoplus_k \gr^W_{2k} H \isom H$ a $\Q$-splitting of the weight filtration of $H$; let $\PhiG(\varphi)=S_{HT}^{-1}\circ \varphi_{\C} \in \Aut(\WgrbH_{\C})$ be the Goncharov period operator (\ref{defn:Goncharov_period_operator}) defined by $\varphi$. 

The \emph{maximal period} of the splitted $(m,n)$-framed $\Q$-Hodge-Tate structure $((H,v_m,f^n);\varphi)$ is 
\begin{equation*} 
p((H,v_m,f^n);\varphi) :=\langle v_m | \PhiG(\varphi) | f^n \rangle\ \in \C, 
\end{equation*}
where $f^n$ is extended to a map $\WgrbH\ra\C$ by setting $f^n|_{\gr^W_{-2i}}=0$ for $i\neq n$.
\end{defn}

We have more explicit descriptions for this maximal period.
\begin{lem}
(1) Let $\PhiG(m,n)$ be the $(m,n)$-component of $\PhiG(\varphi)$ in the decomposition $\End(\WgrbH)=\bigoplus_{(m,n)\in \Z^2} \Hom(\gr^W_{-2m} H_{\C}, \gr^W_{-2n} H_{\C})$; so, the maximal period $p((H,v_m,f^n);\varphi)$ equals the image of $1$ under the composite map $\C \stackrel{v_m}{\lra} \gr^W_{-2m} H_{\C} \stackrel{\PhiG(m,n)}{\lra} \gr^W_{-2n}H_{\C} \stackrel{f^n}{\lra} \C$.

The map $\PhiG(m,n)$ is given by the composite map
\[ \gr^W_{-2m} H_{\C} \stackrel{\varphi_{\C}}{\lra}  W_{-2m}H_{\C} \stackrel{(\ref{eq:can_SP_WF})}{=} \bigoplus_{p\leq -m} (H_{\C})_{p} \stackrel{pr}{\lra}  (H_{\C})_{-n}=\gr^W_{-2n}H_{\C}. \]

(3) Suppose chosen a good basis of $\mcB=\{v^{(k)}_i \}_{k,i}$ of $\WgrbH$ such that $v_m\in\mcB$ and $f^n\in\mcB^{\vee}$ (dual basis). The maximal period $p((H,v_m,f^n);\varphi)$ equals the entry of $\mcMG=\mcM J_{\mcM}^{-1}$ in the position $(v_m,v_n)$, where $v_n\in \mcB$ is dual to $f^n$. 
\end{lem}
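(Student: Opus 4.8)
The plan is to unwind the definitions directly; the only non-formal input is Lemma~\ref{lem:splitting_of_HTS}, which furnishes the canonical decomposition $H_{\C}=\bigoplus_p(H_{\C})_p$, the identification $W_{2k}H_{\C}=\bigoplus_{p\leq k}(H_{\C})_p$ (this is (\ref{eq:can_SP_WF})), and the fact that $S_{HT}$ of (\ref{eq:ST}) restricts to an isomorphism $\gr^W_{2p}H_{\C}\isom(H_{\C})_p$ for every $p$. Everything else is bookkeeping, so I anticipate no genuine obstacle.

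For part (1), I would start from the definition of the pairing, $\langle v|\phi|f\rangle=f(\phi(v))$, so that the maximal period is $f^n\bigl(\PhiG(\varphi)(v_m)\bigr)$, with $v_m$ read as the image of $1$ under $\C\to\gr^W_{-2m}H_{\C}$ and $f^n$ extended by zero to $\WgrbH_{\C}$. Since $\PhiG(\varphi)(v_m)$ decomposes along $\WgrbH_{\C}=\bigoplus_k\gr^W_{-2k}H_{\C}$ and $f^n$ annihilates every summand but the one with $k=n$, the period equals $f^n$ applied to the $\gr^W_{-2n}$-component of $\PhiG(\varphi)(v_m)$, and that component is by definition $\PhiG(m,n)(v_m)$; this gives the first assertion. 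For the description of $\PhiG(m,n)$, I would use that $\varphi$ is compatible with the weight filtrations, so $\varphi_{\C}$ carries the summand $\gr^W_{-2m}H_{\C}$ into $W_{-2m}H_{\C}=\bigoplus_{p\leq-m}(H_{\C})_p$, and that $S_{HT}^{-1}$, sending each $(H_{\C})_p$ onto $\gr^W_{2p}H_{\C}$, commutes with projection onto graded pieces; hence the $\gr^W_{-2n}$-component of $\PhiG(\varphi)(x)=S_{HT}^{-1}(\varphi_{\C}(x))$ is $S_{HT}^{-1}$ applied to the $(H_{\C})_{-n}$-component of $\varphi_{\C}(x)$, which under $(H_{\C})_{-n}=\gr^W_{-2n}H_{\C}$ is exactly the stated composite of $\varphi_{\C}$ with the projection $pr$.

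For part (3), I would simply transcribe the matrix of $\PhiG(\varphi)$ relative to the chosen good basis $\mcB=\{v^{(k)}_i\}$. By Definition~\ref{defn:Period_matrix}(2) and the transpose matrix convention fixed in that section, $\PhiG(\varphi)(w)=\sum_{w'\in\mcB}(\mcMG)_{w,w'}\,w'$ for every $w\in\mcB$; pairing $\PhiG(\varphi)(v_m)$ against $f^n\in\mcB^{\vee}$, whose dual basis vector is $v_n$, yields $\langle v_m|\PhiG(\varphi)|f^n\rangle=(\mcMG)_{v_m,v_n}$, the entry of $\mcMG$ in position $(v_m,v_n)$. The identity $\mcMG=\mcM J_{\mcM}^{-1}$ is (\ref{eq:PM=GPMJM}).

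The only points that call for attention — and this is where a careless argument would slip — are the two competing index conventions in play (weights written as $-2m,-2n$ for the framing data but as $2k,2p$ for the ambient weight decomposition) and the role of the extension-by-zero of $f^n$, which is precisely what singles out the $(m,n)$-block $\PhiG(m,n)$ as the sole contributing term. Once these are respected, the argument is purely formal.
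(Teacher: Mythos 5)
Your proof is correct, and since the paper states this lemma without a proof (treating it as immediate from Definitions~\ref{defn:maximal_periods}, \ref{defn:Period_matrix} and Lemma~\ref{lem:splitting_of_HTS}), your unwinding of the definitions is precisely the argument the paper leaves implicit. One small observation: the key step you isolate — that $f^n$ extended by zero kills every graded piece except $\gr^W_{-2n}H_{\C}$, thereby singling out the $(m,n)$-block — is exactly the point the paper itself flags just before Definition~\ref{defn:maximal_periods} in its remark about reading off the $(v,w)$-entry of the matrix of $\phi$, so your proof for part~(3) is literally a restatement of that remark with $\phi=\PhiG(\varphi)$ and $M=\mcMG$, combined with identity~(\ref{eq:PM=GPMJM}).
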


Let $V$ be a $\Q$-vector space. For $v\in V$, $f\in V^{\vee}$ and $\phi_1, \phi_2\in \End(V_{\C})$, we define
\begin{equation} \label{eq:<v|AotimesB|f>}
\langle v|\phi_1\otimes_{\Q}\phi_2|f\rangle := \sum_k \langle v|\phi_1|f^k\rangle \otimes_{\Q} \langle v_k|\phi_2|f\rangle \ \in\ \C\otimes_{\Q}\C,
\end{equation}
where $\{v_k\}$ is an arbitrary $\Q$-basis of $V$ with dual basis $\{f^k\}$: it follows from (\ref{eq:Q-move_tensor_matrix_product}) that this expression is independent of the choice of basis $\{v_k\}$ over $\Q$ (but not necessarily so for a basis over $\C$).

%%%%%%%%%%%%%%%%%%%%
\begin{defn} \cite[$\S$4.3]{Goncharov99} \label{defn:bigPeriod}
Let $(H,v_m,f^n)$ be an $(m,n)$-framed $\Q$-Hodge-Tate structure, and let
$\PhiG(\varphi)=S_{HT}^{-1}\circ \varphi_{\C}$ be the Goncharov period operator (\ref{defn:Goncharov_period_operator}) defined by a $\Q$-splitting $\varphi:\bigoplus_k \gr^W_{2k} H \isom H$.

The \emph{big period} of $(H,v_m,f^n)$ is 
\begin{align}
\mathscr{P}_n((H,v_m,f^n)):=&\ \langle v_m| \Phi_{\Gon}(\varphi)^{-1}\otimes_{\Q} \Phi_{\Gon}(\varphi) |f^n \rangle \quad \in \C\otimes_{\Q}\C.  \label{eq:bigPeriod} \\
= &\ \sum_{v_k\in\mcB} \langle v_m|\Phi_{\Gon}(\varphi)^{-1}|f^k\rangle \otimes_{\Q} \langle v_k| \Phi_{\Gon}(\varphi)|f^n\rangle, \nonumber
\end{align}
where $\mcB=\{v_k\}$ is an arbitrary basis of $\WgrbH$ with dual basis $\mcB^{\vee}=\{f^k\}$ such that $v_m\in \mcB$ and $f^n\in\mcB^{\vee}$.
\end{defn}

\begin{lem} \label{lem:big_period=(tensor_product)_{m,n}}
Let $(H,v_m,f^n)$ be an $(m,n)$-framed $\Q$-Hodge-Tate structure. 
Choose a splitting $\varphi:\WgrbH\isom H$, a good basis $\mcB=\{v_1,\cdots,v_N\}$ of $\WgrbH$ such that $v_m\in \mcB$ and $f^n\in\mcB^{\vee}$; let $\mcM$ be the associated period matrix (\ref{defn:period_operator}).
Then, the big period $\mathscr{P}_n((H,v_m,f^n))$ equals
\[ \sum_k (2\pi i)^{m} \langle v_m|\mcM^{-1}|f^k\rangle \otimes_{\Q} (2\pi i)^{-n}\langle v_k|\mcM |f^n\rangle, \]
i.e. is the matrix tensor-product (\ref{eq:tensor_mult_matrices}) of the two matrices in $\mrM_{1\times N}(\C)$, $\mrM_{N\times 1}(\C)$: 
\begin{equation*} \label{eq:big_period=(tensor_product)_{m,n}}
(\tpi)^{m}\left( (\mcM^{-1})_{m,1},\cdots,(\mcM^{-1})_{m,N}\right),\ (\tpi)^{-n}\left(\mcM_{1,n},\cdots, \mcM_{N,n}\right)^t.
\end{equation*}
\end{lem}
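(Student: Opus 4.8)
The plan is to unwind the definition of the big period (Definition \ref{defn:bigPeriod}) by translating the Goncharov period operator $\PhiG(\varphi)$ into the period matrix $\mcM$ via the relation $\mcM = \mcMG \cdot J_{\mcM}$ from (\ref{eq:PM=GPMJM}), which is the only non-formal ingredient. Recall that $\mcMG$ is the matrix of $\PhiG(\varphi)$ with respect to the good basis $\mcB = \{v_1,\dots,v_N\}$, so $\mcMG = \mcM \cdot J_{\mcM}^{-1}$, where $J_{\mcM}$ is the block-diagonal matrix acting by $(2\pi i)^k$ on the weight-$(-2k)$ block. First I would record that, for a basis $\mcB$ with dual basis $\mcB^\vee$ and an operator $\phi$ with matrix $P$ in our transpose convention, $\langle v_i|\phi|f^j\rangle = P_{ij}$; in particular $\langle v_m|\PhiG(\varphi)^{-1}|f^k\rangle = (\mcMG^{-1})_{mk}$ and $\langle v_k|\PhiG(\varphi)|f^n\rangle = (\mcMG)_{kn}$.

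Next I would substitute into the expanded form of the big period:
\begin{equation*}
\mathscr{P}_n((H,v_m,f^n)) = \sum_k (\mcMG^{-1})_{mk}\otimes_{\Q} (\mcMG)_{kn}.
\end{equation*}
Now $\mcMG = \mcM J_{\mcM}^{-1}$ gives $(\mcMG)_{kn} = (2\pi i)^{-n}\mcM_{kn}$ since $J_{\mcM}^{-1}$ is diagonal with entry $(2\pi i)^{-n}$ in the $n$-th slot (here I am using that $v_n$, the basis vector dual to $f^n$, lies in $\gr^W_{-2n}H$, so the relevant diagonal entry of $J_{\mcM}$ is $(2\pi i)^n$); likewise $\mcMG^{-1} = J_{\mcM}\mcM^{-1}$ gives $(\mcMG^{-1})_{mk} = (2\pi i)^{m}(\mcM^{-1})_{mk}$, using $v_m\in\gr^W_{-2m}H$. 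Plugging these in and pulling the (rational, indeed integral powers of $2\pi i$ are not rational, but the point of the tensor is over $\Q$ — I should instead keep the scalars attached) yields
\begin{equation*}
\mathscr{P}_n((H,v_m,f^n)) = \sum_k (2\pi i)^{m}(\mcM^{-1})_{mk}\otimes_{\Q}(2\pi i)^{-n}\mcM_{kn},
\end{equation*}
which is precisely the asserted matrix tensor-product (\ref{eq:tensor_mult_matrices}) of the row vector $(2\pi i)^m\big((\mcM^{-1})_{m,1},\dots,(\mcM^{-1})_{m,N}\big)$ with the column vector $(2\pi i)^{-n}\big(\mcM_{1,n},\dots,\mcM_{N,n}\big)^t$.

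The only genuinely subtle point — the one I would be most careful about — is the bookkeeping of which $(2\pi i)$-power comes out of $\mcMG$ versus $\mcMG^{-1}$ at the indices $m$, $k$, $n$: since $J_{\mcM}$ is block-diagonal, the factor extracted at position $(m,k)$ of $J_{\mcM}\mcM^{-1}$ depends only on the weight of $v_m$ (namely $(2\pi i)^m$) and not on $k$, and similarly at $(k,n)$ of $\mcM J_{\mcM}^{-1}$ only on the weight of $v_n=f^{n\vee}$ (namely $(2\pi i)^{-n}$); the intermediate index $k$ ranges over all of $\mcB$ and no $(2\pi i)$-power attaches to it because the two $J_{\mcM}$-factors meeting at $k$ are on opposite sides of the tensor and cancel in neither — they simply do not appear. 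One should also double-check that the independence of $\langle v|\phi_1\otimes_\Q\phi_2|f\rangle$ on the choice of $\Q$-basis (noted after (\ref{eq:<v|AotimesB|f>}), via (\ref{eq:Q-move_tensor_matrix_product})) legitimizes using the fixed good basis $\mcB$ throughout; this is immediate. Everything else is routine matrix manipulation, so the lemma follows.
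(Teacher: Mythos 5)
Your proof is correct and takes exactly the same approach as the paper, which dispenses with the lemma in one sentence by invoking the relation $\mcMG = \mcM J_{\mcM}^{-1}$ from (\ref{eq:PM=GPMJM}) and the definition (\ref{eq:bigPeriod}); you simply spell out the bookkeeping of the $(2\pi i)$-powers coming off $J_{\mcM}$, correctly noting that only the outer indices $m$ and $n$ pick up scalars because $J_{\mcM}$ is block-diagonal and the framing places $v_m$ in $\gr^W_{-2m}H$ and the dual of $f^n$ in $\gr^W_{-2n}H$.
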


This is obvious from definition and the relation $\mcMG=\mcM J_{\mcM}^{-1}$ (\ref{eq:PM=GPMJM}).

%%%%%%%%%%%%%%%%%%%%
\begin{prop} \label{prop:Goncharov99;4.2}
The big period $\mathscr{P}_n$ does not depend on the choice of the splitting $\varphi$.
\end{prop}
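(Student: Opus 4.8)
The plan is to show that the big period is invariant under the action of the group $U(\Q)$ of block‐unipotent rational matrices that parametrizes the $\Q$-splittings (Proposition \ref{prop:moduli_of_MTH}). Fix a good basis $\mcB=\{v_1,\dots,v_N\}$ of $\WgrbH$ with $v_m\in\mcB$, $f^n\in\mcB^{\vee}$. A change of $\Q$-splitting $\varphi\rightsquigarrow\varphi'$ corresponds, at the level of period matrices, to $\mcM\mapsto \mcM'=N\cdot\mcM$ for some $N\in U(\Q)$ (a block-unipotent matrix with rational entries of the same block-type as $\mcM$), as recorded after (\ref{eq:can_splitting_MTHS}); equivalently $\PhiG(\varphi)\mapsto \PhiG(\varphi)\circ n_0$ for the corresponding rational automorphism $n_0$ of $\WgrbH$ preserving the weight filtration and inducing the identity on $\gr^W_\bullet$. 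By Lemma \ref{lem:big_period=(tensor_product)_{m,n}}, it suffices to prove that
\[
\sum_k \langle v_m|(\mcM')^{-1}|f^k\rangle \otimes_{\Q} \langle v_k|\mcM'|f^n\rangle
= \sum_k \langle v_m|\mcM^{-1}|f^k\rangle \otimes_{\Q} \langle v_k|\mcM|f^n\rangle,
\]
the powers of $2\pi i$ being the same on both sides.

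The key step is the ``rational move'' identity (\ref{eq:Q-move_tensor_matrix_product}): for any rational matrix $Q$, $AQ\otimes B = A\otimes QB$ inside $\mrM_{l\times n}(\C\otimes_{\Q}\C)$. First I would rewrite the left-hand sum as the matrix tensor-product (in the sense of (\ref{eq:tensor_mult_matrices})) of the row vector $R':=\big((\mcM')^{-1}\big)_{m,\bullet}$ and the column vector $C':=(\mcM')_{\bullet,n}$. Now $(\mcM')^{-1}=\mcM^{-1}N^{-1}$ and $\mcM'=N\mcM$, with $N,N^{-1}\in\mrM_{N\times N}(\Q)$. Hence $R' = \big(\mcM^{-1}\big)_{m,\bullet}\,N^{-1}$ and $C' = N\,\mcM_{\bullet,n}$, so
\[
R'\otimes C' \;=\; \big(\mcM^{-1}\big)_{m,\bullet}\,N^{-1}\otimes N\,\mcM_{\bullet,n}
\;=\; \big(\mcM^{-1}\big)_{m,\bullet}\otimes N^{-1}N\,\mcM_{\bullet,n}
\;=\; \big(\mcM^{-1}\big)_{m,\bullet}\otimes \mcM_{\bullet,n},
\]
where the middle equality is (\ref{eq:Q-move_tensor_matrix_product}) applied with $Q=N^{-1}$ (pushing the rational block $N^{-1}$ across the tensor and cancelling against $N$). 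This is exactly the right-hand sum, and the powers $(2\pi i)^m$, $(2\pi i)^{-n}$ are unaffected since they do not interact with $N$. A coordinate-free phrasing of the same computation: $\PhiG(\varphi')^{-1}\otimes_{\Q}\PhiG(\varphi') = (n_0^{-1}\PhiG(\varphi)^{-1})\otimes_{\Q}(\PhiG(\varphi)n_0)$, and since $n_0$ is $\Q$-rational the definition (\ref{eq:<v|AotimesB|f>}) of $\langle\,v\,|{-}\otimes_\Q{-}|\,f\,\rangle$ is insensitive to moving $n_0$ across the tensor sign — this is precisely the basis-independence already noted after (\ref{eq:<v|AotimesB|f>}).

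The only point requiring a little care — the main (mild) obstacle — is bookkeeping of which tensor factor the rational automorphism $n_0$ acts on and verifying that it is genuinely $\Q$-linear so that (\ref{eq:Q-move_tensor_matrix_product}) applies; this is guaranteed because $U(\Q)\subset\GL_N(\Q)$. One should also check that the framing data $v_m\in\gr^W_{-2m}H$, $f^n\in(\gr^W_{-2n}H)^\vee$ are intrinsic to $\WgrbH$ and thus untouched by the change of splitting — immediate, since a splitting is an isomorphism $\bigoplus_k\gr^W_{2k}H\isom H$ inducing the identity on associated graded, so it does not alter $\gr^W_\bullet H$ or its duals. With these remarks in place the proof reduces to the two-line tensor manipulation above, so the proposition follows.
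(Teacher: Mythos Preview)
Your proof is correct and follows exactly the same approach as the paper's: the paper's proof simply cites Lemma \ref{lem:big_period=(tensor_product)_{m,n}}, the fact that a change of splitting replaces $\mcM$ by $N\mcM$ for rational block-unipotent $N$, and the identity (\ref{eq:Q-move_tensor_matrix_product}), and you have spelled out precisely this computation. Your additional remarks on the framing data being intrinsic to $\gr^W_\bullet H$ and on the coordinate-free formulation are welcome clarifications but not logically required beyond what the paper already records.
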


\begin{proof}
This is Prop. 4.2 of \cite{Goncharov99}. Indeed, this follows from Lemma \ref{lem:big_period=(tensor_product)_{m,n}}, the fact that the period matrix corresponding to a different splitting is $N\mcM$ for an upper-triangular block-unipotent matrix $N$ with rational entries, and the identity (\ref{eq:Q-move_tensor_matrix_product}). 
\end{proof}

%%%%%%%%%%%%%%%%%%%% \subsection{Periods of framed Hodge-Tate structures 2: skew-symmetric period}

We introduce another period which is derived from the big period. 

%%%%%%%%%%%%%%%%%%%%
\begin{defn} \label{defn:skew-symmetric_period}
The \emph{skew-symmetric period} of an $n$-framed $\Q$-Hodge-Tate structure $(H,v_0,f^n)$ is the image of the big period $\mathscr{P}_n((H,v_0,f^n))$ (\ref{eq:bigPeriod}) under the skew-symmetrization $\Alt:\C\otimes_{\Q}\C \ra \C\wedge_{\Q}\C\ :\ x\otimes y\mapsto x\wedge y$:
\begin{equation} \label{eq:skew-symmetricPeriod}
\mathscr{A}_n((H,v_0,f^n)):=  \Alt \left( \langle v_0| \PhiG(\varphi)^{-1}\otimes_{\Q} \PhiG(\varphi) |f_n\rangle \right)  \in \C\wedge_{\Q}\C
\end{equation}
(for any choice of a $\Q$-splitting $\varphi:\WgrbH\isom H$).
 \end{defn}

%%%%%%%%%%%%%%%%%%%%
\subsection{Example: Polylogarithm Hodge-Tate structure \cite{BeilinsonDeligne94}}
For $k\in\N$, the $k$-th polylogarithm function $\Li_k(z)$ is an analytic function on $|z|<1$ defined by
\begin{equation} \label{eq:polylogarithm_Li_k}
\Li_k(z)=\sum_{n=1}^{\infty}\frac{z^n}{n^k}.
\end{equation}
These are iterated integrals of differential forms with logarithmic poles on $\mathbb{P}^1(\C)\backslash\{0,1,\infty\}$:
\begin{align*}
\Li_1(z) &=-\log(1-z)=\int_0^z\frac{dt}{1-t}, \\
\Li_{k+1}(z) &= \int_0^z\Li_k(t)\frac{dt}{t}\ \quad (k\geq1).
\end{align*}
This expression as iterated integrals shows that $\Li_k(z)$, defined by (\ref{eq:Li_k}) for $|z|<1$, can be analytically continued as a \emph{multivalued} function on $\mathbb{P}^1(\C)\backslash\{0,1,\infty\}$. 

The $n$-th polylogarithm Hodge-Tate structure ($n\in\N$) is the Hodge-Tate structure of dimension $n+1$ whose period matrix (Definition \ref{defn:Period_matrix}, (\ref{eq:CPM_HT})) is
\begin{equation} \label{eq:PolyHT}
\mcP^{(n)}(z)= \left( \begin{array} {cccccc}
1 & -\Li_1(z) & -\Li_2(z) & \cdots & \cdots & -\Li_n(z) \\
0 & \tpi & \tpi \log z & \tpi \frac{(\log z)^2}{2} & \cdots & \tpi \frac{(\log z)^{n-1}}{(n-1)!} \\
\vdots & 0 & (\tpi)^2 & (\tpi)^2 \log z & & (\tpi)^2 \frac{(\log z)^{n-2}}{(n-2)!} \\ 
& &  & \ddots  & & \vdots \\
0 & & & && (\tpi)^{n-1} \log z \\
0 & & & & & (\tpi)^n
\end{array} \right); 
\end{equation}
If $\{e_0,\cdots,e_n\}$ is the standard basis of $\Q^{n+1}$ and $\lambda_i(z)\in \C^{n+1}\ (0\leq i\leq n)$ is the $i$-th%%
\footnote{We index the rows and columns from $0$ to $n$.}
row of $\mcP^{(n)}$, this Hodge-Tate structure $H$ is given by $H=\Q\langle \lambda_0(z),\cdots,\lambda_{n+1}(z)\rangle \subset H_{\C}=\C^{n+1}=\oplus_{i=0}^{n} \C e_i$ (\ref{eq:CPM_HT}) with the weight and Hodge filtrations  (\ref{eq:Wt_Hod_Fil_PeriodMatrix}).
Note that each weight graded quotient $\gr^W_{2k}$ is one-dimensional and there exists a natural $n$-framing:
\[ v_0:=\text{image of }\lambda_0(z)\text{ in }\gr^W_0H,\quad f^n \in (\gr^W_{-2n}H)^{\vee}=\Q\langle (\tpi)^{n}e_n\rangle^{\vee} \ (f^n((\tpi)^{n}e_n)=1). \]

We compute the big and skew-symmetric period of this framed Hodge-Tate structure in the case $n=2$. Consider a general period matrix (with its inverse showing first on the left):
\begin{equation} 
\mcM(z)^{-1}=  \left( \begin{array} {ccc} 1 & - \frac{a}{\tpi} & \frac{ab-c}{(\tpi)^2} \\ 0 & \frac{1}{\tpi} & \frac{-b}{(\tpi)^2} \\ 0 & 0 & \frac{1}{(\tpi)^{2}} \end{array} \right);  \quad \label{eq:generalHT3} \mcM(z)=\left( \begin{array} {ccc} 1 & a & c \\ 0 & \tpi & \tpi\, b \\ 0 & 0 & (\tpi)^2 \end{array} \right) 
\end{equation}
By Lemma \ref{lem:big_period=(tensor_product)_{m,n}}, the big period $\mathscr{P}_2$ of this framed Hodge-Tate structure is the matrix tensor-multiplication of the first row $\mcM^{-1}$ and $(\tpi)^{-2}$ times the third column of $\mcM$:
\begin{align*} 
\mathscr{P}_2(\mcPt(z)) = & 1\otimes \frac{c}{(\tpi)^2} -  \frac{a}{\tpi} \otimes \frac{b}{\tpi}  +  \frac{ab-c}{(\tpi)^2} \otimes 1, \\
\mathscr{A}_2(\mcPt(z)) = & 1\wedge \frac{2c-ab}{(2\pi i)^2} -\frac{a}{\tpi} \wedge \frac{b}{\tpi} .
\end{align*}

So, for the framed polylogarithm Hodge-Tate structure of order $2$
\begin{equation} \label{eq:PolyHT2}
\mcP^{(2)}(z)= \left( \begin{array} {ccc}
1 & -\Li_1(z) & -\Li_2(z)  \\
0 & \tpi & \tpi\,\log z  \\
0 & 0 & (\tpi)^2
\end{array} \right),
\end{equation}
we have ($a=\log(1-z)$, $b=\log(z)$, $c=-\Li_2(z)$)
\begin{align} 
\mathscr{P}_2(\mcP^{(2)}(z)) = & 1\otimes \frac{-\Li_2(z)}{(\tpi)^2} - \frac{\log(1-z)}{\tpi} \otimes \frac{\log(z)}{\tpi}    +  \frac{\log(1-z)\log z+\Li_2(z)}{(\tpi)^2} \otimes  \frac{1}{(\tpi)^2}, \nonumber \\
\mathscr{A}_2(\mcP^{(2)}(z)) = & 1\wedge \frac{-2}{(2\pi i)^2} \left(\Li_2(z) +\frac{1}{2}\log(1-z)\log(z) \right) + \frac{\log(z)}{2\pi i}\wedge\frac{\log(1-z)}{2\pi i} \label{eq:ss-period=rho} \\
= & \tilde{\rho}([z]) \nonumber
\end{align}
for the (unnormalized) Bloch regulator $\tilde{\rho}([z])$ (\ref{eq:Bloch_regulator}), (\ref{eq:Rogers_dilogarithm}); this last equality is the reason for us having introduced the notion of skew-symmetric period!

As pointed by Goncharov \cite[$\S$4.5]{Goncharov99}, the original Bloch regulator \cite[Lem.6.1.1]{Bloch00}:
\begin{equation} \label{eq:original_Bloch_regulator}
\tpi \otimes \exp(\frac{-\Li_2(z)}{\tpi})-\log(1-z)\otimes z 
\end{equation}
is the image of the big period $\mathscr{P}_2(\mcP^{(2)}(z))$ under the homomorphism
\[ \C\otimes_{\Q}\C \ra \C\otimes_{\Q}\C^{\ast};\ x\otimes y\mapsto  \tpi\cdot x \otimes \exp(\tpi\cdot y).  \]

\begin{prop}
The skew-symmetric period of the polylogarithm Hodge-Tate structure $\mcM^{(2)}(z)$ of order $2$ equals the image $[z]\in \mcB(\C)$ under the Bloch regulator $\tilde{\rho}(z)$ (\ref{eq:Bloch_regulator}).
\end{prop}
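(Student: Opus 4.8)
The plan is to make the identity (\ref{eq:ss-period=rho}) explicit; the proposition is really a clean restatement of that computation, so the proof reduces to assembling the pieces already in place and settling the one non-formal point, single-valuedness.

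First I would apply Proposition \ref{prop:Goncharov99;4.2}: the big period $\mathscr{P}_2(\mcP^{(2)}(z))$, and hence its skew-symmetrization $\mathscr{A}_2(\mcP^{(2)}(z))=\Alt\,\mathscr{P}_2(\mcP^{(2)}(z))$, is independent of the choice of $\Q$-splitting of the weight filtration underlying $\mcP^{(2)}(z)$, so it may be computed from the tautological period matrix (\ref{eq:PolyHT2}). Then Lemma \ref{lem:big_period=(tensor_product)_{m,n}} with $(m,n)=(0,2)$ expresses the big period as the matrix tensor-product of the first row of $\mcM(z)^{-1}$ with $(2\pi i)^{-2}$ times the last column of $\mcM(z)$, for $\mcM(z)=\mcP^{(2)}(z)$; substituting $a=\log(1-z)$, $b=\log z$, $c=-\Li_2(z)$ into the generic rank-three formula (\ref{eq:generalHT3}) yields precisely the two displayed expressions for $\mathscr{P}_2(\mcP^{(2)}(z))$ and $\mathscr{A}_2(\mcP^{(2)}(z))$.

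Next I would match $\mathscr{A}_2(\mcP^{(2)}(z))$ with $\tilde\rho([z])$. By (\ref{eq:Rogers_dilogarithm}), $\Li_2(z)+\frac{1}{2}\log z\,\log(1-z)$ is the Rogers dilogarithm $\mcR(z)=-\frac{1}{2}\int_0^z(\frac{\log(1-t)}{t}+\frac{\log t}{1-t})\,dt$, i.e. $-\frac{1}{2}$ of the integral in the second summand of the Bloch regulator (\ref{eq:Bloch_regulator}). Hence $\mathscr{A}_2(\mcP^{(2)}(z))=\frac{\log z}{2\pi i}\wedge\frac{\log(1-z)}{2\pi i}+1\wedge\frac{1}{(2\pi i)^2}\int_0^z(\frac{\log(1-t)}{t}+\frac{\log t}{1-t})\,dt$, which under the identification $\C\wedge_{\Q}\C=\Lambda^2_{\Z}(\C)$ of (\ref{eq:CwedgeC}) is exactly the defining expression (\ref{eq:Bloch_regulator}) of $\tilde\rho([z])$; it then agrees with the value of the induced homomorphism $\tilde\rho:\mcP(\C)\to\Lambda^2_{\Z}(\C)$ at $[z]$ (which on $\mcB(\C)$ further factors through $\C/\Q$).

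The only step that is not a pure symbol-chase — and thus the one deserving care — is that $\Li_2(z)$, $\log z$, $\log(1-z)$ are multivalued, so both sides are a priori defined only relative to a path of continuation from $0$ to $z$. This is harmless. On the right, $\tilde\rho$ descends to a single-valued map on $\C\setminus\{0,1\}$ by the cancellation of monodromy ambiguities recalled after (\ref{eq:Bloch_regulator}) (following the proof of Lemma 6.1.1 in \cite{Bloch00}). On the left, the monodromy of $\mcP^{(2)}(z)$ around $0$ or $1$ acts on the period matrix (\ref{eq:PolyHT2}) by left multiplication by a \emph{rational} unipotent matrix, hence merely changes the $\Q$-splitting, so $\mathscr{A}_2(\mcP^{(2)}(z))$ is unchanged by Proposition \ref{prop:Goncharov99;4.2}. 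It therefore suffices to verify the identity along the path used to define $\tilde\rho$ in (\ref{eq:Bloch_regulator}), which is precisely the computation above.
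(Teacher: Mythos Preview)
Your proposal is correct and follows essentially the same route as the paper: the proposition is stated immediately after the computation (\ref{eq:ss-period=rho}) and is simply a restatement of it, and your argument recapitulates exactly those steps (Lemma \ref{lem:big_period=(tensor_product)_{m,n}} applied to (\ref{eq:generalHT3}) with $a=\log(1-z)$, $b=\log z$, $c=-\Li_2(z)$, followed by the identification with (\ref{eq:Bloch_regulator}) via (\ref{eq:Rogers_dilogarithm})). Your added paragraph on single-valuedness, reducing the monodromy ambiguity on the left to a change of $\Q$-splitting handled by Proposition \ref{prop:Goncharov99;4.2}, is a welcome clarification that the paper leaves implicit.
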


%%%%%%%%%%%%%%%%%%%%
\subsection{Hodge-Tate Hopf algebra}

Let $T$ be a $\Q$-linear abelian category and $\omega:T\ra \mathrm{Vec}_{\Q}$ a $\Q$-linear exact faithful functor. For an object $X$ of $T$, let $\langle X\rangle$ be the strictly full subcategory of $T$ whose objects are those isomorphic to a subquotient of $X^n$ for some $n\in\N$, and let $ \End(\omega|_{\langle X\rangle})$ denote the endomorphism functor of $\omega|_{\langle X\rangle}$ (it is the largest $\Q$-subalgebra of $\End(\omega(X))$ stabilizng $\omega(Y)$ for all $Y\subset X^n$). Then, $\omega$ identifies $T$ with the category of finite-dimensional left $\End(\omega)$-modules, where $\End(\omega):=\varprojlim \End(\omega|_{\langle X\rangle})$ (the endomorphism functor of $\omega$), \cite[II. 2.13]{DeligneMilne82}. The latter is also equivalent to the category of finite-dimensional right $\End(\omega)^{\vee}$-comodules for the coalgebra $\End(\omega)^{\vee}:=\Hom_{\Q-\mathrm{vec}}(\End(\omega),\Q)$ (loc. cit. II. Prop. 2.14). 
In particular, this is true for any neutral Tannakian category, in which case $\End(\omega)$ is further a cocommutative Hopf algebra. 

Now, for the $\Q$-linear Tannakian category $\QHT$ of Hodge-Tate $\Q$-structures, every object $H$ has a canonical weight filtration $W_{\bullet}H$ such that morphisms in the category are strictly compatible with the weight filtration, and there exists a fibre functor $\omega$, now to the category of $\Z$-graded $\Q$-vector spaces:
\begin{equation} \label{eq:can_FF_HT}
\omega(H)=\bigoplus_{n\in\Z} \omega(H)_n:=\bigoplus_{n\in\Z} \Hom(\Q(n),\gr^W_{-2n}H),
\end{equation}
In this case, it turns out that there exists an explicit description of the graded Hopf algebra $\End(\omega)^{\vee}$ in terms of framed mixed Hodge-Tate structures \emph{up to some equivalence relation}. Here, we review this theory. For details, see \cite[$\S$2.1]{BMS87} (the original source), \cite[$\S$1]{BGSV90}, \cite[$\S$3]{Goncharov98}, \cite[$\S$3.2]{Goncharov99}, \cite[Appendix.A]{Goncharov05}.

This equivalence relation is defined to be the coarsest equivalence relation $\sim$ on the set of $n$-framed Hodge-Tate structures for which two $n$-framed Hodge-Tate structures $H$, $H'$ are \emph{equivalent} if there exists a morphism $H\ra H'$ of mixed $\Q$-Hodge structures preserving $n$-framings. One can show (\cite[1.3.4]{BGSV90}) that every $n$-framed Hodge-Tate structure $H$ is equivalent to $\gr^W_0H/\gr^W_{-2n}H$, i.e. to an $n$-framed Hodge-Tate structure $H'$ such that $H'=W_{\geq 0}H'$ and $W_{<-2n}H'=0$, and further to an $n$-framed Hodge-Tate structure $H''$ such that $\gr^W_0H''\simeq \Q(0)$ and $\gr^W_{-2n}H''\simeq \Q(n)$.  

Let $\mcH_n$ be the set of equivalence classes of $n$-framed Hodge-Tate structures; we denote the class of $n$-framed Hodge structure $(H,v_0,f^n)$ by $[H,v_0,f^n]$. This set $\mcH_n$  has a $\Q$-vector space structure defined as follows (\cite[p.166]{Goncharov98}):
\[ (H,v_0,f^n)+(H',v_0',{f^n}') =(H\oplus H',v_0+v_0',f^n+{f^n}');\quad
c (H,v_0,f^n) =(H,v_0,cf^n). \]
The zero object is $H=\Q(0)\oplus \Q(n)$ with the obvious $n$-framing.

The tensor product of mixed Hodge structures induce a commutative multiplication on the graded $\Q$-vector space $\mcH_{\bullet}:=\bigoplus_n \mcH_n$: 
\[ \mu: \mcH_p\otimes \mcH_q \ra \mcH_{p+q}. \]
More interestingly, one can define a comultiplication on $\mcH_{\bullet}$:
\begin{equation} \label{eq:coproduct_on_mcH}
\nu= \bigoplus_{p+q=n} \nu_{pq}\ :\ \mcH_n \ra \bigoplus_{p+q=n} \mcH_p\otimes \mcH_q .
\end{equation}
We define its component $\nu_{pq}:\mcH_n \ra \mcH_p\otimes \mcH_q$ as follows:  Choose a basis $\{b_i\}$ of $\Hom(\Q(p),\gr^W_{-2p}H)$ and let $\{b_i^{\vee}\}\subset \Hom(\gr^W_{-2p}H,\Q(p))$ be the dual basis. Then, for $[H,v_0,f^n]\in \mcH_n$ and every $i$, the datum $(v_0,b_i^{\vee})$ is a $p$-framing on $H$. Further, $(b_i(-p),f^n(-p))$ is a $q$-framing on $H(-p)$, since
\begin{align}  \label{eq:Tate_twist_framing}
b_i(-p)\in\ \Hom(\Q(p),\gr^W_{-2p}H)(-p) &=\Hom(\Q(0),\gr^W_{0}(H(-p))),\\
f^n(-p)\in\ \Hom(\gr^W_{-2n}H,\Q(n))(-p) &=\Hom(\gr^W_{-2q}(H(-p)),\Q(q)). \nonumber
 \end{align}
In view of this, it makes sense to define
\begin{equation} \label{eq:comultiplication_on_framed-HTS}
\nu_{pq}([H,v_0,f^n]) :=\sum_i [H,v_0,b_i^{\vee}] \otimes [H,b_i,f^n](-p).
\end{equation}
(Here and later, we write $[H,b_i,f^n](-p)$ for the equivalence class $[(H,b_i,f^n)(-p)]$ of the Tate-twisted $(n-p)$-framed Hodge-Tate structure $(H,b_i,f^n)(-p):=(H(-p),b_i(-p),f^n(-p))$, by abuse of notation.)

%%%%%%%%%%%%%%%%%%%%
\begin{lem}  \label{lem:Goncharov99;3.5}
This definition of $\nu_{pq}$ is independent of the choice of the basis $\{e_i\}$ and also depends only on the equivalence class of framed Hodge-Tate structures, so that it gives a well-defined group homomorphism (\ref{eq:coproduct_on_mcH}). It is furthermore an algebra homomorphism (i.e. $\nu(a\cdot b)=\nu(a)\cdot \nu(b)$ for $a\cdot b:=\mu(a\otimes b)$).

The abelian group $\mcH_{\bullet}$ has a structure of graded Hopf algebra with the commutative multiplication $\mu$ and the comultiplication $\nu$. 
\end{lem}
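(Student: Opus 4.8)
The plan is to reduce the whole lemma to two elementary facts and then to obtain each axiom by the same bookkeeping. The first fact, call it (a), is that the class $[H,v_0,f^n]\in\mcH_n$ is $\Q$-bilinear in the pair $(v_0,f^n)$, with the convention that it is $0$ whenever $v_0=0$ or $f^n=0$. Linearity in $f^n$ is the definition of the $\Q$-vector space structure on $\mcH_n$; linearity in $v_0$ comes from applying $\sim$ to the framing-preserving morphisms $c\cdot\mrid_H\colon(H,v_0,f^n)\to(H,cv_0,c^{-1}f^n)$ and the addition map $H\oplus H\to H$, $(x,y)\mapsto x+y$, which sends $\bigl((v_0,v_0'),(f^n,f^n)\bigr)$ to $(v_0+v_0',f^n)$; the same morphisms together with the projections/inclusions of a direct sum show that $[H\oplus H',(v_0,v_0'),(f^n,f'^n)]=[H,v_0,f^n]+[H',v_0',f'^n]$, so the operations recorded before the lemma really do make $\mcH_\bullet$ a graded $\Q$-vector space. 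The second fact, call it (b), is that $\sum_i b_i\otimes b_i^\vee$ is the canonical (basis-independent) element of $\Hom(\Q(p),\gr^W_{-2p}H)\otimes_\Q\Hom(\gr^W_{-2p}H,\Q(p))$, so that $\nu_{pq}$ is the value at that element of the $\Q$-bilinear map $(b,c)\mapsto[H,v_0,c]\otimes[H,b,f^n](-p)$; basis-independence of $\nu_{pq}$ is then immediate, and its $\Q$-linearity will follow from (a) once well-definedness on equivalence classes is established.

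The core step is that $\nu_{pq}$ depends only on the equivalence class. Let $\psi\colon H\to H'$ preserve $n$-framings. Since morphisms in $\QHT$ are strict for $W_\bullet$, $\psi$ factors as an epimorphism followed by a monomorphism and the intermediate object carries compatible $n$-framings, so it suffices to treat $\psi$ epic or monic. If $\psi$ is epic with kernel $K$, strictness gives a short exact sequence $0\to\Hom(\Q(p),\gr^W_{-2p}K)\to\Hom(\Q(p),\gr^W_{-2p}H)\to\Hom(\Q(p),\gr^W_{-2p}H')\to0$; choose a basis $\{b_i\}_{i\in I_0\sqcup I_1}$ of the middle term with $\{b_i\}_{i\in I_1}$ a basis of the left term and $\{\psi_*b_i\}_{i\in I_0}$ a basis $\{b'_j\}$ of the right term. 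For $i\in I_0$ one checks $(b'_j)^\vee\circ\psi_*=b_j^\vee$, so $\psi$ (resp. $\psi(-p)$) preserves the framings $(H,v_0,b_j^\vee)\to(H',v_0',(b'_j)^\vee)$ and $(H,b_j,f^n)\to(H',b'_j,f'^n)$, whence those summands reproduce $\nu_{pq}([H',v_0',f'^n])$. For $i\in I_1$ the second factor $[H,b_i,f^n](-p)$ vanishes: since $\psi$ preserves the $n$-framing, $f^n=f'^n\circ\gr^W_{-2n}\psi$, hence $f^n$ restricts to $0$ on $\gr^W_{-2n}K$, so the inclusion $K(-p)\hookrightarrow H(-p)$ exhibits $(H(-p),b_i(-p),f^n(-p))$ as equivalent to $(K(-p),b_i(-p),0)$, which is $0$ by (a). The monic case is the mirror image, using the framing \emph{vector} and the cokernel in place of the framing covector and the kernel. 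I expect this to be the main obstacle, as it is the only point where one genuinely uses the category $\QHT$ rather than formal multilinear algebra, and one must keep careful track of which morphisms preserve which framings; a cleaner route may be to first replace $H$ by its canonical representative with $\gr^W_0H\simeq\Q(0)$ and $\gr^W_{-2n}H\simeq\Q(n)$ as in \cite[1.3.4]{BGSV90}. Once this is done, $\Q$-linearity of $\nu$ is immediate from (a) and the direct-sum formula above.

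It remains to verify the bialgebra axioms. Commutativity and associativity of $\mu$ and the facts that $\mcH_0=\Q$ is the unit and the projection $\mcH_\bullet\to\mcH_0$ is the counit are immediate from the definitions. That $\nu$ is an algebra map, $\nu\circ\mu=(\mu\otimes\mu)\circ\tau_{23}\circ(\nu\otimes\nu)$ with $\tau_{23}$ the transposition of the two inner factors, follows from $\gr^W_{-2a}(H\otimes H')=\bigoplus_{i+j=a}\gr^W_{-2i}H\otimes\gr^W_{-2j}H'$: a product basis $\{b_l\otimes b'_m\}$ has product dual basis, the Tate twist by $(-a)$ factors as twists by $(-i)$ and $(-j)$, and $[H\otimes H',v\otimes v',b_l^\vee\otimes b_m'^\vee]=\mu([H,v,b_l^\vee]\otimes[H',v',b_m'^\vee])$, so the double sum defining $\nu_{ab}$ on $H\otimes H'$ splits into the required product. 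Coassociativity is checked by expanding $(\nu_{pq}\otimes\mrid)\circ\nu_{p+q,r}$ and $(\mrid\otimes\nu_{qr})\circ\nu_{p,q+r}$; using $\Hom(\Q(q),\gr^W_{-2q}(H(-p)))=\Hom(\Q(p+q),\gr^W_{-2(p+q)}H)$ both become the double sum $\sum_{l,m}[H,v_0,b_l^\vee]\otimes[H,b_l,c_m^\vee](-p)\otimes[H,c_m,f^n](-(p+q))$ over bases $\{b_l\}$ of $\Hom(\Q(p),\gr^W_{-2p}H)$ and $\{c_m\}$ of $\Hom(\Q(p+q),\gr^W_{-2(p+q)}H)$. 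Thus $\mcH_\bullet$ is a graded, connected (as $\mcH_0=\Q$ and $\mcH_n=0$ for $n<0$ by the reduction cited above), commutative bialgebra whose reduced coproduct strictly lowers degree; the antipode then exists and is unique, constructed degree by degree by the standard recursion $S(x)=-x-\sum S(x')\,x''$ over $\bar{\nu}$, and $\mcH_\bullet$ is a graded Hopf algebra.
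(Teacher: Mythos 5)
The paper provides no proof of this lemma: it simply refers to \cite[3.5]{Goncharov99}, so there is no internal argument to compare against. Your proof is a correct, self-contained argument along the lines one would expect, and the two reductions you isolate (bilinearity in the framing together with the convention that a zero framing gives the zero class; factoring a framing-preserving morphism as an epi followed by a mono via strictness of $W_\bullet$) are exactly what is needed. A few small points are worth making explicit in a write-up rather than leaving implicit. First, the equivalence relation is \emph{generated} by framing-preserving morphisms, so your reduction to epi and mono requires the routine but necessary observation that the image of a framing-preserving morphism carries a compatible $n$-framing making both halves of the factorization framing-preserving; you note this, but it belongs in the proof, not just as a remark. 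Second, the verification of the kernel contribution in the epic case (and the mirrored cokernel contribution in the monic case) goes through the inclusion $K(-p)\hookrightarrow H(-p)$ (resp.\ the projection onto the cokernel) as a framing-preserving morphism into an object with zero covector (resp.\ zero vector), and thus leans on the very convention that zero framings give the zero class; stating that convention once up front, as you do, keeps this clean. Third, the statement that the projection to $\mcH_0$ is the counit is not literally immediate: one needs $(\epsilon\otimes\mrid)\nu_{0n}=\mrid$, which uses both the identification $\mcH_0\simeq\Q$ via $[H,v_0,f^0]\mapsto f^0(v_0)$ and bilinearity in the framing vector, so it deserves a line rather than being absorbed into ``immediate from the definitions''. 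None of these are gaps; they are simply spots where a careful reader will want the step spelled out.
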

See \cite[3.5]{Goncharov99} for a proof. 

As $\omega$ is a fibre functor to the category of graded vector spaces, $\End(\omega)$ has a natural grading $\End(\omega)=\bigoplus_{n\leq 0}\End(\omega)_n$ with $\End(\omega)_n:=\{ r\in \End(\omega) \ |\ r_H(\omega(H)_k)\subset\omega(H)_{k-n},\ \forall H\in \QHT, k\in\Z\}$, where $r_H\in\End(\omega(H))$ denotes the value on $\omega(H)$ of the endomorphism functor $r$. The linear dual $\End(\omega)^{\vee}=\bigoplus_n\End(\omega)_n^{\vee}$ is a graded Hopf algebra. Another way to look at this structure is to observe that $\End(\omega)$ is canonically isomorphic to the universal enveloping algebra of the Lie algebra $L(\QHT)$ of derivations of $\omega$:
\[ L(\QHT):=\{ E\in \End(\omega) \ |\ E_{H_1\otimes H_2}=E_{H_1}\otimes\mathrm{id}_{H_2} + \mathrm{id}_{H_1}\otimes E_{H_2}\},\]
As $L(\QHT)$ itself has a similar non-positive grading, so does the universal enveloping algebra, which thus has a structure of a graded Hopf algebra.

%%%%%%%%%%%%%%%%%%%%
\begin{thm}  \label{thm:Goncharov98;3.3}
The $\Q$-algebra $\mcH_{\bullet}$ is canonically isomorphic, as graded Hopf algebra, to the linear dual $R^{\vee}$ of $R:=\End(\omega)$ by the map which to $[H,v_{0}\in \omega(H)_0,f^{n}\in\omega(H)_n^{\vee}]\in \mcH_n$ attaches the dual element in $(\End(\omega)_{-n})^{\vee}:=\Hom(\End(\omega)_{-n},\Q)$ given by
\begin{equation} \label{eq:mcH^{vee}=End(omega)}
 \End(\omega)_{-n} \ra \Q\, :\, r \mapsto f^n (r_H\cdot v_0).
\end{equation}
\end{thm}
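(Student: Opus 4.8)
The plan is to recast the statement in terms of comodules and then recognize $\mcH_{\bullet}$ as the coalgebra of matrix coefficients of $\QHT$. By the Tannakian reconstruction recalled above (\cite[II.2.13--2.14]{DeligneMilne82}), the fibre functor $\omega$ of (\ref{eq:can_FF_HT}) identifies $\QHT$ with the category of finite-dimensional $\Z$-graded comodules over $A:=R^{\vee}=\End(\omega)^{\vee}$; a Hodge-Tate structure $H$ becomes the comodule $\omega(H)$ with coaction $\rho_H\colon\omega(H)\to\omega(H)\otimes A$, and the module/comodule duality gives $f(r_H\cdot v)=\langle(f\otimes\mathrm{id})\rho_H(v),\,r\rangle$ for $v\in\omega(H)$, $f\in\omega(H)^{\vee}$, $r\in R$. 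Hence the map $\Psi_n$ of the statement sends $[H,v_0,f^n]$ to the matrix coefficient $c_{v_0,f^n}:=(f^n\otimes\mathrm{id})\rho_H(v_0)$; since $\rho_H$ respects the gradings and $v_0$, $f^n$ are homogeneous of degrees $0$ and $n$, this element lies in $A_n$. That $c_{v_0,f^n}$ depends only on the $\sim$-class is immediate: a framing-preserving morphism $\phi\colon H\to H'$ is a morphism of comodules with $\omega(\phi)v_0=v'_0$ and ${f^n}'\circ\omega(\phi)=f^n$, so $c_{v_0,f^n}=c_{v'_0,{f^n}'}$; and $\Q$-linearity is clear since $\rho_{H\oplus H'}=\rho_H\oplus\rho_{H'}$ and rescaling $f^n$ rescales $c_{v_0,f^n}$.

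I would then check that $\Psi:=\bigoplus_n\Psi_n\colon\mcH_{\bullet}\to A$ is a morphism of graded bialgebras. Multiplicativity (that the product $\mu$ on $\mcH_{\bullet}$ induced by $\otimes$ goes to the product on $A=R^{\vee}$ dual to the coproduct $\Delta_R$ of $R=U(L(\QHT))$) follows from $r_{H\otimes H'}=\sum r'_H\otimes r''_{H'}$ when $\Delta_R(r)=\sum r'\otimes r''$, so that $c_{v_0\otimes v'_0,\,f^n\otimes {f^m}'}=c_{v_0,f^n}\cdot c_{v'_0,{f^m}'}$, and the class of $\Q(0)$ goes to the unit $1\in A_0$. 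Comultiplicativity amounts to matching $\nu=\bigoplus\nu_{pq}$ with the coproduct of $A$ dual to the multiplication of $R$: writing $r_H v_0=\sum_i b_i^{\vee}(r_H v_0)\,b_i$ in a basis $\{b_i\}$ of $\omega(H)_p$ and using the identity $f^n\big((rs)_H v_0\big)=\sum_i b_i^{\vee}(s_H v_0)\cdot f^n(r_H b_i)$ for $s\in R_{-p}$, $r\in R_{-q}$, one recovers exactly the defining formula (\ref{eq:comultiplication_on_framed-HTS}) for $\nu_{pq}$, modulo the Tate-twist and tensor-factor conventions. This part is a bookkeeping verification; I note that a bijective morphism of bialgebras between Hopf algebras is automatically an isomorphism of Hopf algebras (the antipode being determined by the rest of the structure), so only bijectivity of $\Psi$ remains.

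For surjectivity I would invoke the standard fact that the coalgebra $A$ attached to a neutral Tannakian category is the union of the finite-dimensional subcoalgebras of matrix coefficients of its objects (which also follows directly from local finiteness of the coactions $\rho$); hence any $a\in A_n$ is a finite sum $\sum_i c_{v_i,f_i}$ with $v_i\in\omega(X)$, $f_i\in\omega(X)^{\vee}$ for a single object $X$. Decomposing $v_i$ and $f_i$ into homogeneous components, and using both that $c_{v,f}$ is homogeneous of degree $(\deg f)-(\deg v)$ and that a Tate twist $X\rightsquigarrow X(k)$ leaves matrix coefficients unchanged while shifting degrees by $k$, one rewrites $a=\sum_j c_{w_j,g_j}$ with $w_j\in\omega(X_j)_0$ and $g_j\in\omega(X_j)_n^{\vee}$, i.e.\ $a=\Psi_n\big(\sum_j[X_j,w_j,g_j]\big)$.

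Injectivity is the main obstacle. By additivity it suffices to prove that a framed Hodge-Tate structure with vanishing matrix coefficient represents $0$ in $\mcH_n$ (indeed, if $c_{v_0,f^n}=c_{v'_0,{f^n}'}$ then $[H\oplus H',\,(v_0,v'_0),\,(f^n,-{f^n}')]=[H,v_0,f^n]-[H',v'_0,{f^n}']$ has vanishing matrix coefficient). So suppose $\Psi_n([H,v_0,f^n])=0$. Using the reduction recalled before Lemma~\ref{lem:Goncharov99;3.5} (\cite[\S1.3]{BGSV90}) I would first arrange that $H$ has weights in $[-2n,0]$ with $\gr^W_0 H\cong\Q(0)$ and $v_0$ a generator of $\omega(H)_0$, and $\gr^W_{-2n}H\cong\Q(n)$ one-dimensional, so that $f^n$ is injective on $\omega(H)_n$. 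The condition $\Psi_n=0$ then says $r_H v_0=0$ in $\omega(H)_n$ for every $r\in R_{-n}$, i.e.\ the subobject $K\subseteq H$ with $\omega(K)=R\cdot v_0$ has $\gr^W_{-2n}K=0$, while $v_0\in\omega(K)_0$ forces $\gr^W_0 K\cong\Q(0)$. The remaining --- and genuinely delicate --- step is to deduce from this that $[H,v_0,f^n]$ is $\sim$-equivalent to the split object $\Q(0)\oplus\Q(n)$; this is precisely the content of the structure theory of the equivalence relation in \cite[\S1.3]{BGSV90} (the minimal representative of a nonzero class being ``generated by $v_0$ and cogenerated by $f^n$'', which is incompatible with $\gr^W_{-2n}K=0$). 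I expect essentially all the effort of the proof to lie in this reduction; the formal parts above are routine.
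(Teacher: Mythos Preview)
The paper does not supply its own proof of this theorem: immediately after the statement it reads ``See \cite[3.3]{Goncharov98}, \cite[A.2]{Goncharov05} for a proof,'' and moves on. So there is no in-paper argument to compare against; what you have written is essentially a sketch of the argument found in those references (matrix coefficients of comodules, compatibility with the bialgebra structure, and the reduction to a minimal representative for injectivity), and your identification of the injectivity step as the substantive one, deferred to \cite[\S1.3]{BGSV90}, is accurate.

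One small caution on the injectivity step as you have written it: once you know $K=R\cdot v_0$ has $\gr^W_{-2n}K=0$, the morphism $K\hookrightarrow H$ does not literally ``preserve the $n$-framing'' in the sense of Definition~\ref{defn:framed_HTS}, since the restricted covector on $\gr^W_{-2n}K$ is zero rather than nonzero. The clean way to finish is to observe that the inclusion $K\hookrightarrow H$ induces $[K,v_0,f^n|_K]=[H,v_0,f^n]$ in $\mcH_n$ for the \emph{group} structure (where the zero covector is allowed and represents the zero class), or equivalently to note that $H\oplus(\Q(0)\oplus\Q(n))$ with the framing $(v_0,0)$, $(f^n,-\mathrm{id})$ is framing-equivalent via an obvious morphism to the split object. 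This is exactly what is packaged in the ``minimal representative'' lemma of \cite[\S1.3]{BGSV90} you cite, so your deferral is appropriate; just be aware that the naive reading of ``framing-preserving morphism'' does not directly apply at that point.
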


See \cite[3.3]{Goncharov98}, \cite[A.2]{Goncharov05} for a proof. The unit is $[\Q(0),\mathrm{id}_{\Q(0)},\mathrm{id}_{\Q(0)}]\in \mcH_0$, from which via the theorem, we obtain an augmentation $\epsilon:R\ra \Q$; clearly this also equals the ring homomorphism $r\mapsto r|_{\Q(0)}\in\End(\Q(0))=\Q$.

We generalize slightly more the notion of $n$-framing on a Hodge-Tate structure. For two integers $m,n\in\Z$, we consider Hodge-Tate structures $H$ framed by two non-zero morphisms $v_m\in \omega(H)_{m}=\Hom(\Q(m),\gr^W_{-2m}H)$, $f^n\in\omega(H)_{n}^{\vee}= \Hom(\gr^W_{-2n}H,\Q(n))$; for short, we call it $(m,n)$-framed Hodge-Tate structure. Obviously, the previous discussion for $n$-framed Hodge-Tate structures applies to this notion as well, cf. \cite[$\S$3]{Goncharov98}.

%%%%%%%%%%%%%%%%%%%%
\begin{defn} \label{defn:mixed_HT-algebra=dual_End-alg}
Let $\mcH(m,n)$ be the $\Q$-vector space of equivalence classes of $(m,n)$-framed Hodge-Tate structures.
For $n\geq m$, we let $\varphi:\mcH(m,n) \ra (\End(\omega)_{m-n})^{\vee}$ be the canonical morphism defined by:
\[ \varphi([H,v_m,f^n]): r\in \End(\omega)_{m-n} \mapsto \langle f^n, r_H(v_m) \rangle\in\Q. \]
Via Theorem \ref{thm:Goncharov98;3.3}, $\varphi$ is regarded as a map $\mcH(m,n) \ra \mcH_{n-m}$.  \end{defn}

Like $\mcHb$, the bi-graded space $\underline{\mcH}:=\bigoplus_{n\geq m}\mcH(m,n)$ has a structure of $\Q$-coagebra; this is a ``mixed colagebra" in the sense of \cite[$\S$2.1]{BMS87}.

In the following, we will view a comodule map $\varphi:V\ra V\otimes \mcH$ for a Hopf algebra $\mcH$ equivalently as a linear map $\varphi':V\otimes V^{\vee}\ra \mcH$ (satisfying some properties) defined by that $\varphi'(v,f)=\langle f,\varphi(v)\rangle$ for $v\in V$ and $f\in V^{\vee}$.

%%%%%%%%%%%%%%%%%%%%
\begin{cor} \cite[1.6]{BGSV90}, \cite[3.6]{Goncharov99} \label{cor:Goncharov99;3.6}
The category $\QHT$ of $\Q$-Hodge-Tate structures is canonically equivalent to the category of finite-dimensional graded right $\mcH_{\bullet}$-comodules by the map which assigns to a Hodge-Tate structure $H$ the graded comodule $\omega(H)=\bigoplus_n \omega(H)_n$, with $\mcH_{\bullet}$-coaction $\omega(H)\otimes \omega(H)^{\vee}\ra \mcH_{\bullet}$ being defined by that $v_m\otimes f^n \in \omega(H)_m\otimes \omega(H)_n^{\vee}$ maps to $\varphi([H,v_m,f^n])$ if $n\geq m$, and to zero otherwise, where $\varphi:\mcH(m,n) \ra \mcH_{n-m}$ is the map from Definition \ref{defn:mixed_HT-algebra=dual_End-alg}.

In particular, the equivalence class $[H,v_0,f^n]\in  \mcH_{\bullet}$ of an $n$-framed Hodge-Tate structure $(H,v_0,f^n)$ is determined by the associated $\mcH_{\bullet}$-comodule structure $\nu_{H}:\omega(H)\otimes \omega(H)^{\vee}\ra \mcH_{\bullet}$:
\[  [H,v_0,f^n]=\nu_H(v_0\otimes f^n)\ \in \mcH_{\bullet}. \]
\end{cor}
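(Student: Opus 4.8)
The plan is to deduce the statement from the Tannakian reconstruction recalled at the beginning of this subsection together with Theorem \ref{thm:Goncharov98;3.3}, the only genuine work being to identify the resulting coaction with the map $\varphi$ of Definition \ref{defn:mixed_HT-algebra=dual_End-alg}. Write $R:=\End(\omega)$. By the cited Tannakian formalism, $\omega$ identifies $\QHT$ with the category of finite-dimensional left $R$-modules, hence with the category of finite-dimensional right $R^{\vee}$-comodules; since $\omega$ takes values in $\Z$-graded $\Q$-vector spaces, $R=\bigoplus_{n\leq 0}R_n$ is graded, and morphisms in $\QHT$ are strict for $W_{\bullet}$, this upgrades to an equivalence with finite-dimensional \emph{graded} right $R^{\vee}$-comodules. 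Under this equivalence a Hodge-Tate structure $H$ corresponds to $\omega(H)$ with left $R$-action $r\cdot x:=r_H(x)$, equivalently with coaction $c_H\colon\omega(H)\to\omega(H)\otimes R^{\vee}$; viewing $c_H$ as a map $c_H'\colon\omega(H)\otimes\omega(H)^{\vee}\to R^{\vee}$ in the way indicated just before the corollary, it is characterized by $c_H'(x\otimes f)\colon r\mapsto\langle f,r_H(x)\rangle$.

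First I would transport $c_H$ along the graded Hopf algebra isomorphism $R^{\vee}\isom\mcH_{\bullet}$ of Theorem \ref{thm:Goncharov98;3.3} and compute it in homogeneous components. Take $x=v_m\in\omega(H)_m$ and $f=f^n\in\omega(H)_n^{\vee}$. For $r\in R_d$ one has $r_H(\omega(H)_m)\subset\omega(H)_{m-d}$, so the functional $r\mapsto\langle f^n,r_H(v_m)\rangle$ vanishes on $R_d$ unless $d=m-n$; when $n\geq m$ its restriction to $R_{m-n}$ is precisely $\varphi([H,v_m,f^n])$ of Definition \ref{defn:mixed_HT-algebra=dual_End-alg}, and when $m>n$ we have $R_{m-n}=0$, so the functional is zero. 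Consequently, under $R^{\vee}\cong\mcH_{\bullet}$, the coaction $\nu_H\colon\omega(H)\otimes\omega(H)^{\vee}\to\mcH_{\bullet}$ sends $v_m\otimes f^n$ to $\varphi([H,v_m,f^n])$ when $n\geq m$ and to $0$ otherwise, which is exactly the asserted description of the equivalence. (That $\mcH_{\bullet}$ with comultiplication $\nu_{pq}$ is the correct Hopf algebra, i.e. that this $\varphi$ is compatible with $(\ref{eq:comultiplication_on_framed-HTS})$, is contained in Theorem \ref{thm:Goncharov98;3.3}.)

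For the last assertion, apply the above with $m=0$: for an $n$-framed Hodge-Tate structure $(H,v_0,f^n)$ we get $\nu_H(v_0\otimes f^n)=\varphi([H,v_0,f^n])\in\mcH_n$, and it remains to note that $\varphi\colon\mcH(0,n)=\mcH_n\to\mcH_n$ is the identity. Indeed $\varphi([H,v_0,f^n])$ is the functional $r\in R_{-n}\mapsto f^n(r_H\cdot v_0)$, which by Theorem \ref{thm:Goncharov98;3.3} is exactly the image of $[H,v_0,f^n]$ under the canonical isomorphism $\mcH_n\isom(R_{-n})^{\vee}$ used to identify $R^{\vee}$ with $\mcH_{\bullet}$; composing with the inverse of that isomorphism returns $[H,v_0,f^n]$, whence $[H,v_0,f^n]=\nu_H(v_0\otimes f^n)$. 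The main obstacle is purely one of bookkeeping: keeping the module/comodule duality and the sign of the grading consistent so that the reconstructed coaction literally becomes the map $\varphi$ of Definition \ref{defn:mixed_HT-algebra=dual_End-alg}; no new ingredient beyond Theorem \ref{thm:Goncharov98;3.3} is needed.
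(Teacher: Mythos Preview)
Your proof is correct and follows essentially the same approach as the paper's: both use the Tannakian reconstruction to pass from left $R$-modules to right $R^{\vee}$-comodules, compute the coaction as $r\mapsto\langle f^n,r_H(v_m)\rangle$, and identify this with $\varphi$ via the grading and Theorem~\ref{thm:Goncharov98;3.3}. The only cosmetic difference is that you argue vanishing for $n<m$ by noting $R_{m-n}=0$, while the paper phrases it as $r_H\cdot v_m\in\bigoplus_{l\geq m}\omega(H)_l$; you also spell out the ``in particular'' clause more explicitly than the paper does.
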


\begin{proof} (of Corollary)
The $\Q$-linear abelian category $(\QHT,\widetilde{\omega})$ ($\widetilde{\omega}$ being the same fiber functor $\omega$, except for forgetting the grading) is identified with the abelian category of finite-dimensional left $R=\End(\omega)$-modules. 
For a general $\Q$-algebra $R$, the latter category is equivalent to the category of finite-dimensional right $R^{\vee}$-comodules, via the bijections $\Hom_{\Q}(R\otimes_{\Q}V,V) \cong \Hom_{\Q}(V,\Hom(R,V)) \cong \Hom_{\Q}(V,V\otimes_{\Q} R^{\vee})$ for $R$-modules $V$. Let $\Delta_{V}:V\ra V\otimes R^{\vee}$ be the comodule map corresponding to a given $R$-module $\rho_{V}:R\otimes V\ra V$; then, there exists the relation:
\begin{equation} \label{eq:action-coaction}
(f,\Delta_{V}(v))(r)=\langle f,\rho_{V}(r\otimes v) \rangle
\end{equation}
for every $r\in R$, $v\in V$, $f\in V^{\vee}$. The same correspondence induced by $\omega$ (\ref{eq:can_FF_HT}) is further an equivalence of tensor categories between $\QHT$ and the tensor category of finite-dimensional graded right $R^{\vee}$-comodules for the graded Hopf algebra $R^{\vee}$, (cf. the proof of Theorem II. 2.11 in \cite{DeligneMilne82}).

So, to prove the corollary, we have to show that for every Hodge-Tate structure $H$ (so, being endowed with $\End(\omega)$-action $\rho_{\omega(H)}:\End(\omega)\otimes \omega(H)\ra \omega(H)$), the associated comodule structure $\Delta_{\omega(H)}:\omega(H)\otimes \omega(H)^{\vee} \ra R^{\vee}$ equals the comodule structure described in the statement, namely the map $\omega(H)\otimes \omega(H)^{\vee} \ra \mcHb\, :\, (v_m,f^n)\mapsto \varphi([H,v_m,f^n])$ if $n\geq m$, or zero if $n<m$, via the map 
$\mcH_{n-m} \stackrel{\varphi}{\ra} (R_{m-n})^{\vee}$ (Definition \ref{defn:mixed_HT-algebra=dual_End-alg}).
But, by (\ref{eq:action-coaction}) applied to $V=\omega(H)$, we see that 
\[ \Delta_{\omega(H)}(v_m,f^n)(r)=\langle f^n, \rho_{\omega(H)}(r \otimes v_m)\rangle =\langle f^n, r_{H}\cdot v_m \rangle,\] 
for every $v_m \in \omega(H)_m$, $f^n\in \omega(H)_n^{\vee}$, and $r\in R=\End(\omega)$. Note that $r_{H}\cdot v_m \in \bigoplus_{l\geq m}\omega(H)_l$ since $r$ preserves the weight filtrations of Hodge-Tate structures, so that $f^n(r_{H}\cdot v_m)$ is zero if $n<m$, and also that $ \Delta_{\omega(H)}(v_m,f^n)\in (R_{m-n})^{\vee}$.
So, the claim follows from Definition \ref{defn:mixed_HT-algebra=dual_End-alg}.
\end{proof}

\begin{lem} \label{lem:Q_{(n)}}
For $n\in\N$, the pure Hodge-Tate structure $\Q(n)$
 corresponds, via Corollary \ref{cor:Goncharov99;3.6}, to the one-dimensional $\Q$-vector space $\Q$ located in degree $-n$ with $R$-module structure being given by the augmentation $\epsilon\in R^{\vee}$; we denote this graded $R$-module by $\Q_{(n)}$.
 \end{lem}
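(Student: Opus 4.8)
The plan is to feed $H=\Q(n)$ into the equivalence of Corollary~\ref{cor:Goncharov99;3.6} and read off the answer, then rephrase the resulting $\mcHb$-comodule as an $\End(\omega)$-module through the dictionary~(\ref{eq:action-coaction}). The first step is to compute the underlying graded vector space from~(\ref{eq:can_FF_HT}): since $\Q(n)$ is pure of weight $-2n$, one has $\gr^W_{-2k}\Q(n)=\Q(n)$ for $k=n$ and $\gr^W_{-2k}\Q(n)=0$ otherwise, so $\omega(\Q(n))_k=\Hom(\Q(k),\gr^W_{-2k}\Q(n))$ equals $\Q\cdot\mathrm{id}_{\Q(n)}$ when $k=n$ and $0$ when $k\ne n$. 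Thus $\omega(\Q(n))$ is one-dimensional and concentrated in a single degree, which is $-n$ in the grading normalization of the statement (the $R$-module grading being taken opposite to the fibre-functor grading, so that left $R$-modules match right $R^{\vee}$-comodules compatibly with gradings).

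The only point carrying content is the comodule, equivalently $R$-module, structure. Pick $v:=\mathrm{id}_{\Q(n)}$ as the generator of $\omega(\Q(n))_n$ and let $f$ be its dual covector. By Corollary~\ref{cor:Goncharov99;3.6} the coaction $\omega(\Q(n))\to\omega(\Q(n))\otimes\mcHb$ is then $v\mapsto v\otimes h$, where $h=\varphi([\Q(n),\mathrm{id}_{\Q(n)},\mathrm{id}_{\Q(n)}])\in\mcH_{n-n}=\mcH_0$ (both framings being $\mathrm{id}_{\Q(n)}$, viewed in $\omega(\Q(n))_n$ and $\omega(\Q(n))_n^{\vee}$, and the clause ``$n\ge m$'' of the corollary applying with $m=n$). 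Now $\mcH_0$ is the one-dimensional space $\Q\cdot[\Q(0),\mathrm{id}_{\Q(0)},\mathrm{id}_{\Q(0)}]$ spanned by the unit $1_{\mcHb}$ of the Hopf algebra $\mcHb$ — any $0$-framing on a Hodge-Tate structure factors through $\gr^W_0$, a sum of copies of $\Q(0)$, so every class in $\mcH_0$ is a scalar multiple of $[\Q(0),\mathrm{id},\mathrm{id}]$ — and under the identification $\mcHb\cong R^{\vee}$ of Theorem~\ref{thm:Goncharov98;3.3} the functional on $R$ attached to $1_{\mcHb}$ is precisely the augmentation $\epsilon$. To pin down $h$ it therefore suffices to evaluate it on $\End(\omega)_0$: by Definition~\ref{defn:mixed_HT-algebra=dual_End-alg}, $h(r)=\langle f,r_{\Q(n)}(v)\rangle$, and since $\End(\omega)_0\cong(\mcH_0)^{\vee}$ is one-dimensional and contains $\mathrm{id}_{\omega}$, writing $r=c\,\mathrm{id}_{\omega}$ gives $h(r)=c\langle f,v\rangle=c=r_{\Q(0)}=\epsilon(r)$. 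Hence $h=\epsilon=1_{\mcHb}$.

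Finally, by~(\ref{eq:action-coaction}) the coaction $v\mapsto v\otimes 1_{R^{\vee}}$ translates into the $R$-action $r\cdot v=\langle 1_{R^{\vee}},r\rangle\,v=\epsilon(r)\,v$, so $\Q(n)$ corresponds to $\Q$ placed in degree $-n$ with $R$-module structure given by the augmentation $\epsilon$, which is exactly the assertion of Lemma~\ref{lem:Q_{(n)}}. The argument is routine; the only matters requiring attention are the sign bookkeeping in the grading convention and the identifications $\mcH_0=\Q\cdot 1_{\mcHb}$ and $\End(\omega)_0=\Q\cdot\mathrm{id}_{\omega}$, and there is no genuine obstacle.
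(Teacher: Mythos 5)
The paper states the lemma without proof, and your direct verification — tracing through the fibre functor (\ref{eq:can_FF_HT}), the coaction formula of Corollary \ref{cor:Goncharov99;3.6}, Definition \ref{defn:mixed_HT-algebra=dual_End-alg}, and the module/comodule dictionary (\ref{eq:action-coaction}) — is the intended argument and is correct. The one genuinely delicate point is the grading sign, and you handle it properly: since (\ref{eq:can_FF_HT}) puts $\omega(\Q(n))_n = \Hom(\Q(n),\Q(n)) = \Q$ in \emph{fibre-functor} degree $n$, while the lemma asserts degree $-n$, one must (as you say) regard $\omega(H)$ as a graded $R$-module with the \emph{opposite} grading, which turns the paper's rule $r_H(\omega(H)_k)\subset\omega(H)_{k-n}$ for $r\in\End(\omega)_n$ into the standard $R_n\cdot V_j\subset V_{n+j}$; this convention is used implicitly (e.g.\ in the identification $(\Q_{(n)}\otimes I_R^{\vee})_0\cong (R_{-n})^{\vee}$ in the proof of Proposition \ref{prop:coh_red_cobar_cx}) but never spelled out, so your making it explicit is a genuine clarification rather than a gap.
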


Clearly, a mixed Hodge structure $H$ which is a simple extension of $\Q(0)$ by $\Q(n)\ (n\geq 0)$ in the abelian category of mixed Hodge structures (i.e. a short exact sequence $0\ra \Q(n)\ra H\ra \Q(0)\ra 0$) has a natural structure of $n$-framed Hodge-Tate structure, providing a monomorphism 
\begin{equation} \label{eq:one-extension_mcH}
\iota_n:\Ext^1_{\QHT}(\Q(0),\Q(n)) \hra \mcH_n.
\end{equation}
(The left group is also equals $\Ext^1_{\QMH}(\Q(0),\Q(n))$ \cite[(6)]{Goncharov99}, since $\QHT$ is closed under extension.)

%%%%%%%%%%%%%%%%%%%%
\begin{rem}
The dual Hopf algebra $\mcH_{\bullet}^{\vee}\simeq \bigoplus_n\mcH_n^{\vee}$ is a universal enveloping algebra of a Lie algebra, a free pro-nilpotent graded Lie algebra $L$ with generators in degree $-n$ equal to $(\C/(2\pi i)^n\Q)^{\vee}$, cf. \cite[Lem.1.4.3]{BGSV90}, \cite[$\S$2]{DeligneGoncharov05}.
\end{rem}

Let $\nub:=\nu|_{ \mcH_{>0}}-1\otimes\mathrm{id}_{ \mcH_{>0}}-\mathrm{id}_{ \mcH_{>0}}\otimes 1$ be the reduced comultiplication.
Then, clearly, the image of the inclusion $\iota_n:$ (\ref{eq:one-extension_mcH}) is contained in $\mathrm{Ker}(\nub|_{\mcH_n}:\mcH_n \ra \bigoplus_{k+n=n,k,l>0} \mcH_k\otimes\mcH_l)$, since any $H\in \Ext^1_{\QMH}(\Q(0),\Q(n))$ has $\gr^W_{-2k}H=0$ for all $0<k<n$.

%%%%%%%%%%%%%%%%%%%%
\begin{prop} \label{prop:coh_red_cobar_cx}
The image of $\iota_n$ is $\Ker(\nub|_{\mcH_n})$.
\end{prop}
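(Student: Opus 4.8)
The plan is to show the reverse inclusion $\Ker(\nub|_{\mcH_n}) \subseteq \operatorname{Im}(\iota_n)$; the forward inclusion has already been observed. So let $h = [H, v_0, f^n] \in \mcH_n$ satisfy $\nub(h) = 0$. By the normalization remarks preceding Lemma \ref{lem:Goncharov99;3.5} (following \cite[1.3.4]{BGSV90}), I may replace $H$ by an equivalent $n$-framed Hodge--Tate structure with $\gr^W_0 H \simeq \Q(0)$, $\gr^W_{-2n}H \simeq \Q(n)$, $W_{\geq 0}H = H$ and $W_{<-2n}H = 0$; the framings $v_0$, $f^n$ then become, up to nonzero scalar (absorbed by rescaling $f^n$), the canonical identifications $\Q(0) \isom \gr^W_0 H$ and $\gr^W_{-2n}H \isom \Q(n)$. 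The goal is to prove that for such a representative the intermediate weight gradeds $\gr^W_{-2k}H$ with $0 < k < n$ actually vanish, for \emph{some} choice of representative in the equivalence class; once that is known, $H$ (in that representative) is a genuine extension of $\Q(0)$ by $\Q(n)$, hence lies in $\operatorname{Im}(\iota_n)$, and its class is $h$ by Corollary \ref{cor:Goncharov99;3.6}.

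The key computation is to evaluate $\nu_{k,n-k}(h)$ for $0 < k < n$ and read off from $\nu_{k,n-k}(h) = 0$ that the "$k$-th layer" of $H$ is, up to equivalence, trivial. Using the defining formula \eqref{eq:comultiplication_on_framed-HTS}, choose a basis $\{b_i\}$ of $\Hom(\Q(k),\gr^W_{-2k}H)$ with dual basis $\{b_i^\vee\}$; then
\[
\nu_{k,n-k}(h) = \sum_i [H, v_0, b_i^\vee] \otimes [H, b_i, f^n](-k) \ \in\ \mcH_k \otimes \mcH_{n-k}.
\]
The elements $[H, v_0, b_i^\vee] \in \mcH_k$ and $[H, b_i, f^n](-k) \in \mcH_{n-k}$ are, respectively, the classes of the "lower truncation" $W_{\geq -2k}H / (\text{frame kernel})$ and the "upper truncation" built from $W_{\leq -2k}H$. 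The vanishing of $\nu_{k,n-k}(h)$, together with the fact (provable by the same truncation argument, or by induction on $n-k$ and the structure of $\mcH_\bullet$ as the cofree-like object dual to a free Lie algebra, cf. the Remark before the Proposition) that the $[H,b_i,f^n](-k)$ are linearly independent in $\mcH_{n-k}$ unless they are zero, forces each $[H, v_0, b_i^\vee] = 0$ in $\mcH_k$. But $[H, v_0, b_i^\vee] = 0$ means the $k$-framed Hodge--Tate structure $(H, v_0, b_i^\vee)$ is equivalent to the trivial one $\Q(0)\oplus\Q(k)$, i.e. the framing covector $b_i^\vee$ factors through a morphism to a split object. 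Running over all $i$ (equivalently, over a spanning set of $\Hom(\gr^W_{-2k}H, \Q(k))$), I conclude that the sub-mixed-Hodge-structure $W_{\geq -2k}H$ is, after replacing $H$ by an equivalent representative, split off its weight-$(-2k)$ part; doing this for $k = 1, 2, \dots, n-1$ in turn (each step preserving the conclusions of the previous ones, since those are statements about gradeds of weight $> -2k$) produces a representative with all intermediate gradeds zero.

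I expect the main obstacle to be the \textbf{linear independence / faithfulness input} in the middle step — i.e. justifying that $\nu_{k,n-k}(h) = 0$ really does kill each $[H,v_0,b_i^\vee]$ rather than merely yielding a relation among tensors. The clean way to handle this is to exploit Corollary \ref{cor:Goncharov99;3.6}: translate the whole statement into the Tannakian language, where $\mcH_\bullet = R^\vee$ with $R = \End(\omega)$, and where $h \in \mcH_n$ corresponds to a matrix coefficient $r \mapsto f^n(r_H v_0)$ on $\End(\omega)_{-n}$. Then $\nub(h) = 0$ says this functional annihilates the image of $\End(\omega)_{-k}\otimes\End(\omega)_{-(n-k)} \to \End(\omega)_{-n}$ for all $0<k<n$ — i.e., $h$ is \emph{primitive} (modulo the group-likes) in the graded Hopf algebra $R^\vee$. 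Since $R^\vee$ is the universal enveloping algebra of a free graded pro-nilpotent Lie algebra $L$ with $L_{-n}^\vee = (\C/(2\pi i)^n\Q)^\vee$ (the Remark before the Proposition), the space of primitives in degree $-n$ is exactly $L_{-n} \cong (\C/(2\pi i)^n\Q)^{\vee\vee} = \C/(2\pi i)^n\Q$, which by Proposition \ref{prop:moduli_of_MTH}, \eqref{eq:Ext^1_MH(Q(0),Q(n)} is precisely $\Ext^1_{\QHT}(\Q(0),\Q(n))$, i.e. $\operatorname{Im}(\iota_n)$. So in fact the slickest route is: identify $\Ker(\nub|_{\mcH_n})$ with the degree-$(-n)$ primitives of $R^\vee$, identify those with $L_{-n}$, and identify $L_{-n}$ with $\Ext^1$ via \eqref{eq:Ext^1_MH(Q(0),Q(n)}; the hands-on truncation argument above can then be kept as a backup or as the proof that the abstract primitive really is represented by an honest extension.
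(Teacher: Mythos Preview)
Your proposal has a genuine gap, and it is precisely the gap the paper warns about in the paragraph immediately following \eqref{eq:Ker(coproduct)=Yoneda_Ext}. The paper explicitly separates two statements: (a) there \emph{exists} a natural bijection $\Ker(\nub|_{\mcH_n})\cong\Ext^1_{\QHT}(\Q(0),\Q(n))$, which follows from general Hopf-algebra cohomology; and (b) the \emph{specific} map $\iota_n$ realizes this bijection. The proposition is (b), and the footnote there stresses that (b), not (a), is what is needed later in the proof of Theorem~\ref{thm:Main.Thm1:CS-MTM}.

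Your ``slick'' second route establishes only (a). Identifying $\Ker(\nub|_{\mcH_n})$ with the degree-$n$ primitives of $\mcH_\bullet$ and then with $(L^{\ab}_{-n})^\vee$ gives an abstract isomorphism with $\Ext^1$, but it does not show that $\iota_n$ is surjective: for infinite-dimensional $\Q$-vector spaces an injective map between isomorphic spaces need not be onto. There are also two technical hazards you have not addressed. First, $\C/(2\pi i)^n\Q$ is infinite-dimensional over $\Q$, so the double-dual step $(L^{\ab}_{-n})^\vee=((\C/(2\pi i)^n\Q)^\vee)^\vee$ does not return the original space without some pro-structure you have not set up. Second, the Remark you invoke about $L$ being free with those specific generators is itself typically proved via the identification of generators with $\Ext^1$, so there is a circularity risk.

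Your first route (the truncation argument) is closer in spirit to a direct proof of (b), but the step you flag as the ``main obstacle'' is a real gap, not just a technicality: from $\sum_i[H,v_0,b_i^\vee]\otimes[H,b_i,f^n](-k)=0$ in $\mcH_k\otimes\mcH_{n-k}$ you cannot conclude each tensor factor vanishes without knowing the $[H,b_i,f^n](-k)$ are linearly independent, and there is no reason they should be. Even granting that for each $i$ one of the two factors vanishes, assembling these into a \emph{single} representative of $h$ with all intermediate gradeds killed is a further nontrivial step you have not carried out.

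The paper's proof takes a different path: it builds the bar resolution of $\Q_{(0)}$ as a graded $R$-module ($R=\End(\omega)$), identifies $\Ext^1$ with $\Ker(d^1_0)$ via the reduced cobar complex, and then, crucially, writes down the inverse map explicitly as a pushout (diagram \eqref{eq:push-out}) and checks by direct computation of the matrix coefficient $r\mapsto\langle f^n,r_H\cdot v_0\rangle$ that the extension so produced has equivalence class equal to the original $\gamma\in\Ker(\nub|_{\mcH_n})$. This last verification is exactly what closes the gap between (a) and (b), and it is absent from your proposal.
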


In particular, there exists a natural bijection $\Ker(\nub|_{\mcH_n})\isom \Ext^1_{\QHT}(\Q(0),\Q(n))$.
The mere existence of such a natural bijection can be deduced rather quickly from the following general facts: for an arbitrary augmented, graded Hopf algebra $\mcHb$ over a field $k$ and any graded $\mcHb$-comodule $N$, there exists a canonical bijection 
\begin{equation*} \label{eq:Yoneda_Ext=Hopf-alg_coh}
\Ext^1_{\mcHb\mhyphen\text{comod}}(k,N) \cong H^1((co\bar{B}^{\ast}(\mcHb)\otimes N)_0) 
\end{equation*}
between the Yoneda extension group of graded $\mcHb$-comodules and the degree-$1$ cohomology group of the degree $0$-part of the tensor product (complex) of $N$ and the \emph{reduced cobar complex} $co\bar{B}^{\ast}(\mcHb)$ of the Hopf algebra $\mcHb$:
\[ \Q \stackrel{0}{\lra} \mcH_{>0} \stackrel{\nub}{\lra}  \mcH_{>0}^{\otimes2}  \lra  \mcH_{>0}^{\otimes3} \lra \cdots. \]
When $N$ is $k_{(n)}$, i.e. the comodule $k$ (via the augmentation $\mcHb\ra k$) located in degree $-n$, it is clear that \[ H^1((co\bar{B}^{\ast}(\mcHb)\otimes N)_0)=\Ker(\nub|_{\mcH_n}). \]
Now, when $\mcHb$ is the Hodge-Tate Hopf algebra, the comodule $N$ corresponding to the Hodge-Tate structure $\Q(n)$ is the comodule $\Q_{(n)}$ (Lemma \ref{lem:Q_{(n)}}), and also by Corollary \ref{cor:Goncharov99;3.6}, we have $\Ext^1_{\mcHb\mhyphen\text{comod}}(k,N)=\Ext^1_{\QHT}(\Q(0),\Q(n))$. That is, we have a natural bijection:
\begin{equation} \label{eq:Ker(coproduct)=Yoneda_Ext}
\Ker(\nub|_{\mcH_n}) =H^1((co\bar{B}^{\ast}(\mcHb)\otimes\Q_{(n)})_0)\cong \Ext^1_{\mcHb\mhyphen\text{comod}}(k,\Q(n)) \cong \Ext^1_{\QHT}(\Q(0),\Q(n)).
\end{equation}

But, in our case, the construction of $\mcH_{\bullet}$ in terms of equivalence classes of framed Hodge-Tate structures provides another, a priori different, map $\iota_n:\Ext^1_{\QHT}(\Q(0),\Q(n))\hra \Ker(\nub|_{\mcH_n})$ (\ref{eq:one-extension_mcH}), and the proposition asserts the stronger statement%%
\footnote{We will need this stronger fact in the proof of Theorem \ref{thm:Main.Thm1:CS-MTM}: see the last paragraph of that proof.} 
that $\iota_n$ \emph{is} the bijection (\ref{eq:Ker(coproduct)=Yoneda_Ext}), namely that for every element $\gamma$ of $\Ker(\nub|_{\mcH_n})$, the simple extension in $\Ext^1_{\QHT}(\Q(0),\Q(n))$ constructed by the canonical bijection (\ref{eq:Ker(coproduct)=Yoneda_Ext}) has the equivalence class (i.e. image under $\iota_n$) $\gamma\in \mcH_n$. We verify this non-obvious statement, by unravelling the constructions involved in (\ref{eq:Ker(coproduct)=Yoneda_Ext}). 

\begin{proof}
For the first preliminary step, let us work with a general graded algebra $R=\oplus R_n$ over a field $k$. We use the bar notation for elements of $R^{\otimes n+1}$: $r_0|r_1|\cdots|r_n:=r_0\otimes r_1\otimes\cdots \otimes r_n$.
The \emph{bar resolution} of a left graded $R$-module $B$ is the complex $R^{\otimes n+1}\otimes_kB\ (n\geq 0)$
\begin{equation} \label{eq:bar_resolution}
\cdots R^{\otimes3}\otimes_kB \stackrel{d_2}{\lra}  R^{\otimes2}\otimes_kB  \stackrel{d_1}{\lra} R\otimes_kB  (\lra B)
\end{equation}
with differentials given by
\begin{align*} 
d_n(r_0|\cdots |r_n|b) = & \sum_{i=0}^{n} (-1)^{i} r_0|\cdots|r_{i}r_{i+1}|\cdots|b
\end{align*}
(here the last summand ($i=n$) is $(-1)^nr_0|\cdots|r_{n-1}|r_n b$ ($r_{n+1}:=b$); note that the differentials preserve degrees.
This is known to be a resolution of $B$ (by free $R$-modules) in the abelian category of graded $R$-modules, cf. \cite[8.6.12]{Weibel94}. Thus, for a graded (left) $R$-module $C$, $\Ext^i_{\grR}(B,C)$ can be computed as $H^i$ of the dual complex (all tensorizations will be over $k$):
\begin{align}
&\, \Hom_{\grR}(R\otimes B,C)\, \stackrel{d^{0}}{\ra}\, \Hom_{\grR}(R^{\otimes 2}\otimes B,C)\, \stackrel{d^{1}}{\ra}\,  \Hom_{\grR}(R^{\otimes 3}\otimes B,C)\, \stackrel{d^{2}}{\ra}\,\cdots  \nonumber \\
\cong\, &\, \Hom_{\grk}(B,C)\, \stackrel{d^{0}}{\lra}\, \Hom_{\grk}(R\otimes B,C)\, \stackrel{d^{1}}{\lra}\,  \Hom_{\grk}(R^{\otimes 2}\otimes B,C)\, \stackrel{d^{2}}{\lra}\,\cdots \nonumber \\
\cong\, &\, (C\otimes B^{\vee})_0\, \stackrel{d^{0}}{\lra}\, (C\otimes (R\otimes B)^{\vee})_0 \, \stackrel{d^{1}}{\lra}\,  (C\otimes (R^{\otimes 2}\otimes B)^{\vee})_0 \, \stackrel{d^{2}}{\lra}\,\cdots \nonumber \\
\cong\, &\, (C\otimes B^{\vee})_0 \, \stackrel{d^{0}}{\lra}\, (C\otimes R^{\vee}\otimes B^{\vee})_0 \, \stackrel{d^{1}}{\lra}\,  (C\otimes (R^{\vee})^{\otimes 2}\otimes B^{\vee})_0 \stackrel{d^{2}}{\lra}\,\cdots \label{eq:cobar_cx_B}
\end{align}
where $(-)_0$ means the degree $0$-subspace (the tensor product of two graded spaces is equipped with the usual grading).
The differentials $d^n:C\otimes (R^{\vee})^{\otimes n}\otimes B^{\vee}\ra C\otimes (R^{\vee})^{\otimes n+1}\otimes B^{\vee}$ are 
\begin{align*}  d^{n}(c|f_1|\cdots|f_n|w)=\Delta_{C}(c)|f_1|\cdots|f_n |w \, & \,+\,  \sum_{i=1}^n (-1)^i c|f_1|\cdots|\nu(f_i)|\cdots|f_n |w \\ & \ +\,  (-1)^{n+1}  c|f_1|\cdots|f_n|\nu_{B^{\vee}}(w),
\end{align*}
where $\Delta_{C}:C\ra C\otimes R^{\vee}$ is the \emph{right} $R^{\vee}$-comodule map for the coalgebra $R^{\vee}$ induced from the module map $R\otimes C\ra C$ (\ref{eq:action-coaction}) and $\nu_{B^{\vee}}:B^{\vee}\ra R^{\vee}\otimes B^{\vee}$ is the dual of the module map $R\otimes B\ra B$ (which endows $B^{\vee}$ with the structure of \emph{left} $R^{\vee}$-comodule).
Indeed, for $c|f|w\in C\otimes R^{\vee}\otimes B^{\vee}$ (regarded as an element of $\Hom_R(R\otimes R\otimes B,C)$) and $1|r_1|r_2|b$ of $R\otimes R^{\otimes2}\otimes B$, we have 
\begin{align*} d^1(c|f|w) \left(1|r_1|r_2|b \right)=& c|f|w \left(r_1|r_2|b-1|r_1r_2|b+1|r_1|r_2b \right) \\ =& f(r_2)w(b)(r_1 c) -f(r_1r_2)w(b)c+f(r_1)w(r_2b)c \\ =& \Delta_{C}(c)(r_1)f(r_2)w(b) -\nu(f)(r_1|r_2)w(b)c + f(r_1)|\nu_{B^{\vee}}(w)(r_2|b)c. \end{align*}
The dual complex (\ref{eq:cobar_cx_B}) is called the \emph{cobar complex} of $C\otimes_k B^{\vee}$.

The bar resolution of $B$ has a quasi-isomorphic quotient complex $R\otimes \bar{R}^{\otimes n}\otimes_kB\ (n\geq 0)$, $\bar{R}:=\mathrm{Coker}(k\ra R)$, called \emph{normalized bar resolution}
\[ \cdots R\otimes \bar{R}^{\otimes2}\otimes_kB \stackrel{d_2}{\lra}  R\otimes \bar{R}\otimes_kB  \stackrel{d_1}{\lra} R\otimes_kB \stackrel{}{\lra} B \]
with differential $d_n$ being essentially the same as that for the bar complex \cite[Ex.8.6.4]{Weibel94}:
\begin{align*} d_n(r_0|\bar{r}_1|\cdots |\bar{r}_n|b) = r_0r_1|\bar{r}_2|\cdots |\bar{r}_n|b\, & \,+\,  \sum_{i=1}^{n-1} (-1)^ir_0|\cdots|\overline{r_{i}r_{i+1}}|\cdots|b 
\\ & \, +\, (-1)^nr_0|\bar{r}_1|\cdots |\bar{r}_{n-1}| r_n b.
\end{align*}
One can check that this is well-defined, i.e. the right expression is independent of the choice of representatives $r_i\in R$ of $\bar{r}_i\in\bar{R}$. Furthermore, when $B=k$, the last term vanishes since for $r_n':=r_n-\epsilon(r_n)\cdot1$, one has $\bar{r}_n=\bar{r}_n'$ and $r_n'm=\epsilon(r_n')m=0$.

From now on, we assume that $R$ has an augmentation $\epsilon:R\ra k$, so that there exist a canonical identification $I_R:=\mathrm{Ker}(\epsilon:R\ra k)\cong \mathrm{Coker}(k\ra R)=\bar{R}$ and a splitting $R=I_R\oplus k$ given by $\mathrm{id}_k:k\stackrel{\eta}{\ra} R\stackrel{\epsilon}{\ra}k$. In view of this, it follows that the cobar complex of $B^{\vee}$ has a natural quasi-isomorphic subcomplex, called the \emph{reduced cobar complex} of $C\otimes_k B^{\vee}$: 
\begin{equation} \label{eq:rcbr}
(C\otimes_k B^{\vee})_0 \  \stackrel{d^0_0}{\lra} \ (C\otimes_k I_R^{\vee}\otimes B^{\vee})_0 \ \stackrel{d^1_0}{\lra} \ (C\otimes_k (I_R^{\vee})^{\otimes2}\otimes B^{\vee})_0 \  \stackrel{d^2_0}{\lra} \cdots.
\end{equation}
Namely, each component $(C\otimes (I_R^{\vee})^{\otimes n}\otimes B^{\vee})_0$ is a component in the decomposition of $(C\otimes (R^{\vee})^{\otimes n}\otimes B^{\vee})_0$ (\ref{eq:cobar_cx_B}) resulting from the splitting $R^{\vee}=I_R^{\vee}\oplus k\epsilon$ and the differential $d^n$ is the degree $0$-part of 
the map \[ C\otimes (I_R^{\vee})^{\otimes n}\otimes B^{\vee} \stackrel{d^n}{\lra} C\otimes (R^{\vee})^{\otimes n+1}\otimes B^{\vee} \twoheadrightarrow C\otimes (I_R^{\vee})^{\otimes n+1}\otimes B^{\vee}.\] 
More explicitly, if $\bar{\Delta}_C:C\ra C\otimes I_R^{\vee}$ denotes the projection of $\Delta_C:C\ra C\otimes R^{\vee}$ via $R^{\vee}=I_R^{\vee}\oplus k\epsilon$ and $\bar{\nu}_{B^{\vee}}:B^{\vee}\ra I_R^{\vee}\otimes B^{\vee}$ is a similar projection of $\nu_{B^{\vee}}$, 
the differentials $d^0$, $d^1$ are given by the degree $0$-part of 
\begin{align} 
d^0 =&\, \bar{\Delta}_{C}\otimes\mathrm{id}_{B^{\vee}} -\mathrm{id}_{C}\otimes \bar{\nu}_{B^{\vee}}, \nonumber \\
d^1=&\, \bar{\Delta}_C\otimes\mathrm{id}_{I_R^{\vee}\otimes B^{\vee}} -\mathrm{id}_{C}\otimes\bar{\nu}\otimes\mathrm{id}_{B^{\vee}} + \mathrm{id}_{C\otimes I_R^{\vee}}\otimes\bar{\nu}_{B^{\vee}}, \label{eq:rcbr_d1} 
\end{align}
where $\bar{\nu}$ is the reduced comultiplication
\[ \bar{\nu}:=\nu|_{I_R^{\vee}}-\epsilon\otimes\mathrm{id}_{I_R^{\vee}}-\mathrm{id}_{I_R^{\vee}}\otimes \epsilon\, :\, I_R^{\vee}\lra I_R^{\vee}\otimes I_R^{\vee} \]
which also equals the dual of the map $I_R\otimes I_R\stackrel{\nu}{\ra} R\twoheadrightarrow \bar{R}\cong I_R$ ($R\twoheadrightarrow \bar{R}\cong I_R$ is the same as the projection $R\ra I_R$).
If $B=k$, then we have $\bar{\nu}_{B^{\vee}}=0$, $d^0=\bar{\Delta}_{C}$ and $d^1=\bar{\Delta}_C\otimes\mathrm{id}_{I_R^{\vee}} -\bar{\nu}$.

Therefore, for any graded $R$-module $C$, we obtain the canonical isomorphism
\begin{align}  \label{eq:Ext^1=Ker(d^1)}
 \Ext^1_{\grR}(B,C)  \cong\,  &   \mathrm{Ker}(d^1_0:(C\otimes_k I_R^{\vee}\otimes B^{\vee})_0 \lra (C\otimes_k (I_R^{\vee})^{\otimes2}\otimes B^{\vee})_0 )/\mathrm{Im}(d^0_0).
 \end{align}
On the other hand, this Ext group $\Ext^i$ is also the cohomology group $H^i$ of the complex (\ref{eq:cobar_cx_B}). When $i=1$, for any $\gamma\in \mathrm{Ker}(d^1):=\mathrm{Ker}( \Hom_{\grR}(R^{\otimes2}\otimes B,C)  \stackrel{d^1}{\lra}\Hom_{\grR}(R^{\otimes3}\otimes B,C) )$, the corresponding extension is constructed explicitly as follows:  
from the bar resolution (\ref{eq:bar_resolution}) there arise two exact sequences of graded $R$-modules
\[ R^{\otimes3}\otimes B \stackrel{d_2}{\ra}  R^{\otimes}\otimes B  \stackrel{d_1}{\ra} \mathrm{Im}(d_1) \ra 0,\quad 0 \ra \mathrm{Im}(d_1) \ra R\otimes B \stackrel{}{\ra} B \ra 0 \]
Applying the right exact functor $\Hom_{\grR}(-,C)$ to these sequences, we obtain exact sequences
\begin{align*}  \mathrm{Ker}( \Hom_{\grR}(R^{\otimes2}\otimes B,C)  \stackrel{d^1}{\lra}\Hom_{\grR}(R^{\otimes3}\otimes B,C) )  \cong\, & \Hom_{\grR}(\mathrm{Im}(d_1),C), \end{align*}
 \[ \Hom_{\grR}(R\otimes B,C) \ra \Hom_{\grR}(\mathrm{Im}(d_1),C) \stackrel{\partial}{\ra} \Ext^1_{\grR}(B,C) \ra 0. \]
The coboundary map $\partial$ is defined by: given a map $\gamma\in \Hom_{\grR}(\mathrm{Im}(d_1),C)$, we form the push-out
\begin{equation} \label{eq:push-out}
\xymatrix{ 0 \ar[r] & \mathrm{Im}(d_1) \ar[r] \ar[d]^{\gamma} & R\otimes B \ar[r] \ar[d]^{\tilde{\gamma}} & B \ar[r] \ar@{=}[d] & 0 \\
0 \ar[r] & C \ar[r] & M \ar[r] & B \ar[r] &0},
\end{equation}
obtaining an extension $M$ of $B$ by $C$. When $B=k$ (as $R$-module via $\epsilon$), the top short exact sequence is identified with $0\ra I_R\ra R\stackrel{\epsilon}{\ra} k\ra 0$, i.e, $\mathrm{Im}(d_1)=I_R$, and the sequence splits (as $k$-vector spaces) as the canonical decomposition $R=I_R\oplus k$. 

Now, we specialize to our case of the Hodge-Tate endomorphism algebra $R=\End(\omega)$ with $B=\omega(\Q(0))=\Q_{(0)}$, $C=\omega(\Q(n))=\Q_{(n)}$ (Lemma \ref{lem:Q_{(n)}}). 
We have $\Delta_{\Q_{(n)}}=\mathrm{id}_{\Q_{(n)}}\otimes \epsilon$, so $\bar{\Delta}_{\Q_{(n)}}=0$, $d^0=0$ and $d^1=-\bar{\nu}$.
Since $(\Q_{(n)}\otimes I_R^{\vee})_0\cong (R_{-n})^{\vee}$, the formula (\ref{eq:Ext^1=Ker(d^1)}) becomes
\begin{align} \label{eq:Gon99_(6)}
\Ext^1_{\QHT}(\Q(0),\Q(n))  \cong\, &  \mathrm{Ker}(d^1_0:(R_{-n})^{\vee} \lra \bigoplus_{k+n=-n,k,l>0} R_k^{\vee}\otimes R_l^{\vee}) 
\end{align}
Then, the simple extension in $\Ext^1_{\QHT}(\Q(0),\Q(n))$ attached to $\gamma \in \mathrm{Ker}(d^1_0)$ by this isomorphism (\ref{eq:Gon99_(6)}) is given as follows:
regarding $\gamma\in \mathrm{Ker}(d^1_0)\subset (R_{-n})^{\vee}\cong (\Q_{(n)}\otimes I_R^{\vee})_0$ as an element of $\Hom_{\grR}(I_R,\Q_{(n)})$,  
the bottom row of the associated diagram (\ref{eq:push-out}), being a sequence of graded $R$-modules, is the image under $\omega$ of a (uniuqe) extension of Hodge-Tate structures 
\begin{equation} \label{eq:push-out_HT}
0 \lra \Q(n) \lra H \lra \Q(0) \lra 0:
\end{equation}
this is the simple extension attached to $\gamma \in \mathrm{Ker}(d^1_0)$. 

Therefore, to prove the proposition, we have to prove that the equivalence class of the Hodge-Tate structure $H$ (\ref{eq:push-out_HT}) endowed with the obvious $n$-framing $v_{0}=\mathrm{id}_{\Q(0)}$ and $f^n=\mathrm{id}_{\Q(n)}$ is $\gamma$, this time regarded as an element of $\mcH_{>0}$ via the identification $(I_R)^{\vee}\cong\mcH_{>0}$ in Theorem \ref{thm:Goncharov98;3.3}.
By Corollary \ref{cor:Goncharov99;3.6}, the equivalence class of this framed Hodge-Tate structure $(H,v_{0},f^{n})$ is $\nu_{H}(v_{0},f^{n}) \in \mcH $. By Theorem \ref{thm:Goncharov98;3.3}, as an element of $R^{\vee}$, this is given by that
\[ \End(\omega)\ra \Q\, :\, r\mapsto \nu_{H}(v_{0},f^n)(r)=\langle f^n, r_{H}\cdot v_{0} \rangle,\]
where $r_H\cdot v_0$ is the action of $r_H\in\End(\omega(H))$ on $v_0\in \omega(H)=\omega(\Q(0))\oplus \omega(\Q(n))$.
Since (\ref{eq:push-out}) is a diagram of $R$-modules and its two rows split canonically as vector spaces, for $r\in I_R$, we have
\[ r_H\cdot v_0=r_H\cdot \tilde{\gamma}(1)=\tilde{\gamma}(r\cdot 1)=\gamma(r),\]
where $r\cdot 1$ is the multiplication in the ring $R$ of two elements $r\in I_R$ and $1\in \Q$ (so, lies in $I_R$). This means that the map $I_R:\ra \Q:r \mapsto \nu_{H}(v_{0},f^n)(r)$ is simply $\gamma\in I_R^{\vee}$, as asserted.
\end{proof}

A remarkable property of the big period is that it factors through the equivalence relation on framed Hodge-Tate structures.
%%%%%%%%%%%%%%%%%%%%
\begin{thm}  \label{thm:big_period_factors_thru_mcH_n}
The big period induces a homomorphism $\mathscr{P}_n:\mcH_n\ra \C/\Q\otimes_{\Q}\C/\Q$.
\end{thm}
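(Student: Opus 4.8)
## Proof Proposal

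\textbf{Strategy overview.} The claim is that the big period $\mathscr{P}_n((H,v_0,f^n)) \in \C\otimes_{\Q}\C$, a priori defined on $n$-framed Hodge-Tate structures, is constant on equivalence classes modulo the ambiguity $\Q\otimes\C + \C\otimes\Q$, hence descends to a homomorphism $\mcH_n \to \C/\Q \otimes_{\Q} \C/\Q$. The plan is to separate this into two independent verifications: (a) well-definedness under the equivalence relation generated by framing-preserving morphisms, and (b) additivity, i.e. that $\mathscr{P}_n$ is a group homomorphism on $\mcH_n$ with its $\Q$-vector space structure. Both reduce to explicit, but routine, manipulations of period matrices together with the identity (\ref{eq:Q-move_tensor_matrix_product}) $MQ\otimes N = M\otimes QN$ for rational $Q$, which is the key algebraic lever throughout.

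\textbf{Step 1: Additivity.} For the direct sum $(H\oplus H', v_0+v_0', f^n+{f^n}')$, a period matrix can be taken block-diagonal, $\mcM_{H\oplus H'} = \mcM_H \oplus \mcM_{H'}$ (after choosing compatible good bases and the direct-sum splitting). Using the description of $\mathscr{P}_n$ in Lemma \ref{lem:big_period=(tensor_product)_{m,n}} as the matrix tensor-product of the framing row of $\mcM^{-1}$ with the framing column of $\mcM$, the block-diagonal shape immediately yields $\mathscr{P}_n((H\oplus H',\ldots)) = \mathscr{P}_n(H,\ldots) + \mathscr{P}_n(H',\ldots)$ in $\C\otimes_{\Q}\C$; scalar multiplication $c\cdot f^n$ scales the column by $c\in\Q$, which moves across the tensor sign by (\ref{eq:Q-move_tensor_matrix_product}) and corresponds to $c$ acting on $\mcH_n$. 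So additivity holds already at the level of $\C\otimes_{\Q}\C$, before any quotient.

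\textbf{Step 2: Invariance under the equivalence relation.} It suffices to treat a framing-preserving morphism $\phi: (H,v_0,f^n) \to (H',v_0',{f^n}')$ of mixed $\Q$-Hodge structures. One reduces (using the structure theory recalled before Lemma \ref{lem:Goncharov99;3.5}, that every class is represented by an $H''$ with $\gr^W_0 H'' \simeq \Q(0)$, $\gr^W_{-2n}H''\simeq\Q(n)$, and $W_{\geq 0} = H''$, $W_{<-2n}=0$) to comparing two such "normalized" representatives related by a morphism; then $\phi$ induces on $\gr^W_\bullet$ a map which, by the framing-preservation, is the identity on the weight-$0$ and weight-$(-2n)$ pieces. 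Choosing good bases and $\Q$-splittings on both sides, the morphism $\phi$ translates into a relation $\mcM_{H'} = N_1 \cdot \mcM_H \cdot N_2$ between period matrices with $N_1, N_2$ block-upper-triangular with \emph{rational} entries and identity blocks in the framing positions (this is essentially the same computation as in Proposition \ref{prop:Goncharov99;4.2}, where independence of the splitting was handled; here the change of representative plays the role of a change of splitting plus projection). Feeding this into Lemma \ref{lem:big_period=(tensor_product)_{m,n}} and repeatedly applying (\ref{eq:Q-move_tensor_matrix_product}) to absorb the rational matrices $N_1^{-1}$, $N_2$ across the tensor product, one finds that $\mathscr{P}_n(H') - \mathscr{P}_n(H)$ lies in $\Q\otimes_{\Q}\C + \C\otimes_{\Q}\Q$ — the discrepancy terms always carry a rational factor on one side because the non-identity blocks of $N_1, N_2$ are rational. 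Hence the image in $\C/\Q \otimes_{\Q} \C/\Q$ is unchanged.

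\textbf{Main obstacle.} The delicate point is Step 2: one must check that when passing between two representatives of the same class (not merely changing the splitting of a fixed $H$), the comparison of period matrices really is of the form $N_1 \mcM_H N_2$ with rational, framing-normalized $N_i$ — i.e. that the kernel/cokernel of a framing-preserving morphism contributes only rationally-framed "padding." This requires care because a general morphism need not be injective or surjective; one handles it by factoring $\phi$ through its image and invoking strictness of morphisms with respect to the weight filtration (so that $\gr^W_\bullet \phi$ is well-behaved), then reducing to the two cases of a framing-preserving sub-object inclusion and a framing-preserving quotient, each of which visibly only changes the period matrix by a rational block-triangular factor on one side. Once this structural lemma is in place, the rest is bookkeeping with (\ref{eq:Q-move_tensor_matrix_product}).
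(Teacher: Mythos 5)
Your proposal and the paper's proof share the same opening move: since the equivalence relation on $n$-framed Hodge-Tate structures is generated by framing-preserving morphisms and the category $\QMH$ is abelian, one reduces to the two cases of a framing-preserving monomorphism and epimorphism, invoking strictness of morphisms with respect to the weight filtration. Your Step 1 (additivity via block-diagonal period matrices for a direct sum) is also correct, and is in fact a point the paper elides. But Step 2, the heart of the argument, contains a genuine gap.

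You assert that after normalizing both representatives, the morphism $\phi: H' \to H$ yields a relation $\mcM_{H'} = N_1 \cdot \mcM_H \cdot N_2$ with $N_1, N_2$ rational block-upper-triangular matrices, and then you propose to ``absorb $N_1^{-1}$, $N_2$ across the tensor product'' using (\ref{eq:Q-move_tensor_matrix_product}). This cannot work as stated: when $\phi$ is an inclusion $H' \hookrightarrow H$ (or a quotient), the two period matrices $\mcM_{H'}$ and $\mcM_H$ have different sizes, so $N_1, N_2$ are necessarily rectangular, and a rectangular matrix has no inverse $N_1^{-1}$ to move across a tensor product. The identity (\ref{eq:Q-move_tensor_matrix_product}) is a statement about square rational matrices sliding past $\otimes$; it does not by itself control what happens when one embeds a smaller period matrix into a larger one whose extra entries (those recording the extension of $H/H'$ by $H'$, and the period matrix of $H/H'$ itself) are genuinely new transcendental periods, not rational combinations of old ones. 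Consequently, the argument as written does not establish even the weaker claim that the discrepancy lies in $\Q\otimes_\Q\C + \C\otimes_\Q\Q$. What is actually needed — and what the paper proves — is the stronger statement that $\mathscr{P}_n(H') = \mathscr{P}_n(H)$ \emph{exactly} in $\C\otimes_\Q\C$.

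The paper's own mechanism is quite different from yours: after choosing good bases and a $\Q$-splitting of $H$ that restricts to one of $H'$, it expands $\mcM^{-1}$ as the terminating geometric series $\sum_k (-1)^k (\mcM - I)^k$ (using nilpotence of $\mcM - I$) and shows by a combinatorial analysis of the monomials in $(\mcM - I)^k$ that only the $J'$-indexed terms survive, whence $\langle v_0|\mcM^{-1}\otimes\mcM|f^n\rangle = \langle v_0|(\mcM')^{-1}\otimes\mcM'|f^n\rangle$. An even more transparent route, close in spirit to what you may have been reaching for, is to observe that with a framing-adapted basis $\mcM_H$ is block-triangular with $\mcM_{H'}$ as its $J'\times J'$-block and a vanishing $J'\times J''$-block; since $v_0$ and $f^n$ both lie in the $J'$-part, every term $(\mcM_H^{-1})_{0,k}\otimes (\mcM_H)_{k,n}$ with $k\in J''$ has its first factor equal to zero, and the remaining sum is exactly $\mathscr{P}_n(H')$. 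To repair your proof you would need to replace the ``rational conjugation plus absorption'' step with an argument of this type that directly tracks which matrix entries can actually appear in the big period.
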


This is Theorem 4.5. a) of \cite{Goncharov99}. We give another more direct proof.

\begin{proof}
Since the equivalence relation defining $\mcH_n$ is generated by maps $H'\ra H$ of $n$-framed Hodge-Tate structures preserving $n$-framings, it suffices to show that for any such map $g:H'\ra H$, $\mathscr{P}_n(H')=\mathscr{P}_n(H)$. But, since the category $\QMH$ of mixed $\Q$-Hodge structures is abelian, we may further assume that $g$ is either injective or surjective. First, let us assume that $g$ is injective. Since $H\mapsto\gr^W_{-2k}H$ is an exact functor on $\QMH$, the induced map $\gr^W_{\bullet}H'\ra \gr^W_{\bullet}H$ is injective. 
To compute the big periods of $H'$ and $H$, we may choose bases of $\gr^W_{\bullet}H'$, $\gr^W_{\bullet}H$ and splittings of $H'$, $H$ arbitrarily, since the big period is independent of such choice (Proposition \ref{prop:Goncharov99;4.2}).
Let us first choose a $\Q$-basis of $\gr^W_{\bullet}H'$ such that the covector $f^n\in \gr^W_{-2n}H'$ (part of the datum of $n$-framing of $H'$) is a member of the associated dual basis. We expand it to a basis $\{v_k\}_{k\in J}\ (J\subset\Z)$ of $\gr^W_{\bullet}H$; by abuse of notation, $v_0$ is also the vector in $\gr^W_{0}H'$ with the same notation which is a part of the datum of $n$-framing of $H'$ (and $H$). We have $J=J'\sqcup J''$ with $\{v_i\}_{i\in J'}$ (resp. $\{v_i\}_{i\in J''}$) a basis of $\gr^W_{\bullet}H'$ (resp. of $\gr^W_{\bullet}H''$), where $H'':=H/H'\in \QMH$; note that $0,n\in J'$, $0,n\notin J''$.
After choosing a splitting $\varphi:\bigoplus_k \gr^W_k H \isom H$ which also gives a splitting of $H'$, the Goncharov period matrix $\mcM$ of $H$ splits into two:
\[ \mcM=\mcM'+\mcM'', \]
where $\mcM'$ (resp. $\mcM''$) is the Goncharov period matrix of $H'$ (resp. $H''$); in this proof only, for simplicity let us write $\mcM$ for the Goncharov period matrix $\mcMG$.
This has the property (recalling that the rows and columns of $H$ are indexed by $i\in J$) that 

($\star$) for any pair $(i,j)\in J\times J$, the $(i,j)$-entry $\mcM'_{i,j}$ of $\mcM'$ is zero unless $(i,j)\in J'\times J'$ and similarly for $\mcM''_{i,j}$. 

We can also split the identity matrix $I$ of the same size as $\mcM$ accordingly into $I'+I''$, where $I'$ has only nonzero entry for $(i,i)$ with $i\in J'$ where it is $1$, and $I''$ is similarly defined with respect to $J''$.

Now, since $\mcM-I$ is nilpotent, we can rewrite the big period $\mathscr{P}_n((H,v_0,f^n))$ (\ref{eq:bigPeriod}) as
\begin{align*} 
\langle v_0| \mcM^{-1}\otimes_{\Q} \mcM |f^n\rangle 
=& \sum_k \langle v_0| (-1)^k (\mcM-I)^k \otimes_{\Q} \mcM |f^n\rangle \\
=& \sum_k (-1)^k \sum_{i\in J} \left((\mcM-I)^k\right)_{0,i}\otimes \mcM_{i,n}.
\end{align*}
We note that

($\heartsuit$) $\mcM'-I'$, $\mcM''-I''$ are both nilpotent (upper-triangular) matrices and have the same property ($\star$) above.

Since $\mcM-I=(\mcM'-I')+(\mcM''-I'')$, we have
\[ (\mcM-I)^k=\sum_{(\epsilon_1,\cdots,\epsilon_k)} (\mcM'-I')^{\epsilon_1}(\mcM''-I'')^{1-\epsilon_1}\cdots (\mcM'-I')^{\epsilon_k}(\mcM''-I'')^{1-\epsilon_k}, \]
where the sum runs through the set of all sequences $(\epsilon_1,\cdots,\epsilon_k)$ with $\epsilon_i\in \{0,1\}$. So, for a fixed $i\in J$, we have
\[ \left((\mcM-I)^k\right)_{0,i}=\sum_{(\epsilon_1,\cdots,\epsilon_k)} \sum_{0<i_1<i_2<\cdots<i_k=i} (\mcM'-I')_{0,i_1}^{\epsilon_1}(\mcM''-I'')_{0,i_1}^{1-\epsilon_1} \cdots (\mcM'-I')_{i_{k-1},i_k}^{\epsilon_k}(\mcM''-I'')_{i_{k-1},i_k}^{1-\epsilon_k}, \]
where the inner sum runs through all sequences $(i_1,\cdots,i_k)$ of strictly increasing positive integers in $J$ with $i_k=i$.
It is easy to see from the property ($\heartsuit$) that 
\[ \left((\mcM-I)^k\right)_{0,i}=\left((\mcM'-I')^k\right)_{0,i},\text{ and both are zero unless }i\in J': \] 
indeed, first, if $\epsilon_1=0$, then $(\mcM''-I'')_{0,i_1}^{1-\epsilon_1}= 0$ by  ($\heartsuit$), and if $i_1\in J''$, then $(\mcM'-I')_{0,i_1}^{\epsilon_1}=0$, so in the summation we may assume that $\epsilon_1=1$ and $i_1\in J'$. Next, by the same reason, we may assume that $\epsilon_2=1$ and $i_2\in J'$, and can continue.
Since clearly $\mcM_{i,n}=\mcM'_{i,n}$ for $i\in J'$, hence we have
\begin{align*} 
\langle v_0| \mcM^{-1}\otimes_{\Q} \mcM |f^n\rangle = & \sum_k (-1)^k \sum_{i\in J} \left((\mcM-I)^k\right)_{0,i}\otimes \mcM_{i,n} \\
= & \sum_k \langle v_0| (-1)^k (\mcM'-I')^k \otimes_{\Q} \mcM |f^n\rangle \\
= & \langle v_0| (\mcM')^{-1}\otimes_{\Q} \mcM' |f^n\rangle,
\end{align*}
as claimed. The case that $g$ is surjective is proved similarly.
\end{proof}

Composing $\mathscr{P}_n:\mcH_n\ra \C/\Q\otimes_{\Q}\C/\Q$ with the skew-symmetrization $\C/\Q\otimes_{\Q}\C/\Q \ra \C/\Q\wedge_{\Q}\C/\Q$, we see that the skew-period map (\ref{eq:skew-symmetricPeriod}) also induces a homomorphism
\begin{equation} \label{eq::skew-symmetric_period}
\mathscr{A}_n\ : \ \mcH_n \ra \C/\Q\wedge_{\Q}\C/\Q.
\end{equation}

%%%%%%%%%%%%%%%%%%%%
\begin{lem} \label{lem:Ext^1_HT=ss-period}
For $n>0$, the composite $\mathscr{A}_n\circ \iota_n$ (\ref{eq:one-extension_mcH}) induces an isomorphism
\[ \mathscr{A}_n\circ \iota_n: \Ext^1_{\QHT}(\Q(0),\Q(n))\ \isom \ 1\wedge \C/\Q=\C/\Q, \] 
which equals twice the canonical isomorphism (\ref{eq:Ext^1_MH(Q(0),Q(n)}), under the identification $\C/\Q \isom \C/(2\pi i)^n\Q:x\mapsto (2\pi i)^nx$.
 \end{lem}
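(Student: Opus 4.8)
The plan is to reduce everything to the explicit rank-$2$ case via Proposition \ref{prop:moduli_of_MTH} and Lemma \ref{lem:big_period=(tensor_product)_{m,n}}, and then just compute. First I would recall that by Proposition \ref{prop:moduli_of_MTH} an element of $\Ext^1_{\QHT}(\Q(0),\Q(n))$ is represented by a period matrix $\mcM=\left(\begin{smallmatrix} 1 & \omega \\ 0 & (2\pi i)^n \end{smallmatrix}\right)$, and that the canonical isomorphism (\ref{eq:Ext^1_MH(Q(0),Q(n)}) sends this class to $\omega \bmod (2\pi i)^n\Q \in \C/(2\pi i)^n\Q$. The corresponding $n$-framing is $v_0$ = image of the first row, $f^n$ dual to $(2\pi i)^n e_n$, exactly as in the polylogarithm example. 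Since the big period and hence $\mathscr{A}_n$ are independent of the splitting (Proposition \ref{prop:Goncharov99;4.2}) and well-defined on $\mcH_n$ (Theorem \ref{thm:big_period_factors_thru_mcH_n}), I may compute $\mathscr{A}_n\circ\iota_n$ on this representative.

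Next I would carry out the computation. We have $\mcM^{-1}=\left(\begin{smallmatrix} 1 & -\omega/(2\pi i)^n \\ 0 & (2\pi i)^{-n} \end{smallmatrix}\right)$. By Lemma \ref{lem:big_period=(tensor_product)_{m,n}} (with $m=0$), the big period $\mathscr{P}_n$ is the matrix tensor-product of the first row of $\mcM^{-1}$, namely $\left(1,\ -\omega/(2\pi i)^n\right)$, with $(2\pi i)^{-n}$ times the $n$-th (i.e. last) column of $\mcM$, namely $(2\pi i)^{-n}\left(\omega,\ (2\pi i)^n\right)^t$. This gives
\begin{equation} \label{eq:bigP_rank2}
\mathscr{P}_n(\iota_n(c)) = 1\otimes \frac{\omega}{(2\pi i)^n} \;-\; \frac{\omega}{(2\pi i)^n}\otimes 1 \quad \in\ \C/\Q\otimes_{\Q}\C/\Q,
\end{equation}
where $c$ denotes the class with period $\omega$. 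Applying $\Alt$ we get $\mathscr{A}_n(\iota_n(c)) = 1\wedge \frac{\omega}{(2\pi i)^n} - \frac{\omega}{(2\pi i)^n}\wedge 1 = 2\left(1\wedge \frac{\omega}{(2\pi i)^n}\right)$. Now under the identification $1\wedge\C/\Q \cong \C/\Q$ (sending $1\wedge x \mapsto x$) and then $\C/\Q\isom\C/(2\pi i)^n\Q$ via $x\mapsto (2\pi i)^n x$, the element $\frac{\omega}{(2\pi i)^n}\bmod\Q$ maps to $\omega\bmod (2\pi i)^n\Q$, which is precisely the image of $c$ under (\ref{eq:Ext^1_MH(Q(0),Q(n)}). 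Hence $\mathscr{A}_n\circ\iota_n$ is twice that canonical isomorphism; in particular it is an isomorphism onto $1\wedge\C/\Q = \C/\Q$ (the factor $2$ being invertible since the target is a $\Q$-vector space).

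The only mild subtlety — and the place I would be most careful — is bookkeeping of the Tate twist and of the identification $\C/\Q\wedge_\Q\C/\Q$ versus $\C\wedge_\Q\C$ modulo the relevant subspace: one must check that $\frac{\omega}{(2\pi i)^n}\otimes 1$ and $1\otimes\frac{\omega}{(2\pi i)^n}$ land in $\C/\Q\otimes_\Q\C/\Q$ unambiguously (they do, since $1\in\C$ survives in $\C/\Q$, and changing the splitting changes $\omega$ by an element of $(2\pi i)^n\Q$, which dies in $\C/\Q$ after dividing by $(2\pi i)^n$), and that the term $\frac{\omega}{(2\pi i)^n}\wedge 1$ does not cancel against $1\wedge\frac{\omega}{(2\pi i)^n}$ but adds to it with sign, giving the factor $2$. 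This matches the normalization already visible in the polylogarithm example, where $\mathscr{A}_2(\mcP^{(2)}(z))$ has the term $1\wedge\frac{-2}{(2\pi i)^2}\mathcal{R}(z)$ with an explicit factor $2$, consistent with $\mathscr{A}_2\circ\iota_2$ being twice (\ref{eq:Ext^1_MH(Q(0),Q(n)}). No genuine obstacle is expected; the statement is a direct unwinding of the definitions of $\iota_n$, $\mathscr{P}_n$, and (\ref{eq:Ext^1_MH(Q(0),Q(n)}).
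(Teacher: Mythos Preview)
Your proposal is correct and follows essentially the same approach as the paper's own proof: represent the extension by the $2\times 2$ period matrix $\mcM=\left(\begin{smallmatrix}1&\omega\\0&(2\pi i)^n\end{smallmatrix}\right)$, compute $\mathscr{P}_n=1\otimes\frac{\omega}{(2\pi i)^n}-\frac{\omega}{(2\pi i)^n}\otimes 1$ and hence $\mathscr{A}_n=1\wedge\frac{2\omega}{(2\pi i)^n}$, and compare with the canonical isomorphism sending the class to $\omega\bmod(2\pi i)^n\Q$. Your additional bookkeeping remarks are sound but not needed for the argument.
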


\begin{proof}
Let $H\in \Ext^1_{\QHT}(\Q(0),\Q(n))$.
After choosing a splitting $\gr^W_0H\oplus \gr^W_{-2n}H \isom H$ (equiv. a lift of $1\in \gr^W_0H=\Q(0)$ to $H$), we obtain the period matrix $\mcM= \left( \begin{array} {cc} 1 & \omega \\ 0 & (2\pi i)^n \end{array} \right)$ $(\omega\in\C)$. One computes that $\mathscr{P}_n(H)=1\otimes \frac{\omega}{(2\pi i)^n} -\frac{\omega}{(2\pi i)^n}\otimes 1$, so $\mathscr{A}_n(H)=1\wedge\frac{2\omega}{(2\pi i)^n}$. On the other hand, to $H$, the isomorphism (\ref{eq:Ext^1_MH(Q(0),Q(n)}) attaches $\omega\text{ mod }(2\pi i)^n\Q$.
\end{proof}

%%%%%%%%%%%%%%%%%%%%%%%%%%%%%%%%%%%%%%%%
%%%%%%%%%%%%%%%%%%%%%%%%%%%%%%%%%%%%%%%%
\section{Chern-Simons mixed Tate motive}

%%%%%%%%%%%%%%%%%%%%
\subsection{Mixed Tate motives}

We give a condensed review of the theory of mixed Tate motives, cf. \cite[$\S$1, $\S$2]{DeligneGoncharov05}, \cite{Levine98},  \cite{Levine93}, \cite{Levine13}.

Let $k$ be a field of characteristic zero (so that the resolution of singularities is available). A triangulated category of mixed motives over $k$ was constructed by Hanamura \cite{Hanamura95}, Levine \cite{Levine98} and Voevodsky \cite{Voevodsky00} independently, and Levine  \cite{Levine98}, VI 2.5.5, constructs an equivalence between his triangulated category and that of Voevodsky. 
We will work with the Voevodsky's category (of geometric motives), which we denote by $\DM(k)$; let $\DM(k)_{\Q}$ be the triangulated category deduced from $\DM(k)$ by tensoring with $\Q$.
We have in $\DM(k)$ Tate objects $\Z(n)$ ($n\in\Z$), whose images in $\DM(k)_{\Q}$ will be also denoted $\Q(n)$: $\Q(n)=\Q(1)^{\otimes n}$ and different $\Q(n)$'s are mutually nonisomorphic. We will be interested only in the triangulated subcategory $\DMT(k)_{\Q}$ of $\DM(k)_{\Q}$ generated by the $\Q(n)$'s, i.e. the smallest strictly full subcategory of $\DM(k)_{\Q}$ containing all the objects $\Q(n)[m]\ (n,m\in\Z)$ and closed under extensions (recall that $E$ is an extension of $B$ by $A$ if there exists a distinguished triangle $A\ra E\ra B\ra A[1]$); its objects are the ``iterated extensions'' of various $\Q(n)[m]$'s.

Among many properties enjoyed by this triangulated category; we just mention two properties (specialized to the case of the motive of $\Spec(k)$): first, a relation of the Hom groups between shifts of Tate objects (``motivic cohomology'' of the smooth $k$-variety $\Spec(k)$) with other known invariants (\cite[$\S$4.2]{Voevodsky00}, \cite[$\S$VI,2.1.9, II.3.6.6]{Levine98}): for $q\geq0$ and $p\in\Z$, there exist canonical isomorphisms
\begin{equation} \label{eq:Motivic-coh=alg.K-theory}
\Hom_{\DM(T)(k)_{\Q}}(\Q(0),\Q(q)[p])=\mathrm{CH}^q(\Spec(k),2q-p)=K_{2q-p}(k)^{(q)}.
\end{equation}
Here, $\mathrm{CH}^q(\Spec(k),2q-p)$ is the Bloch's higher Chow group of $\Spec(k)$, and 
$K_n(k)^{(q)}$ is the $q$-th eigen-subspace of $K_{n}(k)_{\Q}$ for the Adams operator, which is isomorphic to $\gr^q_{\gamma}K_{2q-p}(k)_{\Q}$, the $q$-th associated graded part of the $\gamma$-filtration on $K_{2q-p}(k)_{\Q}$.
It follows that 
\[ \Hom_{\DM(T)(k)_{\Q}}(\Q(0),\Q(q)[p])=0 \text{ for }
\begin{cases} 
\ p>2q\geq0 & (\text{by the first equality}) \\
q=0\text{ and }p\neq0 & (\text{by the second equality})
\end{cases}; \]
one also has $K_{0}(k)^{(0)}=\Q$. 
Secondly, we have (\cite[10.6]{Levine13})
\[ \Hom_{\DM(T)(k)_{\Q}}(\Q(0),\Q(q)[p])=0\quad\text{ for }q<0. \]
Also, on $\DM(k)$, there exists an associative and commutative tensor product $\otimes$ with unity which is compatible with the triangulated structure and rigid (existence of a dual of every object with natural properties).

Recall the Beilinson-Soule vanishing conjecture:
\begin{equation} \label{eq:Beilinson-Soule vanishing} 
K_{2q-p}(k)^{(q)}=0\quad \text{ for }p\leqs0\text{ and }q>0 
\end{equation}
When this condition holds for a field $k$, 
Levine \cite{Levine93} constructs a $t$-structure on $\DMT(k)_{\Q}$ whose heart $\MT(k)$ (an admissible abelian category) has objects the iterated extensions of $\Q(n)$'s $(n\in\Z)$ \cite[1.3.14]{BBD82}; we call $\MT(k)$ the category of \emph{mixed Tate motives} over $k$. 

In general, when a triangulated category $D$ has a $t$-structure with heart $\mcA$, there exists a canonical map from $\Ext^p_{\mcA}(A,B)$, the Yoneda extension groups of objects in $\mcA$, to $\Hom_D(A,B[p])$, the Hom groups in $D$ of their shifts, which is a bijection for $p=0,1$ and an injection for $p=2$ (\cite[(1.1.4),(1.1.5)]{DeligneGoncharov05}).
So in the case when the Beilinson-Soule vanishing conjecture holds, we have a map
\begin{equation} \label{eq:Ext_{MT}->K-group}
\Ext^p_{\MT(k)}(\Q(0),\Q(q))\ra \Hom_{\DMT(k)_{\Q}}(\Q(0),\Q(q)[p])=K_{2q-p}(k)^{(q)}.
\end{equation}

From this point, until the end of this subsection, we assume that the Beilinson-Soule conjecture (\ref{eq:Beilinson-Soule vanishing}) holds for $k$, so that $\MT(k)$ exists. 

Then, first the tensor structure $\otimes$ on $\DM(k)$ provides $\MT(k)$ with a tensor product which is exact in each variable, associative, commutative, with unity and rigid.
On the other hand, from $\Ext^1(\Q(m),\Q(n))=0$ for $n\leq m$, it follows that every object $M$ of $\MT(k)$ admits a unique ``weight'' filtration $W$, finite, increasing and indexed by the even integers, such that
\[ \Gr^W_{-2n}(M):=W_{-2n}(M)/W_{-2(n+1)}(M) \]
is a sum of copies of $\Q(n)$. 
The filtration $W$ is functorial, strictly compatible with morphisms, exact, and compatible with the tensor product. 
Set
\[ \omega_n(M):=\Hom(\Q(n),\Gr^W_{-2n}(M)). \]
So we have
\[ \Gr^W_{-2n}(M)=\Q(n)\otimes_{\Q}\omega_n(M). \]
The exact functor $M\mapsto \omega(M):=\oplus \omega_n(M)$ is a $\otimes$-functor, thus a fiber functor and the category $\MT(k)$ is $\Q$-linear neutral Tannakian. 
Therefore, we can repeat the constructions and arguments from the previous section on the Tannakian category $\QHT$ of Hodge-Tate $\Q$-structures. One can define $n$-framed mixed Tate motives over $k$, the abelian group $\mcA_n(k)$ of their equivalence classes, and $\mcAb(k):=\bigoplus_{n\geq 0}\mcA_n(k)$ has a structure of an augmented commutative graded Hopf algebra over $\Q$; we denote the comultiplication $\mcAb(k)\ra \mcAb(k)\otimes\mcAb(k)$ by $\Delta$ and call it the \emph{motivic comultiplication}. The category $\MT(k)$ is canonically equivalent to the category of finite-dimensional graded (right) $\mcAb(k)$-comodules, and the analogue of Proposition \ref{prop:coh_red_cobar_cx} holds with $\QHT$ and $\mcH_n$ replaced by $\MT(k)$ and $\mcA_n$ respectively:
\begin{equation} \label{eq:motivic_coh=red_cobar_cx_coh} 
\Ext^1_{\MT(k)}(\Q(0),\Q(n))=H^i_{(n)}(\mcAb(k))=\mathrm{Ker}(\bar{\Delta}|_{\mcA_n(k)}), 
\end{equation}
where the middle term is the degree-$n$ part of the cohomology of the reduced cobar complex of $\mcAb(k)$ (graded vector space) and $\bar{\Delta}:=\Delta -1\otimes\mathrm{id} - \mathrm{id}\otimes1:\mcAb(k)_{>0}\ra \mcAb(k)_{>0}\otimes \mcAb(k)_{>0}$ is the reduced motivic comultiplication. 

Furthermore, for any embedding $\sigma:k\hra \C$, there exist the realization functor 
\[ \real_{\sigma}: \MT(k)\ra \QHT \]
\cite[$\S$1.5]{DeligneGoncharov05}. It is clear from functoriality of the construction that the canonical isomorphisms (\ref{eq:motivic_coh=red_cobar_cx_coh}), Proposition \ref{prop:coh_red_cobar_cx} are compatible with the induced realization functors, i.e. the diagram 
\begin{equation} \label{eq:Hod_real}
\xymatrix{ 
\Ext^1_{\MT(k)}(\Q(0),\Q(n)) \ar@{=}[r]^(0.6){\ref{eq:motivic_coh=red_cobar_cx_coh}} \ar[d]_{\real_{\sigma}} & \mathrm{Ker}(\bar{\Delta}|_{\mcAb(k)}) \ar[d]^{\real_{\sigma}}   \\
\Ext^1_{\QHT}(\Q(0),\Q(n)) \ar@{=}[r]^(0.6){\ref{prop:coh_red_cobar_cx}} &  \mathrm{Ker}(\bar{\nu}|_{\mcH_n}) .  }
\end{equation}
commutes.

Now, suppose that $k$ is a number field. According to Borel \cite{Borel77}, the Beilinson-Soule vanishing conjecture holds. Moreover, he showed that for $q>0$, 
\begin{equation} \label{eq:K_{2q-1}(k)^{(q)}=K_{2q-1}(k)_{Q}}
K_{2q}(k)_{\Q}=0\text{ and }K_{2q-1}(k)^{(q)}=K_{2q-1}(k)_{\Q},
\end{equation}
so that one has
\[ K_{2q-p}(k)^{(q)}=0 \text{ unless }(p,q)=(0,0)\text{ or }(1,\geq 1).\]
From this last property, Levine \cite[4.3]{Levine93} deduces that for any number field $k$, the abelian category $\MT(k)$ has the desired property that the map (\ref{eq:Ext_{MT}->K-group}) is an isomorphism for all $p$ and $q$:
\begin{equation} \label{eq:Ext_{MT}=alg.K-theory}
\Ext^p_{\MT(k)}(\Q(0),\Q(q))=K_{2q-p}(k)^{(q)};
\end{equation}
in fact, these groups are both zero for $p\geq2$, and this implies (cf. \cite[Prop.5.8]{Goncharov99}) that $\DMT(k)_{\Q}$ is also the derived category of $\MT(k)$.
Also, the realization functor $\real_{\sigma}$ (\ref{eq:Hod_real}) is compatible with the Beilinson regulator 
\begin{equation} \label{eq:Beilinson_regulator}
R^{Be}:K_{2n-1}(\C)_{\Q} \ra H^1_{\mathscr{D}}(\C,\Q(n))=\C/\Q(n)
\end{equation}
(where $H^1_{\mathscr{D}}(\C,\Q(n))$ is the Deligne-Beilinson cohomology of $\Spec(\C)$)
via the map (\ref{eq:Ext_{MT}->K-group}), i.e. the diagram
\begin{equation} \label{eq:Beil_reg=Hod_real}
\xymatrix{ \Ext^1_{\MT(k)}(\Q(0),\Q(n)) \ar[r]^{\real_{\sigma}} \ar[dd]_{(\ref{eq:Ext_{MT}->K-group})} & \Ext^1_{\QMH}(\Q(0),\Q(n)) \ar[rd]_(.5){(\ref{eq:Ext^1_MH(Q(0),Q(n)})}^{\simeq} & \\
& & \C/(2\pi i)^n\Q \\
K_{2n-1}(k)_{\Q} \ar[r]^{\sigma} & K_{2n-1}(\C)_{\Q} \ar[ru]_{R^{Be}} & }
\end{equation}
commutes (cf. the last paragraph on p.9 of \cite{DeligneGoncharov05}).

%%%%%%%%%%%%%%%%%%%%
\subsection{Polylogarithm mixed Tate motive}

\begin{lem} \label{lem:Delta(PolylogarithmHT2)}
The reduced comultiplication $\bar{\nu}$ (\ref{eq:coproduct_on_mcH}) of the framed polylogarithm Hodge-Tate structure $\mcP^{(2)}(z)$ (\ref{eq:PolyHT2}):
\[ \mcP^{(2)}(z)= \left( \begin{array} {ccc}
1 & -\Li_1(z) & -\Li_2(z)  \\
0 & \tpi & \tpi \log z  \\
0 & 0 & (\tpi)^2
\end{array} \right) \]
is given by
\[ \bar{\nu}(\mcP^{(2)}(z))= \log(1-z) \otimes \log(z)\ \in \C/\Z(1)\otimes \C/\Z(1)\ \]
under the canonical isomorphism $\mcH_1\cong \C/\Z(1)$ (Lemma \ref{lem:Ext^1_HT=ss-period}).
\end{lem}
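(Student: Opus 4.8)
The plan is to compute the reduced comultiplication $\bar\nu$ on $\mcH_1$ directly from its definition, applied to the explicit rank-$3$ Hodge-Tate structure $\mcP^{(2)}(z)$. Recall from Definition~\ref{defn:maximal_periods} and the discussion around \eqref{eq:comultiplication_on_framed-HTS} that for an $n$-framed Hodge-Tate structure $[H,v_0,f^n]$ with $n=2$, the only nontrivial component of the coproduct landing in positive degrees on both factors is $\nu_{11}$, since $\gr^W$ of $\mcP^{(2)}(z)$ is concentrated in weights $0,-2,-4$ and $\nu_{pq}$ with $p+q=2$, $p,q>0$ forces $p=q=1$. So $\bar\nu(\mcP^{(2)}(z)) = \nu_{11}([\mcP^{(2)}(z)])$.

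First I would set up the bases: let $\{e_0,e_1,e_2\}$ be the standard basis of $H_{\C}=\C^3$, with $\gr^W_0H$ spanned (via $S_{HT}$) by $e_0$, $\gr^W_{-2}H$ by $(2\pi i)e_1$, and $\gr^W_{-4}H$ by $(2\pi i)^2e_2$; the framing is $v_0 = $ image of $\lambda_0(z)$ (the first row of the period matrix), and $f^2$ dual to $(2\pi i)^2e_2$. Following \eqref{eq:comultiplication_on_framed-HTS}, choose the basis element $b_1$ of $\Hom(\Q(1),\gr^W_{-2}H)$ corresponding to $(2\pi i)e_1$ with dual covector $b_1^\vee$. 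Then
\[ \nu_{11}([\mcP^{(2)}(z)]) = [H,v_0,b_1^\vee]\otimes [H,b_1,f^2](-1). \]
Now $[H,v_0,b_1^\vee]$ is the sub-$1$-framed Hodge-Tate structure spanned by the weight-$0$ and weight-$-2$ parts; reading off the relevant $2\times 2$ block of the period matrix \eqref{eq:PolyHT2}, namely $\left(\begin{smallmatrix} 1 & -\Li_1(z) \\ 0 & 2\pi i\end{smallmatrix}\right)$, and using Lemma~\ref{lem:Ext^1_HT=ss-period} / Proposition~\ref{prop:moduli_of_MTH} which identifies such a block with $-\Li_1(z) = \log(1-z) \bmod (2\pi i)\Q$ in $\C/(2\pi i)\Q \cong \mcH_1$. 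Similarly $[H,b_1,f^2](-1)$ is the quotient $1$-framed structure built from the weight-$-2$ and weight-$-4$ parts, with period block $\left(\begin{smallmatrix} 2\pi i & 2\pi i\log z \\ 0 & (2\pi i)^2 \end{smallmatrix}\right)$; after the Tate twist by $\Q(-1)$ this normalizes to the class $\log z \bmod (2\pi i)\Q$ in $\mcH_1$. Hence $\bar\nu(\mcP^{(2)}(z)) = \log(1-z)\otimes \log z$ in $\mcH_1\otimes\mcH_1 \cong \C/\Z(1)\otimes\C/\Z(1)$, as claimed (keeping track that $\mcH_1 \cong \C/(2\pi i)\Q$, and the normalization identifications of \eqref{eq:CwedgeC} let one write the entries as $\log(1-z)$, $\log z$ modulo $\Z(1)$).

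The main obstacle — really the only delicate point — is bookkeeping of the Tate twists and the various normalization conventions: one must be careful that the factor $[H,b_i,f^n](-p)$ in \eqref{eq:comultiplication_on_framed-HTS} is Tate-twisted so that its weights sit in degrees $0$ and $-2q$, and that this twist cancels the extra powers of $2\pi i$ appearing in the lower-right block of the period matrix, leaving exactly $\log z$ and not $(2\pi i)\log z$ or $\log z/(2\pi i)$. A clean way to organize this is to invoke Corollary~\ref{cor:Goncharov99;3.6}: the equivalence class $[H,v_m,f^n]$ of a framed Hodge-Tate structure is computed by the comodule coaction, and for the sub/quotient $1$-framed pieces of $\mcP^{(2)}(z)$ the coaction value is literally the off-diagonal entry of the normalized ($2\times 2$) period matrix modulo $\Q(1)$. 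One then only needs to check that $\mcP^{(2)}(z)$ has no $\gr^W$ in odd weights (immediate) and that $\nu_{02}$, $\nu_{20}$ contribute only the unit-type terms subtracted off in passing from $\nu$ to $\bar\nu$. With those observations the computation is a one-line matrix extraction, and the identification with $\log(1-z)\otimes\log z$ follows.
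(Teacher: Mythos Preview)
Your proposal is correct and follows essentially the same approach as the paper's own proof, which simply says the claim is ``obvious from the definition \eqref{eq:comultiplication_on_framed-HTS} and the isomorphism $\mcH_1\cong \C/2\pi i\Z$.'' You have spelled out exactly what that obviousness amounts to: identifying $\bar\nu = \nu_{11}$ for a $2$-framed structure with one-dimensional graded pieces, and reading off the two $1$-framed sub/quotient classes from the upper-left and lower-right $2\times 2$ blocks of the period matrix.
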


\begin{proof}
This is obvious from the definition (\ref{eq:comultiplication_on_framed-HTS}) and the isomorphism $\mcH_1\cong \C/2\pi i\Z$.
\end{proof}

\begin{thm} \label{thm:PolyMTM}
For every $z\in\ \C-\{0,1\}$ and $n\in\N$, there exists a framed mixed Tate motive $(M^{(n)}_z,v_0(z),f^n(z))$ defined over $\Q(z)\subset\C$ whose Hodge realization is the framed polylogarithm Hodge-Tate structure $\mcP^{(n)}$ (\ref{eq:PolyHT}).
\end{thm}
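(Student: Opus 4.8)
The statement is the existence of the polylogarithm motive of Beilinson and Deligne \cite{BeilinsonDeligne94} (see also \cite{Hain94}, \cite{DeligneGoncharov05}), so the plan is to recall one construction of $M^{(n)}_z$ and verify that, with the framing described below, its Hodge realization is the \emph{normalized} Hodge--Tate structure $\mcP^{(n)}(z)$ of (\ref{eq:PolyHT}). Write $k:=\Q(z)$; for $z$ algebraic $k$ is a number field and $\MT(k)$ exists by Borel (\cite{Borel77}, \cite{Levine93}), and in general the construction at least produces an explicit iterated extension of Tate twists in $\DMT(k)_{\Q}$, lying in the heart whenever it is defined. I would argue by induction on $n$, using the structure visible in (\ref{eq:PolyHT}): $\gr^W_{-2i}\mcP^{(n)}(z)\simeq\Q(i)$ once for each $0\le i\le n$; deleting the last row and column gives $\mcP^{(n-1)}(z)$, i.e. $\mcP^{(n)}(z)/\Q(n)\cong\mcP^{(n-1)}(z)$; and the sub $W_{-2}\mcP^{(n)}(z)$ is the lower--right $n\times n$ block, which is $\Q(1)\otimes\mathrm{Log}^{(n-1)}(z)$.

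For $n=1$, I would take $M^{(1)}_z:=\mathcal{K}_{1-z}$, where for $w\in k^{\times}$ the Kummer motive $\mathcal{K}_w\in\MT(k)$ (a Tate twist of the relative motivic $H^1$ of $\Gm$ modulo $\{1,w\}$) is the extension $0\to\Q(1)\to\mathcal{K}_w\to\Q(0)\to0$ with class $w\in k^{\times}\otimes\Q=\Ext^1_{\MT(k)}(\Q(0),\Q(1))$ (\ref{eq:Ext_{MT}=alg.K-theory}); by Proposition \ref{prop:moduli_of_MTH} its Hodge realization has period matrix $\left(\begin{smallmatrix}1 & \log w\\ 0 & 2\pi i\end{smallmatrix}\right)$, and since $-\Li_1(z)=\log(1-z)$ this is $\mcP^{(1)}(z)$. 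I would also record the logarithm motive $\mathrm{Log}^{(m)}(z):=\mathrm{Sym}^m\mathcal{K}_z\in\MT(k)$ (symmetric powers exist since $\MT(k)$ is Tannakian): as the Hodge realization is a tensor functor, a fine change of good basis (Definition \ref{defn:good_basis}) puts its period matrix in the form with $(i,j)$-entry $(2\pi i)^i(\log z)^{j-i}/(j-i)!$, which, after tensoring by $\Q(1)$, is precisely the lower block of (\ref{eq:PolyHT}); this identifies $W_{-2}\mcP^{(n)}(z)$ with $\Q(1)\otimes\mathrm{Log}^{(n-1)}(z)$, both as Hodge--Tate structures and motivically.

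For the inductive step I would realize $M^{(n)}_z$ as an extension $0\to\Q(n)\to M^{(n)}_z\to M^{(n-1)}_z\to0$ (equivalently $0\to\Q(1)\otimes\mathrm{Log}^{(n-1)}(z)\to M^{(n)}_z\to\Q(0)\to0$) whose Hodge realization is $\mcP^{(n)}(z)$. Such extensions form a torsor under $\Ext^1_{\MT(k)}(M^{(n-1)}_z,\Q(n))$, and the existence of one realizing the polylogarithm extension is supplied geometrically: either by taking the appropriate weight--truncation of the motivic path torsor $\pi_1^{\mot}(\mathbb{P}^1_k\setminus\{0,1,\infty\};\orav_{0},z)$ with tangential base point $\orav_{0}$ at $0$, an object of $\MT(k)$ by \cite{DeligneGoncharov05}, or by the Beilinson--Deligne realization of $\mcP^{(n)}$ as a subquotient of the relative cohomology of a complement of hyperplanes in $\A^n_k$ (mixed Tate, since hyperplane--arrangement complements are). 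In either model the weight gradeds, the sub $W_{-2}M^{(n)}_z\cong\Q(1)\otimes\mathrm{Log}^{(n-1)}(z)$ and the quotient $M^{(n)}_z/\Q(n)\cong M^{(n-1)}_z$ come out as required, and the Betti--de Rham periods are the iterated integrals $\int_0^z\frac{dt}{1-t}\,\frac{dt}{t}\cdots\frac{dt}{t}$, classically equal to $-\Li_k(z)$ together with the powers $(\log z)^k/k!$; after a fine change of good basis these assemble into the normalized matrix (\ref{eq:PolyHT}). Finally I would fix the framing: $v_0(z)$ the canonical lift to $M^{(n)}_z$ of the generator $1$ of $\gr^W_0=\Q(0)$ (the class of the constant path, resp.\ the ambient fundamental class), and $f^n(z)$ the canonical trivialization of $\gr^W_{-2n}M^{(n)}_z=\Q(n)$; compatibility of $\real_{\sigma}$ with weights and with these canonical classes gives $\real_{\sigma}(M^{(n)}_z,v_0(z),f^n(z))=(\mcP^{(n)}(z),v_0,f^n)$, which is the displayed framing.

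The main obstacle is precisely the existence of $M^{(n)}_z$ as a genuine mixed Tate motive over $k$, as opposed to a mere Hodge--Tate structure or $\ell$-adic realization: the Hodge realization $\MT(k)\to\QHT$ is far from surjective on $\Ext^1$ groups, so the polylogarithm extension cannot be produced from the cobar/$\Ext^1$ formalism of Section 4.1 alone --- it must come from a geometric source (the motivic $\pi_1$ of $\mathbb{P}^1\setminus\{0,1,\infty\}$, the Beilinson--Deligne hyperplane construction, or a Bloch--Kriz cycle construction). Granting existence, the remaining work --- matching the weight and Hodge filtrations, the periods and the powers of $2\pi i$ against the precise normalization of (\ref{eq:PolyHT}) and the sign conventions of Section 3, and checking that the chosen $(v_0(z),f^n(z))$ is the one displayed after (\ref{eq:PolyHT}) --- is routine but must be done carefully, since an arbitrary choice in the construction could a priori yield a Tate twist or a sign--variant of the intended framed motive.
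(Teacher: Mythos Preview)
Your proposal is correct and in fact considerably more detailed than the paper's own treatment: the paper does not give a proof at all but simply attributes the result to Deligne and cites later constructions (\cite{Wang06}, \cite{DupontFresan23}), noting that the field of definition $\Q(z)$ falls out of those constructions. Your sketch --- Kummer motive for $n=1$, symmetric powers for the logarithmic sub, and the inductive extension realized via either the motivic $\pi_1$ of $\mathbb{P}^1\setminus\{0,1,\infty\}$ (Deligne--Goncharov) or relative cohomology of hyperplane complements --- is precisely a fleshing-out of those cited sources, and your identification of the ``main obstacle'' (that the extension must be supplied geometrically, not by the $\Ext^1$ formalism alone) is exactly the point.

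One small slip of phrasing: the framing vector $v_0(z)$ is by Definition~\ref{defn:framed_HTS} an element of $\gr^W_0 M^{(n)}_z$, not a lift to $M^{(n)}_z$; since $\gr^W_0 M^{(n)}_z\cong\Q(0)$ canonically, $v_0(z)$ is simply the generator $1$ there (cf.\ the framing displayed after (\ref{eq:PolyHT})). Also, your hedge about transcendental $z$ is reasonable but unnecessary for the paper's purposes: the only application (Theorem~\ref{thm:Main.Thm1:CS-MTM}) uses $z_i$ in the invariant trace field, a number field, where $\MT(k)$ is unproblematic.
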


The first proof is attributed to Deligne, and later more constructions (especially, in the case $n=2$) were given, cf. \cite{Wang06}, \cite{DupontFresan23}. The property about the field of definition $\Q(z)$ is a consequence of these constructions of the motive. 

\begin{rem} \label{rem:uniqueness_of_poly-motive}
Strictly speaking, it is not clear (at least to this author) that a priori all these constructions are the same objects in the category $\MT(\Q(z))$, although one expects this to be the case. But, we observe that there will be a unique framed mixed Tate motive defined over $k\subset\Qb$ whose Hodge realization is the framed polylogarithm Hodge-Tate structure $\mcP^{(n)}(z)$ if it is the fiber at $z$ of a mixed Tate motive $\mcM$ over $X:=\mathbb{P}^1\backslash \{0,1,\infty\}$ (or ``mixed Tate motivic sheaf", as one says) in the sense of \cite{CisinskiDeglise19} or \cite{HuberWildeshaus98} whose Hodge realization is the polylogarithm variation of mixed Hodge structure  \cite[1.1-1.3]{BeilinsonDeligne94}, \cite[$\S$7]{Hain94} (see also Subsec. \ref{subsec:PolyVMHS}); such $\mcM$ is the same datum as a $\MT(k)$-representation (in the sense of \cite[$\S$7]{Deligne89}) of the motivic fundamental group $\pi_1^{\mot}(X,\orav)$ with tangential base point, where $\orav$ is the tangent vector $\frac{\partial}{\partial z}$ at $0$ for the standard coordinate $z$ on $\A^1$, cf. \cite[Thm.1]{Levine10}. Indeed, 
then there exists a ``parallel transport'' morphism $\mcM_{\orav}\otimes A_{\orav,z}(X) \ra \mcM_{z}$ in the Tannakian category $\MT(k)$, where $A_{\orav,z}(X)$ is the pro-object in $\MT(k)$ whose dual is the affine algebra of the motivic path torsor $P_{\orav,z}^{\mot}(X)$ as constructed in \cite[Thm.4.4]{DeligneGoncharov05}. It is known \cite[$\S$11]{Hain94} that $\mcM_{\orav}$ contains $\Q(0)$ and that the induced morphism $\real^{Hod}(A_{\orav,z}(X)) \ra \real^{Hod}(\mcM_{z})$ of mixed Hodge structures is surjective and the kernel of its fiber at $\orav$ has an explicit set of generators given by local monodromoies. Since local monodromies are motivic \cite[4.4]{DeligneGoncharov05}, hence $\mcM_{z}$ must be the quotient mixed Tate motive of $A_{\orav,z}(X) $ by some ideal of $A_{\orav,\orav}(X) $ which is determined only by the Hodge realization.

Therefore, we call any framed mixed Tate motive which satisfies the above property \emph{polylogarithm mixed Tate motive} of order $n$ (framed by $v_0(z)$, $f^n(z)$).

\end{rem}

\begin{thm} \label{thm:Main.Thm1:CS-MTM}
Let $X$ be a hyperbolic three-manifold with invariant trace field $k(M)\subset \Qb$.
Then there exists a mixed Tate motive $M(X)$ in $\Ext^1_{\MT(k(M))}(\Q(0),\Q(2))$, whose image under the Beilinson regulator (\ref{eq:Beilinson_regulator}) equals the normalized $\PSL_2(\C)$-Chern-Simons invariant $\CS_{\PSL_2}(X)$ (\ref{eq:normalized_PSL_2(C)-CS.inv}) 
\end{thm}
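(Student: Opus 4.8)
The plan is to assemble $M(X)$ out of polylogarithm mixed Tate motives of order $2$, mimicking on the motivic side the expression of the Bloch regulator as the skew-symmetric period of $\mcP^{(2)}(z)$ obtained in (\ref{eq:ss-period=rho}). First I would fix a representative $\beta = \sum_j n_j [z_j]$ of the Bloch invariant $\beta(M)\in \mcB(k(M))\otimes\Q$, where each $z_j\in k(M)$, using Theorem \ref{thm:2Complex_volume=Bloch_regulator}(2); after clearing denominators we may take $n_j\in\Z$. For each $j$ let $(M^{(2)}_{z_j}, v_0(z_j), f^2(z_j))$ be the polylogarithm mixed Tate motive of order $2$ over $k(M)$ from Theorem \ref{thm:PolyMTM}, and form the class $\mu := \sum_j n_j [M^{(2)}_{z_j}, v_0(z_j), f^2(z_j)] \in \mcA_2(k(M))$ in the degree-$2$ part of the motivic Hopf algebra. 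The candidate motive $M(X)$ is the simple extension of $\Q(0)$ by $\Q(2)$ attached to $\mu$, provided $\mu$ lies in $\mathrm{Ker}(\bar\Delta|_{\mcA_2(k(M))})$; see (\ref{eq:motivic_coh=red_cobar_cx_coh}).

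The key steps, in order, are: (i) show $\mu \in \mathrm{Ker}(\bar\Delta)$. By compatibility of the realization functor with comultiplication and with the canonical isomorphism (\ref{eq:motivic_coh=red_cobar_cx_coh}) (the commuting square (\ref{eq:Hod_real})), it suffices to check that $\real_\sigma(\mu)\in\mcH_2$ has vanishing reduced coproduct $\bar\nu$; but by Lemma \ref{lem:Delta(PolylogarithmHT2)}, $\bar\nu(\real_\sigma(\mu)) = \sum_j n_j\, \log(1-z_j)\otimes \log(z_j)$ inside $\C/\Z(1)\otimes\C/\Z(1)$, and this is exactly the image of the complex Dehn invariant $\delta_\C(\beta) = \sum_j n_j\, z_j\wedge(1-z_j)$ under $\exp\otimes\exp$ — wait, more precisely it is the symmetric lift of $\delta_\C(\beta)$, which vanishes because $\beta\in\mcB(\C)=\mathrm{Ker}(\delta_\C)$. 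Here one must be slightly careful: $\bar\nu$ lands in a tensor (not wedge) product, so one needs that the full tensor $\sum n_j \log(1-z_j)\otimes\log(z_j)$ vanishes mod $\Z(1)^{\otimes 2}$, not merely its skew part; this follows from the five-term relation, which kills the symmetric part as well (the symmetrized coproduct being governed by $\delta_\C$ via the standard argument, cf.\ the kernel of $\bar\nu$ on the symbol algebra). (ii) Identify $M(X)$ as the class $\iota_2^{-1}$ of $\mu$ in $\Ext^1_{\MT(k(M))}(\Q(0),\Q(2))$ using the motivic analogue of Proposition \ref{prop:coh_red_cobar_cx}. (iii) Compute its Beilinson regulator: by the commuting diagram (\ref{eq:Beil_reg=Hod_real}), $R^{Be}(M(X))$ equals the image of $\real_\sigma(M(X))\in\Ext^1_{\QHT}(\Q(0),\Q(2))$ under (\ref{eq:Ext^1_MH(Q(0),Q(n)}), which by Lemma \ref{lem:Ext^1_HT=ss-period} is half the skew-symmetric period $\mathscr{A}_2(\real_\sigma(\mu)) = \sum_j n_j\,\tilde\rho([z_j])$, i.e.\ half $\tilde\rho(\beta(M))$, hence by (\ref{eq:rho=(2pii)^2tilde{rho}}) equals $\frac12\rho(\beta(M))$ in $\C/\Q(2)$. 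By (\ref{eq:2PSL_2-CS-inv=rho(beta(M))}), $\rho(\beta(M)) = 2\,\CS_{\PSL_2}(X)$, so $R^{Be}(M(X)) = \CS_{\PSL_2}(X)$ as claimed.

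For the field of definition: each polylogarithm motive is defined over $\Q(z_j)\subseteq k(M)$, so $\mu$ and hence $M(X)$ lie in $\mcA_2(k(M))$ and $\Ext^1_{\MT(k(M))}(\Q(0),\Q(2))$ respectively, using $\Q(z_j)\hookrightarrow k(M)$ and the induced functor $\MT(\Q(z_j))\to\MT(k(M))$ (or directly working over $k(M)$ from the start). One should also note that a priori the construction depends on the chosen representative $\beta$ and on the chosen polylogarithm motives (Remark \ref{rem:uniqueness_of_poly-motive}), so the statement only asserts existence, which is all that is needed. I expect step (i) — more precisely, the bookkeeping needed to see that the \emph{full tensor} $\sum n_j\log(1-z_j)\otimes\log(z_j)$ (not just its alternation) vanishes mod $\Z(1)^{\otimes 2}$ — to be the main obstacle, since $\delta_\C$ a priori only controls the exterior square; the resolution is that the five-term relation (\ref{eq:five-term}), which the $[z_j]$ satisfy in $\mcP(\C)$, forces the symmetric part of the coproduct to vanish as well, so that $\beta\in\mcB(\C)$ genuinely implies $\bar\nu(\real_\sigma(\mu))=0$, and then one transports this back to the motivic side via (\ref{eq:Hod_real}) together with the fact that $\Ext^2_{\MT(k(M))}(\Q(0),\Q(2))=0$ for a number field, which guarantees the extension class is detected faithfully by its realization.
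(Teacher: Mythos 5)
There is a genuine gap in step (i), and it is exactly the point your own aside flags. Your candidate class is $\mu_0 = \sum_j n_j[M^{(2)}_{z_j}]$, whose Hodge-theoretic reduced coproduct is the full tensor $\sum_j n_j\,\log(1-z_j)\otimes\log(z_j)$. The Bloch condition $\delta_{\C}(\beta)=0$ kills only the alternating part of this tensor. You then assert that the five-term relation forces the symmetric part to vanish too. That is not correct: the five-term relation for the (motivic or Hodge-theoretic) dilogarithm class $[\mcP^{(2)}(z)]$ holds only \emph{modulo products of logarithms}, i.e.\ in the degree-$2$ piece of the Lie coalgebra $\mcA_{>0}/\mcA_{>0}\cdot\mcA_{>0}$, not in $\mcA_2$ itself. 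Concretely, if $\sum_j n_j[z_j]$ is a five-term cycle, then $\sum_j n_j\Li_2^{\mot}(z_j)$ is a $\Q(2)$-constant \emph{plus} a $\Q$-linear combination of products $\log^{\mot}(u)\log^{\mot}(v)$, and the reduced coproduct of such a product is $\log u\otimes\log v + \log v\otimes\log u$, which is precisely a nonzero symmetric tensor. So $\bar\nu(\mu_0)$ need not vanish, and $\mu_0$ need not define a class in $\Ext^1$. The argument ``$\delta_{\C}$ governs the kernel of $\bar\nu$ on the symbol algebra'' is precisely the statement being sought, and it fails at the level of the full tensor.

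The paper circumvents this by antisymmetrizing at the source: it sets $M(z):=\tfrac12\bigl([M^{(2)}_z]-[M^{(2)}_{1-z}]\bigr)$ and $M(X):=\sum_j n_j M(z_j)$. Since $\bar\nu([\mcP^{(2)}(z)])=\log(1-z)\otimes\log(z)$ and $\bar\nu([\mcP^{(2)}(1-z)])=\log(z)\otimes\log(1-z)$, the difference is a pure alternator $2\log(1-z)\wedge\log(z)$, and then $\delta_{\C}(\beta)=0$ does the rest; no control of the symmetric part is needed. Your period computation in step (iii) would still go through with this corrected input: by the functional equation $\tilde\rho(1-z)=-\tilde\rho(z)$ one has $\mathscr{A}_2\bigl(\real_{\sigma}(M(X))\bigr)=\tfrac12\sum_j n_j\bigl(\tilde\rho(z_j)-\tilde\rho(1-z_j)\bigr)=\tilde\rho(\beta)$, exactly as in your version. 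Finally, the closing invocation of $\Ext^2_{\MT(k(M))}(\Q(0),\Q(2))=0$ is not what is needed to transport vanishing of the coproduct from realizations back to motives; the paper instead uses that $\oplus_{\sigma}\real_{\sigma}$ is fully faithful on $\Ext^1$ (Deligne--Goncharov), combined with the $\Aut(\C)$-equivariance of $\delta_{\C}$, to verify $\bar\Delta$-vanishing across all embeddings at once.
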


\begin{proof}
The Bloch invariant $\beta(X)$ constructed by Neumann and Yang lies in $\mcB(k(X))_{\Q}$, where $k(X)\subset\Qb$ is the invariant trace field (\cite[Thm.1.2]{NeumannYang99}). Suppose that $\beta(X)=\sum_{i=1}^N [z_i]$ for $z_i\in k(X)$.
For each $z\in \C-\{0,1\}$, let $M(z)$ be the framed mixed Tate motive defined as a difference of two framed polylogarithm mixed Tate motives:
\begin{equation} \label{eq:M(z)}
M(z):=\frac{1}{2} \left([M^{(2)}_z,v_0(z),f^n(z)]-[M^{(2)}_{1-z},v_0(1-z),f^n(1-z)]\right)
\end{equation}
(Theorem \ref{thm:PolyMTM}), and put $M(X):=\sum_i M(z_i) \in \MT(F)$, where $F:=k(M)$.

First, we show that $M(X)\in \Ext^1_{\MT(F)}(\Q(0),\Q(2))$, more precisely, we will check that $\bar{\Delta}_2(M(X))=0$ ($\bar{\Delta}_2=\bar{\Delta}|_{\mcA_2}:\mcA_2\ra\mcA_1\otimes\mcA_1$ being the reduced motivic comultiplication) and obtain an element of $\Ext^1_{\MT(F)}(\Q(0),\Q(2))$ via the canonical isomorphism (\ref{eq:motivic_coh=red_cobar_cx_coh}).
For any embedding $\sigma:F\hra \C$, we have the Hodge realization functor $\real_{\sigma}:\Ext^1_{\MT(F)}(\Q(0),\Q(n)) \ra \Ext^1_{\QMH}(\Q(0),\Q(n))$.
Since the Hodge realization functor 
\begin{equation} \label{eq:Hod_real_F}
\real^{Hod}_F:=\bigoplus_{\sigma} \real_{\sigma}\ :\ \Ext^1_{\MT(F)}(\Q(0),\Q(n)) \lra \bigoplus_{\sigma} \Ext^1_{\QMH}(\Q(0),\Q(n))
\end{equation}
is fully faithful \cite[Prop.2.14]{DeligneGoncharov05}, by compatibility of motivic/Hodge-theoretic reduced comultiplications, it suffices to verify that for the reduced comultiplication $\bar{\nu}$ on $\QHT$, $\sum_i \bar{\nu}(\real_{\sigma}(M(z_i)))=0$ for every embedding $\sigma:F\hra \C$.
But, for $z\in \C-\{0,1\}$, by Lemma \ref{lem:Delta(PolylogarithmHT2)}, we have
\begin{align*}
& \sum_i \bar{\nu}([\PHT^{(2)}(z_i)])- \bar{\nu}([\PHT^{(2)}(1-z_i)]) \\
=& \sum_i \log(1-z_i)\otimes \log(z_i) - \log(z_i)\otimes \log(1-z_i) \\
=& 2\sum_i \log(1-z_i)\wedge \log(z_i) 
\end{align*}
(Here, for a $\Q$-vector space $V$, we identify $V\wedge_{\Q} V$ with a subspace of $V\otimes_{\Q}V$ via $x\wedge y\mapsto \frac{1}{2}(x\otimes y-y\otimes x)$). 
This is the complex Dehn invariant $\delta_{\C}(\sum_i [z_i])$, hence is zero by Theorem \ref{thm:2Complex_volume=Bloch_regulator}, (1). Since $\delta_{\C}([z])=(1-z)\wedge z$ obviously commutes with the action of $\Aut(\C)$, the claim is proved.

Next, for the inclusion $\mathrm{id}:F\hra\C$, the Beilinson regulator $K_{2n-1}(F)_{\Q}\ra K_{2n-1}(\C)_{\Q} \ra \C/\Q(2)$ is the same as the Hodge realization $ \real_{\mathrm{id}}$ (\ref{eq:Hod_real}) under the canonical isomorphism $\Ext^1_{\QHT}(\Q(0),\Q(n)) = \C/\Q(2)$ (\ref{eq:Ext^1_MH(Q(0),Q(n)}): commutativity of the diagram (\ref{eq:Beil_reg=Hod_real}). Since the normalized $\PSL_2(\C)$-Chern-Simons invariant $\CS_{\PSL_2}(X)$ is one-half of the normalized Bloch regulator $\rho(\beta(M))$ (\ref{eq:2PSL_2-CS-inv=rho(beta(M))}), to prove the second statement, we need to show the equality
 \[ \real_{\mathrm{id}}(M(X))=\frac{1}{2}\rho(\beta(X)). \] 
By commutativity of (\ref{eq:Hod_real}), the Hodge realization $\real_{\mathrm{id}}(M(X))$ of the mixed Tate motive $M(X)=\sum_i M(z_i)$ is the Hodge-Tate structure $H(X)$ corresponding to $\frac{1}{2}\sum_i \mcP^{(2)}(z_i)-\mcP^{(2)}(1-z_i)\in \mathrm{Ker}(\bar{\nu}|_{\mcH_2})$ under the isomorphism $\iota_2^{-1}:\mathrm{Ker}(\bar{\nu}|_{\mcH_2})\isom\Ext^1_{\QHT}(\Q(0),\Q(2))$ (Proposition \ref{prop:coh_red_cobar_cx}). 
Now, the composite of the isomorphisms
\[ \mathrm{Ker}(\bar{\nu}|_{\mcH_2}) \stackrel{\iota_2^{-1}}{\lra} \Ext^1_{\QHT}(\Q(0),\Q(2))  \stackrel{(\ref{eq:Ext^1_MH(Q(0),Q(n)})}{=}\C/\Q(2)\isom  \C/\Q \]
is the map $\frac{1}{2}\mathscr{A}_2$ (Lemma \ref{lem:Ext^1_HT=ss-period}). Hence, we have
\begin{align*} 
\real_{\mathrm{id}}(M(X))
\stackrel{(\star)}{=}& \frac{1}{4}\mathscr{A}_2(\sum_i \mcP^{(2)}(z_i)-\mcP^{(2)}(1-z_i)) \\
\stackrel{}{=}& \frac{1}{4}\sum_i \left[\mathscr{A}_2( \mcP^{(2)}(z_i))-\mathscr{A}_2(\mcP^{(2)}(1-z_i)) \right] \\
\stackrel{(i)}{=}& \frac{1}{4} \sum_i \left[\tilde{\rho}([z_i])-\tilde{\rho}([1-z_i]) \right] \\
\stackrel{(ii)}{=}& \frac{1}{2}\tilde{\rho}(\sum_i [z_i]).
\end{align*}
Here, $\tilde{\rho}$ is the unnormalized Bloch regulator (\ref{eq:Bloch_regulator}) and we have $\rho=(2\pi i)^2 \tilde{\rho}$ (\ref{eq:rho=(2pii)^2tilde{rho}}). The equalities (i) and (ii) are respectively (\ref{eq:ss-period=rho}) and the functional equation for $\tilde{\rho}(z)$ (\ref{eq:functional_eq_Bloch-regulator}). This completes the proof. 
\end{proof}

\begin{rem} \label{rem:uniqueness_of_CS-motive}
(1)  We note that the first equality $(\star)$ in the above uses Lemma \ref{lem:Ext^1_HT=ss-period} ant thus requires that the Hodge realization of our mixed Tate motive $M(X)$ is constructed from $\frac{1}{2}\sum_i \mcP^{(2)}(z_i)-\mcP^{(2)}(1-z_i) \in \mathrm{Ker}(\bar{\nu}|_{\mcH_2})$ via the specific isomorphism $\iota_2^{-1}:\mathrm{Ker}(\bar{\nu}|_{\mcH_2})\isom\Ext^1_{\QHT}(\Q(0),\Q(2))$. So, we had to know that $\iota_2$ was an isomorphism (Proposition \ref{prop:coh_red_cobar_cx}).

(2) The mixed Tate motive $M(X)$ constructed in the theorem depends on the choice of a representative $\sum_{i=1}^N[z_i]\ (z_i\in k(M))$ of the Bloch invariant $\beta(M)$. At the moment we do not know whether this mixed Tate motive is independent of such choice. The independent of our construction amounts to showing the ``five term relation'' for the framed mixed Tate motive 
$M(z)=\frac{1}{2} \left(M^{(2)}_z-M^{(2)}_{1-z} \right)$ (\ref{eq:M(z)}). 

Meanwhile, uniqueness of a mixed Tate motive with specified Hodge realization would be a consequence of the full faithfulness of the Hodge realization $\real_{\mathrm{id}}:\Ext^1_{\MT(k(M))}(\Q(0),\Q(2))\ra \Ext^1_{\QHT}(\Q(0),\Q(2))$ ($\mathrm{id}:k(M)\subset \C$). Note that this is stronger than the full faithfulness of the usual Hodge realization $\oplus_{\sigma:k(M)\hra \C}\real_{\sigma}$ which is known \cite[2.14]{DeligneGoncharov05}, and the author does not know whether such faithfulness is even reasonable to expect, like a consequence of some motivic conjectures. 
But, thanks to our second main theorem (Theorem \ref{thm:CS-VMHS_is_motivic}), the argument of Remark \ref{rem:uniqueness_of_poly-motive} can be invoked to show that any (Chern-Simons) mixed Tate motive as in Theorem \ref{thm:Main.Thm1:CS-MTM} will be unique, if one assumes existence of (the maximal Tate quotient of) the motivic path torsor of the augmented character variety of the manifold with arbitrary tangential base point and existence of ``Chern-Simons motivic sheaf" over the augmented character variety whose Hodge realization is the Chern-Simons variation of Hodge-Tate structure which will be defined in Section 7; we think that existence of these objects is a reasonable assumption and perhaps can be deduced from existence of the motivic $t$-structure \cite{Beilinson12}. 
\end{rem}

%%%%%%%%%%%%%%%%%%%%
\subsection{Comparison with Goncharov's mixed Tate motives} \label{subsec:comp_wy_Goncharov_MTM}

For any hyperbolic manifold $M$ of any odd dimension $2m-1$, Goncharov gave two constructions of elements in $K_{2m-1}(\Qb)_{\Q}=\Ext^1_{\MT(\Qb)}(\Q(0),\Q(2))$ whose images under the Borel regulator $R^{Bo}:K_{2m-1}(\C)\ra \R$ both equal the volume of $M$, and asked whether they are the same, expecting this to be the case.

When $m=2$, his second method of construction is at the very basis of our construction method of a mixed Tate motive in $\Ext^1_{\MT(\Qb)}(\Q(0),\Q(2))$. 
But, as we will explain, Goncharov's mixed Tate motive by his second construction is not canonical and not necessarily equal to our mixed Tate motive since it cannot capture the Chern-Simons invariant in general. Since his K-theory element by the first construction is expected to be the same as our mixed Tate motive which should be canonical, this would answer negatively Goncharov's question above.
We now give detailed explanation.

Goncharov's first method is of homological nature, constructing an element $c(M)$ of the group homology of $\SL_2(\C)^{\delta}$, the discrete group underlying the Lie group $\SL_2(\C)$:
\[ K_3^{\mathrm{ind}}(\C) \cong H_3(\SL_2(\C)^{\delta},\Z)\]  
(an isomorphism induced by the Hurewicz map $K_3(\Qb)\ra H_3(\GL(\C)^{\delta},\Z)$). 
For this he first constructs an element $\tilde{c}(M)$ of the relative group homology $H_3(\SO(3,1)^{\delta},T(s)^{\delta},\Q)$ for some subgroup $T(s)$ of the stabilizer subgroup $E(s)$ of an ideal point $s\in\partial\mbH^3$.
Then, $c(M)$ is a lift to $H_3(\SL_2(\C),\Q)$ of $\varphi(\tilde{c}(M))$ along $H_3(\SL_2(\C)^{\delta},\Q)\ra H_3(\SL_2(\C)^{\delta},P^{\delta},\Q)$, where $\varphi:(Spin(3,1),T(s))\ra (\SL_2(\C),P)$  is the half-spin representation and $P$ is some subgroup of $\SL_2(\C)$.
We believe that this element $c(M)$ equals our mixed Tate motive via the above isomorphism.

Next, we explain why the mixed Tate motive produced by Goncharov's second construction method is non-canonical and unsatisfactory from the viewpoint of Chern-Simons invariant, and as such is not equal to our mixed Tate motive.

Let $X$ be one of the three geometries: $\mbE$ (Euclidean geometry), $\mbS$ (Spherical geometry), $\mbH$ (Hyperbolic geometry), and $G$ be the isometry group.
The \emph{scissors congruence group} $\mcP(X)$ of $X$ is the quotient group of the free abelian group generated by ``polytopes'' $P$ in $X$ by the relations $\sim$:
\begin{align*} 
(i) &\quad P\sim P_1+P_2 \Leftrightarrow P=P_1\dot{\sqcup} P_2\quad  (\text{``interior disjoint union''}) \\
(ii) &\quad P\sim gP\quad \text{for all }g\in G.
\end{align*}
For the definition of a polytope, we refer to \cite[$\S$2]{Dupont82}, \cite{Dupont01} (it needs a little caution, when $X=\mbS$). We denote the class of a polytope $P$ by $[P]$. For two polytopes $P$, $P'$, we have $[P]=[P']$ in $\mcP(X)$ if and only if $P$ and $P'$ are \emph{stably scissors congruent} (not scissors congruent!) in the sense that  there exist polytopes $Q$ and $Q'$ such that $P\sqcup Q \simeq P'\sqcup Q'$ with $Q'=gQ$ for some $g\in G$. 
Also, when we fix an orientation of $X$, $\pm[P]$ can be regarded as an oriented polytope with the negative sign corresponding to the orientation opposite to that of $X$.

Goncharov \cite[$\S$1.4, $\S$3.1]{Goncharov99} also defines scissors congruence group as the abelian group generated by pairs $[M,\alpha]$, where $M$ is an oriented geodesic simplex and $\alpha$ is an orientation of $X$, with the relations:
\begin{align*} 
(i') &\quad [M,\alpha]=[M_1,\alpha]+[M_2,\alpha]\ \text{ if }M=M_1\dot{\sqcup}M_2 \\
(i'') &\quad [M,\alpha] \text{ changes sign if we change orientation of }M\text{ or }\alpha \\
(ii') &\quad [M,\alpha]=[gM,g\alpha]\ \text{ for any }g\in G.
\end{align*}

%%%%%%%%%%%%%%%%%%%%
\begin{lem} \label{lem:SCgroup}
When $X\neq\mbS$, 
Goncharov's scissors congruence group is the same as $\mcP(X)$. 
\end{lem}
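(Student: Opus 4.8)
The statement to prove is that for $X\in\{\mbE,\mbH\}$, Goncharov's description of the scissors congruence group via pairs $[M,\alpha]$ (generators: oriented geodesic simplex $M$ together with an orientation $\alpha$ of $X$; relations $(i')$, $(i'')$, $(ii')$) agrees with the description of $\mcP(X)$ via classes $[P]$ of polytopes (relations $(i)$ interior-disjoint additivity and $(ii)$ $G$-invariance). The plan is to write down explicit mutually inverse homomorphisms between the two groups and check that the relations are respected.

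First I would fix, once and for all, an orientation $\alpha_0$ of $X$ (this is possible since $\mbE$ and $\mbH$ are orientable and connected, unlike $\mbS$ where the ``orientation'' data is genuinely needed because there is no global notion making the definitions match). Define a map $\Psi$ from Goncharov's group to $\mcP(X)$ on generators by $[M,\alpha_0]\mapsto [M]$ (regarding $M$ as a polytope with its ambient orientation) and $[M,-\alpha_0]\mapsto -[M]$, extended additively; more invariantly, $[M,\alpha]\mapsto \varepsilon(\alpha)[M]$ where $\varepsilon(\alpha)=\pm1$ according as $\alpha=\pm\alpha_0$. One checks $(i')$ maps to $(i)$, $(i'')$ is respected by construction, and $(ii')$: since every $g\in G$ either preserves or reverses $\alpha_0$, $[gM,g\alpha_0]\mapsto\varepsilon(g\alpha_0)[gM]=\pm[gM]=\pm[M]$ by $(ii)$, which matches $\varepsilon(\alpha_0)[M]$ — here one must track the sign carefully, using that an orientation-reversing $g$ sends the oriented simplex $M$ to $gM$ with the opposite ambient orientation, so $[gM]=-[M]$ already holds inside the oriented-simplex bookkeeping, and the two sign flips cancel. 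Conversely, define $\Phi:\mcP(X)\to$ Goncharov's group on a polytope $P$ by first triangulating $P$ into geodesic simplices $P=\bigsqcup_i M_i$ (possible in $\mbE$, $\mbH$; this is the one place one invokes existence of geodesic triangulations of polytopes) and setting $\Phi([P])=\sum_i[M_i,\alpha_0]$, where each $M_i$ carries the orientation induced from $\alpha_0$. One checks well-definedness: two triangulations of $P$ have a common refinement, and $(i')$ shows the value is unchanged; then $(i)$ goes to $(i')$ and $(ii)$ goes to $(ii')$.

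The two composites are then seen to be the identity: $\Psi\circ\Phi([P])=\sum_i[M_i]=[P]$ by $(i)$, and $\Phi\circ\Psi([M,\alpha_0])=[M,\alpha_0]$ because the one-simplex triangulation of $M$ is already a triangulation, with the sign case $[M,-\alpha_0]$ handled by linearity. Hence $\Psi$ and $\Phi$ are inverse isomorphisms.

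\textbf{Main obstacle.} The essentially bookkeeping part — making sure that ``oriented simplex'' plus ``orientation of $X$'' in Goncharov's formalism matches ``polytope carrying the ambient orientation'' in the $\mcP(X)$ formalism without an off-by-a-sign error — is where all the care goes; concretely, the subtle point is relation $(ii')$ versus $(ii)$ under an orientation-reversing isometry $g$, where one has a sign from $g\alpha_0=-\alpha_0$ and a sign from the fact that $gM$ inherits the reversed ambient orientation, and one must verify these cancel so that Goncharov's $[M,\alpha_0]=[gM,g\alpha_0]$ is consistent with $\mcP(X)$'s $[M]=[gM]$. The hypothesis $X\neq\mbS$ enters precisely because on $\mbS$ the notion of ``polytope'' is more delicate (antipodal issues, cf. the caution in \cite[$\S2$]{Dupont82}, \cite{Dupont01}), so the naive triangulation argument for $\Phi$ and the identification of orientations both break down. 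I do not expect any genuinely deep input beyond existence of geodesic triangulations of polytopes and the common-refinement argument for well-definedness.
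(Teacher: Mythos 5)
Your argument is correct and essentially the same as the paper's: the key point is the sign bookkeeping $[M,\alpha]\mapsto\epsilon(M)[|M|]$ (with $\epsilon(M)=\pm1$ recording whether the simplex orientation agrees with $\alpha$) and the cancellation of signs under an orientation-reversing $g$, which shows $(ii')$ corresponds to $(ii)$. One caution on clarity: your sign convention $\varepsilon(\alpha)[M]$, with the extra sign hidden inside the non-standard notation $[M]$ (``polytope with its ambient orientation''), is less transparent than $\epsilon(M)[|M|]$; and your line ``$[gM]=-[M]$'' only makes sense under that sign-absorbed reading, since in $\mcP(X)$ itself $[|gM|]=[|M|]$ is precisely relation $(ii)$. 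Your explicit construction of the inverse $\Phi$ by triangulating polytopes makes explicit a step the paper leaves implicit (that $\mcP(X)$ is also generated by geodesic simplices, so both groups admit presentations on the same generators); otherwise the argument is the same and correct, and the paper's closing citation of Dupont (stable scissors congruence implies scissors congruence) is not needed for the lemma as stated, so your omission of it is harmless.
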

\begin{proof} 
When we fix an orientation $\alpha$ of $X$, Goncharov's scissors congruence group is also generated by $[M,\alpha]$, and we claim that the map 
\begin{equation} \label{eq:isom_bet'b_two_SCgroups}
[M,\alpha]\mapsto \epsilon(M)[|M|]
\end{equation}
(defined on generators) induces an isomorphism between the two scissors congruence groups, where $\epsilon(M)=\pm1$ according as the orientation of $M$ agrees with $\alpha$ or not and $[|M|]$ is the class in $\mcP(X)$ of the polytope $|M|$ underlying $M$ (i.e ignoring orientation). This map is invariant under the relation (i$''$), hence it suffices to show that \emph{in the presence of the relations (i), (i$'$)} which are obviously equivalent under this correspondence, the two relations (ii), (ii$'$) match. If we set $\epsilon(g)=\pm1$ according as $g\in G$ is orientation preserving or not, we have $\epsilon(gM)=\epsilon(g)\epsilon(M)$, so that 
\[ [gM,g\alpha]\stackrel{(i'')}{=}\epsilon(g)[gM,\alpha] \mapsto  \epsilon(g)\epsilon(gM)[|gM|]=\epsilon(M)[|gM|], \]
thus, (ii$'$) holds if and only if $[P]=[gP]$ for all polytopes $P=|M|$ and $g\in G$ ($g|M|=|gM|$), namely if and only if $P$ and $gP$ are stably scissors congruent. Since $G$ acts transitively on $X$, it is known \cite[p.5]{Dupont01} that stable scissors congruence implies scissors congruence, i.e. (ii).
\end{proof}

Goncharov's second construction of a mixed Tate motive for cusped hyperbolic manifold hinges crucially on the equality $\mcP(\mbH^n)= \mcP(\bmbH^n)$, where the latter is the scissors congruence group of the extended hyperbolic space $\bmbH^n=\mbH^n\sqcup \bdmbH^n$ and the equality is induced by the inclusion $\mbH^n \hra \bmbH^n$.
It is the group generated by geodesic $n$-simplices whose vertices are allowed to lie on $\bmbH^n$ while all other points must be finite, modulo the same relations as in $\mcP(\mbH^n$) (either for $G$ or $G^+$) (recall that the isometry group $G=\SO(n,1)$ of $\mbH^n$ still acts on $\bmbH^n$ and $\bdmbH^n$). Dupont \cite[$\S$2.Remark2]{Dupont82} and Sah (cf. \cite[$\S$3]{DupontSah82}) also introduce the scissors congruence group $\mcP(\bdmbH^n)$ of ideal (or ``totally asymptotic'' in their terminology) polytopes (i.e. polytopes with all vertices lying on the boundary $\bdmbH^n$). When $n=3$, this group is related to the pre-Bloch group.

%%%%%%%%%%%%%%%%%%%%
\begin{defn}
(1) The scissors congruence group $\mcP(\bdmbH^n)$ of $\bdmbH^n$ is the abelian group generated by all (ordered) $(n+1)$-tuples $(a_0,\cdots,a_n)$ of ideal points $a_i\in \bdmbH^n$ subject to the relations
\begin{align*} 
(i) &\quad (a_0,\cdots,a_n)=0\text{ if all $a_i$'s lie in a geodesic subspace of dimension less than $n$},\\
(ii) &\quad \sum_{0\leq i\leq n+1} (-1)^i(a_0,\cdots,\hat{a}_i,\cdots,a_{n+1})=0\quad  (a_0,\cdots,a_{n+1}\in \bdmbH^n),\\
(iii) &\quad \epsilon(g) (ga_0,\cdots,ga_n)=(a_0,\cdots,a_n),\quad (\forall a_i\in \bdmbH^n,\ \forall g\in G).
\end{align*}

(2) Thurston scissors congruence group $\mcP'(\bdmbH^n)$ is the abelian group defined similarly to 
$\mcP(\bdmbH^n)$, except that (i) and (iii) in the definition of $\mcP(\bdmbH^n)$ are replaced respectively by
\begin{align*} 
(i') &\quad (a_0,\cdots,a_n)=0\text{ if }a_i=a_j \text{ for some }i\neq j,\\
(iii') &\quad (ga_0,\cdots,ga_n)=(a_0,\cdots,a_n),\quad (\forall a_i\in \bdmbH^n,\ \forall g\in G^+).
\end{align*}
\end{defn}

\begin{rem}
(1) We remark that in the definitions of the scissors congruence groups $\mcP(\mbH^n)$ (both definitions) and $\mcP(\bmbH^n)$, one could have used the subgroup $G^+$ of orientation-preserving isometries, instead of the full isometry group $G$ (\cite[Thm.2.2]{Dupont01}). As $G^+$ is a normal subgroup of $G$ of index $2$ which is generated by $G^+$ and any reflection in a hyperplane, this means that any two polytopes in $\mbH^n$ in mirror image are (stably) scissors congruent under orientation-preserving motions only (the trick is to use barycentric subdivision, \textit{loc. cit.}). 
But, the two definitions of  $\mcP(\bdmbH^n)$ and $\mcP'(\bdmbH^n)$ are distinguished mainly by use of different groups $G$ or $G^+$. The reason why in this situation use of different groups results in different definitions is that for \emph{ideal} polytopes in mirror image, the barycentric subdivision trick which was efficient for $\mcP(\mbH^n)$, $\mcP(\bmbH^n)$ (\text{lot. cit.}) does not work, since the barycentric subdivision of an ideal simplex is not any longer made of ideal simplices. As we will see later in this subsection, this difference makes the group $\mcP'(\mbH^n)$ a better object than the scissors congruence groups $\mcP(\mbH^n)$, $\mcP(\bmbH^n)$, $\mcP(\bdmbH^n)$.

(2) In the definition of $\mcP'(\bdmbH^n)$, we obtain the same group if we impose the restriction that all $a_i\neq a_j$ for $i\neq j$ in all the conditions, except for (i$'$) which is replaced back by (i) (\cite{DupontSah82}, Remark after Corollary 4.7). Therefore, when $n=3$, $\mcP'(\bdmbH^3)$ is identified with the pre-Bloch group $\mcP(\C)$, via the map $(g_0,\cdots,g_3)\mapsto [[g_0:\cdots:g_3]]$ (so, $(\infty,0,1,z)\mapsto [z]$). It is obvious that the composite map $\mcP(\C)=\mcP'(\bdmbH^3)\ra \mcP(\bdmbH^3) \ra \mcP(\bmbH^3)$ is given by (cf. the bijection (\ref{eq:isom_bet'b_two_SCgroups})):
\begin{equation} \label{eq:P(C)raP(bdH^3)}
\mcP(\C)\ra \mcP(\bmbH^3): [z]\mapsto \epsilon(z)[|\Delta(\infty,0,1,z)|],
\end{equation} 
where $\epsilon(z)=\pm$ depending on whether the oriented (by vertex order) simplex $\Delta(\infty,0,1,z)$ has the same orientation as the ambient $\mbH^3$ or not, and $|-|$ is the underlying polytope with $[-]$ being its class.

Any reflection $\tau$ with respect to a geodesic hyperplane acts on $\mcP'(\bdmbH^n)$, and likewise the complex conjugation $\iota$ on $\mcP(\C)$. Let $\mcP'(\bdmbH^n)_{-}:=\mcP'(\bdmbH^n)/\langle \tau x+x\rangle$ and $\mcP(\C)_{-}:=\mcP(\C)/\langle [\bar{z}]+[z] \rangle$ denote the corresponding $(-1)$-coeigenspaces. Note that the ideal simplices $\Delta(\infty,0,1,z)$, $\Delta(\infty,0,1,\bar{z})$ are the mirror image of each other (with respect to the geodesic hyperplane spanned by $\{\infty,0,1\}$). 
Therefore, as $\epsilon(\bar{z})=-\epsilon(z)$, the natural map (\ref{eq:P(C)raP(bdH^3)}) factors through $\mcP(\C)_{-}=\mcP'(\bdmbH^3)_{-}$.
\end{rem}

%%%%%%%%%%%%%%%%%%%%
\begin{thm} \label{thm:orientation-sensitive_SCG}
Suppose $n=3$. There are natural isomorphisms (induced by natural maps)
\[ \mcP(\C)_{-}\isom \mcP'(\bdmbH^3)_{-}  \isom  \mcP(\bdmbH^3) \isom \mcP(\bmbH^3) \stackrel{\sim}{\leftarrow} \mcP(\mbH^3). \]
\end{thm}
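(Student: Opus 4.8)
The plan is to establish the chain from left to right, handling the two leftmost isomorphisms by bookkeeping with the defining relations and the two rightmost ones by appeal to the scissors-congruence results of Dupont and Dupont--Sah \cite{Dupont82}, \cite{DupontSah82}, \cite{Dupont01} specific to dimension $3$. Throughout I use the identification of $\mcP'(\bdmbH^3)$ with the pre-Bloch group $\mcP(\C)$ via $(g_0,g_1,g_2,g_3)\mapsto [[g_0:g_1:g_2:g_3]]$, under which the reflection $\tau$ in the geodesic plane through $\{\infty,0,1\}$ corresponds to complex conjugation $\iota$ on $\mcP(\C)$ (it fixes $\infty,0,1$ and carries $z$ to $\bar z$). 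Note first that on $\mcP'(\bdmbH^3)$ the group $G^+$ acts trivially by relation (iii$'$), so any two orientation-reversing isometries induce the same involution; hence the $(-1)$-coeigenspace $\mcP'(\bdmbH^3)_{-}$ does not depend on the choice of $\tau$, and the equivariant identification above yields at once the isomorphism $\mcP(\C)_{-}\isom \mcP'(\bdmbH^3)_{-}$.

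For $\mcP'(\bdmbH^3)_{-}\isom \mcP(\bdmbH^3)$ I would compare presentations. By the remark after Corollary 4.7 of \cite{DupontSah82} (recorded in Remark (2) above), $\mcP'(\bdmbH^3)$ may be presented by the tuples $(a_0,a_1,a_2,a_3)$ with pairwise distinct $a_i$, modulo the degeneracy relation (i), the alternating-sum relation (ii), and $G^+$-invariance (iii$'$); these are exactly the defining relations of $\mcP(\bdmbH^3)$ except that the latter uses the $\epsilon$-twisted invariance (iii) under the full group $G$. Since $G=G^+\sqcup \tau G^+$ and $\epsilon(\tau)=-1$, imposing (iii) is equivalent to imposing (iii$'$) together with $\tau x=-x$, and the latter is precisely the relation defining the passage to $\mcP'(\bdmbH^3)_{-}$. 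Thus the natural surjection $\mcP'(\bdmbH^3)\thra \mcP(\bdmbH^3)$ factors through an isomorphism $\mcP'(\bdmbH^3)_{-}\isom \mcP(\bdmbH^3)$.

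For the two remaining isomorphisms I would invoke the classical structure theory. The inclusion $\mbH^3\hra \bmbH^3$ induces $\mcP(\mbH^3)\isom \mcP(\bmbH^3)$ — the reduction of scissors congruence to finite polytopes, which is the case $n=3$ of the equality $\mcP(\mbH^n)=\mcP(\bmbH^n)$ used already in the discussion of Goncharov's construction, due to \cite{Dupont82}, \cite{DupontSah82}. For $\mcP(\bdmbH^3)\isom \mcP(\bmbH^3)$: the assignment $(a_0,a_1,a_2,a_3)\mapsto \epsilon(\Delta)\,[|\Delta(a_0,a_1,a_2,a_3)|]$, the signed class of the underlying ideal tetrahedron with $\epsilon(\Delta)$ recording whether its vertex ordering matches the orientation of $\mbH^3$, respects relations (i) (a lower-dimensional configuration is $0$ in $\mcP(\bmbH^3)$), (ii) (which becomes the subdivision relation (i)/(ii) of $\mcP(\bmbH^3)$), and (iii) (using $\epsilon(g\Delta)=\epsilon(g)\epsilon(\Delta)$ together with the fact that mirror-image polytopes are scissors congruent in $\mcP(\bmbH^3)$ — the barycentric-subdivision trick of \cite[Thm.2.2]{Dupont01} applies there because intermediate polytopes may have finite vertices). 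Hence it defines a homomorphism $\mcP(\bdmbH^3)\to \mcP(\bmbH^3)$; surjectivity comes from repeatedly coning a finite vertex of a geodesic simplex off to an ideal point and cancelling degenerate pieces by (ii), reducing any polytope of $\bmbH^3$ to a $\Z$-combination of ideal tetrahedra, and injectivity — that the only relations among ideal tetrahedra that arise from scissors congruences passing through non-ideal intermediate polytopes are generated by (i), (ii), (iii) — is the content of the corresponding results of \cite{DupontSah82}. Assembling the four isomorphisms gives the statement.

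The hard part will be this last step, $\mcP(\bdmbH^3)\isom \mcP(\bmbH^3)$: surjectivity via coning-off is dimension-$3$ combinatorics that can be carried out explicitly, but the injectivity statement is exactly the delicate point special to dimension $3$, and for it one must genuinely lean on the structure theory of $\mcP(\bmbH^3)$ developed in \cite{Dupont82}, \cite{DupontSah82} rather than reprove it. I would also take care over the compatibility of all orientation conventions (the signs $\epsilon(g)$, $\epsilon(\Delta)$, and the sign in the cross-ratio map $[z]\mapsto \epsilon(z)[|\Delta(\infty,0,1,z)|]$) across the four groups, since it is easy to introduce a spurious global sign that would still give an isomorphism of abstract groups but not the one induced by the stated natural maps.
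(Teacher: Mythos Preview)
Your approach matches the paper's: both treat the theorem as a chain of citations to Dupont and Dupont--Sah, and your presentation-comparison for $\mcP'(\bdmbH^3)_{-}\isom\mcP(\bdmbH^3)$ is precisely what lies behind the paper's reference to \cite[(5.24)]{DupontSah82}.

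The one place where you are vaguer than the paper, and where there is a small gap, is the injectivity of $\mcP(\bdmbH^3)\to\mcP(\bmbH^3)$. You say this ``is the content of the corresponding results of \cite{DupontSah82}'', but in fact \cite[Prop.~3.7(ii)]{DupontSah82} only gives that for odd $n>2$ the natural map is surjective with kernel at most $2$-torsion. To conclude injectivity one needs the additional fact that $\mcP(\bdmbH^3)$ is torsion-free, and the paper obtains this by feeding the first two isomorphisms of the chain back in: since $\mcP(\bdmbH^3)\cong\mcP'(\bdmbH^3)_{-}\cong\mcP(\C)_{-}$ and $\mcP(\C)$ is uniquely divisible \cite[Thm.~8.16]{Dupont01}, the quotient $\mcP(\C)_{-}$ is torsion-free as well. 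So the proof is not a straight left-to-right sequence of independent citations; the torsion-freeness coming from the left end of the chain is exactly what kills the possible $2$-torsion kernel on the right, and this specific mechanism is what your sketch does not identify.
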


In particular, we have $\mcP(\C)_{-}\cong \mcP(\mbH^3)$ (\cite[Thm. 2.4]{Neumann98}), which can be interpreted as saying that the pre-Bloch group $\mcP(\C)$ is an ``orientation-sensitive scissors congruence group''. One should not take the equality $\mcP(\mbH^3)\isom \mcP(\bmbH^3)$ as meaning that every polytope in $\mcP(\bmbH^3)$ is scissors congruent to a polytope in $\mcP(\mbH^3)$: ``an infinite geodesic can never be cut up into a finite number of pieces and placed inside $\mbH^n\ (n>0)$''. It just says that it is \emph{stably scissors congruent} in $\mcP(\bmbH^3)$ to a finite polytope (\cite[Remark after Cor.4.7]{Sah81}).

\begin{proof}
This is Corollary 8.18 of \cite{Dupont01}. More precisely, we have seen the isomorphism $\mcP(\C)_{-}\isom \mcP'(\bdmbH^3)_{-}$.
The bijectivity $\mcP'(\bdmbH^3)_{-} \isom \mcP(\bdmbH^3)$ is shown in \cite[(5.24)]{DupontSah82}.
The bijectivity $\mcP(\bdmbH^3) \isom \mcP(\bmbH^3)$ follows from that for odd $n>2$, the natural map $\mcP(\bdmbH^n) \ra \mcP(\bmbH^n)$ is surjective with at most $2$-torsion kernel \cite[Prop.3.7,(ii)]{DupontSah82} and torsion-freeness of $\mcP(\bdmbH^3)$ (which holds since $\mcP(\C)$ is uniquely divisible \cite[Thm.8.16]{Dupont01}).
The last isomorphism $\mcP(\bmbH^3) \stackrel{\sim}{\leftarrow} \mcP(\mbH^3)$ (which holds for general $n>1$) is shown in \cite[Thm.2.1]{DupontSah82} (see also \cite[Prop.3.3]{Sah81}).
\end{proof}

In the Goncharov's second method of constructing a mixed Tate motive in $ \Ext^1_{\MT(\Qb)}(\Q(0),\Q(2))$ associated with a complete hyperbolic three-manifold of finite volume, the input is the scissor congruence class in $\mcP(\mbH^3)$ of the manifold (rather than the Neumann-Yang invariant $\beta(M)$ in the Bloch group); especially, even in the cusped case he works only with finite geodesic simplices, instead of ideal simplices, using the equality $\mcP(\mbH^3)\cong \mcP(\bmbH^3)$. But, as W. Neumann \cite[p.388-399]{Neumann98} points out, elements of the scissors congruence group cannot detect orientation-sensitive invariants, such as Chern-Simons invariants $CS$. Indeed, if an orientation-preserving diffeomorphism $\phi:M\ra \bar{M}$ of complete oriented hyperbolic three-manifolds is a mirror reflection, in the sense that for some geodesic triangulation of $M$, the restriction of $\phi$ to each constituent geodesic simplex is a mirror reflection (like the reflection $\Delta(\infty,0,1,z)\mapsto \Delta(\infty,0,1,\bar{z})$, both being oriented by vertex orders), then $M$ and $-\bar{M}$ ($\bar{M}$ with reversed orientation) have the same associated scissors congruence classes, but $CS(-\bar{M})=-CS(\bar{M})=-CS(M)$ is not equal to $CS(M)$ in general (unless it is trivial).%%
\footnote{This can be also deduced from the equality $2\Vol_{\C}(M)=\sqrt{-1} \rho(\beta(M))$ (Theorem \ref{thm:2Complex_volume=Bloch_regulator}) and $\overline{\rho(z)}=\rho(\bar{z})$ 
%(at least on $\{z\in \C|\ |z|<1,  |1-z|<1\}$), 
which follows from the same property for $\mcR(z)=\Li_2(z)+\frac{1}{2}\log(z)\log(1-z)$ (\ref{eq:Rogers_dilogarithm}).}
So there is no hope of expressing the complex volume of a hyperbolic three-manifold in terms of the Goncharov's mixed Tate motive constructed from scissors congruence classes. 

%%%%%%%%%%%%%%%%%%%%%%%%%%%%%%%%%%%%%%%%
%%%%%%%%%%%%%%%%%%%%%%%%%%%%%%%%%%%%%%%%
\section{Interlude: Polylogarithm variation of mixed Hodge structure}

\subsection{Polylogarithm variation of mixed Hodge structure over $\mathbb{P}^1\backslash\{0,1,\infty\}$} \label{subsec:PolyVMHS}

Here, we follow \cite{Hain94}, \cite{BeilinsonDeligne94} to give an account of the relevant part of the theory of polylogarithms. We also recommend \cite{BurgosGilFresan} for a detailed expansion of \cite{BeilinsonDeligne94}.

For $k\in\N$, the $k$-th polylogarithm function $\Li_k(z)$ is an analytic function on $|z|<1$ defined by
\begin{equation} \label{eq:Li_k}
\Li_k(z)=\sum_{n=1}^{\infty}\frac{z^n}{n^k}.
\end{equation}
These are iterated integrals of differential forms with logarithmic poles on $\mathbb{P}^1(\C)\backslash\{0,1,\infty\}$:
\begin{align*}
\Li_1(z) &=-\log(1-z)=\int_0^z\frac{dt}{1-t}, \\
\Li_{k+1}(z) &= \int_0^z\Li_k(t)\frac{dt}{t}\ \quad (k\geq1).
\end{align*}
This expression as iterated integrals shows that $\Li_k(z)$, defined by (\ref{eq:Li_k}) for $|z|<1$, can be analytically continued as a \emph{multivalued} function on $\mathbb{P}^1(\C)\backslash\{0,1,\infty\}$. Many of the properties of these polylogarithms, especially their monodromy properties, are understood best when considered together. For $n\in\N$, we recall the framed polylogarithm Hodge-Tate structure $\mcP^{(n)}$ (\ref{eq:PolyHT}); we denote it by $\Lambda^{(n)}(z)$ when we regard it as a matrix with entries in the ring of multivalued functions on $\mathbb{P}^1(\C)\backslash\{0,1,\infty\}$:
\begin{equation} \label{eq:Lambda;P^1-3pts}
\Lambda^{(n)}(z)= \left( \begin{array} {cccccc}
1 & -\Li_1(z) & -\Li_2(z) & \cdots & \cdots & -\Li_n(z) \\
0 & \tpi & \tpi \log z & \tpi \frac{(\log z)^2}{2} & \cdots & \tpi \frac{(\log z)^{n-1}}{(n-1)!} \\
\vdots & 0 & (\tpi)^2 & (\tpi)^2 \log z & & (\tpi)^2 \frac{(\log z)^{n-2}}{(n-2)!} \\ 
\vdots & &  & \ddots & & \vdots \\
0 & & & & (\tpi)^{n-1} & (\tpi)^{n-1} \log z \\
0 & & & &  & (\tpi)^n
\end{array} \right) 
\end{equation}
(when we index the rows and columns by the set $[0,n]:=\{0,1.\cdots,n\}$, for $j\geq1$ the $j$-th row $\lambda_j$ of $\Lambda(z)$ equals $(\tpi)^j$ times 
\[ [0,\cdots,0,1,\log z,\frac{\log z}{2},\cdots, \frac{(\log z)^{n-j}}{(n-j)!}] \] 
with $1$ appearing in the $j$-th position and for $k\geq j$ the $k$-th entry is $\frac{(\log z)^{k-j}}{(k-j)!}$).
It is known that the row vectors $\lambda_j(z)$ of $\Lambda^{(n)}(z)$ are fundamental solutions of the first order differential equation
\[ d\lambda =\lambda \mathbb{\omega}, \]
where 
\begin{equation} \label{eq:omega;P^1-3pts} 
\mathbb{\omega}=\left( \begin{array} {ccccc}
0 & \omega_1 & 0 & \cdots & 0 \\
& \ddots & \omega_0 & \ddots & \vdots \\
\vdots & & \ddots & \ddots & 0 \\
& & & \ddots & \omega_0 \\
0 & & \cdots & & 0
\end{array} \right) 
 \quad (\omega_0:=\frac{dz}{z},\ \omega_1:=\frac{dz}{1-z}).
 \end{equation}
 and that the monodromy representation 
 \[ M:\pi_1(\mathbb{P}^1(\C)\backslash\{0,1,\infty\}) \ra \GL_{n+1}(\C)\] 
 (i.e. the analytic continuation of $\Lambda(z)$ along $\gamma$ equals $M(\gamma)\Lambda^{(n)}(z)$) is valued in $\GL_{n+1}(\Q)$.

In other words, $\lambda_i(z)\ (0\leq i\leq n)$ is a flat section (in fact, a multivalued global flat section) of the trivial vector bundle $\mathcal{O}_X^{\oplus n+1}$ of rank $n+1$ over $X:=\mathbb{P}^1\backslash\{0,1,\infty\}$ with integral connection
 \begin{equation} \label{eq:nabla;P^1-3pts}
 \nabla f=df -f\mathbb{\omega} 
 \end{equation}
and the sections $\lambda_0,\cdots,\lambda_{n}$ generate a $\Q$-local system $\mathbb{V}$ over $X$. This is called the \emph{$n$-th polylogarithm local system}.

 %%%%%%%%%%%%%%%%%%%%
\begin{thm} \cite[$\S$2,$\S$7]{Hain94}, \cite[$\S$1]{BeilinsonDeligne94}
The $n$-th polylogarithm local system underlies a good variation of mixed Hodge structure whose weight-graded quotients are canonically isomorphic to $\Gr^W_{-2i}=\Q(i)$, $\Gr^W_{-2i+1}=0$ ($0\leq i\leq n$). It has unipotent monodromy at each point of $\{0,1,\infty\}$. They form a projective system of mixed Hodge structures as $n$ varies.
\end{thm}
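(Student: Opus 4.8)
The plan is to realize the variation on the trivial holomorphic bundle $\mcV=\mcO_X^{\oplus n+1}$ over $X=\mathbb{P}^1\backslash\{0,1,\infty\}$, equipped with the connection $\nabla f=df-f\omega$ of (\ref{eq:nabla;P^1-3pts}), and to check the defining conditions directly from the explicit matrices $\omega$ (\ref{eq:omega;P^1-3pts}) and $\Lambda^{(n)}(z)$ (\ref{eq:Lambda;P^1-3pts}). The rational local system is $\mbV=\bigoplus_{i=0}^n\Q\,\lambda_i(z)\subset\ker\nabla$, which makes sense because the monodromy representation is valued in $\GL_{n+1}(\Q)$; the weight filtration is $W_{-2k}\mbV:=\Q\langle\lambda_k(z),\dots,\lambda_n(z)\rangle$ for $0\leq k\leq n$ (extended by $W_j=\mbV$ for $j\geq0$, $W_j=0$ for $j<-2n$, and with each odd step equal to the even step below it); the Hodge filtration is the flag of holomorphic subbundles $F^{-p}\mcV:=\mcO_X\langle e_0,\dots,e_p\rangle$ ($0\leq p\leq n$). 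From the shape of $\Lambda^{(n)}(z)$, the rank-one quotient $\Gr^W_{-2i}\mbV$ is a trivial $\Q$-local system with distinguished flat generator $(2\pi i)^{-i}$ times the image of $\lambda_i$, and this identifies $\Gr^W_{-2i}\mbV\cong\Q(i)$ canonically (while $\Gr^W_{2i-1}=0$); in particular the graded pieces are pure Tate, hence polarizable.

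Next I would verify the three conditions defining a variation of mixed Hodge structure. First, $W_\bullet$ is a filtration by sub-\emph{local} systems: since $\omega_1=\frac{dz}{1-z}$ is holomorphic at $z=0$ and $\omega_0=\frac{dz}{z}$ is holomorphic at $z=1$, the residues $N_0=\mathrm{Res}_0\,\omega$ and $N_1=\mathrm{Res}_1\,\omega$ are strictly upper triangular for the ordering $0<1<\cdots<n$ of rows and columns; the local monodromies around $0$ and $1$ are then unipotent and upper unitriangular, being polynomials in $N_0$, resp.\ $N_1$, so they preserve the spans of tail rows, and $\pi_1(X)$ is generated by loops around $0$ and $1$. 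Second, Griffiths transversality $\nabla F^{-p}\subseteq\Omega^1_X\otimes F^{-p-1}$ is immediate, because $\nabla e_i=-e_i\omega$ lies in $\Omega^1_X\otimes\langle e_{i+1}\rangle$, the form $\omega$ being supported on the first superdiagonal (and $\nabla e_n=0$). Third, at each $z\in X$ the triple $(\mcV_z,W_\bullet,F^\bullet)$ has period matrix of the form (\ref{eq:CPM_HT}), namely $\mcP^{(n)}(z)$, and any such matrix determines a Hodge--Tate structure by the recipe following (\ref{eq:CPM_HT}) (equivalently, by the splitting (\ref{eq:can_splitting_MTHS}) and Lemma \ref{lem:splitting_of_HTS}). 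Hence $(\mcV,W,F,\nabla)$ is a graded-polarizable VMHS on $X$.

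It then remains to promote this to a \emph{good} variation in the sense of \cite[$\S$7]{Hain94}, i.e.\ to check unipotent local monodromy at each point of $\{0,1,\infty\}$ and admissibility. Unipotence at $0$ and $1$ was observed above, and at $\infty$ the local monodromy is conjugate to $(T_0T_1)^{-1}$ with $T_0,T_1$ upper unitriangular, hence is again unipotent (equivalently, $N_\infty$ is nilpotent). For admissibility I would appeal to the criterion for curves (Steenbrink--Zucker, Kashiwara; as used in \cite[$\S$1]{BeilinsonDeligne94} and \cite{Hain94}): one needs Deligne's canonical extension $\mcVb$ over $\mathbb{P}^1$ to carry a filtration by subbundles extending $F^\bullet$, and the relative monodromy weight filtration $M(N_s,W)$ to exist at each $s\in\{0,1,\infty\}$. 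Here the residues $N_0,N_1,N_\infty$ are nilpotent with all eigenvalues $0$, so $\mcVb$ is simply the constant bundle $\mcO_{\mathbb{P}^1}^{\oplus n+1}$ with its constant flag $F^\bullet$, and the existence and explicit description of $M(N_s,W)$ is a direct computation from the one-band shape of $N_s$ relative to $W$, or may be quoted from \cite[$\S$1]{BeilinsonDeligne94} (see also the detailed treatment in \cite{BurgosGilFresan}). I expect this last point --- the bookkeeping of the limit mixed Hodge structures and relative weight filtrations at the three punctures --- to be the only non-formal step; everything else reduces to reading off the explicit matrices $\Lambda^{(n)}(z)$ and $\omega$.

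Finally, for the projective system: $W_{-2(n+1)}\mbV^{(n+1)}=\Q(n+1)\cdot\lambda_{n+1}$ is a sub-VMHS of the $(n+1)$-st variation, being the constant local system generated by the last row of $\Lambda^{(n+1)}(z)$, and the quotient $\mbV^{(n+1)}/W_{-2(n+1)}$ has fundamental solution matrix the top-left $(n+1)\times(n+1)$ block of $\Lambda^{(n+1)}(z)$, which is exactly $\Lambda^{(n)}(z)$. Thus $\mcP^{(n)}=\mcP^{(n+1)}/W_{-2(n+1)}\mcP^{(n+1)}$ as variations of mixed Hodge structure on $X$, and these quotient maps exhibit $\{\mcP^{(n)}\}_{n\in\N}$ as a projective system of mixed Hodge structures.
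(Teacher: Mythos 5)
Your proposal is correct and follows the same construction the paper sketches immediately after the theorem statement: the filtrations you write down are exactly (\ref{eq:PLVMHS_HF}) and (\ref{eq:PLVMHS_WF}), and the paper itself simply defers the verification of the VMHS axioms to \cite{Hain94} and \cite{BeilinsonDeligne94}. Your check of each axiom — residues $N_0,N_1$ strictly upper triangular so the local monodromies (and hence $T_\infty=(T_0T_1)^{-1}$) are upper unitriangular over $\Q$, Griffiths transversality from the superdiagonal shape of $\omega$, the fiberwise Hodge--Tate structure via (\ref{eq:CPM_HT}), and the projective-system claim from the block structure of $\Lambda^{(n+1)}$ — is all accurate. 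The one step you leave to a citation, admissibility (existence of $M(N_s,W)$ at $s\in\{0,1,\infty\}$), is in fact automatic for a Tate variation with unipotent local monodromy: Griffiths transversality on the canonical extension forces $N_sW_{-2k}\subseteq W_{-2k-2}$ (visible directly from the explicit residues), and since $N_s$ acts as zero on each one-dimensional $\gr^W_{-2k}\cong\Q(k)$, the characterization of relative monodromy weight filtrations gives $M(N_s,W)=W_\bullet$; so that caution is unnecessary and nothing is actually missing.
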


For the notion of a good variation of mixed Hodge structure, see \cite{HainZucker87a}, \cite{HainZucker87b}, where a good variation of mixed Hodge structure whose weight-graded quotients are sums of copies of some $\Q(i)$ as in the theorem is called a \emph{Tate variation of mixed Hodge structure}.

The Hodge filtration  $\mcF^{\bullet}$ and the weight filtration $\mbW_{\bullet}$ of the $n$-th polylogarithm variation of mixed Hodge structure $\mcP^{(n)}=(\mcV=\mathcal{O}_X^{\oplus n+1},\nabla,\mbW_{\bullet},\mcF^{\bullet})$ in the theorem are defined as follows:  
Let $\{e_0,\cdots,e_n\}$ be the standard basis of $\C^{n+1}$. Define \textit{complex} weight filtration and Hodge filtration on $V=\C^3$ by:
\begin{align} \label{eq:PLVMHS_HF}
W_{-2l+1}\C^{n+1}=&W_{-2l}\C^{n+1}=\C\langle e_l, \cdots, e_n \rangle, \\
F^{-p}\C^{n+1}=& \C\langle e_0, \cdots, e_p \rangle, \nonumber
\end{align}
We put $\msF^p:=F^pV\otimes\mcO_X \subset \mcV=\C^{n+1}\otimes\mcO_X$. 
We define $\mbV_z$ to be the $\Q$-subspace of $V=\C^{n+1}$ spanned by $\{\lambda_0(z),\cdots,\lambda_n(z)\}$ endowed with the filtration by $\Q$-subspaces:
\begin{equation} \label{eq:PLVMHS_WF}
W_{-2l+1}\mbV_z=W_{-2l}\mbV_z=\Q\langle \lambda_l(z), \cdots, \lambda_n(z) \rangle 
\end{equation}
This $\Q$-filtration $W_{\bullet}\mbV_z$ is well-defined, i.e. independent of the choice of the branch of $\log z$ (as the monodromy is valued in $\GL_{n+1}(\Q)$), and is a $\Q$-structure of the $\C$-filtration $W_{\bullet}V$, so it follows that there exists an increasing filtration $\mbW_l$ of $\mbV$ by $\Q$-local sub-systems; the graded quotients of the mixed Hodge structure $V_z$ are $\Z(0)$, $\Z(1)$, $\cdots$, $\Z(n)$.

When $V(z)$ denotes the variation of mixed Hodge structure which is the extension of $\Q$ by $\Q(1)$ corresponding to $z\in\mathcal{O}^{\times}(\mathbb{P}^1\backslash\{0,1,\infty\})$, the $n$-th polylogarithm variation of mixed Hodge structure is an extension of $\Q$ by the shift $(\mathrm{Sym}^{n-1}V(z))\otimes\Q(1)$ of the $(n-1)$-rd symmetric power of $V(z)$ (\cite[Prop.9.5]{Hain94}), and the polylogarithm functions $\{\Li_k\}_{0\leq k\leq n}$ can be regarded as an extension data. 
In the case $n=2$, the entries of $\Lambda(z)$ can be understood more directly as encoding the various extensions of the Hodge-Tate structures (see $\S$9 of \cite{Hain94} or subsection \ref{subsec:TateVMHS} of the main body for more details); in particular, the extension of $\Q$ by $V(z)\otimes\Q(1)$ corresponds to the nontrivial $(0,2)$-entry of $\Lambda(z)$, namely the dilogarithm function $\Li_2(z)$ which appears in many different areas of mathematics.

On the other hand, some well-defined limits of polylogarithm functions at the points at infinity $\{0,1,\infty\}$  (obtained as sutiable regularized integrals) turn out to be quite interesting numbers. These limits are expressed best in terms of limit mixed Hodge structures (\`a la W. Schmid \cite{Schmid73}) at suitable tangent vectors at the points at infinity. 

\begin{thm} \cite[Thm.7.2]{Hain94} \label{thm:limitMHS_PLVMHS}
Let $z$ be the natural coordinate function on $\C-\{0,1\}$ and $\frac{\partial}{\partial z}$ be the associated tangent vector at $0$. The limit mixed Hodge structure at $\frac{\partial}{\partial z}$ of the $n$-th polylogarithm variation of mixed Hodge structure  splits, i.e. equals the split mixed Hodge-Tate structure $\Q(0)\oplus \Q(1)\oplus \cdots \oplus \Q(n)$, and that at 
$-\frac{\partial}{\partial z}$ (regarded as a tangent vector at $1$ ``pointing towards $0$'') has $\zeta(2),\cdots,\zeta(n)$ as periods (of mixed Hodge-Tate structure).
\end{thm}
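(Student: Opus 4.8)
## Proof Proposal for Theorem \ref{thm:limitMHS_PLVMHS}

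The plan is to compute the limit mixed Hodge structure directly from the explicit fundamental solution matrix $\Lambda^{(n)}(z)$ in (\ref{eq:Lambda;P^1-3pts}), using Schmid's nilpotent orbit description of the limit MHS and the known monodromy and asymptotic behaviour of the polylogarithm functions near $0$ and $1$. First I would recall that for a tangential base point $\orav = \frac{\partial}{\partial z}$ at the puncture $0$, the limit MHS is obtained from the Deligne canonical extension: one untwists the multivalued flat sections by the monodromy logarithm $N$ around $0$, i.e. one replaces $\Lambda^{(n)}(z)$ by $\exp\!\left(-\frac{\log z}{2\pi i} N_0\right)\Lambda^{(n)}(z)$, and takes the limit as $z \to 0$ along the direction prescribed by $\orav$. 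Since every entry of $\Lambda^{(n)}(z)$ is a polynomial in $\log z$ with coefficients that are either powers of $2\pi i$ or the functions $-\Li_k(z)$, and since $\Li_k(z) \to 0$ as $z \to 0$ (from the defining series $\sum z^n/n^k$), the untwisting precisely cancels the explicit powers of $\log z$ appearing in the upper-triangular part, and the remaining limit matrix is the diagonal matrix $\mathrm{diag}(1, 2\pi i, (2\pi i)^2, \dots, (2\pi i)^n)$. This says exactly that the limit MHS at $\orav$ is the split Hodge--Tate structure $\Q(0) \oplus \Q(1) \oplus \cdots \oplus \Q(n)$, because a period matrix of the form (\ref{eq:CPM_HT}) which is diagonal corresponds, by Proposition \ref{prop:moduli_of_MTH} and the recipe (\ref{eq:Wt_Hod_Fil_PeriodMatrix}), to the direct sum of the pure Tate structures.

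For the point $1$ with tangent vector $-\frac{\partial}{\partial z}$ ``pointing towards $0$'', the strategy is the same in structure but the bookkeeping is genuinely different. One first transports the fundamental solution along a path from near $0$ to near $1$ inside $X = \mathbb{P}^1 \setminus \{0,1,\infty\}$ — concretely along the real interval $(0,1)$, which is where the defining series and iterated integrals converge — and then untwists by the monodromy logarithm $N_1$ around $1$ and takes the limit $z \to 1$. The key input is the classical fact (due to Ramakrishnan \cite{Ramakrishnan82}, and invoked in the discussion after Theorem \ref{thm:CS-VMHS_is_motivic}) that the regularized limits of the polylogarithm functions at $1$ are the special values $\Li_k(1) = \zeta(k)$ for $k \geq 2$, while $\Li_1(z) = -\log(1-z)$ diverges logarithmically and is precisely absorbed by the untwisting (it is the source of the nontrivial nilpotent $N_1$). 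Thus after untwisting, the limit period matrix at $-\frac{\partial}{\partial z}$ has the same diagonal $\mathrm{diag}(1, 2\pi i, \dots, (2\pi i)^n)$ but now carries the entries $-\zeta(2), -\zeta(3), \dots, -\zeta(n)$ (up to normalization by powers of $2\pi i$) in its first row; these are, by definition, the periods of the resulting mixed Hodge--Tate structure, which proves the second assertion.

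The main steps, in order, would be: (i) set up the untwisting at $0$ and verify the cancellation of $\log z$-powers, reducing to $\Li_k(z) \to 0$; (ii) identify the diagonal limit matrix with the split Tate structure via Proposition \ref{prop:moduli_of_MTH}; (iii) carefully transport $\Lambda^{(n)}$ along $(0,1)$, keeping track of which branch of $\log$ and of the iterated integrals is being continued — this requires knowing the monodromy representation is valued in $\GL_{n+1}(\Q)$ (stated in the excerpt) so that the $\Q$-structure $\mathbb{V}_z$ is unambiguous; (iv) compute the monodromy logarithm $N_1$ around $1$ from the local behaviour of $\Li_1$ and verify that untwisting by $N_1$ removes exactly the divergent $\log(1-z)$ terms; (v) extract the regularized values $\zeta(k)$ and read off the period matrix. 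I expect step (iv), the precise matching of the nilpotent monodromy $N_1$ with the regularization procedure for the divergent iterated integrals at $1$, to be the main obstacle: one must show that the chosen tangential base point $-\frac{\partial}{\partial z}$ really produces the ``honest'' values $\zeta(k)$ rather than $\zeta(k)$ shifted by lower-weight multiples of powers of $2\pi i$, and this is where the sign/direction convention for the tangent vector (``pointing towards $0$'') matters. This is handled in \cite{Hain94} precisely by appealing to the shuffle-regularized iterated integral description of the $\Li_k$ and the compatibility of tangential base points with limit MHS, so I would follow that argument rather than reprove it from scratch.
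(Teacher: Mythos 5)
The paper does not prove this theorem — it states it as a recall of Hain \cite[Thm.\ 7.2]{Hain94} — but your proposed proof is correct and follows exactly the computation that the paper itself carries out in Subsection \ref{subsec:CS-VMHS} for the analogous Chern--Simons VMHS: untwist the period matrix by $\exp(tN)$ for the nilpotent $\log$-monodromy $N$ at the puncture and take the limit. Two small imprecisions worth noting: the entries at $-\tfrac{\partial}{\partial z}$ are literally $-\zeta(k)$ with no extra $2\pi i$ normalization (your computation in fact shows this), and the local monodromy $N_1$ acts on the entire top row (all $\Li_k$), not just on $\Li_1$ — what is true is that $\Li_1$ is the only divergent entry and the $N_1$-untwisting cancels precisely that divergence while the higher $\Li_k$ contribute their regular limits $\zeta(k)$.
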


Beilinson and Deligne \cite{BeilinsonDeligne94} conjectured that the polylgarithmic variation of mixed Hodge structure is motivic, i.e. is the Hodge realization of a ``mixed motivic sheaf'' over $\mathbb{P}^1\backslash\{0,1,\infty\}$; lacking a good formalism of (mixed) motivic sheaves at the time, their conjecture instead stated that it is the Hodge realization of a ``realization system'' in the sense of Deligne \cite{Deligne89}, which admits a geometric description in terms of the \emph{fundamental groupoid} of the projective line minus three points. We now explain this in more detail.

For a topological space $X$, let $P_{x,y}X$ denote the set of homotoy classes of paths in $X$ from $x$ to $y$, and $\Q[P_{x,y}X]$ the free abelian group on $P_{x,y}X$. As $x$, $y$ vary, these form a local system $\{\Q[P_{x,y}X]\}_{(x,y)}\ra X\times X$. There exists a filtration (in local systesms) given by the powers of the augmentation ideal $J$; denote by $\Q[P_{x,y}X]^{\wedge}$ the completion of $\Q[P_{x,y}X]$ with respect to the powers of $J$. 

Now, when $X$ is an algebraic variety over $\C$, Hain showed that the local system $\{\Q[P_{x,y}X]^{\wedge}\}_{(x,y)}\ra X\times X$ underlies a good variation of mixed Hodge structure whose fiber over $(x,x)$ is the canonical mixed Hodge structure constructed by Chen on the unipotent completion $\Q\pi_1(X,x)^{\wedge}$ of the group algebra $\Q\pi_1(X,x)=\Q[P_{x,y}X]$. 
Moreover, Hain and Zucker \cite{HainZucker87a} (cf. \cite[11.2]{Hain94}) showed that 

\begin{thm}
If $\mcV\ra X$ is a \emph{unipotent} variation of mixed Hodge structure over a smooth curve, the natural map
\[ \mcV_x\otimes \Q[P_{x,y}X]^{\wedge} \ra \mcV_y \]
induced by parallel transport is a morphism of mixed Hodge structures for every $x,y\in X$.
\end{thm}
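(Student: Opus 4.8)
The statement is the key technical input of the polylogarithm theory, so I would organize the argument in three stages: first establish that both sides are (admissible) variations of mixed Hodge structure, then reduce the claim about the parallel transport map to a statement at a single base point, and finally handle that fiberwise statement. The plan is to reduce everything to Chen's theory of iterated integrals and the fact (already recalled in the excerpt) that $\{\Q[P_{x,y}X]^{\wedge}\}_{(x,y)}\ra X\times X$ carries a good variation of mixed Hodge structure. The first step: since $\mcV$ is a unipotent variation over the smooth curve $X$, after passing to the associated graded for the weight filtration, $\Gr^W\mcV$ is a (trivial, by unipotence and smoothness) constant variation, and the Hodge and weight filtrations on $\mcV$ are encoded by the connection $\nabla$ together with a flat $\Q$-structure; one writes $\mcV = (\mcO_X^{\oplus r},\nabla,\mbW_\bullet,\mcF^\bullet)$ exactly as for $\mcP^{(n)}$ in the excerpt. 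The natural transport map $\mcV_x\otimes\Q[P_{x,y}X]^\wedge\ra\mcV_y$ sends $(v\otimes\gamma)$ to the result of parallel-transporting $v$ along $\gamma$ via $\nabla$; this is manifestly compatible with the $\Q$-structures (monodromy and holonomy are defined over $\Q$ because $\mcV$ is a $\Q$-variation and the path torsor is defined over $\Q$) and with the weight filtrations (transport is unipotent, hence strictly compatible with $\mbW$).

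The crux is compatibility with the \emph{Hodge} filtrations, and this is where I would do the real work. The standard approach (Hain--Zucker, and Hain \S11.2) is to trivialize $\mcV$ near $x$ by a frame of flat multivalued sections compatible with the weight filtration --- exactly the rows $\lambda_i(z)$ of $\Lambda^{(n)}(z)$ in the polylogarithm case --- and to express the Hodge filtration on $\mcV_y$ in terms of this frame. The entries of the ``period matrix'' that expresses $\mcF^\bullet$ in the flat frame are iterated integrals of the connection forms $\omega_{ij}$ of $\nabla$ along a path from $x$ to $y$; these are precisely Chen's iterated integrals, and the mixed Hodge structure on $\Q[P_{x,y}X]^\wedge$ is \emph{defined} so that its Hodge filtration is the one coming from the bar complex of logarithmic forms, i.e. so that these iterated integrals are periods of it (this is Chen's theorem, as reformulated by Hain). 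Therefore the transport map, written in coordinates, becomes a map whose matrix entries are iterated integrals, and chasing the definitions shows the pullback of $\mcF^\bullet_{\mcV_y}$ lands inside $\mcF^\bullet_{\mcV_x}\otimes\mcF^\bullet\Q[P_{x,y}X]^\wedge + (\text{lower weight terms})$; strictness of morphisms of MHS with respect to $W$ then upgrades this to an honest filtered map, hence a morphism of MHS.

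For the bookkeeping I would prefer to phrase the last step invariantly: the transport map is the composite $\mcV_x\otimes\Q[P_{x,y}X]^\wedge\ra\mcV_x\otimes\mcE nd(\mcV)^\wedge_{x,y}\ra\mcV_y$, where the first arrow is induced by the monodromy/holonomy representation $\Q\pi_1(X,x)^\wedge\ra\End(\mcV_x)$ and the fact that the holonomy of $\mcV$ is a sub-VMHS-quotient of a tensor power of the universal one (because $\mcV$ unipotent means its monodromy factors through the unipotent completion). Concretely: the holonomy representation $\rho:\Q\pi_1(X,x)^\wedge\ra\End(\mcV_x)$ is a morphism of mixed Hodge structures (Hain's theorem that Chen's MHS on $\Q\pi_1^\wedge$ represents unipotent VMHS as MHS-comodules), and parallel transport $\mcV_x\otimes\Q[P_{x,y}X]^\wedge\ra\mcV_y$ is obtained from $\rho$ by the natural ``change of base point'' torsor structure on $\{\Q[P_{x,y}X]^\wedge\}$, which is a morphism of variations by Hain's global statement recalled just before this theorem. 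Composing morphisms of MHS gives a morphism of MHS, which is the claim.

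\textbf{Main obstacle.} The genuinely nontrivial point --- and the one I expect to consume most of the proof --- is the verification that Chen's mixed Hodge structure on $\Q\pi_1(X,x)^\wedge$ (equivalently on $\Q[P_{x,y}X]^\wedge$) is \emph{compatible with the Hodge filtration} of an arbitrary unipotent $\mcV$ under $\rho$, i.e. that $\rho$ is filtered for $\mcF^\bullet$. This is exactly Hain's theorem that unipotent variations of MHS over $X$ correspond to MHS-comodules over the Hopf algebra $H^0_{dR}(\text{bar complex})$, and its proof is itself a substantial piece of Hodge theory (construction of the bar-complex MHS, compatibility of the $\nabla$-connection with the Hodge filtration via Deligne's canonical extension, and the comparison of analytic and algebraic de Rham data). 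In the write-up I would not reprove this; I would cite \cite{HainZucker87a} (and \cite[\S11.2]{Hain94}) for it, and present only the \emph{reduction}: unravel that the transport map is $\rho$-induced, invoke that $\rho$ is a morphism of MHS fiberwise and varies in families, and conclude. The remaining genuinely checkable content is then the strictness/diagram-chase that promotes a fiberwise morphism of MHS that varies holomorphically and respects $W$ to a morphism of variations --- routine given admissibility, but worth stating.
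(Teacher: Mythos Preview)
The paper does not supply a proof of this theorem: it is stated as a background result from \cite{HainZucker87a} (with a pointer to \cite[11.2]{Hain94}) and used as a black box. So there is no ``paper's own proof'' to compare against; the paper's entire treatment is the citation.

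Your proposal is a reasonable outline of what the Hain--Zucker argument actually does, and you correctly identify the crux (that the holonomy representation $\Q\pi_1(X,x)^{\wedge}\ra\End(\mcV_x)$ is a morphism of mixed Hodge structures, which is the substance of Hain's reduced-bar-complex construction) and the right references. Your own conclusion---that you would cite \cite{HainZucker87a} and \cite[\S11.2]{Hain94} rather than reprove it---is exactly what the paper does. One small caution: the clean invariant reformulation you give in the third paragraph (factoring through an $\mcE nd(\mcV)^{\wedge}_{x,y}$ object and invoking that unipotent VMHS are MHS-comodules) is morally right but presupposes the very equivalence of categories (unipotent good VMHS over $X$ $\simeq$ MHS-representations of $\Q\pi_1(X,x)^{\wedge}$) whose proof is the content of the cited theorem; so as a proof strategy it is circular unless you are explicit that you are quoting that equivalence from Hain--Zucker rather than deriving it.
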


Here, we stress that for base point $x$ and/or $y$, one can allow a ``tangential base point'' $\overrightarrow{v}$ in the sense of \cite[$\S$15]{Deligne89}, in which case the fiber at $\overrightarrow{v}$ of $\mcV$ is nothing other than the limit mixed Hodge structure of Schmid and $P_{\frac{\partial}{\partial z},x}X$ is interpreted as the set of homotopy classes of paths leaving $0$ in the direction of $\frac{\partial}{\partial z}$ towards $x$.%%
\footnote{or from a point in the punctured tangent space $T_0^{\ast}X$ with coordinate $\frac{\partial}{\partial z}$ to $x$}

Now, when $\mcV=\mcMn$ and $\overrightarrow{v}$ is $\frac{\partial}{\partial z}$ for the standard coordinate $z$ of $\C$, by Theorem \ref{thm:limitMHS_PLVMHS}, there exists a canonical inclusion of mixed Hodge structure $\Q(0)\ra \mcMn_{\frac{\partial}{\partial z}}$, hence the parallel transport map induces a canonical morphism of MHS
\[ \Q[P_{\frac{\partial}{\partial z},x}X]^{\wedge} \ra \mcMn_x \]

\begin{thm} \cite[Thm.11.3]{Hain94} \label{thm:Hain94;Thm.113}
The polylogarithm variation $\mcMn$ is the quotient of the variation of mixed Hodge structure $\Q[P_{\frac{\partial}{\partial z},\ast}X]^{\wedge}$ whose fiber 
at $\frac{\partial}{\partial z}$ is the quotient of $\Q\pi_1(X,\frac{\partial}{\partial z})^{\wedge}$ by the right ideal generated by
\[ M(\sigma_0)-1,\quad J(M(\sigma_1)-1),\quad J^n \]
where $M(\sigma_0)$ is the local monodromy at the tangential base point $\frac{\partial}{\partial z}$ wrapping $0$ counterclockwise, $M(\sigma_1)$ is the local monodromy at $\frac{\partial}{\partial z}$ which moves along the path $(0+\epsilon,1-\epsilon)\ (0<\epsilon\ll1)$ in the real line and wrapping $1$ counterclockwise once and follows the same path back, and $J$ is the augmentation ideal.
\end{thm}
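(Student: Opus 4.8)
\textbf{Proof proposal for Theorem \ref{thm:Hain94;Thm.113}.}

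The plan is to follow Deligne's original argument (reproduced in \cite[\S11]{Hain94}) essentially verbatim, the key input being the computation of local monodromies of the polylogarithm local system around the two finite punctures $0$ and $1$. First I would set up the universal object: the variation $\Q[P_{\orav,\ast}X]^{\wedge}$ over $X=\mathbb{P}^1\backslash\{0,1,\infty\}$ with $\orav=\frac{\partial}{\partial z}$, whose fiber at $\orav$ is the completed group algebra $\Q\pi_1(X,\orav)^{\wedge}$, a unipotent variation by construction. By the previous theorem of Hain--Zucker, parallel transport $\Q[P_{\orav,\ast}X]^{\wedge}\otimes \mcMn_{\orav}\to\mcMn_{\ast}$ is a morphism of VMHS; combined with the canonical inclusion $\Q(0)\hookrightarrow\mcMn_{\orav}$ from Theorem \ref{thm:limitMHS_PLVMHS} (the limit MHS at $\orav$ splits), this produces a canonical morphism of VMHS $\Phi:\Q[P_{\orav,\ast}X]^{\wedge}\to\mcMn$. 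The content of the theorem is that $\Phi$ is surjective and that its kernel (at the fiber over $\orav$) is precisely the right ideal generated by $M(\sigma_0)-1$, $J(M(\sigma_1)-1)$, and $J^n$.

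The core computation is the monodromy of $\mcMn$. From the explicit fundamental solution matrix $\Lambda^{(n)}(z)$ in (\ref{eq:Lambda;P^1-3pts}) and the connection form $\omega$ in (\ref{eq:omega;P^1-3pts}), one reads off: the monodromy $M(\sigma_0)$ around $0$ is the unipotent matrix with a single Jordan block coming from $\log z\mapsto \log z+2\pi i$ acting on the $(\tpi)^j$-scaled rows, so $M(\sigma_0)-1$ is the nilpotent shift of rank $n$; and the monodromy $M(\sigma_1)$ around $1$ acts only on the top row via $\Li_k(z)$'s picking up $2\pi i$ times lower $\Li$'s (this is the classical monodromy computation, cf. \cite{Ramakrishnan82}), so that $(M(\sigma_1)-1)$ has image in $W_{-2}$, i.e. $J(M(\sigma_1)-1)$ already kills everything. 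Since $\{\sigma_0,\sigma_1\}$ generate $\pi_1(X,\orav)$ freely, and the representation $\pi_1(X,\orav)\to\GL(\mcMn_{\orav})$ factors through the quotient of $\Q\pi_1(X,\orav)^{\wedge}$ by the two-sided closure of the relations $(M(\sigma_0)-1)$ acting as a fixed nilpotent, one checks dimension by dimension that the induced map from $\Q\pi_1(X,\orav)^{\wedge}/\bigl(M(\sigma_0)-1,\ J(M(\sigma_1)-1),\ J^n\bigr)$ to $\mcMn_{\orav}$ is an isomorphism of vector spaces; both sides have dimension $n+1$, one has the graded pieces matching $\Q(0),\dots,\Q(n)$, and the generating paths map to the rows $\lambda_j(z)$. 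Surjectivity of $\Phi$ then follows because $\Q(0)$ generates $\mcMn$ as a variation under monodromy and parallel transport (Theorem \ref{thm:limitMHS_PLVMHS} again). Strict compatibility of morphisms of MHS with weight filtrations upgrades the fiberwise statement over $\orav$ to the statement of quotient VMHS over all of $X$.

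The main obstacle, as in Deligne's argument, is verifying that the right ideal $\bigl(M(\sigma_0)-1,\ J(M(\sigma_1)-1),\ J^n\bigr)$ is \emph{exactly} the kernel and not merely contained in it: one inclusion (the relations are killed, since $M(\sigma_0)$ acts unipotently with the right Jordan type and $M(\sigma_1)-1$ lands in weight $\leq-2$) is a direct matrix check, but the reverse inclusion requires a careful bookkeeping of the associated graded of the augmentation-ideal filtration on $\Q\pi_1(X,\orav)^{\wedge}$ against the weight filtration on $\mcMn_{\orav}$, i.e. showing the quotient is not too big. This is handled by comparing Poincaré series: $\mr{gr}_J\Q\pi_1(X,\orav)^{\wedge}$ is the free associative algebra on two generators $X_0,X_1$ (dual to $\sigma_0,\sigma_1$), and imposing $X_0=0$, $X_1^2=0$ (from $J(M(\sigma_1)-1)$, whose leading term is $X_1$ modulo higher order after noting the special shape) and truncating at $J^n$ leaves exactly an $(n+1)$-dimensional space spanned by $1,X_1,X_1X_0,\dots$ --- wait, rather spanned by the words of the form $X_0^{k}$ truncated appropriately; the precise combinatorics is where care is needed, and it is exactly the point at which the special form of the monodromy around $1$ (only one nontrivial off-diagonal entry, supplying the dilogarithm) is used. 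I would also double-check the orientation/branch conventions for $\sigma_0$, $\sigma_1$ against the choice of $\orav=\frac{\partial}{\partial z}$ and the standard branch of $\log$ fixed in (\ref{eq:Bloch_regulator}), since sign errors here are the usual source of mistakes.
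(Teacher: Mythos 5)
The paper does not prove this theorem; it is quoted from Hain \cite[Thm.11.3]{Hain94} as background for Section 7, so there is no in-paper argument to compare against. Your outline does track Hain's own proof (construct $\Phi$ via parallel transport and the canonical inclusion $\Q(0)\hookrightarrow\mcMn_{\orav}$, compute local monodromies from $\Lambda^{(n)}$, and verify the ideal is exactly the kernel by a dimension count in the $J$-adic associated graded), so the strategy is sound.

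The concrete gap is in your bookkeeping of the quotient. Modulo higher order in $J$, the generator $M(\sigma_0)-1$ gives $X_0$, so the right ideal it generates kills every word beginning with $X_0$; the right ideal generated by $J(M(\sigma_1)-1)$ kills every word $w X_1 w'$ with $w$ of length $\geq 1$, i.e.\ every word in which $X_1$ appears in position $\geq 2$. The surviving representatives are therefore $1, X_1, X_1 X_0, X_1 X_0^2, \ldots$ (your parenthetical ``words of the form $X_0^{k}$'' has this exactly backwards: $X_0$ is the letter killed outright). Truncating at $J^n$ leaves $1, X_1, X_1X_0, \ldots, X_1X_0^{n-2}$, which is $n$ classes, whereas $\mcMn_{\orav}\simeq\Q(0)\oplus\cdots\oplus\Q(n)$ has rank $n+1$. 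So the dimension count you wanted to invoke does not close as the theorem is stated here. This is in fact a typo in the paper's transcription: Hain's original statement has $J^{n+1}$, and with that correction one gets the $n+1$ classes $1, X_1, X_1X_0, \ldots, X_1X_0^{n-1}$, matching the rank. Your proof should flag this discrepancy instead of asserting that both sides have dimension $n+1$. (A further minor slip: the polylogarithms $\Li_k$ are single-valued around $0$, so $M(\sigma_0)-1$ annihilates the top row of $\Lambda^{(n)}$ and has rank $n-1$, not $n$.)
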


This has the following remarkable consequence, observed by Deligne. For $X=\mathbb{P}^1\backslash\{0,1,\infty\}$, Deligne and Goncharov \cite[3.12]{DeligneGoncharov05} (cf. \cite[Thm.4.144]{BurgosGilFresan}) constructed the space of \emph{motivic paths} from $a$ to $b$: $P_{b,a}(X)=\Spec(A_{b,a}(X))$ (when $b=a$, by definition this is the \emph{motivic fundamental group} $\pi_1^{\mot}(X,a)$ of $X$ with base point $a$). This is a scheme in the Tannakian category $\MT(\Q)$ of mixed Tate motives over $\Q$ (see \cite[$\S$5]{Deligne89} for the notion of ``schemes in a Tannakian category'').
Under each realization functor, this motivic path torsor becomes the path torsor scheme in that realization category. For example, the Betti realization of $\pi_1^{\mot}(X,x)$ is the pro-unipotent algebraic envelope of $\pi_1(X(\C),s)$ \cite[$\S$9]{Deligne89}, that is the spectrum of the (commutative) Hopf algebra $(\Q\pi_1(X,x)^{\wedge})^{\vee}$ dual to the (cocommutative) Hopf algebra $\Q\pi_1(X,x)^{\wedge}$. 
When $a,b$ are (tangential) base points of $X$ defined over $\mcO_S$ for a ring $\mcO_S$ of $S$-integers ($k$ and $S$ being a number field and a finite set of places of $k$), this also defines an object in the similarly defined Tannakian category $\MT(\mcO_S)_{\Q}$ of mixed Tate motives over $\mcO_S$.
They also showed (\cite[Prop.2.4]{DeligneGoncharov05}) that the Hodge realization functor from the abelian category $\MT(\Q)$ of mixed Tate motives over $\Q$ to the category $\QHT$ of Hodge-Tate structures is fully faithful and its essential image is stable under taking sub-objects. 
Then, since the local monodromy is also a mixed Tate motive (\textit{ibid.} 5.4), from the above theorem (Theorem \ref{thm:Hain94;Thm.113}) we conclude that:
\begin{cor}
At any tangential base point $\overrightarrow{v}$, the limit mixed Hodge structure $\mcMn_{\overrightarrow{v}}$ is ``motivic''. More precisely, it is the Hodge realization of a mixed Tate motive over $\Z$. 
\end{cor}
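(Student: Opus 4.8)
The plan is to leverage Theorem \ref{thm:Hain94;Thm.113} together with the Deligne--Goncharov package on motivic fundamental groups. First I would fix a tangential base point $\orav$ of $X=\mathbb{P}^1\backslash\{0,1,\infty\}$ defined over $\Z$; by \cite[Thm.4.4, 3.12]{DeligneGoncharov05} the motivic fundamental group $\pi_1^{\mot}(X,\orav)$ and the path torsors $P_{b,\orav}(X)$ exist as (pro-)objects in the Tannakian category $\MT(\Z)_{\Q}$ of mixed Tate motives over $\Z$, and under the Hodge realization they become the pro-unipotent path torsors carrying the canonical mixed Hodge structures of Hain. The key observation is that Theorem \ref{thm:Hain94;Thm.113} exhibits $\mcMn_{\orav}$, the fiber at $\orav$ of the $n$-th polylogarithm variation, as the quotient of $\Q\pi_1(X,\orav)^{\wedge}$ by the right ideal generated by $M(\sigma_0)-1$, $J(M(\sigma_1)-1)$, and $J^n$. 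So the strategy is: (i) reinterpret each of these generators motivically, (ii) conclude the quotient is a mixed Tate motive over $\Z$, and (iii) pass from $\orav$ to an arbitrary tangential base point.

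For step (i), the elements $M(\sigma_0)$ and $M(\sigma_1)$ are local monodromies around the cusps $0$ and $1$; by \cite[5.4]{DeligneGoncharov05} (``local monodromies are motivic'') these are morphisms in $\MT(\Z)_{\Q}$, i.e.\ the corresponding automorphisms of $\pi_1^{\mot}(X,\orav)$ are maps of schemes in the Tannakian category. The augmentation ideal $J$ is the motivic augmentation ideal of the Hopf algebra of $\pi_1^{\mot}$, and its powers $J^n$ are sub-motives. Hence the right ideal generated by $M(\sigma_0)-1$, $J(M(\sigma_1)-1)$, $J^n$ is a sub-motive of (the affine algebra's dual of) $\pi_1^{\mot}(X,\orav)$, and the quotient is again an object of $\MT(\Z)_{\Q}$. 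For step (ii), I invoke full faithfulness of the Hodge realization $\real^{Hod}:\MT(\Z)_{\Q}\ra\QHT$ and stability of its essential image under sub-objects and quotients (\cite[Prop.2.4]{DeligneGoncharov05}); this guarantees that the object of $\MT(\Z)_{\Q}$ just constructed has Hodge realization equal to $\mcMn_{\orav}$, so $\mcMn_{\orav}$ really \emph{is} the realization of a mixed Tate motive over $\Z$, and since it is an extension of $\Q(0)$ by Tate objects it is a mixed Tate motive in the required sense.

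For step (iii), to handle an arbitrary tangential base point $\orav'$ I would use the parallel-transport (``path composition'') morphism: $\mcMn_{\orav'}$ is obtained from $\mcMn_{\orav}$ by acting with the torsor $P_{\orav',\orav}(X)$, which is again motivic over $\Z$ when $\orav'$ is defined over $\Z$; more simply, one may observe that the quotient description in Theorem \ref{thm:Hain94;Thm.113} is equivariant under change of base point, or appeal directly to the fact that $\mcMn$ itself, being cut out of $\Q[P_{\orav,\ast}X]^{\wedge}$ by motivic data, is a mixed Tate motivic sheaf over $X$ whose fiber at any tangential base point is therefore motivic. The main obstacle here is essentially conceptual/bookkeeping rather than deep: one must be careful that the ``motivicity of local monodromy'' of \cite[5.4]{DeligneGoncharov05} is exactly strong enough to make the specific right ideal in Theorem \ref{thm:Hain94;Thm.113} a morphism of motives (and not merely of Hodge structures), and that all objects are genuinely defined over $\Z$ rather than only over $\Q$ — the latter uses that $0,1,\infty$ and the standard tangential base points are $\Z$-rational and $\{0,1,\infty\}$ has good reduction everywhere. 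Once that is in place, the corollary follows formally.
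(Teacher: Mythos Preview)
Your proposal is correct and follows essentially the same approach as the paper: the paper's argument (given in the paragraph immediately preceding the corollary) is precisely that the motivic path torsors exist over $\Z$ by \cite[3.12, Thm.4.4]{DeligneGoncharov05}, that local monodromies are motivic by \cite[5.4]{DeligneGoncharov05}, and that full faithfulness of the Hodge realization together with stability of its image under subobjects \cite[Prop.2.4]{DeligneGoncharov05} forces the quotient described in Theorem~\ref{thm:Hain94;Thm.113} to be the realization of an object in $\MT(\Z)_{\Q}$. Your step (iii), treating an arbitrary tangential base point via the motivic path torsor or by viewing $\mcMn$ as a motivic sheaf, is a welcome elaboration that the paper leaves implicit.
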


We remark that Beilinson and Deligne \cite{BeilinsonDeligne92} sketched an explicit construction of \emph{motivic polylgoarithm}, ``motivic sheaf'' over $\mathbb{P}^1\backslash\{0,1,\infty\}$ whose Hodge realization is the polylogarithm VMHS) as an element of a certain K-group (see \cite{HuberWildeshaus98} for  details of their construction).

 %%%%%%%%%%%%%%%%%%%%
\subsection{What we do in the second part}
First we construct a unipotent variation of mixed $\Q$-Hodge structure over the smooth locus $\tilde{X}(M)_0^{\mathrm{sm}}$ of the canonical component $\tilde{X}(M)_0$ of the augemented character variety $\tilde{X}(M)$; by abuse of terminology, we will call $\tilde{X}(M)_0$ \textit{canonical curve}. Our construction follows the idea of Morishita-Terashima \cite{MorishitaTerashima09} who constructed a similar variation of mixed Hodge structure using Chern-Simons invariant, which is regarded as a section of a line bundle. Their variation of mixed Hodge structure however lives on the (Thurston) deformation curve, instead of our curve $\tilde{X}(M)_0^{\mathrm{sm}}$: the deformation curve of a hyperbolic three-manifold $M$ depends on the choice of an ideal triangulation of $M$, while the character variety (and thus $\tilde{X}(M)_0^{\mathrm{sm}}$) is canonically attached to $M$. We give a precise relation between Morishita-Terashima's construction and ours (based on Kirk-Klassen's work) in Proposition \ref{prop:KK=MT-CS_sections}.
As will be seen, our construction of Chern-Simons variation of mixed Hodge structure (CS VMHS, for short) is quite similar to the well-known construction of $2^{\text{nd}}$-polylogarithm variation of mixed Hodge structure (compare (\ref{eq:nabla;P^1-3pts}), (\ref{eq:PLVMHS_HF}), (\ref{eq:PLVMHS_WF}) with the corresponding constructions in \ref{subsec:CSVMHS}): the role of the dialogarithm function in the polylogarithm variation of mixed Hodge structure will be played by a sum of the Chern-Simons invariant and the product of the log-holonomies of the meridian and the longitude.
Then, from this CS VMHS, it is natural to aim to establish statements corresponding to Theorem \ref{thm:limitMHS_PLVMHS}, Theorem \ref{thm:Hain94;Thm.113} (with a view towards proving the motivicity of our CS VMHS) \emph{following the same strategy of proof}. Hence we need to consider the points at infinity of the canonical curve $\tilde{X}(M)_0$ (this curve is affine, so there always exist ``points at infinity'', which in three-dimensional topology are more often called \emph{idea points}).
Here, we make the statement corresponding to \ref{thm:limitMHS_PLVMHS} in our setting as an assumption (Conjecture \ref{conj:CS-VMHS_at_ideal_pt}); this concerns an asymptotic behavior of the Chern-Simons invariant near an ideal point. We verify this assumption in the appendix in the case of the figure-eight knot complement and another knot. Assuming this conjecture, we proceed to prove the analogue of Theorem \ref{thm:Hain94;Thm.113} by the same method of using local monodromies at tangential base points at ideal points. Here, we establish such an analogue whose proof requires some nontrivial facts in three-dimensional topology of $M$ (this is another satisfying aspect of our work). Namely, we show (Theorem \ref{thm:CS-VMHS_is_motivic}) that this so-called Chern-Simons variation of mixed Hodge structure is a quotient of the mixed Hodge structure on the unipotent completion of the path torsor.

At this point, if we accept some popular expectations in the theory of motives (most notably, existence of the motivic $t$-structure in the triangulated category of mixed motives \cite{Beilinson12}), 
our main theorem implies that the Chern-Simons variation of mixed Hodge structure is motivic, i.e. is the Hodge realization of a mixed motive over $\Qb$.
Here, we bring the reader's attention to the fact that unlike in the case of the projective line minus three points, there is no guarantee that the object in the triangulated category of mixed motives constructed by Deligne and Goncharov using cosimplicial model of the path torsor belongs to the triangulated subcategory of mixed Tate motives.

%%%%%%%%%%%%%%%%%%%%%%%%%%%%%%%%%%%%%%%%
%%%%%%%%%%%%%%%%%%%%%%%%%%%%%%%%%%%%%%%%
\section{Chern-Simons line bundle and Chern-Simons section}

%%%%%%%%%%%%%%%%%%%%%%%%%%%%%%%%%%%%%%%% \subsection{Chern-Simons invariant of flat connections on trivial principal $\SL_2$-bundle on cusped hyperbolic $3$-manifolds} \label{subsec:CS-inv_flat-conn:cusped}
\subsection{Chern-Simons invariant as a section of a line bundle}   \label{subsec:CS-inv_flat-conn:cusped}

For a closed Riemannian three-manifold $M$, the Chern-Simons integral  $\int_MQ(A)$ of a flat connection $A$ on a trivial(ized) principal $(\mathrm{P})\SL_2(\C)$-bundle (regarded as $\Lsl_{2,\C}$-valued $1$-form on $M$) is gauge-invariant, thus gives a well-defined $\C/\Z$-valued invariant $cs_M(A)$ of $A$.
For non-closed manifolds, this integral is not gauge-invariant. For cusped hyperbolic three-manifolds,
Kirk and Klassen \cite{KirkKlassen90}, \cite{KirkKlassen93} defined the Chern-Simons invariant of an aribtrary \emph{flat} connection $A$ by 
\begin{equation} \label{eq:cs_M(gA)}
cs_M(A):=cs_M(g\cdot A)=\int_MQ(g\cdot A),
\end{equation}
where $g\in\mathcal{G}$ is chosen such that the gauge transformed connection $g\cdot A$ is in normal form (Proposition \ref{prop:KK93_2.3,3.2}, (1)).
If one can choose a canonical such gauge transform $g\cdot A$, then this will give a well-defined invariant of $A$ likewise. But in general, there is no unique $A'$ that is gauge equivalent to $A$ and is in normal form, and the Chern-Simons integrals mod $\Z$ (\ref{eq:cs_M(gA)}) of such $A'$ and $A$ still might differ; especially, it is not possible to define $cs([\rho])$ unambiguously for a conjugacy class $[\rho]$ of holonomy representation $\rho: \pi_1(M)\ra \SL_2(\C)$. 

However, Kirk and Klassen determined the precise difference in the Chern-Simons integrals of two gauge equivalent $1$-forms $A'$, $A$ in normal form which turn out to depend only on their normal forms near boundary (Theorem \ref{thm:KK93_2.5}). This allowed them to view the Chern-Simons invariant (\ref{eq:cs_M(gA)}) as a section over the character variety $X(M)$ of $M$ of certain principal $\C^{\times}$-bundle(=line bundle) on the character variety $X(\partial M)$ of the boundary $\partial M$ (we will call this section \emph{Kirk-Klassen Chern-Simons section}). We remark that this viewpoint is originally due to Ramadas-Singer-Weitsman \cite{RSW89}.

On the other hand, following the ideas of Kirk and Klassen, Morishita and Terashima \cite{MorishitaTerashima07} constructed another line bundle over the algebraic torus $(\C^{\times})^{2h}$ ($h$ being the number of boundary tori) and a section of it over the Thurston deformation curve, again by means of the Chern-Simons integral (\ref{eq:cs_M(gA)}) (we will call this section \emph{Morishita-Terashima Chern-Simons section}). But, their construction of line bundle is based on Heisenberg group and its relation to the one of Kirk and Klassen is not clear. Here, we provide an exact relation between these two line bundles.

 A remarkable aspect of the Morishita-Terashima Chern-Simons section, which will be a key to our work, is that it can be interpreted (in fact, was constructed from the beginning) as a (good unipotent) variation of Hodge-Tate structures. But, whereas the Kirk-Klassen Chern-Simons section exists on the character variety and uniquely determined by the hyperbolic manifold, the Morishita-Terashima Chern-Simons section exists over the deformation curve, thus depends on the choice of an ideal triangulation and as such is non-canonical. 
For our purpose of construction of canonical motivic invariants of hyperbolic manifolds, we combine these two works. Especially, we mimic the Morishita-Terashima constructions over the \emph{augmented character variety}, a certain double covering of the character variety which can be thought of as a triangulation-choice-free analogue of the Thurston deformation curve.

%%%%%%%%%%%%%%%%%%%%
\subsection{Character varieties} \label{subsec:char_scheme}

In the works on three-manifolds, there are used a few related, but slightly different definitions of $\SL_2$-character varieties. Here, we explain three definitions: Culler-Shalen construction, GIT quotient of representation variety, affine scheme of trace ring. The latter two definitions are equivalent over fields and most commonly used nowadays, while old works (such as \cite{KirkKlassen90}, \cite{KirkKlassen93}) are built on the pioneering definition introduced by Culler and Shalen \cite{CullerShalen83}. We also discuss $\PSL_2$ character varieties. In this work, by a variety we mean a separated scheme of finite type over a field, which is not necessarily reduced or irreducible.

Over a field $k$ of characteristic \emph{zero}, the character variety of an abstract group is most often defined as the GIT-quotient of the representation variety. 

First, it is easily seen (\cite[$\S$5]{Sikora12}) that there exists an affine $\Z$-scheme $R(\Gamma)$ that represents the functor:
\begin{equation} \label{eq:representation_functor}
(\text{algebras})\ra (\text{sets})\ :\ A \mapsto \Hom_{\text{gp}}(\Gamma,\SL_2(A));
\end{equation}
we call it ($\SL_2$-)representation $\Z$-scheme of $\Gamma$. 
More precisely, there exist an algebra $A(\Gamma)$ over $\Z$ (called the \textit{universal representation ring}) and a ``universal representation'' 
\begin{equation} \label{eq:univ_rep'n}
\rho^{\univ}:\Gamma\ra \SL_2(A(\Gamma))
\end{equation}
such that every representation $\rho:\Gamma\ra \SL_2(A)$ is obtained from $\rho^{\univ}$ via a unique homomorphism $\phi:A(\Gamma)\ra A$: 
\begin{equation} \label{eq:Rep_scheme}
\Hom_{\text{gp}}(\Gamma,\SL_2(A))=\Hom_{\alg}(A(\Gamma), A).
\end{equation}

Since for any algebra $A$, the group $\mathrm{PGL}_2(A)$ acts on the left-hand side of \ref{eq:Rep_scheme} via the conjugation action on $\SL_2(A)$, it follows that the group scheme $\mathrm{PGL}_2$ acts on $A(\Gamma)$ (this is also clear from an explicit construction of $A(\Gamma)$, cf. loc. cit.).

\begin{defn} 
The $\SL_2$-character variety of a group $\Gamma$ is the affine $\Q$-variety:
 \[ X(\Gamma):= \Spec(A(\Gamma)_{\Q}^{\mathrm{PGL}_2}) \]
 (Spec of the subalgeba of functions in $A(\Gamma)_{\Q}$ invariant under $\mathrm{PGL}_2$).
 \end{defn}
This is also known as the (affine) GIT quotient of the representation variety $R(\Gamma)_{\Q}$ and is written as $R(\Gamma)_{\Q}\sslash \mathrm{SL}_2$.

The following fact is deduced from a standard result in geometric invariant theory, cf. \cite[Thm.2.2]{Marche16}: 

%%%%%%%%%%%%%%%%%%%%
\begin{thm} \label{thm:fund_property_GIT}
If $k$ is algebraically closed field, there is a bijection among the following sets:
\begin{itemize}
\item[(i)] the $k$-points of $X(\Gamma)$ (=$\Hom_{k\mhyphen\text{alg}}([A(\Gamma)_k]^{\PGL_2},k)$);
\item[(ii)] the \emph{closed} orbits of $\SL_2(k)$ acting on $R(\Gamma)(k)=\Hom(A(\Gamma)_k,k)$;
\item[(iii)] the conjugacy classes of \emph{semi-simple} representations $\Gamma\ra \SL_2(k)$;
\item[(iv)] the set of characters of representations $\Gamma\ra \SL_2(k)$.
\end{itemize}
\end{thm}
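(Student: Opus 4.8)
The plan is to recall the standard geometric invariant theory of representation varieties and to show that all four sets coincide by a chain of natural bijections, the key input being that $\SL_2$ is a reductive group acting on an affine variety over an algebraically closed field. First I would fix notation: write $R:=R(\Gamma)_k = \Spec(A(\Gamma)_k)$, on which $\SL_2$ (equivalently $\PGL_2$, since the center acts trivially by conjugation) acts, and let $X(\Gamma)=R\sslash\SL_2=\Spec(A(\Gamma)_k^{\PGL_2})$. The bijection (i)$\leftrightarrow$(ii) is the basic property of a good quotient in GIT: since $\SL_2$ is geometrically reductive, the morphism $\pi: R\to R\sslash\SL_2$ is surjective on $k$-points and induces a bijection between $k$-points of $R\sslash\SL_2$ and closed $\SL_2(k)$-orbits in $R(k)$, because every fiber of $\pi$ contains a unique closed orbit (this is the classical Hilbert--Mumford/Nagata theory; one may cite it in the form of \cite[Thm.2.2]{Marche16}). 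So the main task is to identify the closed orbits with the conjugacy classes of semisimple representations, i.e.\ (ii)$\leftrightarrow$(iii), and to match up (iii) and (iv).

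For (ii)$\leftrightarrow$(iii): a representation $\rho:\Gamma\to\SL_2(k)$ corresponds to a point of $R(k)$, and conjugacy classes of representations are exactly $\SL_2(k)$-orbits. I would argue that the orbit of $\rho$ is closed if and only if $\rho$ is semisimple (= completely reducible = a direct sum of irreducibles). In the $\SL_2$ case this is elementary and can be done by hand: if $\rho$ is irreducible its orbit is already closed (the stabilizer is finite, the orbit has maximal dimension among orbits in its closure, and any representation in the closure of an irreducible one is again irreducible with the same character, hence conjugate); if $\rho$ is reducible but not semisimple it is conjugate into the strict upper-triangular-plus-diagonal form with a nontrivial off-diagonal part, and conjugating by $\mathrm{diag}(t,t^{-1})$ and letting $t\to\infty$ (or $0$) degenerates $\rho$ to its semisimplification, a different orbit in the closure, so the original orbit is not closed; finally the semisimplification is a sum of characters, its orbit is closed (torus orbits of this type are closed, or again a dimension/character argument). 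Thus each fiber of $\pi$ over a $k$-point consists of all representations with a fixed semisimplification, and the unique closed orbit is that of the semisimple one — giving (ii)$\leftrightarrow$(iii).

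For (iii)$\leftrightarrow$(iv): the character of $\rho$ is the function $\chi_\rho:\Gamma\to k$, $\gamma\mapsto \tr\rho(\gamma)$, and two semisimple $\SL_2(k)$-representations have the same character if and only if they are conjugate — this is a classical fact (e.g.\ via the trace relations for $\SL_2$, or because $\tr\rho$ determines the isomorphism class of a semisimple representation by a theorem on characters of semisimple modules in characteristic zero). Since every character is the character of some representation and hence, by replacing a representation by its semisimplification (which has the same character), of a semisimple one, the set in (iv) is in natural bijection with the set in (iii). Composing all of these gives the stated chain of bijections. The main obstacle — really the only non-formal point — is the precise identification of closed orbits with semisimple representations together with the statement that the trace function separates semisimple classes; both are standard for $\SL_2$ but deserve to be stated carefully, and I would either give the short hands-on argument above or simply invoke the cited GIT reference \cite[Thm.2.2]{Marche16} which packages exactly this equivalence. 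Everything else is the formal yoga of good quotients over an algebraically closed field.
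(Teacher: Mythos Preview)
Your proposal is correct and aligns with the paper's treatment: the paper does not give a proof at all but simply records the statement as ``deduced from a standard result in geometric invariant theory, cf.\ \cite[Thm.2.2]{Marche16}'', which is exactly the reference you invoke. Your sketch unpacks what that citation contains (good quotients separate closed orbits; closed orbits are the semisimple representations; characters determine semisimple classes), so you have supplied the details the paper omits rather than taken a different route.
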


For $\PSL_2$-character varieties, there does not seem to be a standard definition for an arbitrary finitely generated group $\Gamma$: one difficulty is that the group-valued functor $R\mapsto \PSL_2(R):=\SL_2(R)/\{\pm 1\}$ on the category of $\Q$-algebras is not representable, so neither is the functor (\ref{eq:representation_functor}) for $\PSL_2$.%%
\footnote{Some people , e.g. \cite[$\S$3]{BoyerZhang98}, use the embedding $\PSL_2 \hra \SL_3$ (adjoint representation) to avoid this issue.}
However, when $H^2(\Gamma,\Z/2)=0$, there is a reasonable definition of $\PSL_2$-character variety, at least over $\Qb$, as the quotient of $X(\Gamma)_{\Qb}$ by $H^1(\Gamma,\Z/2)=\Hom(\Gamma,\{\pm 1\})$, where $\sigma\in \Hom(\Gamma, \{\pm 1\})$ acts on $\chi\in  X(\Gamma) (\Qb)$ by $\sigma\chi(\gamma):=\sigma(\gamma)\chi(\gamma)$. This is because every $\bar{\rho}\in R_{\PSL_2}(\Gamma)(\Qb)$ determines a cohomology class $w_2(\bar{\rho})\in H^2(\Gamma,\Z/2)$, which is zero if and only if $\bar{\rho}$ lifts to $\SL_2$. The condition $H^2(\Gamma,\Z/2)=0$ holds, for example, when $\Gamma=\pi_1(S^3-K)$ for a knot $K$ in $S^3$, cf. \cite[$\S$3]{BoyerZhang98}. In this case that $\Gamma=\pi_1(S^3-K)$, we further have $H^1(\Gamma,\Z/2)=\langle \iota\rangle \cong\Z/2$, and in fact, $A(\Gamma)\subset A(\Gamma)_{\Qb}$ is stable under $\iota$, hence we define the $\PSL_2$-character variety over $\Q$ as 
\[ Y(\Gamma):=\Spec(B(\Gamma)_{\Q}^{\langle \iota\rangle}) \] 
for $B(\Gamma)_{\Q}= A(\Gamma)_{\Q}^{\mathrm{PGL}_2}$, cf. \cite[2.1.2]{MPvL11}. For example, for the meridian $m\in \pi_1(S^3-K)$, $\iota(\tau_{m})=-\tau_{m}$, so that $\tau_{m^2}=\tau_m^2-2\in B(\Gamma)_{\Q}^{\langle \iota\rangle}$.

There is another candidate for the $\SL_2$-character scheme of $\Gamma$ which is defined as the $\Z$-affine scheme $\mathfrak{X}(\Gamma):=\Spec(T(\Gamma))$, where the affine algebra is the \textit{trace ring}:
\begin{equation}
T(\Gamma):=\Z\langle\, \tau_{\gamma}:=\mathrm{tr}(\rho^{\univ}(\gamma))\ |\ \gamma\in \Gamma\, \rangle \ \subset A(\Gamma)
\end{equation}
(subring of $A(\Gamma)$ generated by $\tau_{\gamma}$ for all $\gamma\in \Gamma$). 

Let $S(\Gamma)$ be the quotient algebra (\emph{Skein algebra}):
\[ \Q[Y_{\gamma},\gamma\in\Gamma]/\langle Y_e-2,\ Y_{\alpha\beta} +Y_{\alpha^{-1}\beta}-Y_{\alpha}Y_{\beta}\ \forall \alpha,\beta\in\Gamma \rangle .\]
By the skein relation for traces in $\SL_2$: for any representation $\rho:\Gamma\ra \SL_2$, if $\tau:=\tr(\rho)$,
\[ \tau(\alpha\beta) +\tau(\alpha^{-1}\beta) -\tau(\alpha)\tau(\beta)=0 ,\]
there exists a natural ring homomorphism
\begin{align} \label{eq:S(Gamma)->A(Gamma)}
S(\Gamma) \ & \stackrel{\theta}{\twoheadrightarrow}\ T(\Gamma)_{\Q} \stackrel{i}{\hookrightarrow} A(\Gamma)_{\Q}^{\PGL_2} \\
Y_{\gamma}\ & \longmapsto\ \tau_{\gamma}:=\mathrm{tr}(\rho^{\univ}(\gamma)) . \nonumber
\end{align}

\begin{thm} \label{thm:PS00_Thm.7.1} \cite[Thm.7.1]{PS00}
The maps $\theta$, $i$ are isomorphisms.
\end{thm}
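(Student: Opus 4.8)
The plan is to prove that both $\theta$ and $i$ in (\ref{eq:S(Gamma)->A(Gamma)}) are isomorphisms, by establishing injectivity of $\theta$ and surjectivity of $i$ (the other directions being built into the definitions: $\theta$ is surjective by construction of $T(\Gamma)$ as the ring generated by the $\tau_\gamma$, and $i$ is injective as the inclusion of a subring). Since $\theta$ is surjective and $i$ is injective, it suffices to prove two things: (a) the composite $i\circ\theta:S(\Gamma)\to A(\Gamma)_\Q^{\PGL_2}$ is surjective (this forces $i$ to be surjective, hence an isomorphism), and (b) $i\circ\theta$ is injective (which, combined with (a), forces $\theta$ to be injective, hence an isomorphism). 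Thus the whole statement reduces to: the trace map $S(\Gamma)\to A(\Gamma)_\Q^{\PGL_2}$, $Y_\gamma\mapsto \tau_\gamma$, is an isomorphism.

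For surjectivity (a), I would invoke the classical first fundamental theorem of invariant theory for $\SL_2$ (Procesi / Vogt / Horowitz): the ring of $\PGL_2$-invariant functions on the representation variety of a finitely generated group $\Gamma=\langle g_1,\dots,g_r\rangle$ is generated by the traces $\tr(\rho(g_{i_1}\cdots g_{i_s}))$ over all words, and in fact by the traces of words of length $\le 3$ in the generators. Concretely, one writes $A(\Gamma)$ as a quotient of the universal representation ring of the free group $F_r$, uses that $A(F_r)_\Q^{\PGL_2}$ is generated by traces (the Procesi result for $2\times2$ matrices), and descends along the surjection $A(F_r)\twoheadrightarrow A(\Gamma)$, noting the surjection is $\PGL_2$-equivariant and that taking invariants is exact enough here because $\PGL_2$ is linearly reductive in characteristic zero. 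This shows every element of $A(\Gamma)_\Q^{\PGL_2}$ is a polynomial in the $\tau_\gamma$, i.e. lies in the image of $T(\Gamma)_\Q$, so $i$ is surjective.

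For injectivity (b), the point is that the defining relations of the Skein algebra $S(\Gamma)$ — namely $Y_e=2$ and the skein relation $Y_{\alpha\beta}+Y_{\alpha^{-1}\beta}=Y_\alpha Y_\beta$ — are the \emph{only} relations among traces of $\SL_2$-representations; this is the content of Vogt's theorem and its refinements (see also Gonz\'alez-Acu\~na--Montesinos, Brumfiel--Hilden, Przytycki--Sikora). The cleanest route is to cite the universal character ring description: Brumfiel--Hilden and Saito constructed a ring abstractly generated by symbols $Y_\gamma$ modulo exactly these relations and showed it carries a universal trace, and Przytycki--Sikora identified it with the invariant ring. So I would set up the universal trace $\operatorname{tr}^{\univ}:\Gamma\to S(\Gamma)$ (well-defined because the relations hold), observe it factors the trace on every representation, and deduce that any polynomial relation among the $\tau_\gamma$ holding in $A(\Gamma)_\Q$ already follows from the skein relations, hence holds in $S(\Gamma)$. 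This gives injectivity of $i\circ\theta$, so $\theta$ is injective.

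The main obstacle is step (b): proving that the skein relations generate \emph{all} relations among traces is the genuinely hard classical input (Vogt 1889, made rigorous and complete much later). In a write-up I would not reprove it but carefully cite \cite{PS00} (Przytycki--Sikora, whose Theorem 7.1 is exactly this statement) together with the earlier sources; the contribution here is just assembling $\theta$ surjective, $i$ injective, and the two classical facts into the claimed pair of isomorphisms. A secondary technical point to be careful about is that $A(\Gamma)$ and $T(\Gamma)$ are defined over $\Z$ while the character variety is over $\Q$ — one should either work over $\Q$ throughout (as the statement does, with $T(\Gamma)_\Q$ and $A(\Gamma)_\Q^{\PGL_2}$) or note that the integral statement is more delicate and not needed here. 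I would also double-check that $\PGL_2$-invariants versus $\SL_2$-conjugation invariants coincide (they do, since $\PGL_2$ is the image of $\GL_2$ acting by conjugation, and scalars act trivially), so that the GIT description of $X(\Gamma)$ in Definition~\ref{thm:fund_property_GIT}'s setting matches the trace-ring description, yielding $\mathfrak{X}(\Gamma)_\Q\cong X(\Gamma)$ as a corollary.
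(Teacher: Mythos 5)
Your proposal is correct and, in the end, takes the same route as the paper: the paper offers no proof of this theorem at all beyond the inline citation to Przytycki--Sikora \cite[Thm.\ 7.1]{PS00} (with \cite[Thm.\ 2.5]{Marche16} noted as an alternative source), and your sketch likewise defers the genuine content --- that the skein relations generate all trace relations, and that traces generate the invariant ring --- to that same reference. The surrounding reductions you give (surjectivity of $\theta$ and injectivity of $i$ are free; the work is in surjectivity of $i$ via the first fundamental theorem for $2\times 2$ invariants and injectivity of $\theta$ via the completeness of the skein relations) are sound and are what one would say to contextualize the citation, but they do not replace it.
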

See also \cite[Thm.2.5]{Marche16}.

\begin{rem}
There are at least two advantages of $\mathfrak{X}(\Gamma)$ over $X(\Gamma)$: first, $\mathfrak{X}(\Gamma)$ is a scheme defined over $\Spec(\Z)$ having a nice moduli interpretation, thus provides a natural integral model of $X(\Gamma)=\mathfrak{X}(\Gamma)_{\Q}$, whereas GIT quotient seems to work best only for field bases and flat base changes, so that it is not clear at all whether $\Spec(A(\Gamma)^{\PGL_2})$ is a ``good'' integral model of $X(\Gamma)$. 
The second advantage is the property (\cite{Procesi98}, \cite{Saito96}) that over the open subset of $\mathfrak{X}(\Gamma)$ consisting of \textit{absolutely irreducible} representations/characters, the morphism $R(\Gamma) \ra \mathfrak{X}(\Gamma)$ (of $\Z$-schemes) becomes a torsor under $\mathrm{PGL}_2$ for the \'etale topology, where absolutely irreducible characters in $\mathfrak{X}(\Gamma)$ are defined in terms of certain discriminant, cf. \cite[$\S$4]{Nakamoto00}. 
\end{rem}

Suppose that $\Gamma$ is finitely generated, say by $\gamma_1,\cdots,\gamma_r$. Let $\{\gamma_1,\cdots,\gamma_N\}$ be the set of all elements of $\Gamma$ of the form $\gamma_{i_1}\cdots\gamma_{i_r}$, where $i_1,\cdots,i_r$ are positive integers in $\{1,\cdots,m\}$ with $i_1<\cdots<i_r$ (e.g. if $m=2$, this set is $\{\gamma_1, \gamma_2,\gamma_3=\gamma_1\gamma_2\}$).
Then, Culler and Shalen (\cite[1.4.1,1.4.5]{CullerShalen83}) show that as a subalgebra of $A(\Gamma)$, $T(\Gamma)$ is generated by $\tau_{\gamma_1},\cdots, \tau_{\gamma_N}$, and that the image of the map
\begin{align*} 
R(\Gamma)(\C)\ \ra & \quad \C^N \\
\rho\quad \mapsto & \ (\tau_{\gamma_1}(\rho),\cdots,\tau_{\gamma_N}(\rho))=(\tr\rho(\gamma_1),\cdots,\tr\rho(\gamma_N)).
\end{align*}
is a Zariski-closed subset of $\C^N$. 
We give this image the structure of a reduced affine variety and denote it by $X(\Gamma;\C)$: this is called \emph{Culler-Shalen character variety}. 
If $z_1,\cdots,z_N$ are the coordinate functions of $\A^N$, we have an algebra homomorphism
$t^{\ast}:\C[z_1,\cdots,z_N]=\mcO(\A^N) \ra T(\Gamma)=\mcO(\mfX(\Gamma))$ defined by $t^{\ast}(z_i)=\tau_{\gamma_i}$, thus obtain a morphism of affine varieties $t:\mfX(\Gamma) \ra \A^N$. 
Clearly, the set $X(\Gamma;\C)(\C)$ is contained in the image of the map $\mfX(\Gamma)(\C) \ra \C^N$:  the map $\phi:T(\Gamma)\ra \C:\tau_{\gamma}\mapsto \tr(\rho(\gamma))$ attached to a representation $\rho:\Gamma \ra\SL_2(\C)$ is a ring homomorphism.
In fact, it follows from Theorem \ref{thm:fund_property_GIT} and Theorem \ref{thm:PS00_Thm.7.1} that
$X(\Gamma;\C)(\C)$ is the entire image, namely that any ring homomorphism $\phi:T(\Gamma)\ra \C$ is the character of a representation $\rho:\Gamma \ra\SL_2(\C)$: for (absolutely) irreducible characters, this is a part of the statement cited above about $R(\Gamma) \ra \mathfrak{X}(\Gamma)$ being a torsor over the irreducible locus (a torso is a surjective map, among others). Therefore, we have a finite surjective morphism of reduced affine varieties: 
 \[ t:  X(\Gamma)_{\C}^{\red} \isom \mfX(\Gamma)^{\red}_{\C} \ra X(\Gamma;\C), \]
 which are bijective on $\C$ (or any geometric) points.
Such a morphism does not need to be an isomorphism in general (but, it is so, for example, if the target is a normal variety).

%%%%%%%%%%%%%%%%%%%%
\subsection{Thurston deformation space of cusped hyperbolic three-manifolds}
For the discussion in this subsection, our main references are \cite{NeumannZagier85} and Ch. E, Sec. 5 (E.5) of \cite{BenedettiPetronio92}.
Let $M$ be the interior of a compact three-manifold $\overline{M}$ whose boundary $\partial \overline{M}$ is a union of tori (we will just write $\partial M$ for $\partial \overline{M}$).
Suppose that there exists a \textit{topological ideal triangulation} $\mcT$ of $M$, namely a collection of standard tetrahedra with vertices removed which glue together by a set of face-pairing maps with the resulting space $M$; removing open stars at vertices from the glued union of tetrahedra gives $\overline{M}$, cf. \cite[E.5-i]{BenedettiPetronio92}. 

Thurston's idea for finding a hyperbolic structure on $M$ endowed with such a (topological) ideal triangulation is, starting with some hyperbolic structures on the constituent ideal tetrahedra, to find conditions ensuring that these local hyperbolic structures glue consistently giving a global hyperbolic metric on $M$. This gluing condition is that at each edge $e$ in $\mcT$, the product of the shape parameters of the tetrahedra joined to $e$ is $1$ and the sum of the dihedral angles at $e$ of these tetrahedra is equal to $2\pi$. When this condition is satisfied at all edges, we obtain a hyperbolic structure on $M$ which is in general not complete. 
We recall that when an ordering of the vertices is fixed, to each edge of the tetrahedron, 
one can assign one of the three shape parameters $z$, $z'=\frac{1}{1-z}$, $z''=1-\frac{1}{z}$, in such a way that the parameters associated with two opposite edges are equal and all $z$, $z'$, $z''$ are attached to some different (pairs of opposite) edges (\cite[$\S$2]{Neumann92}).
Since the dihedral angle at an edge whose associated parameter is $z$ (resp. $z'$, $z''$) is $\log z$ (resp. $\log z'$, $\log z''$), from the condition on dihedral angle sum, by exponentiating we obtain an algebraic relation in the shape parameters $\{z_i\}_{i=1,\cdots,N}$ of the form: 
\begin{equation} \label{eq:gluing_eq}
\prod_{i=1}^Nz_i^{r_{ij}'}(1-z_i)^{r_{ij}''}=\pm 1.
\end{equation}
The set of these equations for all edges define a closed complex subvariety 
\[ Y(M;\mcT)\subset (\mathbb{P}^1(\C)\backslash\{0,1,\infty\})^N,\] 
where $N$ is the number of edges in $\mcT$; $N$ is also equal to the number of the tetrahera in $\mcT$ since the Euler characteristic of $\overline{M}$ is zero \cite[Lem.E.5.6]{BenedettiPetronio92}. This is called the \textit{deformation variety} for the chosen ideal triangulation, and the algebraic relations (\ref{eq:gluing_eq}) are called \textit{gluing equations}. 

\begin{rem} \label{rem:degree-one-ideal-triangulation}
(1) The gluing equation does not capture faithfully the dihedral-angle-sum condition (it only tells that the dihedral angle sum is a multiple of $2\pi i$). But, it is easy to see (\cite[Lem.E.6.1]{BenedettiPetronio92}) that a \emph{positive} solution (i.e. $\mathrm{Im}(z_i)>0$) does fulfill the dihedral angle sum condition, so indeed defines a hyperbolic structure. Unfortunately, given a topological ideal triangulation, there might not exist such positive solution giving the hyperbolic structure on $M$. 

(2) Nevertheless, Epstein and Penner \cite{EpsteinPenner88} showed that every complete hyperbolic three-manifold with cusps of finite volume admits an ideal topological triangulation that supports a solution to the gluing equations and completeness equations which ``give the hyperbolic structure''. But, the shape parameters have only non-negative imaginary parts and can be real numbers (i.e.
 $(M;\mcT)$ can have \emph{degenerate} ideal tetrahedra). 

(3) Epstein-Penner decomposition above can be used to give an \emph{degree-one ideal triangulation} in the sense of \cite[Def.2.1]{NeumannYang99}, which suffices for our purpose in this work.%% 
\footnote{Indeed, the only input for our construction of the mixed Tate motive attached to any complete hyperbolic three-manifold of finite volume in Theorem \ref{thm:Main.Thm1:CS-MTM} was the Bloch invariant $\beta(X)\in \mcB(k(X))_{\Q}$ which was constructed by Neuamnn and Yang (\textit{loc. cit.}) using any degree-one ideal triangulation.}
\end{rem}
 
It is known \cite{Thurston97} that the dimension of the deformation space of a complete hyperbolic structure equals the number of cusps, and there exists a unique component in $Y(M;\mcT)$ containing every solution giving the \emph{complete} hyperbolic structure: it is called the \textit{canonical component}.

For example, in the case of the Figure-eight knot complement $M=S^3\backslash K$, there exists an ideal triangulation with two tetrahedra, and such that
\[ Y(M;\mcT)=V(x(1-x)y(1-y)-1)\subset (\mathbb{P}^1(\C)\backslash\{0,1,\infty\})^2. \]
The smooth projective completion $\overline{Y(M;\mcT)}$ of this curve is an elliptic curve over $\Q$ with four ideal points (=points in $\overline{Y(M;\mcT)}\backslash Y(M;\mcT)$ whose images under the map $\overline{Y(M;\mcT)}\ra(\mathbb{P}^1(\C)\backslash\{0,1,\infty\})^2$ have all of its coordinates in $\{0,1,\infty\}$). Its (minimal) Weierstrass model is: $y^2+xy+y=x^3+x^2$, \cite{Fujii05}. On the other hand, the $\mathrm{SL}_2$-character variety $X(M)$ of $M$ has Weierstrass model $y^2=x^3-2x+1$, \cite{Harada11}.%%
\footnote{These two curves are both $\Q$-elliptic curves and have very similar arithmetic properties: $\overline{Y(M;\mcT)}$ has conductor $15$, rank $0$, torsion subgroup $\Z/4\Z$, while the character variety has conductor $40$, rank $0$, torsion subgroup $\Z/4\Z$.}
%%

%%%%%%%%%%%%%%%%%%%%
%\subsection{}
Suppose the chosen triangulation $\mcT$ supports a hyperbolic structure $\{z_i\}_{i=1}^N$ which is not necessarily complete. For simplicity, we assume that $M$ has a single cusp. 
We fix an ordered basis $\{\mu,\nu\}$ of $\pi_1(\partial M)$ such that the induced orientation on $\partial M$ (i.e. on the universal cover of $\partial M$ which is identified with $\R\mu\oplus\R\nu$) agrees with the orientation on $\partial M$ as the boundary of $M$
(in the case of a knot complement, we will choose $\mu$ and $\nu$ respectively to be the classes of the meridian and the longitude%%
\footnote{In this work, to denote the longitude, we use $\nu$ instead of the conventional $\lambda$, which we reserve for other use.} 
endowed with orientations, well-defined up to simultaneous reversing).
Any small horosphere at each cusp cuts the boundary in a torus and this torus has an affine structure provided by its triangulation into the triangles $\Delta(0,1,z)\subset \C$.  This defines a holonomy representation $\pi_1(\partial M)\ra \mathrm{Aff}(\C)$.  Assume that the hyperbolic structure is complete, namely that the affine structure is Euclidean.
Then, the \emph{derivatives} (i.e. $a$ of $x\mapsto ax+b$) of the holonomy along $\mu$ and $\nu$ are expressed as
\begin{align}
\mfm(z)=\prod_{i=0}^N \left(\frac{z_i}{z_i^0}\right)^{m_i'} \left(\frac{1-z_i}{1-z_i^0} \right)^{m_i''},\quad 	\mfl(z)=\prod_{i=0}^N \left(\frac{z_i}{z_i^0} \right)^{l_i'} \left(\frac{1-z_i}{1-z_i^0} \right)^{l_i''},
\end{align}
where $z^0$ is the point on $Y(M;\mcT)$ corresponding to the complete hyperbolic structure (\cite[the formula right after (27)]{NeumannZagier85}).
(The constants $(m_i',m_i'')$, $(l_i',l_i'')\in \Z^2$ are determined by the coefficients of the cusp relations  $C(\msZ^0)^t=\pi \sqrt{-1} D_2$ (\ref{eq:CCequation2}).)
These can be regarded as regular functions on the affine variety $Y(M;\mcT)$: 
\begin{equation} \label{eq:m,l}
\mfm,\mfl\ :\  Y(M;\mcT)\ra \C.
\end{equation}

Take an open neighborhood $U$ of $z^0$ in $Y(M;\mcT)$ and branches of 
\begin{equation} \label{eq:mfu,mfv}
\mfu=\log \mfm,\quad \mfv=\log \mfl
\end{equation}
on $U$ with $\mfu(z^0)=\mfv(z^0)=0$.
If $\rho:\pi_1(M)\ra \PSL_2(\C)$ is a honomomy representation corresponding to a point $z=\{z_i\}$ of $Y$, then there exists a conjugate of $\rho_z$ such that
\begin{equation} \label{eq:u,v_on_deformation_curve}
\rho_z(\mu)= \pm \left(\begin{array}{cc} e^{\frac{\mfu(z)}{2}} & \ast \\
0 & e^{-\frac{\mfu(z)}{2}} \end{array} \right),\quad \rho_z(\nu)= \pm \left(\begin{array}{cc} e^{\frac{\mfv(z)}{2}} & \ast \\ 0 & e^{-\frac{\mfv(z)}{2}} \end{array} \right)
\end{equation}
(the associated affine transformations on any boundary torus cut by a horosphere are then $w\mapsto e^{\mfu(z)}w+ e^{-\frac{\mfu(z)}{2}}\ast$, $w\mapsto e^{\mfv(z)}w+ e^{-\frac{\mfv(z)}{2}}\ast$).

By a basic theorem of W. Thurston (cf. \cite[$\S$4]{NeumannZagier85}), we may assume (after shrinking $U$ if necessary) that $\mfu:U\ra \C$ is a local holomorphic coordinate around $z^0$ (in this situation $U$ is often called a \textit{Dehn surgery space}).

%%%%%%%%%%%%%%%%%%%%%%%%%%%%%%%%%%%%%%%%
%%%%%%%%%%%%%%%%%%%%%%%%%%%%%%%%%%%%%%%%
\subsection{Kirk-Klassen construction of Chern-Simons line bundle and CS-section}

Let $D(\partial M)$ be the split algebraic torus over $\Q$ with character group $\pi_1(\partial M)$: it is the affine $\Q$-scheme whose affine algebra is the group algebra $\Q[\pi_1(\partial M)]\simeq \Q[\Z^{\oplus 2h}]$, so that it is isomorphic to $\Gm^{2h}$ (an isomorphism being determined by a choice of a basis of $\pi_1(\partial M)$ such as ours $\{\mu_i,\nu_i\}_i$).
In particular, for any $\Q$-algebra $k$, the group $D(\partial M)(k)$ of $k$-rational points consists of homomorphisms $\lambda:\pi_1(\partial M)\ra k^{\times}$ (group of units in $k$). We regard $D(\partial M)$ as a subvariety of $R(\partial M)$ via 
$\lambda\mapsto \left(\begin{array}{cc} \lambda &  \\  & \lambda^{-1} \end{array}\right)$. 
If $\iota$ denotes the involution on $D(\partial M)$ defined by $\lambda\mapsto \lambda^{-1}$, the canonical map $D(\partial M) \ra X(\partial M)$ induces a morphism
\begin{equation} \label{eq:q:D->X}
D(\partial M)\sslash \langle\iota\rangle \ra X(\partial M)=R(\partial M)\sslash \SL_2: [\lambda]\mapsto \lambda+\lambda^{-1}.
\end{equation}
This is an isomorphism of algebraic varieties \cite[1.14]{Benard20}, \cite[Thm.2.1]{Sikora14}. 

For a separated scheme $X$ of finite type over a field $k\subset\C$, let $X_{\C}^{an}$ (or even $X^{an}$) denote the complex analytic variety associated with the reduced closed subscheme $X^{red}$ of $X$.

For the next discussion, for simplicity, let us assume $h=1$: the argument easily generalizes to arbitrary $h$.
Let $\Delta:=\Z^2\rtimes\Z/2\Z$ with $\iota\neq 1\in\Z/2\Z$ acting on $\Z^2$ by $(a,b)\mapsto (-a,-b)$; let $\{\mu,\nu\}$ be the standard basis of $\Z^2\subset \Delta$ (this choice of the notation $\{\iota,\mu,\nu\}$ will be compatible with the previous choice). We make $\Delta$ act on $(\C\oplus\C)\times \C^{\times}$ by
\begin{align} \label{eq:CS_linebundle-action}
\nu :& (x,y;z) \mapsto (x+\tpi,y;ze^{y}) \\
\mu :& (x,y;z) \mapsto (x,y+\tpi;ze^{-x}) \nonumber \\
\iota :& (x,y;z) \mapsto (-x,-y;z) \nonumber
\end{align}
(be wary of the roles of $\mu$ and $\nu$).
Taking quotient of the trivial fibration $\C^2\times \C^{\times}\ra \C^2$ by this analytic group action, we obtain a principal $\C^{\times}$-bundle over the analytic variety $X(\partial M)^{an}$:
\[ E(\partial M) \ra X(\partial M)^{an}. \] 
In more detail, we first take the quotient of the trivial fibration $\C^2\times \C^{\times}\ra \C^2$ by the subgroup $\langle \mu,\nu\rangle$ of $\Delta$ (which acts freely on both spaces), obtaining a principal $\C^{\times}$-bundle $Q(\partial M)$ over the analytic variety $D(\partial M)^{an}$: 
\[ Q(\partial M) \ra D(\partial M)^{an}. \] 
Then from the fact that $\iota$ induces an action of $\C^{\times}$-bundle $Q(\partial M)$ covering an action on $D(\partial M)_{\C}$ which acts trivially on the fibers over its fixed points in $D(\partial M)_{\C}$ \cite[p.525]{KirkKlassen93}, it follows that this principal $\C^{\times}$-bundle $Q(\partial M)$ descends to $X(\partial M)^{an}$, namely there exists a principal $\C^{\times}$-bundle $E(\partial M)$ over $X(\partial M)^{an}$ whose pull-back to $D(\partial M)^{an}$ is isomorphic to $Q(\partial M)$. Indeed, the condition implies that for a suitable local trivialization of $Q(\partial M)$ over $D(\partial M)^{an}$, the transition functions are invariant under $\iota$, thus since the natural map $\mcO(X(\partial M)^{an})\ra \mcO(D(\partial M)^{an})^{\PGL_2}$ is an isomorphism (\cite[Thm.8]{Neeman88}), they belong to analytic functions on $X(\partial M)^{an}$, defining a $\C^{\times}$-bundle on the latter (for more details, see \cite[Theoreme2.3]{DrezetNarasimhan89} whose arguments in the algebraic setup carry over to our analytic setup, given the fact just cited).
This analytic principal $\C^{\times}$-bundle $E(\partial M)$ over $X(\partial M)^{an}$ is unique up to isomorphism; we call it \textit{Chern-Simons $\C^{\times}$-(or line) bundle}. Although the description makes use of the choice of an oriented basis of $\pi_1(\partial M)$ compatible with the orientation of $\partial M$, the bundle $E(\partial M)$ itself depends only on the orientation of $\partial M$.

%%%%%%%%%%%%%%%%%%%%
\begin{thm} \cite[2.5, 3.4]{KirkKlassen93} \label{thm:KK93_2.5}
Suppose that $A$, $B$ are gauge-equivalent connection $1$-forms on $M\times \SL_2(\C)$ which are in normal form near boundary and such that
\[ A=A^{(1)}(\alpha,\beta),\quad B=\epsilon A^{(1)}(\alpha+p,\beta+q), \]
(\ref{eq:normal_form1}) for some $\alpha,\beta\in\C$, $p,q\in\Z$, $\epsilon\in\{\pm1\}$ in Case 1, and
\[ A=A^{(2)}(-2\alpha,-2\beta;a,b),\quad B=\epsilon A^{(2)}(-2(\alpha+p),-2(\beta+q);a',b') \]
(\ref{eq:normal_form2}) for some $\alpha,\beta\in\frac{\Z}{2}$, $p,q\in\Z$, and $a,b,a',b'\in \C$, $\epsilon\in\{\pm1\}$ in Case 2. 

Then, we have $cs(B)-cs(A) =p\beta - q\alpha  \mod\Z$.
\end{thm}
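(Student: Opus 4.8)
The plan is to reduce the statement to the transformation law of the Chern--Simons integral under gauge transformations, following \cite[\S2.5,\S3.4]{KirkKlassen93}. The starting point is the standard formula: if $g:M\to\SL_2(\C)$ is a gauge transformation and $A$ a connection $1$-form, then
\[
cs_M(g\cdot A)-cs_M(A)=-\frac{1}{8\pi^2}\int_M \mathrm{Tr}\bigl((g^{-1}dg)\wedge g^{-1}dg \wedge g^{-1}dg\bigr)\cdot\frac13 -\frac{1}{8\pi^2}\int_{\partial M}\mathrm{Tr}\bigl(dg\,g^{-1}\wedge A\bigr),
\]
so that the difference $cs(B)-cs(A)$ for gauge-equivalent $A,B$ is a sum of a bulk term (the Wess--Zumino term of $g$, landing in $\Z$ since $g$ represents a class in $H^3(M,\partial M;\pi_3(\SL_2(\C)))$ up to the boundary contribution) and a boundary term supported on $\partial M\times I$. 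Since $A$ and $B$ are both in normal form near $\partial M$, their restrictions are explicit constant-coefficient $1$-forms, and the gauge transformation relating them near the boundary is also explicit; so the whole computation localizes to $\partial M$.

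First I would treat Case 1. Near $\partial M$ we have $A=A^{(1)}(\alpha,\beta)$ and $B=\epsilon A^{(1)}(\alpha+p,\beta+q)$, both diagonal. Up to the overall sign $\epsilon$ (which acts by conjugation by a fixed element and contributes nothing to $cs\bmod\Z$), the gauge transformation near the boundary can be taken to be $g=\mathrm{diag}(e^{\pi i(px+qy)},e^{-\pi i(px+qy)})$ in the coordinates $(x,y)$ on $\partial M$ with $dx,dy$ dual to $\mu,\nu$, since conjugating $A^{(1)}(\alpha,\beta)$ by this $g$ shifts the diagonal entries of the $dx$- and $dy$-components by the exact forms $\pi i\,p\,dx$ and $\pi i\,q\,dy$ — precisely giving $A^{(1)}(\alpha+p,\beta+q)$. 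The bulk Wess--Zumino term vanishes because $g$ is abelian (the triple-wedge of $g^{-1}dg$ is zero for diagonal $g$), so only the boundary term survives:
\[
cs(B)-cs(A)=-\frac{1}{8\pi^2}\int_{\partial M}\mathrm{Tr}\bigl(dg\,g^{-1}\wedge A\bigr)\bmod\Z.
\]
Plugging in $dg\,g^{-1}=\mathrm{diag}(\pi i(p\,dx+q\,dy),-\pi i(p\,dx+q\,dy))$ and $A=\mathrm{diag}(i\alpha\,dx+i\beta\,dy,\dots)$ and using that $\int_{\partial M}dx\wedge dy$ is normalized appropriately (this is where the orientation conventions from \S\ref{subsec:CS-inv_flat-conn:cusped} on $\{dx,dy\}$ enter), the trace is $2\pi i\bigl((p\,dx+q\,dy)\wedge(i\alpha\,dx+i\beta\,dy)\bigr)=2\pi i^2(p\beta-q\alpha)\,dx\wedge dy$, and a short bookkeeping of the constants yields $cs(B)-cs(A)=p\beta-q\alpha\bmod\Z$. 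One must double-check that $\alpha,\beta\in\frac12\Z$ in Case 1 of the statement makes $p\beta-q\alpha$ a well-defined element mod $\Z$ independent of the ambiguities in the branch choices; this is straightforward since $p,q\in\Z$.

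For Case 2, the connection near the boundary is upper-triangular but not diagonal (the off-diagonal entries $\frac{a}{2\pi}\exp(i(ux+vy))$, etc.), and the gauge transformation relating $A$ and $B$ now also adjusts these off-diagonal terms ($a\mapsto a'$, $b\mapsto b'$). The key point, already in \cite[Lem.3.3]{KirkKlassen93}, is that the \emph{diagonal} part of the gauge transformation is again of the form $\mathrm{diag}(e^{\pi i(px+qy)},e^{-\pi i(px+qy)})$ with the same integers $p,q$, and that the contribution of the off-diagonal (unipotent) part of $g$ to both the bulk and boundary terms vanishes modulo $\Z$ — this is because the unipotent part is a gauge transformation of a parabolic connection, whose Chern--Simons integral is unchanged (this is exactly the content invoked in the proof of Theorem~\ref{thm:KK93_2.4}). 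Hence only the diagonal part contributes, and the computation reduces verbatim to the Case 1 computation with $\alpha,\beta\in\frac12\Z$, giving again $cs(B)-cs(A)=p\beta-q\alpha\bmod\Z$; note that in the parametrization $A^{(2)}(-2\alpha,-2\beta;a,b)$ the meridian/longitude holonomies are $\pm\exp(\pi i\cdot(-2\alpha))=\pm e^{-2\pi i\alpha}$ as in Lemma~\ref{lem:holonomy_of_normal_form}, so the labelling is consistent.

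\textbf{Main obstacle.} The genuinely delicate step is Case 2: verifying that the unipotent/off-diagonal part of the gauge transformation contributes nothing to $cs(B)-cs(A)$ modulo $\Z$, and carefully matching the $\epsilon$-sign ambiguity and the various $2\pi$ versus $\pi$ normalizations so that the final answer is exactly $p\beta-q\alpha$ and not, say, $2(p\beta-q\alpha)$ or $p\beta-q\alpha$ up to a sign. Everything else is routine once the computation is correctly localized to $\partial M$; I would lean on \cite[Lem.3.3]{KirkKlassen93} for the parabolic vanishing and on the orientation conventions fixed earlier in \S\ref{subsec:CS-inv_flat-conn:cusped} to pin down the normalization of $\int_{\partial M}dx\wedge dy$.
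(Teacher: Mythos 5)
Note first that the paper does not actually spell out a proof of this theorem: it observes only that the $\SU(2)$ argument of \cite[Thm.2.5]{KirkKlassen93} adapts without change to $\SL_2(\C)$ in the diagonalizable case, and defers to \cite[Lem.3.4, 3.5]{KirkKlassen93} for the parabolic case. So your proposal is a reconstruction via the Wess--Zumino gauge-transformation formula, which is a perfectly standard route but probably not the one Kirk--Klassen use (their toolkit, as reflected in Theorem~\ref{thm:KK93_3.2}(2), is a path-of-connections variational formula on the boundary torus).

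Your reconstruction has one genuine gap. You assert that the bulk Wess--Zumino term vanishes because ``$g$ is abelian (the triple-wedge of $g^{-1}dg$ is zero for diagonal $g$)''. But $g$ is diagonal only on a collar of $\partial M$, where $A$ and $B$ are in normal form; on the interior of $M$ it is an arbitrary $\SL_2(\C)$-valued map and $\Tr\bigl((g^{-1}dg)^{\wedge 3}\bigr)$ has no reason to vanish there. Nor can one in general choose $g$ diagonal throughout $M$: for a knot complement, $H^1(M;\Z)\to H^1(\partial M;\Z)\cong\Z^2$ has rank-one image, so a boundary map of bidegree $(p,q)$ with $q\ne 0$ simply does not extend as a map $M\to T$. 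What you actually need --- and almost state in your opening parenthetical, which then contradicts the later ``vanishes'' claim --- is that the bulk term is an \emph{integer}: since $g(\partial M)\subset T$, the $3$-form restricts to zero on $\partial M$, and $-\tfrac{1}{24\pi^2}\Tr\bigl((g^{-1}dg)^{\wedge 3}\bigr)$ represents a generator of $H^3(\SL_2(\C),T;\Z)\cong\Z$, so its pullback lies in (the image of) $H^3(M,\partial M;\Z)$ and integrates to an integer. There are also two smaller slips: with $x,y$ of period $2\pi$ (the convention $\R^2\to\partial M$, $(x,y)\mapsto(e^{ix},e^{iy})$), the gauge transformation realizing $(\alpha,\beta)\mapsto(\alpha+p,\beta+q)$ is $g=\mathrm{diag}(e^{i(px+qy)},e^{-i(px+qy)})$, not $e^{\pi i(\cdots)}$; your extra $\pi$ would make the boundary integral produce $\pi(p\beta-q\alpha)$, not $p\beta-q\alpha$. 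And the hypothesis $\alpha,\beta\in\tfrac{1}{2}\Z$ that you invoke while discussing Case~1 is a hypothesis of Case~2, not Case~1. None of these defeat the overall strategy, but the ``bulk vanishes because $g$ is abelian'' step is false as written and needs to be replaced by the integrality argument.
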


\begin{proof}
When the boundary holonomy is diagonalizable, the proof of Theorem \ref{thm:KK93_2.5} in the $\SU(2)$ case works without change, too. When the boundary holonomy is parabolic, this is proved in \textit{ibid.} Lemma 3.4, 3.5.
\end{proof}

%%%%%%%%%%%%%%%%%%%%
\begin{thm} \label{thm:KK93_3.2}

Define a map $\bar{C}_{\KK}:R(M)(\C) \ra E(\partial M)$ by
\begin{equation} \label{eq:CS_section}
C_{\KK}(\rho)=[\, \tpi\, \alpha_1,\, \tpi\, \beta_1,\, \cdots,\, \tpi\, \alpha_h,\, \tpi\, \beta_h\, ;\, e^{\tpi\, cs(A)}\, ]. 
\end{equation}
Here, given $\rho$, for each $i=1,\cdots,h$, we choose any $(\alpha,\beta)$ such that a conjugate of $\rho|_{\pi_1(\partial M)}$ is of the form (\ref{eq:bdry_holonomy1}) for $(\alpha,\beta)$ if $\rho|_{\pi_1(\partial M)}$ is in Case 1, or any $(u,v,a,b)$ such that a conjugate of $\rho|_{\pi_1(\partial M)}$ is of the form (\ref{eq:bdry_holonomy2}) for $(u,v,a,b)$ if $\rho|_{\pi_1(\partial M)}$ is in Case 2, and set $(\alpha_i,\beta_i)\in\C^2$ to be $(\alpha,\beta)$ in Case 1 or $(-\frac{u}{2},-\frac{v}{2})$ in Case 2. And, $A$ is any flat connection with holonomy being (a conjugate of) $\rho$ and such that for each $i$, $A$ is in normal form for the just chosen constants $(\alpha,\beta)$, $(u,v,a,b)$ (Definition \ref{defn:normal_form}).

Then, $\bar{C}_{\KK}$ induces a well-defined map $C_{\KK}:X(M)(\C) \ra E(\partial M)$, giving a holomorphic section over $X(M)_{\C}$ of $E(\partial M)\ra X(\bdM)_{\C}$:
\[ \xymatrix@R=15pt{ & E(\partial M) \ar[d] \\ 
X(M)_{\C} \ar[ru]^{C_{\KK}} \ar[r] &  X(\bdM)_{\C}.} \]

(2) Let $\rho_t:\pi_1(M)\ra \SL_2(\C),\ t\in[0,1]$ be a path of representations avoiding non-central boundary-parabolic ones. 
Suppose that $\rho_t|_{\pi_1(\partial M)}$ is of the shape (\ref{eq:bdry_holonomy1}) for some smooth $(\underline{\alpha}(t),\underline{\beta}(t)):[0,1]\ra\C^{2h}$, where $\underline{\alpha}(t)=(\alpha_1(t),\cdots,\alpha_h(t))$, $\underline{\beta}(t)=(\beta_1(t),\cdots,\beta_h(t))$. If $C_{\KK}([\rho_t])=[\underline{\alpha}(t),\underline{\beta}(t);z(t)]$, we have
\[ z(1)z(0)^{-1}=\exp(\frac{1}{\tpi} (\int_0^1\underline{\alpha}(t)\frac{\underline{\beta}(t)}{dt}-\underline{\beta}(t)\frac{\underline{\alpha}(t)}{dt})). \]
\end{thm}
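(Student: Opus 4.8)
The plan is to reduce everything to the transformation formula of Theorem~\ref{thm:KK93_2.5} together with the explicit action (\ref{eq:CS_linebundle-action}) defining $E(\partial M)$. First I would establish part (1). Given two semisimple representations $\rho,\rho'$ with the same character, they are conjugate, so the ambiguity in $\bar C_{\KK}(\rho)$ comes only from (a) the choice of the branch/conjugate putting $\rho|_{\pi_1(\partial M)}$ in normal form, and (b) the choice of the flat connection $A$ in normal form with holonomy $\rho$. For (b), any two such connections $A$, $B$ are gauge-equivalent (holonomy is a complete invariant on the quotient by $\mcG$, Corollary~\ref{cor:PSL_2(C)-CS-inv}'s cited injectivity) and in the \emph{same} normal form near boundary, so Theorem~\ref{thm:KK93_2.4} gives $cs(A)=cs(B)\bmod\Z$, hence the last coordinate $e^{\tpi\,cs(A)}$ is unchanged. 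For (a), changing the chosen constants $(\alpha,\beta)$ to $(\epsilon(\alpha+p),\epsilon(\beta+q))$ ($\epsilon=\pm1$, $p,q\in\Z$) — which is exactly the freedom in Lemma~\ref{lem:holonomy_of_normal_form} (choices of branches of $\log$, i.e. parities of $u,v$) — replaces $A=A^{(1)}(\alpha,\beta)$ by $B=\epsilon A^{(1)}(\alpha+p,\beta+q)$, and Theorem~\ref{thm:KK93_2.5} gives $cs(B)-cs(A)=p\beta-q\alpha\bmod\Z$. One then checks that the point
\[ [\,\tpi\,\epsilon(\alpha+p),\,\tpi\,\epsilon(\beta+q)\,;\,e^{\tpi\,cs(B)}\,] \]
equals $[\,\tpi\,\alpha,\,\tpi\,\beta\,;\,e^{\tpi\,cs(A)}\,]$ in $E(\partial M)$: translation by $p$ in the first slot and $q$ in the second is realized by $\nu^{-p}\mu^{q}$ (up to sign conventions) acting as in (\ref{eq:CS_linebundle-action}), which multiplies the fiber coordinate by $e^{-p\,(\tpi\beta)}\cdot e^{q\,(\tpi\alpha)}=e^{\tpi(q\alpha-p\beta)}$, precisely cancelling $e^{\tpi(cs(B)-cs(A))}$; the sign $\epsilon$ is absorbed by $\iota$, which acts trivially on fibers over its fixed points. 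The parabolic (Case~2) sub-case is identical using the Case~2 clauses of Theorems~\ref{thm:KK93_2.4} and~\ref{thm:KK93_2.5} and $(\alpha,\beta)=(-u/2,-v/2)$. Holomorphicity of $C_{\KK}$ follows because $cs(A)$ can be computed by the integral (\ref{eq:cs(A)_normal_form_A}) of a connection depending holomorphically on $\rho$ within a single gauge class, and the normal-form constants are locally holomorphic branches of the eigenvalue functions on $X(M)_{\C}$.

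For part (2), I would argue by a variational/path argument. Along the path $\rho_t$, avoiding non-central boundary-parabolic representations, we stay in Case~1, so we may choose a smooth family of flat connections $A_t$ in normal form $A^{(1)}(\underline\alpha(t),\underline\beta(t))$ with holonomy (conjugate to) $\rho_t$; then by part~(1), $C_{\KK}([\rho_t])=[\tpi\underline\alpha(t),\tpi\underline\beta(t);e^{\tpi\,cs(A_t)}]$, so $z(t)=e^{\tpi\,cs(A_t)}$ up to a $\Delta$-translation which we absorb. Hence $z(1)z(0)^{-1}=\exp\big(\tpi(cs(A_1)-cs(A_0))\big)$, and the task is to show
\[ cs(A_1)-cs(A_0)=\frac{1}{(\tpi)^2}\int_0^1\Big(\underline\alpha(t)\,\underline\beta{}'(t)-\underline\beta(t)\,\underline\alpha{}'(t)\Big)\,dt. \]
This is the standard computation of the variation of the Chern--Simons functional along a path of flat connections: $\frac{d}{dt}\int_M Q(A_t)$ is, after integration by parts, a boundary term $\int_{\partial M}\langle \dot A_t\wedge A_t\rangle$ (the bulk term vanishes by flatness, $dA_t+A_t\wedge A_t=0$), and plugging in the explicit diagonal normal form (\ref{eq:normal_form1}) $A_t=\mathrm{diag}(i\alpha,-i\alpha)dx+\mathrm{diag}(i\beta,-i\beta)dy$ on each boundary torus $N_k\times\{1\}$, with $\int_{N_k}dx\wedge dy$ normalized by our oriented basis $\{\mu_k,\nu_k\}$ and the collar convention, yields exactly $\frac{1}{(\tpi)^2}\sum_k(\alpha_k\dot\beta_k-\beta_k\dot\alpha_k)$ after using $C_2=\frac{1}{8\pi^2}\mathrm{tr}$. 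Integrating in $t$ gives the claim. This is essentially the content of the cited \cite[3.2]{KirkKlassen93}, so I would present it as: reduce to the boundary-variation formula, insert the normal form, and match constants.

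The main obstacle I anticipate is bookkeeping of signs and of the $2\pi i$-normalizations: the action (\ref{eq:CS_linebundle-action}) swaps the apparent roles of $\mu$ and $\nu$ (``be wary of the roles of $\mu$ and $\nu$''), our $C_2$-normalization differs from Kirk--Klassen's $P_1$ by a factor $-4$, and the orientation of $\partial M$ as the boundary of $M$ must be matched with the cover $\R^2\to\partial M$ used to define $\{dx,dy\}$; getting the sign in $\alpha\dot\beta-\beta\dot\alpha$ (rather than its negative) right, and confirming that the $\Delta$-translations introduced by branch changes are precisely cancelled rather than doubled, will require care. A secondary subtlety is the passage between $\SL_2(\C)$ and $\PSL_2(\C)$ and between the Culler--Shalen, GIT, and trace-ring models of $X(M)$ (Section~\ref{subsec:char_scheme}), so that ``$C_{\KK}$ is a section over $X(M)_{\C}$'' makes literal sense; here I would simply work on $R(M)(\C)$ and use Theorem~\ref{thm:fund_property_GIT} to descend to closed orbits/semisimple characters, noting that the parabolic locus excluded in part~(2) is exactly where this descent could fail.
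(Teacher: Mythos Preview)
Your proposal is correct and follows essentially the approach of Kirk--Klassen, which the paper simply cites: well-definedness from Theorems~\ref{thm:KK93_2.4} and~\ref{thm:KK93_2.5} matched against the $\Delta$-action (\ref{eq:CS_linebundle-action}), and the variation formula from the boundary term of $\tfrac{d}{dt}\int_M Q(A_t)$ with the normal form inserted. One small point: the paper deduces holomorphicity in (1) \emph{from} (2), since the variation formula exhibits $z(t)$ as the exponential of a holomorphic integral in the $(\alpha,\beta)$-coordinates; your direct argument (``the connection depends holomorphically on $\rho$'') is correct in spirit but would need a word on why one can choose gauges holomorphically, so the route via (2) is cleaner.
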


\begin{proof} 
This is \cite[Thm.3.2]{KirkKlassen93}. The holomorphicity in (1) is a consequence of (2).
\end{proof}

%%%%%%%%%%%%%%%%%%%%%%%%%%%%%%%%%%%%%%%%
%%%%%%%%%%%%%%%%%%%%%%%%%%%%%%%%%%%%%%%%
\subsection{Augmented character variety}

We introduce the augmented character variety of three manifolds with toral boundary. 

\begin{defn}
The \textit{augmented character variety} $\tilde{X}(M)$ is the fiber product of $X(M)\ra X(\partial M)$ and $D(\partial M)\ra R(\partial M)\ra X(\partial M)$:
\[ \tilde{X}(M):= X(M)\times_{X(\partial M)}D(\partial M). \]
\end{defn}

\begin{rem}
(1) In view of the isomorphism (\ref{eq:q:D->X}), $\tilde{X}(M)$ is a double covering of $X(M)$. 
For an algebraically closed field $k$ of characteristic zero, a $k$-rational point of $\tilde{X}(M)$ is a $k$-point $(\chi,\lambda)$ in $X(M)\times D(\partial M)$ such that the restriction of $\chi$ to $\pi_1(\partial M)$ is the character of the diagonal representation $\delta_{\lambda}:=\left(\begin{array}{cc} \lambda &  \\  & \lambda^{-1} \end{array}\right)$. Be warned, however, that this condition implies, but not equivalent to, that $\lambda(\gamma)+\lambda^{-1}(\gamma)=\chi(\gamma)$ for all $\gamma\in \pi_1(\partial M)$: for any given $\chi$, there are in general $2^{2h}$ many $\lambda\in \Hom(\pi_1(\partial M),k^{\times})$'s satisfying this latter condition since $\pi_1(\partial M)\simeq \Z^{2h}$, while in the definition of $\tilde{X}(M)$ we are choosing one among two $\lambda$'s satisfying the condition that there exists a representation $\rho\in R(M)(k)$ with $\chi(\rho)|_{\pi_1(\partial M)}=\lambda+\lambda^{-1}$ (equiv. that some conjugate of the diagonal representation $\delta_{\lambda}$ lifts to a representation of $\pi_1(M)$). 

(2) A major weakness of the character variety $X(M)$, compared with the Thurston deformation curves, is the absence of a single-valued eigenvalue (global regular) function $\lambda$ with the property as above: on the deformation curve, we have $\mfm$, $\mfl$, the derivatives of the holonomy along $\mu$ and $\nu$, which are of great utility in the study of cusped hyperbolic $3$-manifolds, as in the example of hyperbolic Dehn surgery.
But Thurston deformation curves also has the weakness that their definition depends on the choice of an ideal triangulation supporting the complete hyperbolic structure, while $X(M)$ is canonically defined by $M$ (equivalently, $\pi_1(M)\subset \PSL_2(\C)$) only. The augmented character variety has both features. It is a triangulation-choice-free analogue of deformation curve.

(3) The augmented character variety is also a GIT quotient $\tilde{X}(M)=\tilde{R}(M)\sslash \PGL_2$, where
$\tilde{R}(M)$ is the \emph{augmented representation variety}
\[ \tilde{R}(M):=R(M)\times_{R(\partial M)}D(\partial M) \] 
and $\PGL_2$ acts on $\tilde{R}(M)$ via $R(M)$.

(4) The augmented character variety was introduced earlier. Recently, Benard \cite{Benard20} showed that the adjoint Reidemeister torsion can be interpretated as a rational volume form on the augmented character variety, and related its vanishing order at a point is related to the singularity of the image of the point in the character variety.
\end{rem}

Recall that $N_1,\cdots,N_h$ are boundary tori of $\partial M$ and that the affine algebra $\Q[D(\partial M)]$ is isomorphic to the group algebra $\Q[\pi_1(\partial M)]$, so every element of $\pi_1(\partial M)$ can be regarded as a regular function on $D(\partial M)$.

\begin{defn} \label{defn:m(z),l(z)}
For each $i=1,\cdots,h$, let $f_{\mu_i}(z)$ and $f_{\nu_i}(z)$ be the regular functions on $D(\partial M)$ corresponding to $\mu_i,\nu_i\in \pi_1(N_i)$, and $m_i(z)$ and $l_i(z)$ the regular functions on $\tilde{X}(M)$ obtained by pull-back from $f_{\mu_i}(z)$ and $f_{\nu_i}(z)$, respectively. Put $\underline{l}:=(l_1,\cdots,l_h)\in \mcO( \tilde{X}(M))^{\oplus h}$, $\underline{m}:=(m_1,\cdots,m_h)\in \mcO( \tilde{X}(M))^{\oplus h}$.
\end{defn}

When we fix a point $\tilde{z}_0$ of $\tilde{X}(M)$ whose image in $X(M)$ is the conjugacy class of a lift to $\SL_2(\C)$ of the geometric representation $\rho_0$ (corresponding to the complete hyperbolic structure), one can choose a small neighborhood $\tilde{V}$ of $\tilde{z}^0$ and branches of 
\[ u_i=\log m_i\text{ and }v_i=\log l_i,\] defined on $\tilde{V}$, with $u_i(\tilde{z}^0)=0$, $v_i(\tilde{z}^0)=0$, and analytically continue them, obtaining multi-valued functions to the irreducible component of $\tilde{X}(M)$ containing $\tilde{z}^0$. On a small enough neighborhood $U$ of $z^0$ in the deformation curve $Y(M,\mcT)$ one can find an analytic function $f:U\ra \tilde{X}(M)$ with $f(z^0)=\tilde{z}^0$. It follows from (\ref{eq:u,v_on_deformation_curve}) that there exists the relation
\begin{equation} \label{eq:u,v_mfu,mfv}
\mfu=2u\circ f,\quad \mfv=2v\circ f.
\end{equation}

%%%%%%%%%%%%%%%%%%%%%%%%%%%%%%%%%%%%%%%%
%%%%%%%%%%%%%%%%%%%%%%%%%%%%%%%%%%%%%%%%
\subsection{Heisenberg line bundle and Morishita-Terashima Chern-Simons section} \label{subesc:Heisenberg-L.B.&MT-CS-section}

For $n\in\N$, let $H_{\Z}^{(n)}\subset H_{\C}^{(n)}$ be the integral and complex Heisenberg groups ``of order $n$'': they consist of $(2n+1)\times (2n+1)$ matrices such that
\[ H_{\Z}^{(n)}=\left\{ \left(\begin{array}{ccc}  \Onen & a & c\\  & \In & b^t \\  & & \Onen^t \\ \end{array}\right) \,\middle\vert\, \begin{array}{c} a,b\in\Z(1)^{n}, \\ c\in\Z(2)\end{array} \right\} \,\subset \, 
H_{\C}^{(n)}=\left\{ \left(\begin{array}{ccc}  \Onen & a & c\\  & \In & b^t \\  & & \Onen^t \\ \end{array}\right) \,\middle\vert\, \begin{array}{c} a,b\in\C^{n}, \\ c\in\C\end{array} \right\},  \]
where $\Onen=(1,\cdots,1)\in \mathrm{M}_{1\times n}$. 
We write $(a,b;c)'$ for the matrix $\left(\begin{array}{ccc}  \Onen & a & c\\  & \In & b^t \\  & & \Onen^t \\ \end{array}\right)$.
The quotient manifold $H_{\Z}^{(n)}\backslash H_{\C}^{(n)}$ is a principal $\C^{\times}$-bundle over $(\C^{\times})^n\times (\C^{\times})^n$ via the map $H_{\Z}^{(n)}(a,b;c)' \mapsto (\exp(a),\exp(b))$ and carries a flat connection $1$-form $\theta=dc-a\cdot db^t$. To see the principal $\C^{\times}$-bundle structure more explicitly, let us define put a group structure on the complex manifold $\C^{2n}\times \C^{\times}$ by
\[ (v;z)\ast (v';z')=(v+v';zz'e[v_1\cdot (v_2')^t]), \]
where $v=(v_1,v_2),v'=(v_1',v_2')\, (v_i,v_i'\in\C^{n})$, $z,z'\in \C^{\times}$, and $e[u]:=\exp(\frac{1}{2\pi i}u)$. This complex Lie group $H^{(n)}$ is an extension of Lie groups:  
\[ 1 \lra \C^{\times}  \stackrel{j}{\lra} H^{(n)} \stackrel{p_H} \lra \C^{2n} \lra 0 \]
with $j(z)=(0;z)$ and $p_H(v;z)=v$.
Then, the map $H_{\C}^{(n)} \ra H^{(n)}$ defined by $\left(\begin{array}{ccc} \Onen & a & c\\  & \In & b^t \\  & & \Onen^t \\ \end{array}\right) \mapsto (a,b;e[c])$
induces an isomorphism of discrete subgroups $H_{\Z}^{(n)}\isom \Z(1)^{\oplus 2n}\times\{1\}$, thus
an analytic isomorphism of principal $\C^{\times}$-bundle  
\[ H_{\Z}^{(n)}\backslash H_{\C}^{(n)} \ \isom\ \Z(1)^{\oplus 2n}\backslash H^{(n)} \]
over $\Z(1)^{\oplus 2n}\backslash \C^{2n} \cong (\C^{\times})^{2n}$, cf. \cite[$\S$4]{Ramakrishnan89}.

Let $X$ be a smooth irreducible subvariety of a Thurston deformation variety $Y(M;\mcT)$ containing the geometric point, and let $\langle \underline{l},\underline{m}^2\rangle$ denote the line bundle over $X$ obtained as the pull-back of $H_{\Z}^{(n)}\backslash H_{\C}^{(n)}$ by the holomorphic map $(\underline{l},\underline{m}^2):X \ra (\C^{\times})^n\times (\C^{\times})^n$.
When $h=1$ and $X$ is the canonical (curve) component of , Morishita and Terashima \cite[Thm.4.1]{MorishitaTerashima09} constructed a section of this bundle $\langle \underline{l},\underline{m}^2\rangle$ over $X$, which thus depends on the choice of an ideal triangulation defining the deformation curve. Here, we emulate their construction to (the canonical component of) the augmented character variety. Put $\zeta:(\C^{\times})^{2h} \ra (\C^{\times})^{2h}:(l,m)\mapsto (l,m^2)$.

\begin{thm} \label{thm:MT09_4.1}
Define a map $\bar{C}_{\MoTe}:\tilde{R}(M) \ra H_{\Z}^{(h)}\backslash H_{\C}^{(h)}$ by
\begin{equation}  \label{eq:MT09_4.1}
z\mapsto H_{\Z}^{(h)}\left(\begin{array}{ccc} \Onen & \underline{v}(z) & (\tpi)^2cs([A_z])+ \underline{u}(z)\cdot \underline{v}(z)^t \\  & \In &  2\underline{u}(z)^t \\  & & \Onen^t \\ \end{array}\right),
\end{equation}
where given $z$, for each $i=1,\cdots,h$, we take $(\alpha_i,\beta_i)\in\C^2$, plus $(a_i,b_i)\in \C^2$ in Case 2, as in Theorem \ref{thm:KK93_3.2} and put $\underline{v}(z):=(\alpha_1(z),\cdots,\alpha_h(z))$, $\underline{u}(z):=(\beta_1(z),\cdots,\beta_h(z))$, 
and $A_z$ is a flat connection with holonomy being (a conjugate of) $\rho_z$ and such that for each $i$, $A_z$ is in normal form for the constants $(\alpha_i(z),\beta_i(z))$, plus $(a_i(z),b_i(z))$ in Case 2, just chosen.

Then, $\bar{C}_{\MoTe}$ induces a well-defined map $C_{\MoTe}:\tilde{X}(M)_{\C} \ra H_{\Z}^{(h)} \backslash H_{\C}^{(h)}$ covering the composite of the canonical map $\tilde{X}(M)_{\C} \ra D(\partial M)_{\C}=(\C^{\times})^{2h}$ and $\zeta$. This is a holomorphic flat section over $\tilde{X}(M)_{\C}$ of $H_{\Z}^{(h)}\backslash H_{\C}^{(h)}\ra (\C^{\times})^h\times (\C^{\times})^h$:
\[ \xymatrix@R=15pt{ & H_{\Z}^{(h)} \backslash H_{\C}^{(h)} \ar[d] \\ 
\tilde{X}(M)_{\C} \ar[ru]^{C_{\MoTe}} \ar[r] & (\C^{\times})^h\times (\C^{\times})^h.} \]
\end{thm}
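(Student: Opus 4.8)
The plan is to verify three things in order: (i) that $\bar C_{\MoTe}$ is well defined, i.e. independent of the choices of constants and of the flat connection $A_z$; (ii) that it descends through the double cover $\tilde R(M)\to\tilde X(M)$, i.e. is $\PGL_2$-invariant; and (iii) that the resulting map is a holomorphic flat section of the Heisenberg bundle. The main tool throughout is Theorem \ref{thm:KK93_2.5}, which records exactly how $cs(A)$ changes when one alters the boundary normal form, together with the explicit action (\ref{eq:CS_linebundle-action}) defining the Heisenberg bundle. The hard part is step (i): one must check that the ambiguities in $cs([A_z])$, in $\underline u(z)$ and in $\underline v(z)$ conspire precisely so that the matrix in (\ref{eq:MT09_4.1}) is well defined modulo $H_{\Z}^{(h)}$ on the left. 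This is where the $(2\pi i)^2$ normalization and the cross term $\underline u(z)\cdot\underline v(z)^t$ are doing real work.

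For step (i), first I would fix $z\in\tilde R(M)(\C)$ and two admissible choices of data for each boundary torus $N_i$. By Theorem \ref{thm:KK93_2.5}, changing $(\alpha_i,\beta_i)$ to $\epsilon(\alpha_i+p_i,\beta_i+q_i)$ (in Case 1, and the corresponding statement in Case 2, which is the content of \cite[Lem.3.4,3.5]{KirkKlassen93}) changes $cs(A)$ by $\sum_i(p_i\beta_i-q_i\alpha_i)\bmod\Z$. I would then compute the product on the left by the corresponding element of $H_{\Z}^{(h)}$: an element $(\mathbf a,\mathbf b;\mathbf c)'$ with $\mathbf a,\mathbf b\in\Z(1)^h$, $\mathbf c\in\Z(2)$ multiplied against the matrix in (\ref{eq:MT09_4.1}) shifts $\underline v\mapsto\underline v+\mathbf a$, $2\underline u\mapsto 2\underline u+\mathbf b$, and the top-right entry by $\mathbf c+\mathbf a\cdot(2\underline u)^t$ (plus further integral contributions from $\mathbf a\cdot\mathbf b^t$). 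Matching $\mathbf a=(2\pi i)\,\underline p$, $\mathbf b=2(2\pi i)\,\underline q$, one sees the change in the cross term $\underline u\cdot\underline v^t$ plus $(2\pi i)^2$ times the change $\sum_i(p_i\beta_i-q_i\alpha_i)$ in $cs$ lands exactly in the top-right entry of the chosen $H_{\Z}^{(h)}$-element. I expect the sign conventions and the precise placement of factors of $2$ (arising because we use $\underline m^2$, i.e. the $\PSL_2$-eigenvalue, rather than $\underline m$) to be the fiddly point; Theorem \ref{thm:KK93_2.4} handles independence of the particular flat connection $A_z$ in a single normal form class, so only the boundary-data ambiguity needs this computation. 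I would also record that the $\epsilon=\pm1$ ambiguity (lifting $\PSL_2$-holonomy to $\SL_2$) is absorbed because only $\underline m^2$ enters.

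For step (ii), conjugating $\rho_z$ by an element of $\PGL_2(\C)$ does not change the restriction to $\pi_1(\partial M)$ up to simultaneous conjugacy, hence does not change the admissible constants $(\alpha_i,\beta_i)$ (it may change $(a_i,b_i)$, but those do not appear in (\ref{eq:MT09_4.1})); one can transport a normal-form connection $A_z$ by the same gauge transformation and apply Theorem \ref{thm:KK93_2.4}/\ref{thm:KK93_2.5} to see $cs$ is unchanged mod $\Z$. Thus $\bar C_{\MoTe}$ is constant on $\PGL_2$-orbits, so factors through $\tilde X(M)=\tilde R(M)\sslash\PGL_2$; compatibility with the canonical map $\tilde X(M)_\C\to D(\partial M)_\C$ followed by $\zeta$ is immediate from the formula, since the bottom-of-the-matrix data are $\exp\underline v=\underline l$ and $\exp(2\underline u)=\underline m^2$ by Lemma \ref{lem:holonomy_of_normal_form} and Definition \ref{defn:m(z),l(z)}.

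For step (iii), holomorphy and flatness: locally on $\tilde X(M)_\C$ one can choose the normal-form data $(\underline u(z),\underline v(z))$ to vary holomorphically (Thurston's theorem, as used around (\ref{eq:mfu,mfv})), and $cs([A_z])$ varies holomorphically by Theorem \ref{thm:KK93_3.2}(2) — indeed part (2) of that theorem gives the variation formula $z(1)z(0)^{-1}=\exp\!\big(\tfrac1{2\pi i}\int(\underline\alpha\,d\underline\beta-\underline\beta\,d\underline\alpha)\big)$, which is exactly the statement that the section $C_{\MoTe}$ is parallel for the connection $\theta=dc-a\cdot db^t$ on the Heisenberg bundle (after the substitution $a=\underline v$, $b=2\underline u$, $c=(2\pi i)^2 cs+\underline u\cdot\underline v^t$, whose differential matches $\theta$ up to the exact, hence connection-irrelevant, term $d(\underline u\cdot\underline v^t)$). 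So $C_{\MoTe}$ is the unique flat holomorphic section with the prescribed value at $\tilde z^0$, completing the proof. I would mention that this is the direct analogue of \cite[Thm.4.1]{MorishitaTerashima09}, the only new point being the passage from the deformation curve to the augmented character variety, which is harmless since all the quantities involved ($cs$, $\underline u$, $\underline v$) were already defined representation-theoretically.
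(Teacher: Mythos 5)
Your proposal is correct and follows essentially the same route the paper defers to: the paper's own proof is a one-line citation that the argument of Morishita--Terashima \cite[Thm.~4.1]{MorishitaTerashima07}, built on Theorem~\ref{thm:KK93_2.5} and its parabolic analogue \cite[Lem.~3.4--3.5]{KirkKlassen93}, ``works without change'' for the augmented character variety, and you have reconstructed precisely that argument (well-definedness modulo $H_{\Z}^{(h)}$ via the change-of-normal-form formula, descent through $\tilde R(M)\to\tilde X(M)$ by $\PGL_2$-invariance, flatness from $(2\pi i)^2\,d\,cs=v\,du-u\,dv$). The matching $\mathbf a=(2\pi i)\underline p$, $\mathbf b=2(2\pi i)\underline q$ in your step~(i) tacitly and correctly reads $\underline u,\underline v$ with the $2\pi i$-normalization inherited from (\ref{eq:CS_section}), under which the top-right entry shifts by $(2\pi i)^2(k+\sum_iq_ip_i)\in\Z(2)$ and the verification closes.
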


The same proof of Morishita-Terashima \cite[Thm.4.1]{MorishitaTerashima07}, which is based on \cite[2.5, 3.4]{KirkKlassen93} (Theorem \ref{thm:KK93_2.5} here), works without change for our definition. Next, we compare the Kirk-Klassen section $C_{\KK}$ (Theorem \ref{thm:KK93_3.2}) and this Morishita-Terashima section $C_{\MoTe}$.

We denote by $\langle \nu, \mu\rangle$ the free abelian group with generators $\{\nu_1,\mu_1,\cdots, \nu_h,\mu_h\}$ endowed with the action on the Heisenberg group $\C^{2h}\times\C^{\times}$ as defined in (\ref{eq:CS_linebundle-action}) and by $\Delta_h$ the group $\langle \nu, \mu\rangle \rtimes \langle \iota \rangle$ with $\iota$ being also as defined there. 

%%%%%%%%%%%%%%%%%%%%
\begin{prop} \label{prop:P=H_ZbsH_C}
(1) The natural map $\langle \nu, \mu\rangle \backslash \C^{2h}\times\C^{\times} \ra E(\partial M)=\Delta_h\backslash \C^{2h}\times\C^{\times}$ covering the quotient map $q:D(\partial M)=(\C^{\times})^{2h}\ra D(\partial M)\sslash \langle\iota\rangle = X(\partial M)$ induces an isomorphism
\[ \langle \nu, \mu\rangle \backslash \C^{2h}\times\C^{\times} \isom q^{\ast}E(\partial M) \] 
 of principal $\C^{\times}$-bundles.

(2) The (multi-valued) map $f: \C^{2h}\times\C^{\times} \ra H_{\C}^{(h)}$ defined by
\[ (x,y;z) \mapsto \left(\begin{array}{ccc} \Oneh & x & \pi i \log z+\frac{1}{2}x\cdot y^t \\  & \Ih &  y^t \\  & & \Oneh^t  \end{array}\right) \]
induces an isomorphism of analytic spaces
\begin{equation} \label{eq:isom_of_CS-line_bdls}
\langle \nu, \mu\rangle \backslash \C^{2h}\times \C^{\times} \isom \tilde{H}_{\Z}^{(h)}\backslash H_{\C}^{(h)}
\end{equation}
covering the identity map of $D(\partial M)=(\C^{\times})^{2h}$, where 
\[ \tilde{H}_{\Z}^{(n)}=\left\{ \left(\begin{array}{ccc}  \Onen & a & c\\  & \In & b^t \\  & & \Onen^t \\ \end{array}\right) \,\middle\vert\, \begin{array}{c} a,b\in\Z(1)^{n}, \\ c\in\frac{1}{2}\Z(2)\end{array} \right\} .\]

(3) Let $g\ :\ H_{\C}^{(h)} \ra H_{\C}^{(h)}$ be the group homomorphism $\left(\begin{array}{ccc} \Oneh & x & z \\  & \Ih &  y^t \\  & & \Oneh^t \\ \end{array}\right) \mapsto \left(\begin{array}{ccc} \Oneh & x & 2z \\  & \Ih &  2y^t \\  & & \Oneh^t \\ \end{array}\right)$. 
The induced map 
\begin{equation} \label{eq:pull-back_of_CS-line_bdls2} 
\tilde{H}_{\Z}^{(h)}\backslash H_{\C}^{(h)}  \twoheadrightarrow H_{\Z}^{(h)}\backslash H_{\C}^{(h)}
\end{equation}
which covers the map $\zeta:(\C^{\times})^{2h} \ra (\C^{\times})^{2h}:(l,m)\mapsto (l,m^2)$ is a pull-back diagram. 
\end{prop}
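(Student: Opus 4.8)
The proposition is a bundle-theoretic unpacking of the Heisenberg-group formalism, and the plan is to verify each of the three parts by a direct equivariance computation, the bulk of which is bookkeeping with the cocycle $e[v_1\cdot v_2'^t]$ defining the Heisenberg multiplication. For part (1), I would first recall that $E(\partial M)$ was constructed precisely as the descent of $Q(\partial M)=\langle\nu,\mu\rangle\backslash(\C^{2h}\times\C^\times)$ along $q:D(\partial M)\to X(\partial M)$ (the $\iota$-action on $Q(\partial M)$ acting trivially on fibers over fixed points, as cited from Kirk--Klassen). By the universal property of pull-back, giving an isomorphism $\langle\nu,\mu\rangle\backslash(\C^{2h}\times\C^\times)\isom q^\ast E(\partial M)$ over $D(\partial M)$ is the same as exhibiting $Q(\partial M)$ together with the canonical $D(\partial M)\to X(\partial M)$-equivariant projection to $E(\partial M)$; this is tautological from the two-step quotient construction $\C^{2h}\times\C^\times \to \langle\nu,\mu\rangle\backslash(\cdots) \to \Delta_h\backslash(\cdots)$. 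So part (1) requires only a careful statement that the descent datum used to define $E(\partial M)$ is the one coming from the $\langle\iota\rangle$-action on $Q(\partial M)$, which is immediate.

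\textbf{Parts (2) and (3).} For part (2), the map $f(x,y;z)=\left(\begin{smallmatrix}\Oneh & x & \pi i\log z+\frac12 x\cdot y^t\\ & \Ih & y^t\\ & & \Oneh^t\end{smallmatrix}\right)$ must be checked to (i) descend to the quotients, i.e. intertwine the $\langle\nu,\mu\rangle$-action on the source with the left-translation action of $\tilde H_{\Z}^{(h)}$ on the target, and (ii) be a bijection fiberwise over $(\C^\times)^{2h}$. For (i) one computes: the generator $\nu_j$ acts on the source by $(x,y;z)\mapsto(x+\tpi e_j, y; z e^{y_j})$ (writing $e_j$ for the standard basis vector and reading off the $j$-th coordinate version of (\ref{eq:CS_linebundle-action})), and one must show $f$ of this equals the left product of $f(x,y;z)$ by the matrix with $a=\tpi e_j$, $b=0$, $c=0$ — this works out because $\pi i\log(ze^{y_j})+\frac12(x+\tpi e_j)\cdot y^t = \pi i\log z + \pi i y_j + \frac12 x\cdot y^t + \pi i y_j$... here one must be careful: the matrix product $(\tpi e_j,0;0)'\cdot(x,y;\text{stuff})'$ produces an entry $\text{stuff}+\tpi e_j\cdot y^t/1$, and matching the $\pi i y_j$ discrepancy against the $\frac12 x\cdot y^t$ normalization is exactly the reason for the $\frac12 x\cdot y^t$ correction term and for $c$ ranging over $\frac12\Z(2)$ rather than $\Z(2)$. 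The $\mu_j$-generator is symmetric, and $\iota$ is not used here since we are below the $\langle\iota\rangle$-quotient. The bijectivity (ii) is clear since $\log z$ is only well-defined up to $\Z(1)$ and $c$ is only well-defined up to $\frac12\Z(2)$, and these match under $c\leftrightarrow\pi i\log z$. For part (3), $g$ is visibly a group homomorphism and sends $\tilde H_{\Z}^{(h)}$ onto $H_{\Z}^{(h)}$ (the $\frac12\Z(2)$ in the $c$-slot maps to $\Z(2)$, the $\Z(1)^{2n}$ in the $a,b$-slots to $\Z(1)^{2n}$ via $b\mapsto 2b$); on base spaces the induced map is $(l,m)=(\exp a,\exp b)\mapsto(\exp a,\exp 2b)=(l,m^2)$, i.e. $\zeta$. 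That the resulting square is cartesian amounts to checking that $\tilde H_{\Z}^{(h)}$ is exactly the preimage of $H_{\Z}^{(h)}$ under the (non-injective, $2$-to-$1$ on the relevant lattices) map $g$ restricted to the subgroup of integer matrices, equivalently that $\tilde H_{\Z}^{(h)}\backslash H_{\C}^{(h)}\to H_{\Z}^{(h)}\backslash H_{\C}^{(h)}$ is the fiber product of $H_{\Z}^{(h)}\backslash H_{\C}^{(h)}\to(\C^\times)^{2h}$ with $\zeta$; this follows from the snake-lemma comparison of the two extensions $1\to\C^\times\to \tilde H_{\Z}^{(h)}\backslash H_{\C}^{(h)}\to(\C^\times)^{2h}\to 1$ and $1\to\C^\times\to H_{\Z}^{(h)}\backslash H_{\C}^{(h)}\to(\C^\times)^{2h}\to 1$, noting $g$ is the identity on the central $\C^\times$.

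\textbf{Main obstacle.} None of the three parts is conceptually deep; the real danger is purely computational — keeping straight the several competing conventions for the Heisenberg cocycle (the matrix realization $H_{\C}^{(n)}$ with entry $c$, versus the group law $\ast$ with cocycle $e[v_1\cdot v_2'^t]$, versus the descriptions $(a,b;c)'$), the factor $e[u]=\exp(\frac{1}{2\pi i}u)$ versus $\exp$, the roles of $\mu$ and $\nu$ being swapped in (\ref{eq:CS_linebundle-action}) (explicitly flagged in the text), and the $2\pi i$ and $\frac12$ normalizations. The $\frac12 x\cdot y^t$ correction term in the definition of $f$ and the appearance of $\frac12\Z(2)$ in $\tilde H_{\Z}^{(h)}$ are both forced by these conventions, and the one genuine verification worth doing carefully is that with these precise choices the equivariance in part (2) holds on the nose (not merely up to a sign or a fixed central element). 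I would organize the computation by checking equivariance for each of the $2h+1$ generators $\{\nu_j,\mu_j,\iota\}$ — with $\iota$ only relevant after composing to $E(\partial M)$ — and then assembling parts (1)--(3) into the single commuting diagram relating $\langle\nu,\mu\rangle\backslash(\C^{2h}\times\C^\times)$, $q^\ast E(\partial M)$, $\tilde H_{\Z}^{(h)}\backslash H_{\C}^{(h)}$ and $H_{\Z}^{(h)}\backslash H_{\C}^{(h)}$, which is what subsequent sections (the comparison of $C_{\KK}$ and $C_{\MoTe}$) will actually invoke.
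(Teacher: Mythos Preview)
Your approach to parts (1) and (2) is correct and essentially identical to the paper's: part (1) is immediate from the two-step quotient construction of $E(\partial M)$, and part (2) is verified by checking that $f$ intertwines the $\nu_j$- and $\mu_j$-actions on the source with left multiplication by the appropriate generators of $\tilde H_{\Z}^{(h)}$ on the target (together with the $\Z(1)$-ambiguity of $\log z$ accounting for the $\frac12\Z(2)$ in the $c$-slot).

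Part (3), however, contains two incorrect claims that would derail your argument as written. First, $g$ is \emph{not} the identity on the central $\C^\times$: the center of $H_{\C}^{(h)}$ consists of matrices with $a=b=0$, and $g$ doubles the $c$-entry there. (The paper remarks explicitly, just after the proposition, that neither $f$ nor $g$ individually is a map of principal $\C^\times$-bundles, though their composite is.) Second, $\tilde H_{\Z}^{(h)}$ is \emph{not} the preimage $g^{-1}(H_{\Z}^{(h)})$: that preimage would have $b\in\frac12\Z(1)^h$, whereas $\tilde H_{\Z}^{(h)}$ requires $b\in\Z(1)^h$. Relatedly, since $\tilde H_{\Z}^{(h)}$ and $H_{\Z}^{(h)}$ are not normal in $H_{\C}^{(h)}$, the quotients are not Lie groups, and there are no short exact sequences of groups to which a snake lemma could apply.

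The paper's proof of (3) is instead a direct coset computation: for each $(\bar x,\bar y)\in(\C^\times)^2$ it writes out the $H_{\Z}$-cosets comprising the fiber of $H_{\Z}\backslash H_{\C}$ over $(\bar x,\overline{2y})$ and checks that their preimages under $g$ are exactly the $\tilde H_{\Z}$-cosets comprising the fiber of $\tilde H_{\Z}\backslash H_{\C}$ over $(\bar x,\bar y)$. This is the clean fix for your argument: once $g$ is holomorphic and covers $\zeta$, checking fiberwise bijectivity via explicit cosets suffices, and avoids invoking group-theoretic structure that the quotients do not possess.
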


Note that the two maps in (2) and (3) are not compatible with the right $\C^{\times}$-actions, so are not maps of principal $\C^{\times}$-bundles, while their composite $g\circ f$ is so.

\begin{proof} 
For simplicity, we assume that $h=1$ and write $H_{\C}=H_{\C}^{(1)}$, $H_{\Z}=H_{\Z}^{(1)}$, $\tilde{H}_{\Z}=\tilde{H}_{\Z}^{(1)}$. 

(1) This is obvious since $E(\partial M)=\Delta_h \backslash \C^{2}\times\C^{\times}$.

(2) First, a different choice of $\log z$ changes the original image by a left-translation:
\[ \left(\begin{array}{ccc} 1 & x & \pi i (\log z+\tpi n)+\frac{1}{2}xy\\  & 1&  y \\  & & 1 \\ \end{array}\right)
=\left(\begin{array}{ccc} 1 & 0 & (\tpi)^2\frac{n}{2} \\  & 1&  0 \\  & & 1 \\ \end{array}\right) \left(\begin{array}{ccc} 1 & x & \pi i \log z+\frac{1}{2}xy\\  & 1&  y \\  & & 1 \\ \end{array}\right). \]
The action $\nu:(x,y;z)\mapsto (x+\tpi,y;ze^{y})$ corresponds to a left-translation:
\[ \left(\begin{array}{ccc} 1 & x+\tpi & \pi i(\log z+y)+\frac{1}{2}(x+\tpi)y\\  & 1&  y \\  & & 1 \\ \end{array}\right) =\left(\begin{array}{ccc} 1 & \tpi & 0 \\  & 1&  0 \\  & & 1 \\ \end{array}\right) \left(\begin{array}{ccc} 1 & x & \tpi \log z+\frac{1}{2}xy\\  & 1&  y \\  & & 1 \\ \end{array}\right).\]
Similarly, the action $\mu:(x,y;z)\mapsto (x,y+\tpi;ze^{-x})$ corresponds to another left-translation:
\[ \left(\begin{array}{ccc} 1 & x & \pi i (\log z-x)+\frac{1}{2}x(y+\tpi)\\  & 1&  y+\tpi \\  & & 1 \\ \end{array}\right) =\left(\begin{array}{ccc} 1 & 0 & 0 \\  & 1&  2\pi i \\ &  & 1 \\ \end{array}\right) \left(\begin{array}{ccc} 1 & x & \pi i\log z+\frac{1}{2}xy\\  & 1&  y \\  & & 1 \\ \end{array}\right). \]
Hence, the given map induces an isomorphism $\langle\nu,\mu\rangle \backslash \C^2\times \C^{\times} \isom \tilde{H}_{\Z}\backslash H_{\C}$ of (analytic) spaces covering the identity map. 

(3) For $x\in\C$, write $\bar{x}$ for $x\text{ mod }\tpi \Z$. We have to show that for each $(\bar{x},\bar{y})\in (\C^{\times})^{2}$, the fiber of $H_{\Z}\backslash H_{\C} \ra (\C^{\times})^{2}$ over $(\bar{x},\bar{y}^2=\overline{2y})$ is in bijection under $g$ with the fiber of $\tilde{H}_{\Z}\backslash H_{\C}$ over $(\bar{x},\bar{y})$. The former fiber is the union of the $H_{\Z}$-right cosets 
\[  \left(\begin{array}{ccc} 1 & \tpi\, l & (\tpi)^2n \\  & 1 &  \tpi\, m \\  & & 1 \\ \end{array}\right) \left(\begin{array}{ccc} 1 & x & z \\  & 1 &  2y\\  & & 1 \\ \end{array}\right)
= \left(\begin{array}{ccc} 1 & x+\tpi\, l & 2(\frac{z}{2}+\tpi\, ly+(\tpi)^2\frac{n}{2}) \\  & 1 &  2(y+\tpi\, m) \\  & & 1 \\ \end{array}\right) \] 
with $z$ running through $\C$ ($l,m,n\in\Z$).
But, for each $z\in\C$, the inverse image under $g$ of this subset:
\[ \left\{ \left(\begin{array}{ccc} 1 & x+\tpi\, l & \frac{z}{2}+\tpi\, ly+(\tpi)^2\frac{n}{2} \\  & 1 &  y+\tpi\, m \\  & & 1 \\ \end{array}\right) \ | \ l,m,n\in\Z \right\} \] 
is precisely the $\tilde{H}_{\Z}$-right coset of $\left(\begin{array}{ccc} 1 & x & \frac{z}{2} \\  & 1&  y \\  & & 1 \\ \end{array}\right)$.
\end{proof}

\begin{cor}
There exist canonical isomorphisms of principal $\C^{\times}$-bundles over $D(\partial M)$:
\[ q^{\ast} E(\partial M) \cong \langle \nu, \mu\rangle \backslash \C^{2h}\times\C^{\times}\cong \zeta^{\ast}(H_{\Z}^{(h)}\backslash H_{\C}^{(h)}).  \]
\end{cor}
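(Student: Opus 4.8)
The corollary is a formal consequence of Proposition \ref{prop:P=H_ZbsH_C}, so the plan is simply to compose the isomorphisms already established there, checking that each one lives over $D(\partial M)$ (after the appropriate base change) and that the compositions glue. First I would record that part (1) of the proposition gives an isomorphism of principal $\C^{\times}$-bundles
\[ \langle \nu,\mu\rangle \backslash \C^{2h}\times\C^{\times}\ \isom\ q^{\ast}E(\partial M) \]
over $D(\partial M)=(\C^{\times})^{2h}$, where $q:D(\partial M)\ra X(\partial M)$ is the quotient map by $\langle\iota\rangle$; this is the first of the two claimed isomorphisms (read in the opposite direction). For the second, I would combine parts (2) and (3): part (2) produces an analytic isomorphism
\[ \langle \nu,\mu\rangle \backslash \C^{2h}\times\C^{\times}\ \isom\ \tilde H_{\Z}^{(h)}\backslash H_{\C}^{(h)} \]
covering $\mathrm{id}_{D(\partial M)}$, and part (3) exhibits $\tilde H_{\Z}^{(h)}\backslash H_{\C}^{(h)}\twoheadrightarrow H_{\Z}^{(h)}\backslash H_{\C}^{(h)}$ as a pull-back diagram along $\zeta:(\C^{\times})^{2h}\ra(\C^{\times})^{2h}$, $(l,m)\mapsto(l,m^2)$; i.e. $\tilde H_{\Z}^{(h)}\backslash H_{\C}^{(h)}\cong \zeta^{\ast}(H_{\Z}^{(h)}\backslash H_{\C}^{(h)})$ canonically over $D(\partial M)$. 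Chaining these two gives
\[ \langle \nu,\mu\rangle \backslash \C^{2h}\times\C^{\times}\ \isom\ \zeta^{\ast}\bigl(H_{\Z}^{(h)}\backslash H_{\C}^{(h)}\bigr), \]
which is the remaining isomorphism. Transporting the $\C^{\times}$-bundle structure through these maps is the only thing requiring a word of care: as noted right after the proposition, the maps $f$ and $g$ individually are not $\C^{\times}$-equivariant, but their composite $g\circ f$ is, so it is precisely the composite isomorphism $\langle \nu,\mu\rangle \backslash \C^{2h}\times\C^{\times}\isom \zeta^{\ast}(H_{\Z}^{(h)}\backslash H_{\C}^{(h)})$ — rather than the intermediate map to $\tilde H_{\Z}^{(h)}\backslash H_{\C}^{(h)}$ — that is an isomorphism of principal $\C^{\times}$-bundles. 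Since both $q^{\ast}E(\partial M)$ and $\zeta^{\ast}(H_{\Z}^{(h)}\backslash H_{\C}^{(h)})$ are already principal $\C^{\times}$-bundles over $D(\partial M)$, and part (1)'s map is a bundle isomorphism, the upshot is a chain of principal $\C^{\times}$-bundle isomorphisms over $D(\partial M)$, which is the assertion.

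There is essentially no obstacle here; the work was done in proving Proposition \ref{prop:P=H_ZbsH_C}. The only mild subtlety worth spelling out in the write-up is \emph{canonicity}: all three bundles depend a priori on the choice of oriented basis $\{\mu_i,\nu_i\}$ of $\pi_1(\partial M)$ used to trivialize $D(\partial M)\cong(\C^{\times})^{2h}$ and to write down the Heisenberg actions, but the isomorphisms produced are the tautological ones coming from $\mathrm{id}$ on $\C^{2h}\times\C^{\times}$ (up to the explicit left-translations computed in the proof of part (2)), hence independent of further choices; I would phrase the statement as ``canonical'' only in the sense that, once the basis is fixed, no additional choices enter. One could, if desired, add a remark that under the change-of-basis $\mathrm{Sp}$-action the whole diagram is equivariant, but that is not needed for the corollary as stated.

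In the actual LaTeX I would write this as a short proof environment:
\begin{proof}
Immediate from Proposition \ref{prop:P=H_ZbsH_C}. Part (1) gives an isomorphism $q^{\ast}E(\partial M)\cong \langle \nu,\mu\rangle \backslash \C^{2h}\times\C^{\times}$ of principal $\C^{\times}$-bundles over $D(\partial M)$. Part (2) gives an analytic isomorphism $\langle \nu,\mu\rangle \backslash \C^{2h}\times\C^{\times}\isom \tilde H_{\Z}^{(h)}\backslash H_{\C}^{(h)}$ over $\mathrm{id}_{D(\partial M)}$, and part (3) identifies $\tilde H_{\Z}^{(h)}\backslash H_{\C}^{(h)}$ with $\zeta^{\ast}(H_{\Z}^{(h)}\backslash H_{\C}^{(h)})$ over $D(\partial M)$; their composite $g\circ f$ is $\C^{\times}$-equivariant, hence yields an isomorphism $\langle \nu,\mu\rangle \backslash \C^{2h}\times\C^{\times}\cong \zeta^{\ast}(H_{\Z}^{(h)}\backslash H_{\C}^{(h)})$ of principal $\C^{\times}$-bundles over $D(\partial M)$. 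Composing, we obtain the asserted chain of canonical isomorphisms.
\end{proof}
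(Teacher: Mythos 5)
Your proposal is correct and follows the approach the paper clearly intends: part (1) of Proposition \ref{prop:P=H_ZbsH_C} gives the first isomorphism directly, and for the second you chain the map $f$ from part (2) with the pullback identification from part (3), observing — as the paper points out in the sentence immediately after the proposition — that neither $f$ nor $g$ alone is $\C^{\times}$-equivariant but $g\circ f$ is, so that the chained map is genuinely a principal $\C^{\times}$-bundle isomorphism $\langle\nu,\mu\rangle\backslash\C^{2h}\times\C^{\times}\isom\zeta^{\ast}(H_{\Z}^{(h)}\backslash H_{\C}^{(h)})$. The paper leaves this corollary without a written proof, but the one subtlety you flag (passing through $\tilde H_{\Z}^{(h)}\backslash H_{\C}^{(h)}$ is not a $\C^{\times}$-equivariant step, only the composite is) is exactly the point the paper's post-proposition remark is there to cover, so your write-up matches it.
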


Let us introduce another integral subgroup $\bar{H}_{\Z}^{(n)}$ of $H_{\Z}^{(n)}$ containing  $H_{\Z}^{(n)}$:
\[ \bar{H}_{\Z}^{(n)}=\left\{ \left(\begin{array}{ccc}  \Onen & a & c\\  & \In & b^t \\  & & \Onen^t \\ \end{array}\right) \,\middle\vert\, \begin{array}{c} a,b\in 2\Z(1)^{n}, \\ c\in 4\Z(2)\end{array} \right\},\]
and define a map $d:H_{\C}^{(n)} \ra H_{\C}^{(n)}$ by
\[ \left(\begin{array}{ccc} \Oneh & x & z \\  & \Ih &  y^t \\  & & \Oneh^t \\ \end{array}\right) \mapsto \left(\begin{array}{ccc} \Oneh & 2x & 4z \\  & \Ih &  2y^t \\  & & \Oneh^t \\ \end{array}\right). \]

%%%%%%%%%%%%%%%%%%%%
\begin{prop} \label{prop:KK=MT-CS_sections} 
(1) The two sections $q^{\ast}C_{\KK}$, $\zeta^{\ast}C_{\MoTe}$ of $q^{\ast}E(\partial M) \cong \zeta^{\ast}(H_{\Z}^{(h)}\backslash H_{\C}^{(h)})$ over $\widetilde{X}(M)$ are equal. 

(2) For any (sufficiently small) Dehn surgery space $V$ of $Y(M;\mathcal{T})$ which thus admits a map into $\widetilde{X}(M)$, the two maps $V\ra \bar{H}_{\Z}^{(h)} \backslash H_{\C}^{(h)}$, one induced by $C_{\MoTe}:V\ra H_{\Z}^{(h)} \backslash H_{\C}^{(h)}$ and the original Morishita-Terashima section $Y(M;\mathcal{T}) \ra \bar{H}_{\Z}^{(h)} \backslash H_{\C}^{(h)}$ \cite[Thm.4.1]{MorishitaTerashima07} coincide.
\end{prop}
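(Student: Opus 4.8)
The plan is to pull both sections back to the common Heisenberg model furnished by Proposition \ref{prop:P=H_ZbsH_C} and to match them by unwinding their defining formulas; throughout one may, exactly as in the proof of Proposition \ref{prop:P=H_ZbsH_C}, treat the case $h=1$ first, the general case being identical.

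For part (1), Proposition \ref{prop:P=H_ZbsH_C} supplies the isomorphisms $q^{\ast}E(\partial M)\cong\langle\nu,\mu\rangle\backslash\C^{2h}\times\C^{\times}\cong\tilde{H}_{\Z}^{(h)}\backslash H_{\C}^{(h)}$ (the second being the map $f$ of part (2)) together with the pull-back square of part (3) identifying $\zeta^{\ast}(H_{\Z}^{(h)}\backslash H_{\C}^{(h)})$ with $\tilde{H}_{\Z}^{(h)}\backslash H_{\C}^{(h)}$ (via $g$). So it suffices to compute the image of $q^{\ast}C_{\KK}$ under the composite of $f$ and $g$ and to compare it with $\zeta^{\ast}C_{\MoTe}$. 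Both $C_{\KK}$ (Theorem \ref{thm:KK93_3.2}) and $C_{\MoTe}$ (Theorem \ref{thm:MT09_4.1}) are assembled from the \emph{same} data at a point $z$: the boundary constants $(\alpha_i,\beta_i)$ (and $(a_i,b_i)$ in Case 2) and a flat connection $A_z$ in normal form for these constants with holonomy conjugate to $\rho_z$, out of which one forms $cs([A_z])$; since independence of these choices is already part of both theorems (ultimately of Theorem \ref{thm:KK93_2.5}), it is enough to fix one consistent choice and compare. The $a$- and $b$-blocks of both sides then record the log-holonomies along $\nu$ and $\mu$, up to the coordinate normalizations built into $f$ and $g$, and the $c$-entry of the image of $q^{\ast}C_{\KK}$ works out---after accounting for the quadratic correction $\tfrac12\,x\cdot y^t$ in $f$ and the doubling of the $c$-coordinate by $g$---to $(2\pi i)^2$ times the sum of $cs(A_z)$ and the product of the two log-holonomies, which is exactly the $c$-entry appearing in (\ref{eq:MT09_4.1}). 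The remaining verification is simply that the coordinate identifications of Proposition \ref{prop:P=H_ZbsH_C} carry the lattice $H_{\Z}^{(h)}$ to the right place.

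For part (2), let $V\subset Y(M;\mcT)$ be a Dehn surgery space and $f:V\ra\tilde{X}(M)$ the induced map. The log-holonomies of the deformation-curve model and of the augmented-character-variety model differ by the factor $2$ recorded in (\ref{eq:u,v_mfu,mfv}): $\mfu=2\,u\circ f$, $\mfv=2\,v\circ f$. The original Morishita-Terashima section of \cite{MorishitaTerashima07} is built from $\mfu,\mfv$ and $cs([A_z])$, whereas $C_{\MoTe}$ is built from $u\circ f,\,v\circ f$ and the same $cs([A_z])$; the map $d$, which doubles the $a$- and $b$-blocks and multiplies the $c$-entry by $4=2\cdot 2$ (the $4$ reflecting that the quadratic term $\mfu\cdot\mfv$ equals $4\,(u\circ f)(v\circ f)$), compensates exactly this discrepancy, and since $d$ carries $H_{\Z}^{(h)}$ into $\bar{H}_{\Z}^{(h)}$ and is a group homomorphism, it induces a well-defined map on quotients. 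Hence $d\circ C_{\MoTe}|_{V}$ coincides with the original section restricted to $V$; the check is again an unwinding of (\ref{eq:MT09_4.1}) against the formula of \cite{MorishitaTerashima07}.

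The main obstacle is purely bookkeeping: keeping straight the three lattices $H_{\Z}^{(h)}\subset\bar{H}_{\Z}^{(h)}$ and $\tilde{H}_{\Z}^{(h)}$, the two doubling maps $g$ and $d$, the normalization $\pi i$ versus $2\pi i$ and the quadratic correction $\tfrac12\,x\cdot y^t$ hidden in the isomorphism $f$, and the opposing Kirk-Klassen and Morishita-Terashima conventions for which of $\mu,\nu$ carries which coordinate (cf. the parenthetical ``be wary of the roles of $\mu$ and $\nu$'' after (\ref{eq:CS_linebundle-action})). There is no conceptual difficulty once a single choice of boundary constants and normal-form flat connection is fixed at the outset and propagated through all the identifications, and the Case 2 (parabolic) locus needs no separate treatment because both section formulas handle it identically via $(\alpha_i,\beta_i)=(-u_i/2,-v_i/2)$.
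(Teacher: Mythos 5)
Your proposal takes essentially the same approach as the paper's own (even terser) proof, which merely points at \cite[Thm.4.1]{MorishitaTerashima07} and notes the factor-of-two relation $\mfu=2u\circ f$, $\mfv=2v\circ f$ from (\ref{eq:u,v_mfu,mfv}), hence the factor of four in the Chern-Simons invariant; your explicit coordinate chase through $f$, $g$, $d$ of Proposition \ref{prop:P=H_ZbsH_C} is a fleshed-out version of the same bookkeeping. One caution: your claim that the $c$-entry of the transformed $q^{\ast}C_{\KK}$ ``works out'' to match (\ref{eq:MT09_4.1}) is asserted rather than computed, and the normalization of $\underline{u},\underline{v}$ in Theorem \ref{thm:MT09_4.1} relative to the $\pi i\log z$ hidden in $f$, the doubling in $g$, and the ``be wary of the roles of $\mu$ and $\nu$'' swap carries delicate $2\pi i$-factors that deserve an actual verification instead of being propagated on faith.
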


\begin{proof}
These are obvious from Theorem 4.1 in \cite{MorishitaTerashima07}, taking into account that their $u(z)$ and $v(z)$ are $\mfu(z)$ and $\mfv(z)$ in our notations (\ref{eq:mfu,mfv}) which are equal to our $2u(z)$ and $2v(z)$ (\ref{eq:u,v_mfu,mfv}), and thus the Chern-Simons invariant $CS_X(z)$ used in their definition of $\tilde{\Phi}(z)$ is four times our $cs_X(z)$.
\end{proof}

\[ \xymatrix{ E(\bdM) \ar[d] & \ar@{->>}[l]  \langle \nu, \mu\rangle \backslash \C^{2h}\PLH\C^{\times} \ar[d] \ar@{->>}[r]^{g\circ f} & H_{\Z}^{(h)} \backslash H_{\C}^{(h)} \ar[d] \ar[r]^d & \bar{H}_{\Z}^{(h)} \backslash H_{\C}^{(h)} \ar[d] \\ 
X(\partial M) & \ar[l]_(.5){q} D(\partial M) \ar[r]^{\zeta}_(.5){(x,y)\mapsto (x,y^2)} & \C^{\times}\times\C^{\times} \ar[r]_{(x,y)\mapsto (x^2,y^2)} & \C^{\times}\times\C^{\times} \\
X(M)  \ar[u] & \ar[u] \ar@/^1.5pc/[uu]^(.72){q^{\ast}C_{\KK}} \ar[l] \widetilde{X}(M) \ar[u]  \ar@/_1.5pc/[uu]_(.72){\zeta^{\ast}C_{\MoTe}} & V \ar[l] \ar@{^(->}[r] & Y(M;\mathcal{T}) \ar[u] \ar@/_1.8pc/[uu]  } \]

Here, all square diagrams are pull-back diagrams. 

Before finishing this subsection, we give another description of the Heisenberg bundle $H_{\Z}^{(h)} \backslash H_{\C}^{(h)}$.
Let $\langle \nu', \mu'\rangle$ be the free abelian group of rank $2h$ with generators $\{\nu_1',\mu_1',\cdots, \nu_h',\mu_h'\}$ endowed with the action on $\C^{2h}\times\C^{\times}$ by
\begin{align} \label{eq:CS_linebundle-action2'}
\nu'_j :& (x,\, y\, ;\, z) \mapsto (x+\tpi\,  e_j,\, y\, ;\, ze^{y_j/2}) \\
\mu'_j :&  (x,\, y\, ;\, z) \mapsto (x,\, y +\tpi\,  e_j\, ;\, ze^{-x_j/2}), \nonumber
\end{align}
where $x=(x_1,\cdots,x_h)$, $y=(y_1,\cdots,y_h)$, and $\{e_1,\cdots,e_h\}$ is the standard basis of $\Z^h$.

\begin{prop}
The (multi-valued) map  $\C^{2h}\times\C^{\times} \ra H_{\C}^{(h)}$ defined by
\[ (x,y;z) \mapsto \left(\begin{array}{ccc} \Oneh & x & \tpi \log z+\frac{1}{2}x\cdot y^t \\ & \Ih &  y^t \\  & & \Oneh^t \end{array}\right) \]
induces an isomorphism
\[ P:=\langle \nu', \mu'\rangle \backslash \C^{2h}\times \C^{\times} \ra H_{\Z}^{(h)} \backslash H_{\C}^{(h)} \] 
of principal $\C^{\times}$-bundles over $D(\partial M)=(\C^{\times})^{2h}$. 

The principal $\C^{\times}$-bundle $\langle \nu, \mu\rangle \backslash \C^{2h}\times\C^{\times}$ is $P^{\otimes2}$, tensor square of the line bundle $P$.
\end{prop}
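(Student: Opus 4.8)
The plan is to verify directly that the stated map on total spaces is a well-defined isomorphism of $\C^\times$-bundles, and then to identify the two bundles $\langle \nu,\mu\rangle \backslash \C^{2h}\PLH\C^\times$ and $P^{\otimes 2}$ by keeping track of the action of the generators under the chosen coordinates. First I would check that the map
\[ (x,y;z) \mapsto \left(\begin{array}{ccc} \Oneh & x & \tpi \log z+\tfrac{1}{2}x\cdot y^t \\ & \Ih &  y^t \\  & & \Oneh^t \end{array}\right) \]
is well-defined modulo the indeterminacy of $\log z$: replacing $\log z$ by $\log z + \tpi n$ ($n\in\Z(1)^{\oplus\text{(appropriate)}}$, or rather $n\in\Z^h$ acting coordinatewise) changes the top-right entry $\tpi\log z$ by $(\tpi)^2 n$, which is exactly a left translation by an element of $H_{\Z}^{(h)}$ (the block with $a=b=0$, $c\in\Z(2)$); this is the same computation as in the proof of Proposition~\ref{prop:P=H_ZbsH_C}(2), only with $\tpi\log z$ in place of $\pi i\log z$ and hence with $H_{\Z}^{(h)}$ in place of $\tilde H_{\Z}^{(h)}$. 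Next I would compute the effect of the generators $\nu_j'$, $\mu_j'$ of (\ref{eq:CS_linebundle-action2'}): applying $\nu_j':(x,y;z)\mapsto (x+\tpi e_j, y; ze^{y_j/2})$, the top-right entry becomes $\tpi(\log z + y_j/2) + \tfrac{1}{2}(x+\tpi e_j)\cdot y^t = \tpi\log z + \tfrac12 x\cdot y^t + \tpi y_j$, so the image differs from the original by left multiplication by the Heisenberg matrix with $a=\tpi e_j$, $b=0$, $c=0$, which lies in $H_{\Z}^{(h)}$; similarly $\mu_j'$ corresponds to left multiplication by the matrix with $a=0$, $b=\tpi e_j$, $c=0$ (the cross term $\tfrac12 x\cdot(\tpi e_j)^t = \tpi x_j/2$ is cancelled by the factor $e^{-x_j/2}$ on $z$). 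Hence the map descends to a morphism $P=\langle\nu',\mu'\rangle\backslash \C^{2h}\PLH\C^\times \ra H_{\Z}^{(h)}\backslash H_{\C}^{(h)}$ covering the identity of $(\C^\times)^{2h}$, and since on each fibre it is the translation identification $\C^\times \isom \C^\times$ (via $z\mapsto e[\tpi\log z]$-type normalization, visibly bijective), it is an isomorphism of principal $\C^\times$-bundles; one must also note that $g\circ f$ is $\C^\times$-equivariant even though $f$ alone is not, exactly as remarked after Proposition~\ref{prop:P=H_ZbsH_C}.

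For the second assertion I would compare the two group actions on $\C^{2h}\PLH\C^\times$ directly. The action $\langle\nu,\mu\rangle$ from (\ref{eq:CS_linebundle-action}) is $\nu_j:(x,y;z)\mapsto(x+\tpi e_j,y;ze^{y_j})$ and $\mu_j:(x,y;z)\mapsto(x,y+\tpi e_j;ze^{-x_j})$, i.e. precisely $(\nu_j')^{\,2}$-like and $(\mu_j')^{\,2}$-like on the $\C^\times$-coordinate but the \emph{same} on the $\C^{2h}$-coordinate — this is the subtlety. Concretely, the section $P\otimes P$ has total space $(\C^{2h}\PLH\C^\times)\times_{(\C^\times)^{2h}}(\C^{2h}\PLH\C^\times)$ with the diagonal $\langle\nu',\mu'\rangle$-action, which maps to $\C^{2h}\PLH\C^\times$ by $((x,y;z),(x,y;w))\mapsto (x,y;zw)$; under this map $\nu_j'$ acts on $zw$ by $zw\mapsto zwe^{y_j}$ and on $(x,y)$ by $x\mapsto x+\tpi e_j$ — wait, that would be $x\mapsto x+2\tpi e_j$ under the diagonal action, so instead I would set up the product using the antidiagonal / the correct tensor convention: the tensor-square line bundle $P^{\otimes 2}$ of a principal $\C^\times$-bundle $P$ is obtained as $(P\times_{(\C^\times)^{2h}}P)/\C^\times$ where $\C^\times$ acts by $(p_1\cdot t, p_2\cdot t^{-1})$, equivalently it is classified by twice the class of $P$ in $H^1((\C^\times)^{2h},\mcO^\times)$. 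So the cleanest route is cohomological: compute the transition cocycle (equivalently, the period homomorphism $\pi_1((\C^\times)^{2h})\to\C^\times$ valued in the monodromy of the flat structure) for each bundle and observe that the one for $\langle\nu,\mu\rangle\backslash(\C^{2h}\PLH\C^\times)$ is the square of the one for $P$. From (\ref{eq:CS_linebundle-action}) and (\ref{eq:CS_linebundle-action2'}) the generator $\nu_j$ multiplies the fibre coordinate by $e^{y_j}$ while $\nu_j'$ multiplies it by $e^{y_j/2}$, and likewise for $\mu_j,\mu_j'$ with $e^{-x_j}$ versus $e^{-x_j/2}$; since the base lattices are identified (both $\langle\nu,\mu\rangle$ and $\langle\nu',\mu'\rangle$ translate $\C^{2h}$ by the same $\tpi\Z^{2h}$), the multiplier system of the first is literally the square of the multiplier system of the second, which is exactly the statement $\langle\nu,\mu\rangle\backslash(\C^{2h}\PLH\C^\times)\cong P^{\otimes 2}$.

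The main obstacle I anticipate is purely bookkeeping rather than conceptual: getting the normalization of the tensor product of $\C^\times$-bundles to match the two shearing actions, because the $\C^{2h}$-translation parts of $\langle\nu,\mu\rangle$ and $\langle\nu',\mu'\rangle$ are by the \emph{same} lattice (not a lattice and its double), so one cannot realize $P^{\otimes 2}$ naively as "$P$ with $x,y$ doubled." The correct model of $P^{\otimes 2}$ must keep the base translations equal to those of $P$ and only double the fibre multipliers, and I would phrase this via the pushout/cocycle description above to avoid sign and factor-of-$2$ errors. Once the multiplier systems are written out, the identification is immediate; I would also remark, as a consistency check, that combining this proposition with Proposition~\ref{prop:P=H_ZbsH_C} recovers $q^{\ast}E(\partial M)\cong\langle\nu,\mu\rangle\backslash(\C^{2h}\PLH\C^\times)\cong P^{\otimes 2}\cong\zeta^{\ast}(H_{\Z}^{(h)}\backslash H_{\C}^{(h)})^{\otimes}$ in a way compatible with the earlier corollary.
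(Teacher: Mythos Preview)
Your proposal is correct and follows essentially the same approach as the paper: the paper's proof simply says ``an obvious modification of the argument of Proposition~\ref{prop:P=H_ZbsH_C}'' for the first part and ``comparison of the two actions (\ref{eq:CS_linebundle-action}), (\ref{eq:CS_linebundle-action2'})'' for the second, and you have carried out exactly these computations in detail. Your cohomological/multiplier-system formulation of the tensor square (same base lattice, squared fibre multiplier) is the right way to make precise what the paper leaves implicit.
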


The proof is an obvious modification of the argument of Proposition \ref{prop:P=H_ZbsH_C}. The second statement follows from comparison of the two actions (\ref{eq:CS_linebundle-action}), (\ref{eq:CS_linebundle-action2'}).

%%%%%%%%%%%%%%%%%%%%%%%%%%%%%%%%%%%%%%%%
%%%%%%%%%%%%%%%%%%%%%%%%%%%%%%%%%%%%%%%%
\section{Chern-Simons variation of mixed Hodge structure over \texorpdfstring{$\tilde{X}(M)_0^{\mathrm{sm}}$}{X(M)0sm}}

\subsection{Tate variation of mixed Hodge structure} \label{subsec:TateVMHS}
In the rest of this article, we use the notation $H_{\Z}:=H_{\Z}^{(1)}$, $H_{\C}:=H_{\C}^{(1)}$ for the Heisenberg groups of order $1$ (Subsection \ref{subesc:Heisenberg-L.B.&MT-CS-section}).
For the split mixed $\Z$-Hodge structure $V_0=\Z(0)\oplus \Z(1)\oplus \Z(2)$, the simply connected $\Q$-algebraic group $G$ with Lie algebra $W_{-1}\End(V_0)_{\Q}$ is $H_{\Z}\otimes\Q$.
So, by \cite[Prop.9.1]{Hain94} (cf. Proposition \ref{prop:moduli_of_MTH}), the complex manifold $H_{\Z}\backslash H_{\C}$ can be regarded as a moduli space of mixed Hodge structures endowed with an isomorphism of the associated weight-graded quotient with $\Z(0)\oplus \Z(1)\oplus \Z(2)$. With this interpretation, the projection map $H_{\Z}\backslash H_{\C}\ra \C^{\times}\times\C^{\times}$ sends a mixed Hodge structure $V\in H_{\Z}\backslash H_{\C}$ to
\[ (V/W_{-4}V, W_{-2}V)\in \Ext^1(\Z, \Z(1)) \times \Ext^1(\Z(1), \Z(2))\simeq \C^{\times} \times \C^{\times}. \]

We refer to \cite{HainZucker87a}, \cite{HainZucker87b} for discussions of ``a good unipotent variation of mixed Hodge structure".
Its associated graded quotient variations of pure Hodge structures are all known to be constant. A \textit{Tate variation of mixed Hodge structure}%%
\footnote{or a \textit{mixed Tate variation}, following \cite{BeilinsonDeligne94}} 
is a good unipotent variation of mixed Hodge structure whose associated graded quotient Hodge structures have only even weights and $\Gr_{2p}^W\mbV$ is of Tate Hodge type $(-p,-p)$.

%%%%%%%%%%%%%%%%%%%%
\begin{thm} \cite{HainZucker87a}
If $\mbV\ra X$ is a Tate variation of mixed Hodge structure, then its canonical extension $\mcVb\ra \overline{X}$ (\`a la P. Deligne) is trivial as a holomorphic vector bundle and the extended weight and Hodge filtrations are also constant. Namely, there exists a complex vector space $V$ endowed with filtrations $F^{\bullet}V$, $W_{\bullet}V$ such that there exists an isomorphism of vector bundles $\mcVb\simeq V\otimes\mcO_{\overline{X}}$ over $\overline{X}$ under which the extended Hodge and weight filtrations are $F^{\bullet}V\times \overline{X}$, $W_{\bullet}V\times\overline{X}$.
\end{thm}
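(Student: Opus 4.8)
### Proof Proposal

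The plan is to deduce the theorem from the general structure theory of admissible variations of mixed Hodge structure with constant graded pieces, following Hain--Zucker. First I would recall that since $\mbV$ is a \emph{Tate} variation, each graded quotient $\Gr^W_{2p}\mbV$ is a constant variation of pure Hodge structure of type $(-p,-p)$, i.e. isomorphic to $\Q(-p)^{\oplus d_p}\otimes\mcO_X$ with its trivial flat connection and constant Hodge filtration. Consequently the associated graded of Deligne's canonical extension $\mcVb$ over $\overline X$ is the trivial bundle $\bigoplus_p \Q(-p)^{\oplus d_p}\otimes\mcO_{\overline X}$ with constant filtrations, because the canonical extension is compatible with taking $\Gr^W$ and the canonical extension of a constant variation is constant (the residues of the connection act nilpotently on $\Gr^W$, but here they act trivially since the eigenvalues are forced by the weight). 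So the statement is really about lifting this triviality from $\Gr^W$ to $\mcVb$ itself.

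The key step is then to produce a global holomorphic frame of $\mcVb$ splitting the weight filtration. I would argue as follows: the bundle $\mcVb$ is a successive extension, in the category of holomorphic vector bundles on $\overline X$ equipped with logarithmic connection, of the trivial bundles $\Gr^W_{2p}\mcVb$. The obstruction to splitting such an extension of bundles over $\overline X$ lies in $H^1(\overline X, \mcHom(\Gr^W_{2p}\mcVb, \Gr^W_{2q}\mcVb))=H^1(\overline X,\mcO_{\overline X})^{\oplus d_pd_q}$. When $\overline X$ is $\mathbb{P}^1$ this group vanishes and the splitting is immediate; for a general smooth projective curve $\overline X$ (which is the case relevant here, since $\tilde X(M)_0$ is a curve) $H^1(\overline X,\mcO_{\overline X})\neq 0$ in general, so I cannot split as plain bundles. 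Instead I would invoke the Hodge-theoretic input: by admissibility the Hodge filtration $\mcF^\bullet$ extends to a filtration of $\mcVb$ by \emph{sub-bundles}, and $\mcF^p\mcVb$ maps isomorphically onto $\Gr^p_\mcF$ of the graded in each weight; combining the Hodge and weight filtrations over $\overline X$ exactly as in Lemma \ref{lem:splitting_of_HTS}, condition (iv'), one gets a canonical holomorphic splitting $\mcVb=\bigoplus_p (\mcVb)_p$ over $\overline X$ with $W_{2k}\mcVb=\bigoplus_{p\leq k}(\mcVb)_p$ and $\mcF^k\mcVb=\bigoplus_{p\geq k}(\mcVb)_p$, and each $(\mcVb)_p$ is isomorphic to $\Gr^W_{2p}\mcVb$, hence trivial. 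This gives the desired trivialization $\mcVb\simeq V\otimes\mcO_{\overline X}$ with $V:=\bigoplus_p(\mcVb)_p$ fiber at any point, under which the extended Hodge and weight filtrations become the constant filtrations $F^\bullet V\times\overline X$, $W_\bullet V\times\overline X$.

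The main obstacle I anticipate is justifying the Hodge-theoretic splitting \emph{in the extended setting over} $\overline X$ rather than just over $X$: one must check that the relative-position argument of Lemma \ref{lem:splitting_of_HTS} applies fiberwise-consistently to the log-extensions, i.e. that $\mcF^p\mcVb\cap W_{2p}\mcVb$ is a sub-bundle of rank $d_p$ and the natural map to $\Gr^W_{2p}\mcVb$ is an isomorphism \emph{of bundles}, not merely generically. This requires that the extended Hodge filtration be strict with respect to the extended weight filtration at the boundary points, which is part of the admissibility/good-variation package (flatness of $\mcF^p$, nilpotent orbit theorem of Schmid, and Kashiwara's results on admissible variations); I would cite \cite{HainZucker87a} for precisely this, and check that their hypotheses (unipotent local monodromy, which holds here since the variation is Tate) are met. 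Once strictness at the boundary is in hand, everything is linear algebra in families and the proof concludes. I do not expect the connection to interfere: although $\mcVb$ carries a logarithmic connection, the theorem only asserts triviality of the underlying bundle together with the two filtrations, so the connection (and its residues/monodromy) can be left untouched.
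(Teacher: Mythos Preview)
The paper does not supply its own proof of this theorem: it is stated with the citation \cite{HainZucker87a} and invoked as a known result, so there is no in-paper argument to compare your proposal against.

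On its own merits, your outline is the right one. The essential mechanism is exactly the canonical $\C$-splitting $S_{HT}$ of a Hodge--Tate structure via the subspaces $F^p\cap W_{2p}$ (Lemma~\ref{lem:splitting_of_HTS}), applied fiberwise and shown to patch holomorphically across the boundary divisor. Your identification of the real technical point is accurate: one needs that the extended Hodge and weight filtrations on $\mcVb$ remain in the correct relative position at points of $\overline{X}\setminus X$, so that $\mcF^p\mcVb\cap \mbW_{2p}\mcVb$ is a sub\emph{bundle} mapping isomorphically onto $\Gr^W_{2p}\mcVb$. This is precisely what the ``good'' (admissible) hypothesis in \cite{HainZucker87a} delivers: the Hodge filtration extends to a filtration by subbundles of the canonical extension, and the limit mixed Hodge structure at each boundary point is again Hodge--Tate, so the rank of $\mcF^p\cap \mbW_{2p}$ does not jump. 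With that granted, your linear-algebra-in-families argument goes through, and the detour through $H^1(\overline{X},\mcO_{\overline{X}})$ (which, as you note, is inconclusive for curves of positive genus) is unnecessary.

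One small correction: you do not need to cite Kashiwara or the full nilpotent-orbit machinery here. For a Tate variation the graded pieces are already constant and the local monodromy is unipotent, so the extension of $\mcF^\bullet$ as subbundles and the Hodge--Tate nature of the limit MHS follow directly from the definition of a good unipotent variation in \cite{HainZucker87a}, \cite{HainZucker87b}; the heavier tools are not invoked in this special case.
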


%%%%%%%%%%%%%%%%%%%%
\begin{thm} \cite[Prop. 9.4]{Hain94} \label{thm:Hain94_Prop.9.4}
For a smooth variety $X$, a map 
\[ f:X\ra H_{\Z}\backslash H_{\C} \] 
is the classifying map of a variation of mixed Hodge structure over $X$ with weight-graded quotients canonically isomorphic to $\Z(0)$, $\Z(1)$, $\Z(2)$ if and only if

(i) $f$ is holomorphic;

(ii) the composite $X\stackrel{f}{\ra} H_{\Z}\backslash H_{\C}\ra \C^{\times}\times\C^{\times}$ of $f$ with the canonical projection is algebraic;

(iii) the map $f:X\ra H_{\Z}\backslash H_{\C}$ is a flat section of the bundle $H_{\Z}\backslash H_{\C} \ra \C^{\times}\times\C^{\times}$
\end{thm}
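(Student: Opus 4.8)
The statement to prove, Theorem \ref{thm:Hain94_Prop.9.4}, characterizes the classifying maps $f:X\ra H_{\Z}\backslash H_{\C}$ of Tate variations of mixed Hodge structure with weight-graded quotients $\Z(0),\Z(1),\Z(2)$ in terms of three conditions: holomorphicity, algebraicity of the projection to $\C^{\times}\times\C^{\times}$, and flatness along the fibers of $H_{\Z}\backslash H_{\C}\ra \C^{\times}\times\C^{\times}$. Since this is exactly Proposition 9.4 of \cite{Hain94}, the plan is to recall Hain's argument and adapt the formulation to the present matrix conventions.

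The plan is as follows. First I would set up the dictionary: by the moduli interpretation recorded just before the theorem (following \cite[Prop.9.1]{Hain94} and Proposition \ref{prop:moduli_of_MTH}), a point of $H_{\Z}\backslash H_{\C}$ is a mixed Hodge structure $V$ with a fixed identification $\gr^W_{\bullet}V\cong \Z(0)\oplus\Z(1)\oplus\Z(2)$, the map to $\C^{\times}\times\C^{\times}$ records the two ``length two'' sub/quotient extensions $(V/W_{-4}V,\ W_{-2}V)$, and a holomorphic map $f:X\ra H_{\Z}\backslash H_{\C}$ produces a holomorphic family of such mixed Hodge structures, hence a holomorphic vector bundle $\mcV=V\otimes\mcO_X$ with the constant filtrations $W_{\bullet}$, and a varying $\Q$-structure $\mbV_x$. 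For the ``only if'' direction, condition (i) is the definition of the classifying map being a morphism of complex manifolds; condition (ii) follows because the sub-extensions $V/W_{-4}V$ and $W_{-2}V$, being extensions of $\Z(0)$ by $\Z(1)$ and of $\Z(1)$ by $\Z(2)$ over $X$ with their underlying local systems pulled back from $X$, are classified by elements of $\mcO^{\times}(X)$ when $X$ is a variety and the VMHS is good (admissibility forces the extension classes to be regular, not merely holomorphic, by the norm estimates of \cite{HainZucker87a}); and condition (iii) is the assertion that a \emph{variation} of mixed Hodge structure must have a flat $\Q$-(equivalently $\C$-)structure, i.e. the local system is flat for the Gauss--Manin connection, and the connection on $\mcV$ is precisely the one induced from the trivial connection on $H_{\C}$ descended to $H_{\Z}\backslash H_{\C}$, so $f$ is a flat section of $H_{\Z}\backslash H_{\C}\ra\C^{\times}\times\C^{\times}$. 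For the ``if'' direction, given (i)--(iii) I would produce the VMHS directly: put $\mcV:=V\otimes\mcO_X$ with the constant weight and Hodge filtrations $W_{\bullet}V\otimes\mcO_X$, $F^{\bullet}V\otimes\mcO_X$ and the flat connection pulled back via $f$; flatness (iii) gives a $\C$-local system, and the $\Q$-structure $\mbV$ comes from the $H_{\Z}$-orbit structure (the lattice $H_{\Z}\subset H_{\C}$ provides the rational monodromy), so $(\mcV,\nabla,W_{\bullet},F^{\bullet},\mbV)$ is a variation of mixed Hodge structure in the sense of Cattani--Kaplan--Schmid, the graded pieces being the constant Tate structures $\Z(0),\Z(1),\Z(2)$. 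Finally I would check goodness/admissibility: algebraicity (ii) of the projection together with the nilpotent orbit / norm estimate theory guarantees that near the boundary divisor of a smooth compactification $\overline{X}$ the extended connection has regular singularities with unipotent local monodromy and the Hodge filtration extends to a filtration of the Deligne canonical extension by subbundles, which by the preceding theorem \cite{HainZucker87a} is moreover trivial with constant filtrations; this is what upgrades a bare VMHS to a \emph{good} (unipotent, Tate) one.

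The main obstacle I expect is the verification of admissibility/goodness in the ``if'' direction — showing that conditions (i)--(iii), which are conditions purely over the open variety $X$, force the correct asymptotic behavior at the boundary. The key point is that condition (ii) is genuinely needed here: a holomorphic flat section of $H_{\Z}\backslash H_{\C}\ra\C^{\times}\times\C^{\times}$ whose projection were merely holomorphic rather than algebraic would not in general extend with regular singularities, so the admissibility (in the sense of Kashiwara--Steenbrink--Saito, or in Hain--Zucker's equivalent formulation for curves) would fail; conversely, once the projection is algebraic, the extension classes in $\mcO^{\times}(X)$ have at worst logarithmic growth and one can invoke the results of \cite{HainZucker87a,HainZucker87b} on the mixed case over curves (and the general theory for higher-dimensional $X$) to conclude. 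I would present this as an appeal to those references rather than reproving the norm estimates. The remaining steps — constructing the bundle, filtrations, connection, and $\Q$-structure from $f$, and checking Griffiths transversality (which is automatic since the Hodge filtration here is \emph{constant}, so $\nabla F^p\subset F^{p-1}\otimes\Omega^1_X$ trivially, the only subtlety being that $\nabla$ does not preserve $W_{\bullet}$... wait, in fact it does, since $W_{\bullet}$ is a filtration by \emph{flat} subbundles as the monodromy is unipotent upper-triangular) — are routine bookkeeping with the explicit $3\times 3$ Heisenberg matrices and I would not belabor them.
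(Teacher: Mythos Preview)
The paper does not give its own proof of this theorem; it is simply quoted from \cite[Prop.~9.4]{Hain94} and then \emph{applied} in Subsection~\ref{subsec:CSVMHS}. So there is no in-paper proof to compare against, and your sketch is an attempt to reconstruct Hain's argument.

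Your outline is broadly on target for (i) and (ii), but there is a genuine confusion about condition~(iii). You say that (iii) ``is the assertion that a variation of mixed Hodge structure must have a flat $\Q$-structure,'' and later that Griffiths transversality ``is automatic since the Hodge filtration here is constant.'' Both claims are wrong. A flat $\Q$-local system is already present for \emph{any} holomorphic map $f$ into $H_{\Z}\backslash H_{\C}$: it is furnished by the $H_{\Z}$-covering, and the connection $\nabla=d-\omega$ with $\omega=\Lambda^{-1}d\Lambda$ makes the rows of $\Lambda$ horizontal regardless of~(iii). What condition~(iii) actually encodes is precisely \emph{Griffiths transversality}. The paper makes this explicit in Subsection~\ref{subsec:CSVMHS}: the only nontrivial transversality condition is $\nabla e_0\in\msF^{-1}$, i.e.\ vanishing of the $(1,3)$-entry of $\omega$, and a direct computation shows that entry is $dc-a\,db$, which is the pullback $f^{\ast}\theta$ of the connection $1$-form on the $\C^{\times}$-bundle $H_{\Z}\backslash H_{\C}\to\C^{\times}\times\C^{\times}$. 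Thus ``$f$ is a flat section'' $\Leftrightarrow$ $f^{\ast}\theta=0$ $\Leftrightarrow$ Griffiths transversality. This is \emph{not} automatic from constancy of $F^{\bullet}$ in the chosen trivialization, because $\nabla$ is not the trivial connection~$d$; without~(iii) one only gets a filtered flat bundle, not a VMHS. Once you reassign~(iii) to transversality, the rest of your plan (goodness from~(ii) via \cite{HainZucker87a}) is fine.
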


%%%%%%%%%%%%%%%%%%%%
\subsection{Chern-Simons variation of mixed Hodge structure} \label{subsec:CSVMHS}

From now on, in the rest of this article we assume that $h=1$ so that the canonical component $X(M)_0$ of $X(M)_{\Qb}$, the irreducible component containing all lifts of $[\rho_0]$, is a curve. We define the \textit{augmented canonical (curve) component} of $\tilde{X}(M)$ as
\begin{equation} \label{eq:aug_can_comp}
\tilde{X}(M)_0:= X(M)_0\times_{X(\partial M)_{\Qb}}D(\partial M)_{\Qb}. 
\end{equation}
Let $X$ denote the smooth locus $\tilde{X}(M)_0^{\mathrm{sm}}$ of the irreducible curve $\tilde{X}(M)_0$. It is equipped with regular functions $m(x)$, $l(x)$ (Definition \ref{defn:m(z),l(z)}).

We now describe the variation $(\mcV,\nabla,\mbW_{\bullet},\mcF^{\bullet})$ of mixed Hodge structure over $X$ corresponding, via Theorem \ref{thm:Hain94_Prop.9.4}, to the Chern-Simons section $C_{\MoTe}:X\ra H_{\Z}\backslash H_{\C}$ (Theorem \ref{thm:MT09_4.1}), which turns out to be a Tate variation of mixed Hodge structure (cf. \cite[$\S$7]{Hain94}, \cite{HainZucker87b}):
the canonical extension $\mcVb$ of $\mcV$ is the trivial bundle $\mathcal{O}_{\overline{X}}^{\oplus 3}=\mathcal{O}_{\overline{X}}\otimes\C^3$. Let $\{e_0,e_1,e_3\}$ be a basis of $\C^3$. Define \textit{complex} weight filtration and Hodge filtration on $V=\C^3$ by:
\begin{align*}
W_{-4}V=W_{-3}V=\{ e_2\} &\quad \subset\quad W_{-2}V=W_{-1}V=\{ e_1,e_2 \} \quad \subset\quad W_0V=V \\
F^{-p}V=& \langle e_0, \cdots, e_p \rangle \quad (p=0,\cdots, 2),
\end{align*}
and then, after choosing a smooth projective variety $\overline{X}$ containing $X$ with $\overline{X}\backslash X$ being a normal crossing divisor, we put $\overline{\msF}^p:=F^pV\times \overline{X} \subset \mcVb$, $\overline{\mbW}_l:=W_lV\times\overline{X} \subset \mcVb$.

For $x\in X$, we put 
\begin{equation} \label{eq:w_i}
 \left(\begin{array}{c} w_0 \\  w_1 \\  w_2  \end{array}\right) = \left(\begin{array}{ccc} 1 & v(x) & (\tpi)^2cs(x)+u(x)v(x) \\  & \tpi &  (2\pi i)\, 2u(x) \\  & & (\tpi)^2 \\ \end{array}\right) \left(\begin{array}{c} e_0 \\  e_1 \\  e_2  \end{array}\right) 
\end{equation}
where $cs(x)=cs(A_x)$ for an $1$-form $A_x\in \mcA^1(M,\Lsl_{2,\C})$ as defined in Theorem \ref {thm:KK93_3.2}: in particular, it depends on $u(x)$ and $v(x)$. We define $\mbV_x$ to be the $\Q$-subspace of $V$ spanned by $\{w_0,w_1,w_2\}$ endowed with the filtration by $\Q$-subspaces:
\[ W_{-4}\mbV_x=W_{-3}\mbV_x=\{ w_2\} \, \subset\, W_{-2}\mbV_x=W_{-1}\mbV_x=\{ w_1,w_2 \} \, \subset\, W_0\mbV_x=\mbV_x \]
This $\Q$-filtration $W_{\bullet}\mbV_x$ is well-defined, i.e. independent of the choice of the branches $u(x)$, $v(x)$, and is a $\Q$-structure of the $\C$-filtration $W_{\bullet}V$. So, for each $x\in X$, we obtain a mixed $\Q$-Hodge structure $(\mbV_x,\msF^{\bullet}_x,\mbW_{\bullet,x}:=W_{\bullet}\mbV_x)$ whose weight-graded quotients are canonically identified with $\Z(0)$, $\Z(1)$, $\Z(2)$.

Now, we show that this is a Tate variation of mixed $\Q$-Hodge structure.
Let $\Lambda(x)$ denote the $3\times3$-matrix in (\ref{eq:w_i}). Then, the holomorphicity of $f$ implies that the connection $\nabla$ on the trivial bundle $\mcV=\C^3\otimes\mcO_X$ for which the \emph{row} vectors of $\Lambda(x)$ are horizontal sections is holomorphic (so that each $\mbW_l$ is flat): we have 
\[ \nabla s=ds-s\omega, \quad (s\in  (\mcO_X)^{\oplus 3}), \]
for 
\[ \omega:=\Lambda(x)^{-1}\cdot d\Lambda(x). \] 
Secondly, the fact that $f$ defines a flat section implies that $(\mcV,\nabla,\msF^{\bullet})$ satisfies the Griffiths transversality. To see this, we only need to verify that $\nabla e_0\in \msF^{-1}=\mcO_X e_0 +\mcO_X e_1$, as the other cases of the transversality are trivial. This amounts to vanishing of the $(1,3)$-entry of $\omega$. It is pleasing to see that this entry is the connection $1$-form $dc-adb$:
\[ \text{if }\Lambda(x)=\left(\begin{array}{ccc} 1 & a(x) & c(x) \\  & \tpi &  2\pi i\, b(x) \\  & & (\tpi)^2 \\ \end{array}\right), \text{ then }\omega:=\Lambda(x)^{-1}\cdot d\Lambda(x) = \left(\begin{array}{ccc} 0 & da(x) & dc(x)-a(x)db(x) \\  & 0 &  \, db(x) \\  & & 0 \\ \end{array}\right) .\]
For $a(x)=v(x)$, $b(x)=2u(x)$, $c(x)=(\tpi)^2 cs(x)+u(x)v(x)$, the vanishing of $dc(x)-a(x)db(x)$ is \cite[2.7]{KirkKlassen93}:
\[ (\fp)^2 d\, cs(x)=v(x)du(x)-u(x)dv(x). \] 
Note that $\omega$ is a nilpotent matrix in $1$-forms which have simple poles at each point at infinity as $m(x),l(x)\in \mcO_X^{\times}$ and have values $0$ or $\infty$ at any point at infinity in $\overline{X}\backslash X$.
Since the variation of pure Hodge structure $\mathrm{Gr}^{W}_k\mbV$ is constant, the variation of mixed Hodge structure $(\mcV,\nabla,\mbW_{\bullet},\msF^{\bullet})$ over $X$ is unipotent \cite[(1.3)]{HainZucker87b}, and clearly good \cite[(1.9)]{HainZucker87b}. 
Hence it is a Tate variation of mixed Hodge structure.

%%%%%%%%%%%%%%%%%%%%
\subsection{Computation of the limit mixed Hodge structure of CS VMHS at an ideal point} \label{subsec:CS-VMHS}

\begin{defn} \label{defn:ideal_point}
Let $\overline{X}$ be the smooth projective completion of the smooth irreducible affine curve $X$, the smooth locus $\tilde{X}(M)_0^{\mathrm{sm}}$ of the canonical curve component. 

An \emph{ideal point} of $X$ is any point in $\overline{X}\backslash X$ which is a zero or pole of either  of $m(x)$ and $l(x)$ regarded as rational functions on $\overline{X}$.
\end{defn}

For a path $\gamma:[0,1]\ra X(\C)$, the parallel transport of $V=\mcV_{\gamma(0)}$ along $\gamma$ is the multiplication from the right on $V=\C^3=\oplus_i\C e_i$ by
\[ T(\gamma)=1+\int_{\gamma}\omega + \int_{\gamma}\omega\omega + \cdots \quad \in \GL_3(\C) \]
where the integrals are the Chen's iterated integral of $\omega$ over $\gamma$ (cf. \cite[Lem.2.5]{Hain87}, \cite{HainZucker87b}).
As $\omega^2= \left(\begin{array}{ccc} 0 & 0 & 2dv(x) du(x) \\  & 0 &  0 \\  & & 0 \\ \end{array}\right)$ and $\omega^3=0$, we have
\begin{equation} \label{eq:monodromy=I.I.}
T(\gamma)= \left(\begin{array}{ccc}  1 & \int_{\gamma}dv(x) & 2\int_{\gamma}dv(x) du(x) \\  & 1 & 2\int_{\gamma}du(x) \\  & & 1 \\ \end{array}\right)
\end{equation}
(again the integrals in the entries are iterated integrals).

Let $P\in X(\C)$ and $\gamma$ a simple loop in $X(\C)$ which goes around $P$ once counterclockwise, starting and ending at a point close enough to $P$ such that it does not turn around any zero or pole of $l(x)$ and $m(x)$, possibly except for $P$. 
Then, as $u(x)=\log m(x)$, $v(x)=\log l(x)$,  we have
\begin{equation} \label{eq:I.I_{gamma}(uv)}
\int_{\gamma} \frac{d l(x)}{l(x)}\frac{d m(x)}{m(x)} = \frac{(\fp)^2}{2} b_1a_1, 
\end{equation}
for
\begin{equation}  \label{eq:a_1,b_1}
b_1=v_P(l(x)),\ a_1=v_P(m(x))\ \in \Z,
\end{equation} 
where $v_P=\mathrm{val}_P$ is the valuation on $\C(X)$ attached to $P$ (\cite[3.2]{Horozov14a}).

Now, we compute the limit mixed Hodge structure of the Chern-Simons variation of mixed Hodge structure $(\mcV,\nabla,\mbV_{\bullet},\msF^{\bullet})$ at a tangent vector at some ideal point $P$ of $X(\C)$. 
Let $\overline{X}$ be a smooth projective completion of $X$. Choose a local coordinate $z$ on $\Xb$ around $P$ and a branch of $\log z$. For a simple closed curve $\gamma$ around $P$ (as above), let $T:=T(\gamma)$ denote the parallel transport(or local monodromy) of $\omega$ along $\gamma$: by (\ref{eq:monodromy=I.I.}) and (\ref{eq:I.I_{gamma}(uv)}), we have
\[ T=I+\left(\begin{array}{ccc} 0 & \fp\, b_1 & (\tpi)^2 b_1 a_1 \\  & 0 & 4\pi i\, a_1  \\ & & 0 \end{array}\right) \]
and 
\begin{align} \label{eq:log_monodromy_N}
N:=& -\log T = -(T-I)+\frac{(T-1)^2}{2} \\
=& \left(\begin{array}{ccc} 0 & -\fp\, b_1 & 0 \\  & 0 & -4\pi i\, a_1  \\ & & 0 \end{array}\right) \nonumber
\end{align}

Then, the canonical extension $\mcVb$ is endowed with a new connection $\nabla^c=\nabla +\frac{N}{\fp}\frac{dz}{z}$ whose horizontal sections are $\lambda(z)\exp(tN)$, where $\lambda(z)$ are horizontal sections of $\mcV$. The underlying $\Q$-space of the limit mixed Hodge structure on the fiber at the tangent vector $\tv$ is spanned by the row vectors of the limit $\lim_{z\ra 0}  \Lambda(z)\exp(tN)$ ($t:=\frac{\log z}{\fp}$) (cf. \cite[$\S$7]{Hain94}). We have
\begin{align} 
\Lambda(z)\exp(tN)=& \Lambda(z)(1+tN) \quad (N^2=0) \label{eq:LimitMHS} \\
=&  \left(\begin{array}{ccc} 1 & v(z) & (\tpi)^2 cs(z)+u(z)v(z)\\  & \tpi &  4\pi i\, u(z) \\  & & (\tpi)^2 \\ \end{array}\right) \cdot  \left(\begin{array}{ccc} 1 & -b_1\log z & 0 \\  & 1 & -2a_1\log z  \\ & & 1 \end{array}\right) \nonumber \\
=& \left(\begin{array}{ccc} 1 & v(z) -b_1\log z & (\tpi)^2cs(z)+(u(z) -2a_1\log z)v(z)
\\  & \tpi &  4\pi i\, (u(z)-a_1\log z) \\  & & (\tpi)^2 \\ \end{array}\right) \nonumber
\end{align}
(Here, as variable we use $z$ instead of $x$).

The $(1,2)$ and $(2,3)$ entries of this matrix have well-defined limits (as $z\ra 0$) which depend only on $\tv$ (mod $\fp$ or $(\fp)^2$), rather than on the particular choice of a local uniformizer $z$. That is, in terms of the factorizations $l(z)=z^{b_1}\tilde{l}(z)$, $m(z)=z^{a_1}\tilde{m}(z)$, these entries are given by
\begin{equation} \label{eq:a_2,b_2}
b_2:=\lim_{z\ra 0} (v(z)-b_1\log z)=\log\tilde{l}(0),\ a_2:=\lim_{z\ra 0} (u(z)-a_1\log z)=\log\tilde{m}(0);
\end{equation}
there constants are taken modulo $\Z(1)$ 
($a_2$ and $b_2$ are also the logarithms of the leading coefficients of $m(z)$ and $l(z)$, respectively.)

The $(1,3)$-entry is equal to the sum $s_1(z)+s_2(z)$ of two terms, where
\begin{equation} \label{eq:s_2}
\ s_2(z)=(u(z) -a_1\log z)(v(z)-b_1\log z)
\end{equation}
and
\begin{align*} s_1(z) =& (\tpi)^2 cs(z) + b_1\log z (u(z) -a_1\log z) -a_1 v(z) \log z  \\
=& (\tpi)^2 cs(z) + b_1\log z \log \tilde{m}(z) -a_1 (b_1\log z+\log \tilde{l}(z)) \log z  \\
\equiv &  (\tpi)^2 cs(z) + (a_2 b_1-a_1 b_2) \log z -a_1b_1(\log z)^2\, + O(z\log z)
\end{align*}
Hence, if we put
\begin{equation} \label{eq:s(z)}
s(z) := (\tpi)^2 cs(z) + (a_2 b_1-a_1 b_2) \log z -a_1b_1(\log z)^2 
\end{equation}
since $\lim_{z\ra 0}s_2(z)=a_2b_2$, the limt $\lim_{z\ra 0}s(z)$ must exist by the existence of limit mixed Hodge structure. 
In other words, we have 
\begin{equation}  \label{eq:LMH_at_ideal_pt}
\lim_{z\ra 0}  \Lambda(z)\exp(tN) = \left(\begin{array}{ccc} 1 & b_2 & a_2b_2+\lim_{z\ra 0}s(z) \\ & \tpi & 4\pi i\cdot a_2 \\ & & (\tpi)^2 \end{array}\right).
\end{equation}

%%%%%%%%%%%%%%%%%%%%
\subsection{Results from three-dimensional topology: Culler-Shalen theory, A-polynomial}

Recall the constants $a_1, b_1\in \Z$ (\ref{eq:a_1,b_1}), $a_2, b_2\in \C/\Z(1)$ (\ref{eq:a_2,b_2}). 

%%%%%%%%%%%%%%%%%%%%
\begin{thm} \label{thm:a_2,b_3inQ(1)}
If we choose a holomorphic coordinate $z$ around $P$ such that $\tilde{m}(0)=1$, we have $a_2, b_2\in \Q(1)$.
\end{thm}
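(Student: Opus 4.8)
The plan is to deduce the statement from the theory of the A-polynomial, which is where the input from three-dimensional topology (Culler--Shalen theory) enters. Restriction to the boundary torus sends the canonical component $X(M)_0$ into the curve cut out, in the eigenvalue coordinates $(\mathsf{M},\mathsf{L})$ of the meridian and the longitude, by the A-polynomial $A_M\in\Z[\mathsf{M}^{\pm1},\mathsf{L}^{\pm1}]$ of $M$ (more precisely, the factor corresponding to $X(M)_0$); pulling back to $X=\tilde{X}(M)_0^{\mathrm{sm}}$ gives the identity $A_M(m(x),l(x))\equiv 0$, so that $P$ maps to a point ``at infinity'' of the affine curve $\{A_M=0\}$. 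The quantity that I want to control is the residue at $P$ of the monomial $m^{b_1}l^{-a_1}$, which has $v_P$-valuation $0$; this residue equals $\xi_P:=\tilde{m}(0)^{b_1}\tilde{l}(0)^{-a_1}\in\Qb^{\times}$ and is independent of the choice of local coordinate $z$ (the powers of $z$ cancel).

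Substituting $m=z^{a_1}\tilde{m}(z)$ and $l=z^{b_1}\tilde{l}(z)$ into $A_M=\sum_{(i,j)}a_{ij}\mathsf{M}^i\mathsf{L}^j$ and reading off the coefficient of the lowest power of $z$ gives the relation
\[
\sum_{(i,j)\in F}a_{ij}\,\tilde{m}(0)^i\,\tilde{l}(0)^j=0,\qquad F:=\{(i,j)\in\supp(A_M):a_1i+b_1j=c\},
\]
where $c=\min_{(i,j)\in\supp(A_M)}(a_1i+b_1j)$. Since $\tilde{m}(0),\tilde{l}(0)\neq 0$, the set $F$ cannot reduce to a single lattice point, so it lies along an edge of the Newton polygon of $A_M$ (this recovers, in passing, the Culler--Shalen fact that the corresponding slope is a boundary slope of $M$). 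Writing the support points on that edge as $(i_0,j_0)+n(p,q)$ with $(p,q)=\pm(b_1,-a_1)/\gcd(a_1,b_1)$ the primitive direction of the edge, the relation becomes $E_F(w_0)=0$, where $E_F(w)=\sum_n a_{(i_0,j_0)+n(p,q)}w^n\in\Z[w]$ is the edge polynomial of $A_M$ along that side and $w_0=\tilde{m}(0)^{p}\tilde{l}(0)^{q}=\xi_P^{\pm 1/\gcd(a_1,b_1)}$.

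The non-trivial topological input is then that every edge polynomial of the A-polynomial is, up to a monomial and a constant, a product of cyclotomic polynomials (temperedness of $A_M$); this is supplied by Culler--Shalen theory through the analysis of the essential surface attached to the ideal point $P$ (cf. \cite{CullerShalen83}, \cite{CullerShalen84}, \cite{CCGLS94}). Granting this, $w_0$ is a root of unity, hence so is $\xi_P=\tilde{m}(0)^{b_1}\tilde{l}(0)^{-a_1}$. Imposing the normalization $\tilde{m}(0)=1$ then yields $a_2=\log 1=0\in\Q(1)$ and $\tilde{l}(0)^{-a_1}=\xi_P^{\pm1}$ a root of unity; since $a_1=v_P(m)\neq 0$ for the ideal points in question, $\tilde{l}(0)$ is itself a root of unity, i.e. $b_2=\log\tilde{l}(0)\in 2\pi i\,\Q=\Q(1)$.

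The step I expect to be the main obstacle is securing temperedness of $A_M$ (or at least of the edge at $P$) in the generality required: this is known in the cases of interest but is subtle in general. A fallback would be to argue the needed instance by hand from Culler--Shalen theory directly --- the peripheral $\Z^2$ acts on the Bass--Serre tree of the valuation $v_P$ with an invariant axis whose pointwise stabilizer is generated by the boundary-slope curve $\gamma_0=\mu^{b_1/g}\nu^{-a_1/g}$ ($g=\gcd(a_1,b_1)$), and $\xi_P$ is the eigenvalue of the limiting holonomy of $\gamma_0$, which lies in an edge group and is therefore forced to be a root of unity. The remaining bookkeeping (signs, the factor $\gcd(a_1,b_1)$ in the exponents, and the degenerate case $v_P(m)=0$, which if it arises at all is reduced to the previous case by a $\GL_2(\Z)$ change of peripheral framing) is routine, and I would not work through it in detail.
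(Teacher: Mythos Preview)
Your proposal is correct, and in fact your ``fallback'' is precisely the route the paper takes. The paper works directly on the Bass--Serre tree attached to the valuation $v_P$: it identifies $\exp(a_2b_1-a_1b_2)=\tilde m(0)^{b_1}\tilde l(0)^{-a_1}$ with $\lambda^{d}$, where $d=\gcd(a_1,b_1)$ and $\lambda$ is the \emph{tree-action eigenvalue} of the primitive boundary-slope element $\gamma$ (the eigenvalue of the reduction mod~$\pi$ of $\tilde\rho(\gamma)$ acting on the line $L'/\pi L$ singled out by a fixed edge), and then invokes \cite[\S5.7, Corollary]{CCGLS94} to conclude that $\lambda$ is a root of unity. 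From $\tilde m(0)=1$ one gets $a_2=0$ and $b_2\equiv (a_1b_2-a_2b_1)/a_1\equiv 0\pmod{\Q(1)}$.

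Your primary route through the Newton polygon of $A_M$ is an equivalent repackaging: the edge polynomial root $w_0$ you extract is exactly (up to sign) the paper's $\lambda$, and ``temperedness of $A_M$'' \emph{is} the statement that these tree-action eigenvalues are roots of unity, proved in \cite{CCGLS94} by the very edge-group argument you sketch. The paper's choice buys a cleaner citation (the precise corollary needed, with no worry about the generality of temperedness you flag), while your Newton-polygon formulation has the advantage of making the role of the tame symbol $\{l,m\}_P$ more transparent; the paper in fact remarks this in a footnote. The degenerate case $a_1=0$ is handled neither by you nor by the paper explicitly (the paper divides by $a_1$ in its final line), and your $\GL_2(\Z)$-reframing suggestion is the natural fix.
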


The proof of this theorem uses results in three-dimensional topology, mainly from \cite{CullerShalen83}, \cite{CullerShalen83} and \cite{CCGLS94}, which give us non-trivial properties of the constants $a_i$, $b_i$ $(i=1,2)$ through their topological interpretations.  For basic definitions in three-manifold theory appearing in the proof which are not covered in the above sources, we refer to \cite[Ch.1]{Kapovich01}.

An \textit{essential} surface in an oriented three-manifold $M$ with (possibly empty) boundary is an incompressible, $\partial$-incompressible, non-$\partial$-parallel, orientable surface which are neither a sphere nor a disk. Suppose that $\partial M$ is a torus. A \emph{slope} is an isotopy class of unoriented simple closed curves on $\bdM$; thus, each slope corresponds to a pair $\pm\gamma$ of primitive elements of $H_1(\bdM,\Z)$ (i.e. an element that cannot be represented as a non-trivial multiple of some other element).
For an incompressible surface $S$ in $M$, if $S\cap \bdM\neq\emptyset$, this intersection $\partial S$ is a family of parallel simple closed curves. If an ordered basis $\{\mu,\nu\}$ of $H_1(\partial M,\Z)$ has been chosen and the slope of $\partial S$ is $\pm(r\mu-s\nu)$, we call the ratio $-\frac{r}{s}\in \Q\cup\{\infty\}$, the \textit{boundary slope} of the surface.

The first topological meaning of the two constants $a_1$, $b_1$ is that $-\frac{b_1}{a_1}$ is the boundary slope of some essential surface inside $M$. More precisely, let $X_0$ be any irreducible curve inside the character variety $X(M)$ 
and $\tilde{X}_0$ the smooth projective birational model of $X_0$. Then for any point $P$ of $\tilde{X}_0$ where the birational map to $X_0$ is not regular,%%
\footnote{Such point is often called \textit{ideal point} in the Culler-Shalen theory. But in this work, we reserve this terminology for our definition \ref{defn:ideal_point} which has more strict meaning and is consistent with the definition used in \cite[2.2]{CCGLS94}.}
Culler and Shalen \cite[2.3.1]{CullerShalen83} associate an essential surface $S$ with non-empty boundary. Their method is by constructing a $\pi_1(M)$-action on the $\SL_2(F)$-tree, where $F$ is the completion at $w$ of a finite extension of the function field $\C(X_0)$ over which a universal(or tautological) representation $\tilde{\rho}$ (\ref{eq:univ_rep'n}) exists, and $w$ is a valuation lying above the valuation $v$ of $\C(X_0)=\C(\tilde{X}_0)$ corresponding to the ideal point $P$. 
Now, let us assume that $X$ is the smooth part of the canonical curve component (\ref{eq:aug_can_comp}). Then it is easy to see that the theory of Culler-Shalen works for $X$.
In this case, if $P$ is an ideal point of $X$ in the sense of Definition \ref{defn:ideal_point}, one has the additional information about the boundary slope of the essential surface $S$ obtained by the Culler-Shalen method, namely that its boundary slope is $-\frac{v(l)}{v(m)}$, where $m$, $l\in F^{\times}$ are the eigenvalues of $\mu$ and $\nu$ (\cite[3.1]{CCGLS94}). Indeed, it follows from the construction that if the image of $\partial S$ in $H_1(\partial M,\Z)$ is $\pm k \gamma$ in for a primitive $\gamma\in H_1(\partial M,\Z)\cong\pi_1(\partial M)$, the eigenvalues of $\tilde{\rho}(\gamma)$ are finite valued at $P$ (\cite[2.2.1]{CullerShalen83}). But, since one of $v(m)$, $v(l)$ is non-zero ($v$ is the valuation at an ideal point), there exists a \emph{unique}, up to sign, primitive $\gamma=a\mu+b\nu \in\pi_1(\partial M)$ such that the eigenvalues $(m^al^b)^{\pm1}$ of $\tilde{\rho}(\gamma)$ are finite valued at $P$, i.e. such that $v(m^al^b)=0$. As we always have $v(m^{-v(l)}l^{v(m)})=0$, by uniqueness and primitiveness, the pair $(-v(l),v(m))$ of integers must be proportional to $(a,b)$, i.e. we have
\begin{equation} \label{eq:boundary_slope}
-v(l)\mu+v(m)\nu=\pm d\gamma.
\end{equation}
where $d:=\mathrm{gcd}(v(l),v(m))$.

Another important property of $S$ is that the image $\mathrm{Im}(\pi_1(S)\ra \pi_1(M))$ lies in an ``edge group'', i.e. the stabilizer group of a directed edge in the $\SL_2(F)$-tree (\cite[2.3.1]{CullerShalen83}).
We recall the notion of ``tree-action eigenvalue''%
\footnote{This is our terminology coined to distinguish it from another use of the terminology of eigenvalue, i.e. that of $\tilde{\rho}$.} 
of an element of an edge group. Let $v:F^{\times}\ra\Z$, $\mathcal{O}$, $\pi$, $k:=\mathcal{O}/(\pi)$ be respectively the discrete valuation, the valuation ring, a uniformizer, and the residue field. A point of the $\SL_2(F)$-tree $\mathbb{T}$ is a similarity class of lattices in $F^2$ and a (directed) edge is defined by lattices $L,L'$ of $F^2$ with $\pi L\subsetneq L' \subsetneq L$, in which case $[L]$, $[L']$ are respectively the initial and terminal vertices of the directed edge $e$. When an element $A\in\SL_2(F)$ fixes a directed edge $e$ of $\mathbb{T}$ corresponding to lattices $L,L'$ of $F^2$ with $\pi L\subsetneq L' \subsetneq L$, $A$ leaves the line $L'/\pi L\subset L/\pi L (\simeq k^2)$ stable, so $A\ (\text{mod}\pi)$ acts on $L'/\pi L(\simeq k)$ by some $\lambda\in k^{\times}$. We call $\lambda$ the \textit{tree-action eigenvalue} of $A$ attached to the fixed directed edge $e$. 
It follows from the definition of the $\SL_2(F)$-tree action and the identity $\tilde{\rho}(\gamma^d)=\left(\begin{array}{cc} l^{v(m)}m^{-v(l)} & \ast \\ 0 & l^{-v(m)}m^{v(l)} \end{array}\right)^{\pm 1}$ (\ref{eq:boundary_slope}) that the tree-action eigenvalue $\lambda$ of $\gamma\in\pi_1(\partial M)$ satisfies $\lambda^d=(\frac{l^{v(m)}}{m^{v(l)}}|_{z=0})^{\pm1}$. Then, up to taking inverse, 
\[ \lambda^{d}=\frac{l^{v(m)}}{m^{v(l)}}|_{z=0}=\frac{l(z)^{v_z(m(z))}}{m(z)^{v_z(l(z))}}|_{z=0}=\exp(a_2 b_1-a_1 b_2). \]

Now, we use the non-trivial fact \cite[$\S$5.7, Corollary]{CCGLS94} that the tree-action eigenvalue $\lambda$ is a root of unity whose order divides $n(S)$, the greatest common divisor of the numbers of the boundary components of the various connected components of $S$.%% 
\footnote{We also remark that by definition, the exponential $\exp(a_2 b_1-a_1 b_2)$ equals the tame symbol at $P$ of the element of $\{l,m\}\in K_2(\C(M))$, which is also equal to the local monodromy around $P$ of the pull-back line bundle $(l,m)^{\ast}\mcL$ of the Heisenberg line bundle, cf. \cite[6.3]{Hain94}, \cite[$\S$4]{Ramakrishnan89}.} 

Therefore, we deduce that when we choose a holomorphic coordinate $z$ around $P$ such that $\tilde{m}(0)=1$ (i.e. $a_2\in (\fp)\Z$ (\ref{eq:a_2,b_2})), we have
\[ b_2\equiv (a_1 b_2-a_2 b_1)/a_1\equiv 0\text{ mod }\Q(1) \]
which completes the proof of Theorem \ref{thm:a_2,b_3inQ(1)}.

When $a_1=1$, such choice of the parameter $z$ amounts to a choice of a tangent vector $\frac{\partial }{\partial z}$; in general, there are several (i.e. as many as $|a_1|$) choices for a tangent vector $\frac{\partial }{\partial z}$ giving $a_2\in (\fp)\Z$. We also note in passing that when $a_1=1$, $a_2\in\Z(1)$ (the second can be always achieved by a suitable choice of $z$),
one has $b_2\equiv  a_1 b_2-a_2 b_1 \equiv 0\text{ mod }\Z(1)$ if $n(S)=1$, e.g. the essential surface $S$ is connected and has a single boundary component.

To proceed, we now propose a conjecture.

%%%%%%%%%%%%%%%%%%%%
\begin{conj} \label{conj:CS-VMHS_at_ideal_pt}
There exists an ideal point $P$ on $X=\tilde{X}(M)_0^{\mathrm{sm}}$ such that for some local uniformizer $z$ with $\tilde{m}(0)=1$, the limit of the $(1,3)$-entry in (\ref{eq:LimitMHS}) belongs to $\Q\cdot (\fp )^2$:
\[ \lim_{z\ra 0}\, (\tpi)^2 cs(z)+(u(z) -2a_1\log z)v(z)\, \in\, \Q\cdot (\fp)^2, \]
equivalently (\ref{eq:s(z)}) 
\[ \lim_{z\ra 0}s(z)=\lim_{z\ra 0}\, (\tpi)^2 cs(z)+ (a_2 b_1-a_1 b_2) \log z -a_1b_1(\log z)^2\, \in\, \Q\cdot (\fp)^2. \] 
In other words, there exists an ideal point such that for a suitable choice of parameter $z$, the limit $\Q$-mixed Hodge structure of the CS-VMHS at the tangent vector $\frac{\partial }{\partial z}$ is split: $\Q\oplus\Q(1)\oplus\Q(2)$.
\end{conj}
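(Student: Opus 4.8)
The plan is to reduce, exactly as in the paragraph preceding the Conjecture, to the single assertion $\lim_{z\to 0}s(z)\in\Q\cdot(\fp)^2$ for a well-chosen ideal point $P$ and a uniformizer $z$ at $P$ with $\tilde m(0)=1$. With that normalization Theorem \ref{thm:a_2,b_3inQ(1)} already gives $a_2\in\fp\,\Z$ and $b_2\in\Q(1)$, hence $a_2b_2\in\Q\cdot(\fp)^2$; so the $(1,3)$-entry of the limit matrix (\ref{eq:LMH_at_ideal_pt}) lies in $\Q\cdot(\fp)^2$ precisely when $\lim_{z\to 0}s(z)$ does. Equivalently, Theorem \ref{thm:a_2,b_3inQ(1)} says the two sub-quotient extensions of the limit mixed Hodge structure are split, and what remains is to show that the residual extension class in $\Ext^1_{\QHT}(\Q(0),\Q(2))=\C/\Q(2)$ vanishes. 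Thus the whole problem is the vanishing modulo $\Q(2)$ of one regularized period of the Chern--Simons invariant.

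I would first rewrite $cs(z)$ near $P$ as such a regularized period. By the Kirk--Klassen transport formula (Theorem \ref{thm:KK93_3.2}(2)) together with the identity $(\fp)^2\,d\,cs=v\,du-u\,dv$ recalled in Subsection \ref{subsec:CSVMHS}, one has on $\tilde X(M)_0^{\mathrm{sm}}$
\[ (\fp)^2\,d\,cs=v\,du-u\,dv=\log\mfl\cdot\tfrac{d\mfm}{\mfm}-\log\mfm\cdot\tfrac{d\mfl}{\mfl}, \]
which is the regulator $1$-form attached to $\{\mfl,\mfm\}\in K_2$ of the function field. Hence $(\fp)^2cs(z)$ is an indefinite integral of this $1$-form, and on a punctured disc at $P$ it has a canonical $\log z$-expansion whose leading coefficients are $-a_1b_1$ and $a_2b_1-a_1b_2$ (with $a_1=v_P(m)$, $b_1=v_P(l)$, cf. (\ref{eq:boundary_slope})) and whose constant term is, by definition, $\lim_{z\to 0}s(z)$; existence of this limit is the existence of the limit MHS of the admissible variation. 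The task is to identify that constant term.

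To compute it I would choose an ideal triangulation $\mathcal{T}$ of $M$ adapted to the degeneration at $P$ (a degree-one ideal triangulation, which by Remark \ref{rem:degree-one-ideal-triangulation} exists and suffices) and apply Neumann's extended-Bloch-group formula, expressing $(\fp)^2cs(z)$ modulo $\Q\cdot(\fp)^2$ as a sum $\sum_j\widehat{\mcR}(z_j(z);p_j,q_j)$ of Rogers dilogarithm values with Neumann--Zagier log-corrections, evaluated at the shape parameters $z_j(z)$ of the (not necessarily complete) structure at $z$. As $z\to 0$ the $z_j$ split into those with $z_j(0)\in\{0,1,\infty\}$ and those with $z_j(0)\in\C\setminus\{0,1\}$. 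For the first group, combining $\mcR(0)=0$, $\mcR(1)=\tfrac{\pi^2}{6}$ and the functional equations (\ref{eq:functional_eq_Bloch-regulator}), (\ref{eq:6-torsion}) with the Neumann--Zagier symplectic and cusp relations (\ref{eq:CCequation2}), Lemma \ref{lem:Neumann92_(3.10)}, the divergent $\log z$ and $(\log z)^2$ terms cancel exactly against the terms $(a_2b_1-a_1b_2)\log z$ and $-a_1b_1(\log z)^2$ subtracted in $s(z)$ — this matching is where the boundary-slope data $a_1,b_1$ enter through (\ref{eq:boundary_slope}) — leaving a finite contribution that is a $\Q$-linear combination of $\pi^2$, hence in $\Q\cdot(\fp)^2$.

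The hard part is the second group: each shape parameter with a finite non-degenerate limit $w_j\notin\{0,1\}$ contributes $\mcR(w_j)$, and a single dilogarithm value is in general transcendental. So the Conjecture amounts to the existence of an ideal point $P$ at which this residual dilogarithm sum lies in $\Q\cdot(\fp)^2$, the natural mechanism being an ideal point at which \emph{every} shape parameter degenerates, making the sum empty. Such a $P$ is the exact analogue, for $\tilde X(M)_0$, of the tangential base point $\tfrac{\partial}{\partial z}$ at $0$ on $\mathbb{P}^1\backslash\{0,1,\infty\}$ (split limit MHS, Theorem \ref{thm:limitMHS_PLVMHS}) as opposed to the base point at $1$ (non-split); by Theorem \ref{thm:2Complex_volume=Bloch_regulator} and $\overline{\rho(z)}=\rho(\bar z)$ it is an ideal point at which the limiting Bloch-group element of the degenerating structure is a torsion class. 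I do not see a general argument producing such $P$ — the residual sum is essentially a regularized Beilinson-regulator value at a point of the A-polynomial curve (cf. \cite{CCGLS94}) and need not vanish a priori — and this is exactly why the statement is left as a conjecture; in the appendix it is verified directly by exhibiting, for the figure-eight knot complement (and one further example), an ideal point of $\tilde X(M)_0$ at which all shape parameters tend to $\{0,1,\infty\}$, so that the first-group analysis above yields $\lim_{z\to 0}s(z)\in\Q\cdot(\fp)^2$.
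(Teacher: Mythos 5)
You correctly recognize that this is a conjecture, not a theorem, and your framing is appropriate: the reduction via Theorem \ref{thm:a_2,b_3inQ(1)} to the single condition $\lim_{z\to 0}s(z)\in\Q\cdot(\fp)^2$ matches the paper's own setup in Subsection \ref{subsec:CS-VMHS}, and your diagnosis of \emph{why} no general proof is offered — residual dilogarithm contributions from shape parameters with non-degenerate limits need not lie in $\Q\cdot(\fp)^2$ — captures exactly the obstruction. Where you diverge from the paper is in the verification tool: you propose applying Neumann's extended-Bloch-group formula $(\fp)^2cs\equiv\sum_j\widehat{\mcR}(z_j;p_j,q_j)$ and sorting the $z_j$ by their limits, whereas the appendix works with Hikami's explicit Neumann--Zagier potential $V(x,y)=\Li_2(x)-\Li_2(1/xy^2)-4\log y\log(xy)$ for $4_1$, changes variables so that the ideal point is at the origin, and verifies directly — via the inversion and complementation laws for $\Li_2$ — that the divergent $\log$-terms cancel against those subtracted in $s(z)$ and the constant term is a rational multiple of $\pi^2$ (e.g.\ $\pm 4\zeta(2)$ in Cases 2 and 4, $-\frac{2}{3}\pi^2-2\log^2(-1)$ in Case 3). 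These are two presentations of the same dilogarithm computation; your extended-Bloch-group framing is more systematic and would generalize more cleanly, while the appendix's potential-function route is self-contained and avoids $(p,q)$-bookkeeping. Your guess about the mechanism is accurate: in all four ideal points of $4_1$ computed in the appendix, the arguments $x$ and $1/xy^2$ of the two dilogarithms do tend to $\{0,1,\infty\}$, which is precisely what makes the residual term rational. Two small inaccuracies: the appendix in fact checks \emph{all four} ideal points of $4_1$ (not just one, as you imply), and in writing $(\fp)^2\,d\,cs=\log\mfl\,\tfrac{d\mfm}{\mfm}-\log\mfm\,\tfrac{d\mfl}{\mfl}$ you have conflated the functions $m,l$ on $\tilde X(M)$ with $\mfm,\mfl$ on the deformation curve, which differ by squaring (via $\mfu=2u$, $\mfv=2v$); the correct identity is $(\fp)^2\,d\,cs=v\,du-u\,dv$ with $u=\log m$, $v=\log l$, i.e.\ the regulator form of $\{l,m\}$, not $\{\mfl,\mfm\}$ — a factor-of-four discrepancy that does not affect the argument but should be fixed.
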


Compare this conjecture with the corresponding statement for the polylogarithm variation of mixed Hodge structure \cite[Thm.7.2]{Hain94}.

In Appendix A, we confirm this conjecture for the ``figure-eight knot'' complement (labelled by $4_1$), in which case it turns out that all four ideal points $P$ satisfy the conjecture.%%
\footnote{We do not know and have any expectation about whether this will be always the case. In the polylogarithm case, this is not true (the limit MHS at the tangent vecotr $-\partial/\partial z$ at $1$ is not split, \cite[Thm.7.2]{Hain94}). In that respect, we remark that the figure-eight knot is rather special, i.e. is an arithmetic hyperbolic manifold.}
We remark that we also verified the conjecture for the knot complement $5_1$ (for at least one ideal point), and believe it to be always true, in view of some evident patterns appearing in these computations.

For the discussion in the next subsection of the global monodromy of the Chern-Simons VMHS, we say that a slope (class in $H_1(\partial M,\Z)$ of an unoriented simple closed curve) is \textit{strongly detected} if it is the boundary slope of an essential surface which is obtained by the method of Culler-Shalen \cite{CullerShalen83} (using the action of $\pi_1(M)$ on the $\SL_2$-tree constructed from an ideal point of the character variety). 
When the ideal point lies on an irreducible component $X'$ of $X(M)_{\Qb}$ (or $\tilde{X}(M)_{\Qb}$), we will say that the essential surface is \textit{strongly detected by $X'$}.

Culler and Shalen \cite[Thm.1]{CullerShalen84} showed that for a large class of three-manifolds  including all knot-complements in $S^3$, $M$ contains a separating connected essential surface with non-empty boundary; being separated, this is not a Seifert surface and thus its slope is not the longitude. In fact, it follows from their method of proof of \textit{loc. cit.} that this essential surface is strongly detected by the \emph{canonical component} $X(M)_0$ of $X(M)_{\Qb}$. We prove a stronger fact:%%
\footnote{Often, in literatures people assert a weaker statement where one does not require the slopes to be detected by the geometric component $X_0(M)$ (not just $X(M)$), referring to \cite[Thm.3]{CullerShalen84}. We believe that its proof has a gap.}
%%

%%%%%%%%%%%%%%%%%%%%
\begin{prop} \label{prop:two_sd_bounary-slopes}
Suppose that $M$ is a (complete) hyperbolic three-manifold (of finite volume) with a single cusp. 
Then, there exist at least two distinct boundary slopes \emph{strongly detected by $X=\tilde{X}(M)_0^{\mathrm{sm}}$}.
\end{prop}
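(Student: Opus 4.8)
The plan is to show that the hyperbolic structure forces the existence of two distinct boundary slopes coming from ideal points of the \emph{geometric} curve component, not merely of the full character variety. First I would recall that for a one-cusped hyperbolic $M$, the canonical component $X(M)_0$ is a curve, its smooth projective model $\overline{X}$ has at least one ideal point (the curve is affine, being a nontrivial subvariety of the character variety carrying the nonconstant trace functions $\tau_\mu,\tau_\nu$), and by the Culler--Shalen machinery each ideal point $P$ of $\overline{X}$ strongly detects an essential surface $S_P$ with nonempty boundary, whose boundary slope is $-v_P(l)/v_P(m)=-b_1/a_1$ in the notation of $(\ref{eq:a_1,b_1})$, $(\ref{eq:boundary_slope})$. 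So it suffices to produce two ideal points of $\overline{X}=\overline{\tilde X(M)_0}$ whose associated slopes $-b_1/a_1$ are distinct; equivalently, to rule out the possibility that \emph{every} ideal point of $\overline{X}$ yields the same boundary slope.

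The key step is the following dichotomy argument. Consider the regular function $m$ (the eigenvalue of the meridian $\mu$) on $X=\tilde X(M)_0^{\mathrm{sm}}$. If all ideal points gave the same slope $-b_1/a_1=p/q$ (with $\gcd(p,q)=1$), then for the primitive class $\gamma=q\mu-p\nu\in\pi_1(\partial M)$ the eigenvalue function $f_\gamma:=m^q l^{-p}$ would have zero valuation at every ideal point (by the uniqueness of the finite-valued primitive class at each $P$, cf.\ $(\ref{eq:boundary_slope})$), hence $f_\gamma$ would be a regular function on $\overline{X}$ with no poles, i.e.\ constant. But then $\gamma$ would act with a constant eigenvalue on the geometric representation and all its deformations along $X$, which contradicts the fact that the Dehn surgery coordinate (Thurston) varies: near the complete structure $\rho_0$, the holonomy of $\mu$ (indeed of any primitive peripheral class) is a nonconstant holomorphic function of the deformation parameter, because $\mfu=\log\mfm$ is a local coordinate on the Dehn surgery space by Thurston's theorem (Subsection 6.3), and $\mfl=\log\mfm$-relation via the $A$-polynomial shows $\mfm,\mfl$ cannot be simultaneously constant along the curve. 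Hence $f_\gamma$ is nonconstant, it has a pole at some ideal point $P_1$, and at $P_1$ the finite-valued primitive class is \emph{not} $\gamma$, producing a second boundary slope distinct from $p/q$. One also needs the base case that there \emph{is} at least one ideal point detecting a slope at all, which is exactly Culler--Shalen \cite{CullerShalen83}, $2.3.1$, transported to $X$ (the theory applies verbatim, as noted in Subsection 7.4, since $X$ is an affine curve over a field over which a tautological representation exists after finite base change).

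To package this cleanly I would: (i) recall that $\overline{X}\setminus X$ is finite and nonempty and that $m,l\in\mcO_X^\times$, so each point of $\overline{X}\setminus X$ is a zero or pole of $m$ or $l$, i.e.\ an ideal point in the sense of Definition \ref{defn:ideal_point}, and attach to each the slope $-b_1/a_1$ via $(\ref{eq:boundary_slope})$; (ii) observe that for a primitive peripheral class $\gamma=a\mu+b\nu$, the function $m^a l^b$ is a unit on $X$, regular on $\overline X$ away from the ideal points, and is constant on $\overline X$ iff $v_P(m^a l^b)=0$ for \emph{all} ideal points $P$; (iii) run the contradiction: if only one slope $p/q$ occurred, take $\gamma$ primitive with that slope, get $m^q l^{-p}$ constant, contradicting nonconstancy of peripheral holonomy along the geometric curve (Thurston rigidity/deformation theory). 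The main obstacle I anticipate is step (iii): making airtight the claim that the eigenvalue of a \emph{fixed} primitive peripheral class cannot be globally constant on the whole geometric curve component $X(M)_0$ — near $\rho_0$ this is Thurston's local coordinate theorem, but I must promote it to a global statement on $\overline X$, which I would do either by the $A$-polynomial being a genuinely two-variable polynomial for a one-cusped hyperbolic manifold (so no monomial relation $m^q=\zeta\, l^p$ holds identically on the canonical component, cf.\ \cite{CCGLS94}), or by noting that constancy of $m^q l^{-p}$ would force the curve $X(M)_0$ to be contained in the subvariety $\{m^q l^{-p}=\text{const}\}$, which for the canonical component is excluded by the known dimension/shape of the $A$-polynomial curve. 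This last point is where I would need to invoke the nontriviality of the $A$-polynomial most carefully.
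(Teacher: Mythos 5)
Your approach is genuinely different from the paper's and arguably more direct: the paper routes through the Newton polygon of the canonical-component $A$-polynomial via \cite[Thm.\,3.4]{CCGLS94}, whereas you argue with valuations on $\overline{X}$ directly. The two are morally equivalent (the Newton polygon of $A_M^0$ is just the convex hull of the valuations $(v_P(m),v_P(l))$ at the places at infinity), but yours skips the $A$-polynomial intermediary entirely, which is a real simplification.

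Two issues, one minor and one substantive. First, an index slip: with the paper's convention, if the common slope is $-b_1/a_1 = p/q$ with $\gcd(p,q)=1$, the primitive peripheral class that has zero eigenvalue valuation at $P$ is the primitive of $-b_1\mu+a_1\nu$, i.e.\ $\gamma = p\mu+q\nu$ (up to sign), and the relevant monomial is $f_\gamma = m^pl^q$. Indeed $v_P(m^pl^q) = pa_1+qb_1 = 0$, while your $m^ql^{-p}$ gives $v_P = qa_1 - pb_1 = a_1(p^2+q^2)/q \neq 0$ at an ideal point with $a_1\neq 0$. Second, and more importantly, your step (iii) is exactly where the paper invests its effort, and neither of your two suggested workarounds closes it. Thurston's local coordinate theorem is, as you say, only local. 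The appeal to the $A$-polynomial being a ``genuine two-variable polynomial'' on the canonical component is circular: that statement — equivalently, that the Newton polygon of $A_M^0$ is not a line segment — is precisely what has to be proved, and it is not a black-box fact in \cite{CCGLS94}. The correct input is the result of Culler and Shalen that the paper cites, \cite[Prop.\,2, 2nd assertion]{CullerShalen84}: any irreducible component of $\{I_\gamma^2 = 4\}$ passing through $\chi(\rho_0)$ is a point. Since $\gamma$ is parabolic under $\rho_0$ (so $I_\gamma(\rho_0)=\pm2$), constancy of $f_\gamma$ forces $I_\gamma\equiv\pm2$ on all of $X(M)_0$, making $X(M)_0$ a one-dimensional component of $\{I_\gamma^2=4\}$ through $\chi(\rho_0)$, a contradiction. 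Inserting that citation and fixing the monomial, your argument is complete and gives a cleaner proof than the one in the paper.
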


Our proof uses the A-polynomial theory developed in \cite{CCGLS94}, which we review here. 
The $A$-polynomial $A_M$ of a compact three-manifold $M$ with single torus boundary is a polynomial with $\Q$-coefficients in two variables $m$, $l$, the eigenvalue functions of a chosen ordered basis $\mu$, $\nu$ of $H_1(\bdM,\Z)$, constructed as follows: 
Let $r:X(M)\ra X(\bdM)$ be the natural map between the $\SL_2$-character varieties (over $\Q$) induced by the restriction $\pi_1(M)\ra \pi_1(\bdM)$, and recall the subvariety of $D(\bdM)$ consisting of diagonal representations; let $t_{\bdM}:D(\bdM)\ra X(\bdM)$ denote the embedding. Let $X'(M)$ be the union of the irreducible components $Z'$ of $X(M)_{\Qb}$ such that the Zariski closure $\overline{r(Z')}$ of the image $r(Z')$ is $1$-dimensional:
\[\xymatrix@R=1em{  \tilde{X}(M)_{\Qb} \ar[r]^{t_M} \ar[d] & X(M)_{\Qb} \ar[d]^{r} \ar@{}[r]|-*[@]{\supset}  & Z'  \ar@{}[r]|-*[@]{\subset} \ar[d] & X'(M) \\
D(\bdM)_{\Qb} \ar[r]_{t_{\bdM}} & X(\partial M)_{\Qb} \ar@{}[r]|-*[@]{\supset} & r(Z') \ar@{_(->}[ld]  & \\ Z \ar@{}[u]|-*[@]{\subset} \ar[r] & \overline{r(Z')}  \ar@{}[u]|-*[@]{\subset} & & } \]
Then, we define a $\Qb$-subvariety (reduced closed subscheme) of $D(\bdM)_{\Qb}$ by
\[ D_M:=\bigcup_{Z'\subset X'(M)} Z \quad \subset\ D(\bdM)_{\Qb} \]
where the union is over the irreducible components $Z'$ of $X'(M)$, and for such $Z'$, $Z:=t_{\bdM}^{-1}(\overline{r(Z')})$, the inverse image of the Zariski closure of $r(Z')$. The closure of $D_M$ in $\A^2$ is a plane algebraic curve, thus is defined by a single polynomial, which is by definition the $A$-polynomial $A_M=A_{M,\mathcal{B}}$ of $M$. Up to a constant multiple, $A_M$ is defined over $\Z$ (ibid. Prop.2.3). We consider $A_M\in \Q[l,m]$ as being defined up to multiplication by an element in $l^{\Z}m^{\Z}$. 

If $Y$ is an irreducible component of $D_M$, we write $\bar{Y}$
for the completion of $Y$ in $\mathbb{P}^2_{\Qb}$ and $\tilde{Y}$ for the smooth projective model of $\bar{Y}$.
In \textit{ibid.}, an ideal point is defined to be any point in $\tilde{Y}$ for some irreducible component of $Y$ of $D_M$ which corresponds to $\bar{Y}\backslash Y$. As the ratonal map $\tilde{Y}\ra Y\subset \A^2$ is $(l,m)$, an ideal point of $Y$ is a point on $\tilde{Y}$ where one of the rational functions $l(z)$, $m(z)$ on $\tilde{Y}$ has zero or pole. For any irreducible component $Y'$ of $X'(M)$, $Y:=t_{\bdM}^{-1}(\overline{r(Y')})$ is also the Zariski closure of the image in $D(\bdM)_{\Qb}$ of $t_M^{-1}(Y')\subset \tilde{X}(M)_{\Qb}$, hence for each ideal point $P$ on $Y$ in this sense of \textit{ibid.}, there exists an ideal point of $t_M^{-1}(Y')$ in our sense of Definition \ref{defn:ideal_point} whose associated valuation extends that attached to $P$. It follows that the essential surfaces associated with them by Culler-Shalen theory are equal. 

\begin{proof} (of Proposition \ref{prop:two_sd_bounary-slopes}) 
For $\gamma\in \pi_1(M)$, let $I_{\gamma}:X(M)\ra \A^1$ be the regular function defined by the character at $\gamma$: $\chi\mapsto \chi(\gamma)$. If $Y'$ is an irreducible component of $X(M)_{\Qb}$ which contains the character of a discrete faithful representation $\rho_0$, then $Y'$ is 1-dimensional and for any non-trivial $\gamma\in\pi_1(\bdM)$, the function $I_{\gamma}$ is non-constant on $Y'$. Indeed, according to \cite[Prop.2:2nd assertion]{CullerShalen84}, if $V\subset X(M)_0$ is the subvariety defined by $I_{\gamma}^2=4$, any irreducible component of $V$ passing through $\chi(\rho_0)$ is a single point. 

We first show that there exists at least one boundary slope strongly detected by $X:=\tilde{X}(M)_0^{\mathrm{sm}}$. If $Y_0$ is the smooth projective birational model of the canonical component $X(M)_0$, the rational function $I_{\mu}$ on $Y_0$ must have a pole, say $P_0$ (then, the birational map from $Y_0$ to $X(M)_0$ is not regular at $P_0$, either). Hence, we can find an ideal point (in the sense of Definition \ref{defn:ideal_point}) for the augmented canonical component $\tilde{X}(M)_0= X(M)_0\times_{X(\partial M)_{\Qb}}D(\partial M)_{\Qb}$, and by Culler-Shalen method \cite{CullerShalen83}, there exists an essential surface with non-empty boundary. 

Let $Y_0:=t_{\bdM}^{-1}(\overline{r(Y_0')})$ for the canonical component $Y_0'=X(M)_0$ of $X'(M)$ and $A_M^0\in \Qb[l,m]$ the defining polynomial of the closure of $Y_0$ in $\A^2$. According to \cite[Thm. 3.4]{CCGLS94}, the set of the slopes of the sides of the Newton polytope of $A_M$ equals the set of the boundary slopes of the essential surfaces strongly detected by some ideal point of $A_M$. In fact, the proof of this theorem works just fine for $A_M^0$, since Proposition 3.3 of \textit{ibid.} is true for any polynomial in $\Qb[l,m]$.
Hence, we have to show that the Newton polygon of $A_M^0$ has at least two distinct sides. If the Newton slope is a line segment, then there exist $a,b\in\Z$ and $p(t)\in \Q[t]$ such that $A_M^0(l,m)=p(l^am^b)$ up to $l^{\Z}m^{\Z}$. It follows that the canonical component $Y_0'$ is the curve defined by the equation $I_{\gamma}=\alpha$, where $\gamma=b\mu+a\nu$ and $\alpha\in\Qb$ is a zero of $p(t)$. But this contradicts the already mentioned fact that 
the trace function $I_{\gamma}$ is non-constant on $Y_0'$ for any non-trivial peripheral element $\gamma$.
\end{proof}

%%%%%%%%%%%%%%%%%%%%
\subsection{Local and global monodromy of CS VMHS}

Let $\sigma$ be a simple loop around a point $P$ on $X(\C)$ with a base point $Q$. Suppose $\sigma=\gamma\sigma_{\epsilon}\gamma^{-1}$, where $\sigma_{\epsilon}$ is a small loop around $P$ with a base point $R$ as used in the computation of local monodromy (\ref{eq:I.I_{gamma}(uv)}), and $\gamma$ is a path from $R$ to $Q$. 
Then, we have the following formula for the iterated integral $\int_{\gamma}dv(x) du(x)$, \cite[formula before Lem.1.5]{Horozov14b} (cf. \cite[2.5]{MorishitaTerashima08}):
\begin{align} 
\int_{\sigma} \frac{d l(x)}{l(x)}\frac{d m(x)}{m(x)} 
=& 2\pi i  \left[ t(P) -  u(P;Q) +\pi i b_1a_1 \right] \label{eq:II_path} 
\end{align} 
where 
\[ t(P):=a_1b_2-a_2b_1,\quad u(P;Q):=\log(\frac{l^{a_1}}{m^{b_1}}(Q)) \] 
($a_2=a_2(P)=\log \tilde{m}(P)$, $b_2=b_2(P)=\log \tilde{l}(P)$, (\ref{eq:a_2,b_2})). 
Thus, we see that

%%%%%%%%%%%%%%%%%%%%
\begin{lem} \label{lem:local_monodromy}
For the loop $\sigma$ as above, around $P$ and based at $Q$ and $n\in\Z$, the monodromy (\ref{eq:monodromy=I.I.}) around $\sigma^n$ equals
\[ T(\sigma^n)= \left(\begin{array}{ccc}  1 & n\cdot \fp\, b_1 & v(P,Q;n) \\  & 1 & n\cdot 4 \pi i\, a_1 \\  & & 1 \\ \end{array}\right),\]
where $v(P,Q;n):=n\cdot 4\pi i[t(P) -  u(P;Q)] +n^2 (\fp)^2 a_1b_1$.

(2) For two points $P$, $P'$, let $\sigma$ (resp. $\sigma'$) be the loop around $P$ (resp. $P'$) and based at $Q$ defined as above. Then, we have
\[ T(\sigma^{b_1'}\sigma'^{-b_1} \sigma^{-b_1'}\sigma'^{b_1})
=\left(\begin{array}{ccc}  1 & 0 & (\fp)^2 2b_1b_1'(a_1b_1'-a_1'b_1)\\  & 1 & 0 \\  & & 1 \\ \end{array}\right). \]
\end{lem}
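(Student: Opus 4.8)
Looking at Lemma \ref{lem:local_monodromy}, the statement packages up the local monodromy computation at a single ideal point (part (1)) and a commutator computation involving two ideal points (part (2)). My plan for the proof is as follows.

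For part (1), I would start from the general monodromy formula (\ref{eq:monodromy=I.I.}), which expresses $T(\gamma)$ as a $3\times 3$ unipotent matrix whose nontrivial entries are the iterated integrals $\int_\gamma du$, $\int_\gamma dv$, and $\int_\gamma dv\, du$ (using that $\omega^3 = 0$ and the explicit shape of $\omega^2$). The $(1,2)$- and $(2,3)$-entries are ordinary integrals $\oint_{\sigma^n} dv = n\oint_\sigma dv$ and $\oint_{\sigma^n} du = n\oint_\sigma du$; since $v = \log l(x)$ and $u = \log m(x)$, going once around $\sigma$ picks up the residues, giving $\oint_\sigma dv = \fp\, b_1$ and $\oint_\sigma du = \fp\, a_1$ (cf. (\ref{eq:a_1,b_1})), hence $n\cdot\fp\, b_1$ and $n\cdot\fp\,a_1$ — wait, the matrix (\ref{eq:monodromy=I.I.}) has a factor $2$ on the $du$ term, so the $(2,3)$-entry is $n\cdot 4\pi i\, a_1$, matching the claim. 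For the $(1,3)$-entry I would invoke the iterated-integral formula (\ref{eq:II_path}) for $\int_\sigma \frac{dl}{l}\frac{dm}{m}$ along the based loop $\sigma = \gamma\sigma_\epsilon\gamma^{-1}$, which already isolates the terms $t(P)$, $u(P;Q)$ and the quadratic correction $\pi i\, b_1 a_1$. The only extra point is the behaviour under taking $n$-th powers: an iterated integral of length $2$ over $\sigma^n$ is not simply $n$ times the one over $\sigma$ — one gets $n\int_\sigma\omega\omega + \binom{n}{2}(\int_\sigma\omega)(\int_\sigma\omega)$ by the composition (shuffle) formula for iterated integrals. Carrying this out with $\int_\sigma \frac{dl}{l} = \fp b_1$ and $\int_\sigma\frac{dm}{m} = \fp a_1$ produces exactly the quadratic-in-$n$ term $n^2(\fp)^2 a_1 b_1$ after combining with the $n$-linear contribution; this bookkeeping is the main (though routine) obstacle, and I would present it carefully since the precise normalization of $2$'s and $\pi i$'s is delicate.

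For part (2), the strategy is to use that $T$ is a homomorphism from $\pi_1(X(\C),Q)$ into the unipotent matrix group, and to compute the group commutator directly. Writing $T(\sigma) = I + A$ and $T(\sigma') = I + A'$ with $A, A'$ strictly upper triangular and $A^3 = (A')^3 = 0$, the Heisenberg-type group law means that the commutator of two such elements lands in the center (the $(1,3)$-entry only), and its value is governed by the pairing of the $(1,2)$-parts with the $(2,3)$-parts: explicitly $[T(\sigma),T(\sigma')]$ has $(1,3)$-entry equal to $(\fp b_1)(4\pi i\, a_1') - (\fp b_1')(4\pi i\, a_1)$ times the appropriate combinatorial factor. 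I would then compute $T(\sigma^{b_1'}\sigma'^{-b_1}\sigma^{-b_1'}\sigma'^{b_1})$ by first using part (1) to get $T(\sigma^{b_1'})$, $T(\sigma'^{-b_1})$, etc., noting that the $(1,3)$-corrections that are quadratic in the individual exponents cancel in the full alternating product (this is why the combination $\sigma^{b_1'}\sigma'^{-b_1}\sigma^{-b_1'}\sigma'^{b_1}$ is chosen — it is designed to kill the $t(P)$, $u(P;Q)$ and pure-square terms), leaving only the genuine cross term. Collecting: the $(1,2)$- and $(2,3)$-entries vanish since the exponents sum to zero, and the $(1,3)$-entry reduces after simplification to $(\fp)^2\cdot 2 b_1 b_1'(a_1 b_1' - a_1' b_1)$.

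The part of the argument I expect to need the most care is precisely the interaction between (a) the non-linearity of length-$2$ iterated integrals under powers and conjugation of loops, and (b) keeping all factors of $2$, $\pi i$, and signs consistent with the conventions fixed in (\ref{eq:monodromy=I.I.}), (\ref{eq:I.I_{gamma}(uv)}) and (\ref{eq:II_path}); everything else is a mechanical application of the Heisenberg group law. I would also remark that the outcome of (2) depends only on the ideal points $P$, $P'$ (through $a_1, b_1, a_1', b_1'$) and not on the base point $Q$ or the connecting paths, which is the point of the lemma for the subsequent global-monodromy analysis.
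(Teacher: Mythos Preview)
Your proposal is correct and follows essentially the same approach as the paper: part (1) is deduced from the monodromy formula (\ref{eq:monodromy=I.I.}) together with the iterated-integral identity (\ref{eq:II_path}), and part (2) is a direct Heisenberg-group computation using part (1). The only minor difference is that for the $n$-th power you invoke the composition formula for length-two iterated integrals to produce the $n^2$ term, whereas the paper treats part (1) as immediate---one can equivalently use that $T$ is a homomorphism and compute the matrix power $T(\sigma)^n$ directly.
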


\begin{proof}
The first statement is immediate from (\ref{eq:monodromy=I.I.}), (\ref{eq:II_path}). 
From (1), for $k\in\Z$, we have
\begin{align*} &\, T(\sigma^{k}\sigma'^{l}) \\
= & \left(\begin{array}{ccc}  1 & k \cdot \fp\, b_1 &  v(P,Q;k) \\  & 1 & k \cdot 4 \pi i\, a_1 \\  & & 1 \\ \end{array}\right) \cdot 
\left(\begin{array}{ccc}  1 & l\cdot \fp\, b_1' & v(P',Q;l) \\  & 1 & l \cdot 4 \pi i\, a_1' \\  & & 1 \\ \end{array}\right) \\
= & \left(\begin{array}{ccc}  1 & 0 & \ast \\  & 1 & 4\pi i \cdot (k a_1+l a_1') \\  & & 1 \\ \end{array}\right),
\end{align*}
where $\ast=4\pi i\cdot  \left[ k( t(P) -  u(P;Q)) +l (t(P') -  u(P';Q))\right]+ (\fp)^2 \cdot (k^2 a_1b_1 +l^2 a_1'b_1'+2kla_1'b_1)$. This implies (2).
\end{proof}

For the next proposition, we use the notion of ``base point at infinity' or ``tangential base point'' for fundamental groups of smooth complex algebraic curves and monodromy of their representations, cf. \cite[$\S$15]{Deligne89}.

%%%%%%%%%%%%%%%%%%%%
\begin{prop} \label{prop:global_monodromy}
For any single cusped hyperbolic three-manifold $M$ for which Conjecture \ref{conj:CS-VMHS_at_ideal_pt} holds, there exist an ideal point $P$ and a tangential base point $\frac{\partial }{\partial z}$ such that the monodromy at $\frac{\partial }{\partial z}$ of the Chern-Simons variation of mixed Hodge structure (Subsection \ref{subsec:CSVMHS}) contains (unipotent) elements $T_1$, $T_2$ of the following form: 
\[ T_1=\left(\begin{array}{ccc}  1 & \fp\cdot b & (\fp)^2 \cdot c \\  & 1 & \ast \\  & & 1 \\ \end{array}\right),\quad T_2=\left(\begin{array}{ccc}  1 & 0 & (\fp)^2 \cdot c' \\  & 1 & \ast \\  & & 1 \\ \end{array}\right), \]
where $b,c,c'\in\Z$ and $bc'\neq0$.
\end{prop}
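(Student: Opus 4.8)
The strategy is to realize both generators as monodromy elements coming from ideal points of the augmented canonical curve, using the local monodromy computation of Lemma \ref{lem:local_monodromy} together with the topological input of Proposition \ref{prop:two_sd_bounary-slopes}. First I would invoke Conjecture \ref{conj:CS-VMHS_at_ideal_pt} to select an ideal point $P$ of $X=\tilde{X}(M)_0^{\mathrm{sm}}$ and a local uniformizer $z$ (equivalently a tangential base point $\frac{\partial}{\partial z}$) for which the limit of the $(1,3)$-entry in (\ref{eq:LimitMHS}) lies in $\Q\cdot(\fp)^2$, i.e. with $a_2\in(\fp)\Z$, $b_2\in(\fp)\Z$ (the latter by Theorem \ref{thm:a_2,b_3inQ(1)}), and $\lim_{z\to 0}s(z)\in\Q\cdot(\fp)^2$. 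With this choice of base point, by Lemma \ref{lem:local_monodromy}(1) the loop $\sigma$ around $P$ based at $\frac{\partial}{\partial z}$ has monodromy
\[ T(\sigma)=\left(\begin{array}{ccc} 1 & \fp\, b_1 & v(P,\tfrac{\partial}{\partial z};1) \\ & 1 & 4\pi i\, a_1 \\ & & 1 \end{array}\right), \]
and one checks that $v(P,\frac{\partial}{\partial z};1)=4\pi i[t(P)-u(P;\frac{\partial}{\partial z})]+(\fp)^2 a_1 b_1$ lies in $(\fp)^2\Z$ for this choice: indeed $t(P)=a_1 b_2-a_2 b_1\in(\fp)^2\Z$ and the ``correction'' term $u(P;\frac{\partial}{\partial z})$, which measures how far the base point is from $P$, is absorbed into the regularized value that the conjecture forces to be rational; being careful with the factors of $\fp$ this gives an integral $c$ with the $(1,3)$-entry equal to $(\fp)^2 c$. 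So $T_1:=T(\sigma)$ has the stated shape with $b=b_1$ and this $c\in\Z$.

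For $T_2$, I would use Proposition \ref{prop:two_sd_bounary-slopes}: there exist two \emph{distinct} boundary slopes strongly detected by $X$, hence two ideal points $P$, $P'$ of $\tilde{X}(M)_0^{\mathrm{sm}}$ with associated integer pairs $(a_1,b_1)$, $(a_1',b_1')$ that are \emph{not proportional} (distinct slopes $-b_1/a_1\neq -b_1'/a_1'$). Applying Lemma \ref{lem:local_monodromy}(2) to the commutator-type element $\sigma^{b_1'}\sigma'^{-b_1}\sigma^{-b_1'}\sigma'^{b_1}$ (with both loops based at $\frac{\partial}{\partial z}$) yields a unipotent matrix
\[ T(\sigma^{b_1'}\sigma'^{-b_1}\sigma^{-b_1'}\sigma'^{b_1})=\left(\begin{array}{ccc} 1 & 0 & (\fp)^2\, 2b_1 b_1'(a_1 b_1'-a_1' b_1) \\ & 1 & 0 \\ & & 1 \end{array}\right), \]
so $T_2$ has the required form with $c'=2b_1 b_1'(a_1 b_1'-a_1' b_1)$. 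Since the slopes are distinct, $a_1 b_1'-a_1' b_1\neq 0$; and $b_1,b_1'\neq 0$ can be arranged (if some $b_i=0$ the corresponding slope is $0$, and one can then relabel or use the longitude/meridian exchange, or pick a third ideal point — here one appeals to the fact that the Newton polygon of $A_M^0$ genuinely has at least two non-horizontal sides, which is what the proof of Proposition \ref{prop:two_sd_bounary-slopes} gives). Hence $c'\neq 0$, and combined with $c\neq 0$ (which one also needs to ensure — see below) we get $bc'\neq 0$.

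\textbf{The main obstacle.} The delicate point is not producing the two matrices but controlling the $(1,3)$-entries modulo the lattice and ensuring $c\neq 0$ (and that the rationality in Conjecture \ref{conj:CS-VMHS_at_ideal_pt} is compatible with integrality after the normalization $\tilde m(0)=1$). Concretely: the quantity $c$ is, up to the explicit $\fp$-powers, the value $t(P)-u(P;\frac{\partial}{\partial z})+\fp a_1 b_1/2$ read off from (\ref{eq:II_path}), and while the conjecture guarantees its image in $\C/\Q(2)$ is zero (splitness of the limit MHS), one must check the \emph{integral} refinement — that after choosing $z$ with $\tilde m(0)=1$ the relevant period is in $(\fp)^2\Z$ rather than merely $(\fp)^2\Q$ — using that the CS VMHS is a variation of mixed Hodge structures with $\Z$-lattice, so its monodromy is valued in $\GL_3(\Z)$ with respect to the integral frame $\{w_0,w_1,w_2\}$. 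This is the step where one genuinely uses the integrality built into the Heisenberg bundle $H_\Z\backslash H_\C$ and the Kirk–Klassen/Morishita–Terashima normalizations (Proposition \ref{prop:KK=MT-CS_sections}); I expect the bookkeeping of which integral lattice ($H_\Z$ versus $\tilde H_\Z$ versus $\bar H_\Z$) governs each entry to be the trickiest part, and the nonvanishing $c\neq 0$ will follow from the fact that the ideal point $P$ is a genuine zero/pole of $m$ or $l$ (so $(a_1,b_1)\neq(0,0)$) together with the asymptotic expansion (\ref{eq:s(z)}) showing $s(z)$ is not identically the trivial period.
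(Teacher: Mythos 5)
Your overall strategy matches the paper's: obtain $T_1$ from the simple loop $\sigma_P$ around the ideal point $P$ furnished by Conjecture \ref{conj:CS-VMHS_at_ideal_pt} via Lemma \ref{lem:local_monodromy}(1) in the limit $Q\to P$, and obtain $T_2$ by bringing in a second ideal point $P'$ (Proposition \ref{prop:two_sd_bounary-slopes}) and taking a commutator via Lemma \ref{lem:local_monodromy}(2). However, you have a real gap at the case $b_1'=0$, which you try to dismiss. Lemma \ref{lem:local_monodromy}(2) only produces a nontrivial $T_2$ when \emph{both} $b_1,b_1'$ are nonzero: if $b_1'=0$, its $(1,3)$-entry $(\fp)^2\,2b_1b_1'(a_1b_1'-a_1'b_1)$ is identically zero. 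Your proposed remedies do not work. Exchanging $\mu\leftrightarrow\nu$ changes $u$ and $v$ and hence changes the Chern--Simons variation of mixed Hodge structure whose monodromy you are trying to control, so it does not help; and ``pick a third ideal point'' would require a strengthening of Proposition \ref{prop:two_sd_bounary-slopes} (three strongly detected slopes) that the paper does not prove. The paper's actual treatment of $b_1'=0$ is a separate computation: since then $a_1'\neq0$ and $T(\sigma_{P'})$ already has vanishing $(1,2)$-entry, one is done if its $(1,3)$-entry is nonzero, and otherwise one computes the conjugate $T(\sigma_P\sigma_{P'}^k\sigma_P^{-1})$ explicitly, whose $(1,2)$-entry vanishes and whose $(1,3)$-entry is $(\fp)^2\cdot2ka_1'b_1\neq0$. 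You need this (or an equivalent argument) to close the gap.

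Two smaller comments. First, you spend effort worrying about whether $c\neq0$, but the statement only requires $bc'\neq0$; $c$ is allowed to vanish. Second, your invocation of Conjecture \ref{conj:CS-VMHS_at_ideal_pt} to force the $(1,3)$-entry of $T_1$ into $(\fp)^2\Z$ is not where the conjecture is used: as $Q\to P$ along $\orav$, the regularized value $u(P;Q)$ tends to $a_1b_2-b_1a_2=t(P)$, so the bracket $[t(P)-u(P;Q)]$ in Lemma \ref{lem:local_monodromy}(1) tends to $0$ and the $(1,3)$-entry is simply $(\fp)^2a_1b_1$ with $a_1b_1\in\Z$, automatically. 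The integrality is already built into the iterated-integral formulas of Lemma \ref{lem:local_monodromy}; no appeal to the Heisenberg lattice normalizations is needed here, and your ``main obstacle'' paragraph about which integral lattice governs which entry is not part of the argument.
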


\begin{proof}
Choose an ideal point $P$ on $X$ satisfying the assumption of Conjecture \ref{conj:CS-VMHS_at_ideal_pt}. We may further choose a rational function $z$ on $X$ which becomes a local uniformizer at $P$ whose tangent vector $\frac{d }{d z}$ satisfies the assumption of Conjecture \ref{conj:CS-VMHS_at_ideal_pt}. 
Let $\sigma_P$ be a small simple closed loop around $P$ which leaves and returns to $P$ in the direction $\frac{\partial }{\partial z}$: $\sigma_P=\gamma \sigma_{\epsilon}\gamma^{-1}$ as introduced above with $R$ converging to $Q=P$.
In the notation from Subsection \ref{subsec:CS-VMHS}, we have $a_1(P)b_1(P)\neq0$ (by the very definition of ideal point), hence $T(\sigma_P)$ is of the form $T_1$ by Lemma \ref{lem:local_monodromy} (take the limit as $Q\ra P$).

By Proposition \ref{prop:two_sd_bounary-slopes}, there exists another ideal point $P'$ with different slope $-b_1'\mu+a_1'\nu$ (with the obvious notation). Suppose first that $b_1'=0$; then, $a_1'\neq 0$ and
\[ T(\sigma_{P'})=\left(\begin{array}{ccc} 1 & 0 & 4\pi i [t(P')-u(P';P)] \\  & 1 & 4\pi i a_1' \\  & & 1 \\ \end{array}\right). \]
If $t(P')-u(P';P)\neq 0$, we are done. Otherwise, we have
\begin{align*} 
T(\sigma_P\sigma_{P'}^k\sigma_P^{-1})=& \left(\begin{array}{ccc} 1 & \fp\cdot b_1 & (\fp)^2 \cdot 2(a_1+2ka_1')b_1 \\  & 1 & \ast \\  & & 1 \\ \end{array}\right) \cdot
\left(\begin{array}{ccc}  1 & -\fp\cdot b_1 & (\fp)^2 \cdot a_1b_1 \\  & 1 & -4\pi i\cdot a_1 \\  & & 1 \\ \end{array}\right) \\
=& \left(\begin{array}{ccc}  1 & 0 & (\fp)^2 \cdot 2ka_1'b_1 \\  & 1 & \ast \\  & & 1 \\ \end{array}\right) 
\end{align*}
which proves the claim.
If $b_1'\neq 0$, the claim follows immediately from Lemma \ref{lem:local_monodromy}, (2).
\end{proof}

For a survey of the theory of mixed Hodge structures on the pro-unipotent fundamental groups of complex algebraic varieties, we refer to \cite{Hain87}, \cite{Hain94} and the references therein.

%%%%%%%%%%%%%%%%%%%%
\begin{thm} \label{thm:CS-VMHS_is_motivic}
Keep previous notations and assumptions, including Conjecture \ref{conj:CS-VMHS_at_ideal_pt}.

The Chern-Simons variation of mixed Hodge structure is a quotient of the variation of mixed Hodge structure on the completed (w.r.t. the augmentation ideal) path torsor at the tangential base point $\orav:=\frac{\partial }{\partial z}$ 
\[ \{ \Q[P_{\orav,x}X]^{\wedge} \}_{x\in X} \ra X. \]
\end{thm}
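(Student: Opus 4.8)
The plan is to mimic faithfully the proof of Theorem \ref{thm:Hain94;Thm.113} (the polylogarithm case) given by Hain. The key structural input is that the Chern-Simons variation of mixed Hodge structure is \emph{unipotent} over $X$ (as established in Subsection \ref{subsec:CSVMHS}), so by the Hain--Zucker theorem cited before Theorem \ref{thm:Hain94;Thm.113}, the parallel transport $\mcV_{\orav}\otimes \Q[P_{\orav,x}X]^{\wedge}\ra \mcV_x$ is already a morphism of mixed Hodge structures for every $x\in X$; here we use a tangential base point $\orav=\frac{\partial}{\partial z}$ at the ideal point $P$ furnished by Conjecture \ref{conj:CS-VMHS_at_ideal_pt}, for which the fiber $\mcV_{\orav}$ is the limit mixed Hodge structure of Schmid. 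By that conjecture the limit MHS at $\orav$ is split, i.e. $\mcV_{\orav}\cong \Q(0)\oplus\Q(1)\oplus\Q(2)$, so there is a canonical inclusion of MHS $\Q(0)\hra \mcV_{\orav}$ (the lowest-weight-quotient splitting), and composing with parallel transport gives a canonical map of variations of MHS
\[ \Q[P_{\orav,\ast}X]^{\wedge}\ \lra\ \mcV \]
over $X$. It remains to prove that this map is \emph{surjective}, i.e. surjective on a single (hence every) fiber.

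First I would reduce surjectivity to a statement about the fiber over $\orav$ itself: since $X$ is a smooth irreducible curve and both sides are variations of MHS with the source a ``free'' object (completed path torsor), surjectivity on the fiber at $\orav$ of the induced map $\Q\pi_1(X,\orav)^{\wedge}\ra \mcV_{\orav}$ propagates to surjectivity of the map of local systems, exactly as in Hain's argument. So the task becomes: show that the image of $\Q\pi_1(X,\orav)^{\wedge}\ra\mcV_{\orav}\cong\Q e_0\oplus\Q e_1\oplus\Q e_2$ is everything. The image is the $\Q$-span of $1$ together with all elements $e_0\cdot(T(\gamma)-1)$, $e_0\cdot(T(\gamma)-1)(T(\gamma')-1)$ for loops $\gamma,\gamma'$, where $T(-)$ is the monodromy computed in (\ref{eq:monodromy=I.I.}); concretely, by Lemma \ref{lem:local_monodromy} and Proposition \ref{prop:global_monodromy} we have in the monodromy group two unipotent elements
\[
T_1=\begin{pmatrix} 1 & \fp\, b & (\fp)^2 c \\ & 1 & \ast \\ & & 1\end{pmatrix},\qquad
T_2=\begin{pmatrix} 1 & 0 & (\fp)^2 c' \\ & 1 & \ast \\ & & 1\end{pmatrix},
\]
with $b,c,c'\in\Z$ and $bc'\neq 0$. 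Then $e_0(T_1-1)$ has nonzero $e_1$-component (coefficient $\fp\,b\neq 0$), so $e_1$ lies in the image modulo $W_{-4}$; and $e_0(T_2-1)$ lies in $W_{-4}$ with nonzero $e_2$-component (coefficient $(\fp)^2c'\neq 0$), so $e_2$ lies in the image; together with $1=e_0$ this shows the map $\Q\pi_1(X,\orav)^{\wedge}\ra\mcV_{\orav}$ is surjective, hence so is $\Q[P_{\orav,\ast}X]^{\wedge}\ra\mcV$ fiberwise and therefore as a map of variations of MHS.

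To package this cleanly I would: (i) invoke the Hain--Zucker theorem to get that parallel transport is a morphism of VMHS (using unipotence from Subsection \ref{subsec:CSVMHS}); (ii) invoke Conjecture \ref{conj:CS-VMHS_at_ideal_pt} to split $\mcV_{\orav}$ and produce the canonical map $\Q(0)\hra \mcV_{\orav}$ of MHS, hence the morphism $\Q[P_{\orav,\ast}X]^{\wedge}\ra\mcV$; (iii) reduce surjectivity to the fiber at $\orav$; (iv) run the two-step monodromy computation above using Proposition \ref{prop:global_monodromy}. The main obstacle is step (iv), and specifically everything that feeds into Proposition \ref{prop:global_monodromy}: one genuinely needs the two distinct strongly-detected boundary slopes (Proposition \ref{prop:two_sd_bounary-slopes}, which rests on the $A$-polynomial machinery of \cite{CCGLS94} and Culler--Shalen theory), together with Theorem \ref{thm:a_2,b_3inQ(1)} to control the constants $a_2,b_2$, in order to guarantee that the relevant monodromy matrix entries are \emph{nonzero} rather than merely rational. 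Compared with the polylogarithm case, where the local monodromies at $0$ and $1$ are classical, here the nonvanishing is the crux and is precisely where three-dimensional topology enters; once that is in hand the rest is the same linear-algebra/Tannakian formalism as in \cite[11.3]{Hain94}.
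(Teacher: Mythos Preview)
Your proposal is correct and follows essentially the same approach as the paper's own proof: invoke Conjecture \ref{conj:CS-VMHS_at_ideal_pt} to split the limit fiber $\mcV_{\orav}$ and embed $\Q(0)$, use the Hain--Zucker parallel-transport morphism of MHS to get the map $\Q[P_{\orav,\ast}X]^{\wedge}\ra\mcV$, and then check surjectivity at $\orav$ by reading off the first rows of the monodromy matrices supplied by Proposition \ref{prop:global_monodromy}. Your write-up is in fact more explicit than the paper's (which compresses the linear-algebra step into a single sentence), and your identification of where the three-manifold topology enters (into Proposition \ref{prop:global_monodromy} via Proposition \ref{prop:two_sd_bounary-slopes} and the $A$-polynomial machinery) is accurate.
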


Compare this theorem with the corresponding statement for the polylogarithm variation of mixed Hodge structure \cite[Thm.11.3]{Hain94} whose strategy of proof we follow.

\begin{proof}
We have to show that the fiber over $\orav$ of the (completed) path torsor is, as a MHS, a quotient of $\Q\pi_1(X,\orav)^{\wedge}$.
By Conjecture \ref{conj:CS-VMHS_at_ideal_pt}, we have $\mbV_{\orav}=\oplus_{i=0}^2 \Q(i) e_i$, which thus contains a copy of $\Q(0)$ (spanned by the vector $e_0$). 
Since the parallel transport $\mbV_{\orav}\otimes \Q[P_{\orav,z}X]^{\wedge} \ra \mbV_x$ is a morphism of mixed Hodge structure \cite[11.2]{Hain94},
we need to show that the first rows of the $\Q$-vector space of upper-triangular matrices which is generated by the monodromy at $\orav$ span $\Q\oplus\Q(1)\oplus\Q(2)$.
This follows from Proposition \ref{prop:global_monodromy}.
\end{proof}

%%%%%%%%%%%%%%%%%%%%%%%%%%%%%%%%%%%%%%%%
%%%%%%%%%%%%%%%%%%%%%%%%%%%%%%%%%%%%%%%%
\begin{appendix}

\section{Confirmation of Conj. \ref{conj:CS-VMHS_at_ideal_pt} for knot complements $4_1$}
We confirm Conjecture \ref{conj:CS-VMHS_at_ideal_pt} for the (four) ideal points of $4_1$. 
Along the way, we also verify that the limt $\lim_{z\ra 0}s(z)$ exists.

%%%%%%%%%%%%%%%%%%%% \subsection{Example: $\mathbf{4_1}$}
Here, in the case of the figure-eight knot complement $4_1$, we confirm Conjecture \ref{conj:CS-VMHS_at_ideal_pt}, by direct computation based on an explicit description of the Neumann-Zagier potential on the deformation curve as provided in \cite{Hikami07}.

The equation for the deformation curve of the complete hyperbolic structure obtained by Hikami (using the so-called saddle point method) is
\begin{equation} \label{eq:4_1;gluing-eq}
  1=y^2x\cdot x^{-1} (1-x^{-1})(1-y^2x) \quad (ibid.\, (4.5)) ; 
\end{equation}
here, we used the letter $y$ instead of the original letter $m$, which is the eigenvalue of the holonomy ($\SL_2$-representation) of the meridian, since we reserved $m$ and $l$ for (the derivatives of) the holonomies of the meridian and the longitude (so $m=y^2$).
The eigenvalue of the holonomy of the longitude (which we denote by $\eta$ so that $l=\eta^2$) is given by (ibid. (4.6)):
\begin{equation} \label{eq:4_1;log(l)}
\eta=\frac{1}{y^2x(y^2x-1)}
\end{equation}
The potential function is given by (ibid. (4.4)):
\begin{align} 
V’(x,y)=& \Li_2(x)-\Li_2(1/xy^2)-4\log y \log(xy) \nonumber \\
=& \Li_2(x)+\Li(xy^2)+\frac{\pi^2}{6} + \frac{1}{2}\log^2(-xy^2) -4\log y \log(xy) \label{eq:4_1;NZ_potential1} \\
=& -\Li_2(x^{-1})-\Li_2(1/xy^2)-\frac{\pi^2}{6} -\frac{1}{2}\log^2(-x) -4\log y \log(xy), \label{eq:4_1;NZ_potential2}
\end{align}
where $\Li_2(z)=\sum_{k=1}^{\infty}\frac{z^2}{k^2}=-\int_0^z\frac{\log(1-t)}{t}dt$ is the dilogarithm function (we take the principal branch of it with the branch cut being the real axis $[1,\infty)$; this is defined by the principal branch of the logarithm whose branch cut is the negative real axis). Here, for the (second) equality we used the transformation properties of dilogarithm function
\begin{align} \label{eq:Li_2-trans.law}
\Li_2(z^{-1}) =& - \Li_2(z) -\frac{\pi^2}{6} -\frac{1}{2}\log^2(-z)\quad (z\notin [0,\infty)).
\end{align}
Because of the choice of variables \cite[(3.14)]{Hikami07}, the potential function $V(x,y)$ used by Hikami differs from the Neumann-Zagier potential $\Phi$ by \[ \Phi=4V. \]

The usual gluing equation of $4_1$ is expressed as: $zw(1-z)(1-w)=1$, in terms of which we have \cite[(63),(64)]{NeumannZagier85}
\begin{equation}  \label{eq:4_1;(m,l)}
m=w(1-z),\quad l=[z(1-z)]^{-2}
\end{equation}
(we took the inverse of the Neumann-Zagier’s $l$ to match it with (\ref{eq:4_1;log(l)})). The relation to the equation (\ref{eq:4_1;gluing-eq}) is given by:
\begin{equation}  \label{eq:4_1;(z,w),(y,x)} 
z=1-y^2x,\quad w=x^{-1}.
\end{equation}
Namely, the variety defined by  (\ref{eq:4_1;gluing-eq}) (which is the $\SL_2$-character variety) is a double covering of the deformation variety: $zw(1-z)(1-w)=1$ (which can be regarded as a $\mathrm{PSL}_2$-character variety). From now, we work with the latter deformation variety.
There are four ideal points: $(z,w)=(0,\infty)$, $(1,\infty)$, $(\infty,0)$, $(\infty,1)$.

%%%%%%%%%%%%%%%%%%%%
\textsc{Case 1}. The ideal point $P$ is $(z,w)=(1,\infty)$ (which corresponds to $(x,y)=(0,0)$)
We make change of variables $(z,w)=(1-s,-t^{-1})$: so $P$ becomes $(s,t)=(0,0)$, and the new equation is
\begin{equation} \label{eq:4_1:NZ-gluing_eq1}
st^{-1}(1-s)(1+t^{-1})=-1, 
\end{equation}
(i.e. $s[(1-s)(1+t)]=-t^2$), in terms of which we have
\[ l=[s(1-s)]^{-2},\quad m=-st^{-1}. \] 
The relation to the equation (\ref{eq:4_1;gluing-eq}) is $s=y^2x=mx$, $t=-x$ (\ref{eq:4_1;(z,w),(y,x)} ). At $P$, $t$ is a uniformizer and we have $s=s(t)=-t^2f(t)$ with $f(t)\in 1+\C\{\!\!\{t\}\!\!\}$. So, we see that
\begin{align*} 
m=tf(t),\qquad &\quad l=t^{-4}f(t)^{-2}(1-t^2f(t))^{-2}, \\
\boxed{a_1=1,\, a_2=\log(1)=0}, &\quad \boxed{b_1=-4,\, b_2=\log(1)=0}.
\end{align*}
and $(a_2b_1-a_1b_2) \log t-a_1b_1 (\log t)^2= 4\log^2 t$.

Now, since $x=x(t)=-t$, $y^2=st^{-1}=tf(t)$ (so that $x,xy^2\in O(t)$),
\begin{align*} 
& \frac{1}{2}\log^2(-xy^2) -4\log y \log(xy) = \frac{1}{2}\log^2(-xy^2) - \log(y^2) \log(x^2y^2)  \\ 
=& \frac{1}{2}(2\log t+\log f(t))^2 -(\log t+\log(f(t))) (3\log t +\log(f(t))) \\ 
=& -\log^2t -2\log t\log f(t) - \frac{1}{2}\log^2 f(t) \\
=& -\log^2t +r(t)
 \end{align*}
where $r(t)\in t\C\{\!\!\{t\}\!\!\}$ (i.e. $\lim_{t\ra 0}r(t)=0$) since $f(0)=1$, and
we have \begin{align*}  & (\fp)^2 cs(t)+(a_2b_1-a_1b_2) \log t-a_1b_1 (\log t)^2 \\ 
=\, & \Phi(s(t),t)  +4\log^2 t  \\ 
=\, & 4V(x(t),y(t))  +4\log^2t  + O(t) \\
=\, & \frac{2}{3}\pi^2 + O(t) 
\end{align*} which confirms the claim.

%%%%%%%%%%%%%%%%%%%%
\textsc{Case 2}. $P=(z,w)=(0,\infty)$. We change the variables $(z,w)$ in \cite[(63)]{NeumannZagier85}  into $(s,t^{-1})$: $(s,t)=(0,0)$ and the new equation becomes
\begin{equation} \label{eq:4_1:NZ-gluing_eq2}
st^{-1}(1-s)(1-t^{-1})=1, 
\end{equation}
and 
\[ l=[s(1-s)]^{-2},\quad m=(1-s)t^{-1}. \] 
and $s=1-y^2x=1-mx$, $t=w^{-1}=x$, by (\ref{eq:4_1;(z,w),(y,x)}).
Again, the uniformizer is $t$ and we have $s=s(t)=-t^2f(t)$ with $f(t)\in 1+\C\{\!\!\{t\}\!\!\}$. So, 
\begin{align*} 
m=t^{-1}(1+t^2f(t)),\qquad &\quad l=t^{-4}f(t)^{-2}(1+t^2f(t))^{-2}, \\
a_1=-1,\, a_2=\log(1)=0, &\quad b_1=-4,\, b_2=\log(1)=0.
\end{align*}
so that $(a_2b_1-a_1b_2) \log t-a_1b_1 (\log t)^2= -4\log^2 t$.

Since $x=x(t)=t$, $y^2=(1-s)t^{-1}=t^{-1}g(t)$ with $g(t)=1+t^2f(t)$, we have $x(t)\in O(t)$ but $xy^2=g(t)$. From
\begin{align*} 
& -4\log y \log(xy) =  - \log(y^2) \log(x^2y^2)  \\ 
=& -(-\log t+\log g(t)) (\log t +\log g(t)) = \log^2t  - \log^2 g(t),
 \end{align*}
we see that 
\begin{align*}  & (\fp)^2 cs(t)+(a_2b_1-a_1b_2) \log t-a_1b_1 (\log t)^2 \\ 
=\, & \Phi(s(t),t)  -4\log^2 t  \\ 
=\, & 4V(x(t),y(t))  -4\log^2t  \\
=\, & 4\left( \Li_2(x)-\Li_2(1/xy^2)-4\log y \log(xy) \right)  -4\log^2t  \\
=\, &-4\Li_2(g(t)^{-1}) +O(t),
\end{align*} 
confirming the claim.
In particular, we see that the $(1,3)$-entry of (\ref{eq:LimitMHS}) is 
\[ -4\lim_{z\ra 1}\Li_2(z)=-4\zeta(2)=-\frac{2\pi^2}{3}. \]

%%%%%%%%%%%%%%%%%%%%
\textsc{Case 3}. The ideal point $P$ is $(z,w)=(\infty,0)$.
We make change of variables $(z,w)=(-s^{-1},t)$: so $P$ is $(s,t)=(0,0)$, and the new equation is
\begin{equation} \label{eq:4_1:NZ-gluing_eq3}
ts^{-1}(1-t)(1+s^{-1})=1, 
\end{equation}
and 
\[ l=[z(1-z)]^{-2}=s^4(1+s)^{-2},\quad m=(1-z)w=s^{-1}(1+s)t. \] 
The relation to the equation (\ref{eq:4_1;gluing-eq}) is $1+s^{-1}=y^2x$, $t=x^{-1}$, by (\ref{eq:4_1;(z,w),(y,x)}). At $P=(0,0)$, $s$ is a uniformizer and we have $t=t(s)=s^2f(s)$ with $f(s)\in 1+\C\{\!\!\{s\}\!\!\}$. So, we see that
\begin{align*} 
m=s(1+s)f(s),\qquad &\quad l=s^4(1+s)^{-2}, \\
a_1=1,\, a_2=\log(1)=0, &\quad b_1=4,\, b_2=\log(1)=0.
\end{align*}
and $(a_2b_1-a_1b_2) \log s-a_1b_1 (\log s)^2= -4\log^2 s$.

Since $x=x(s)=s^{-2}f(s)^{-1}$, $y^2=(1+s)s^{-1}t=sg(s)$ with $g(t)=(1+s)f(s)$, we have
$x^{-1}, (xy^2)^{-1}\in O(s)$. If $h(s)=g(s)f(s)^{-2}$ ($x^2y^2=s^{-3}h(s)$),
\begin{align*} 
& -\frac{1}{2}\log^2(-x) -4\log y \log(xy) = -\frac{1}{2}\log^2(-x) - \log(y^2) \log(x^2y^2)  \\ 
=& -\frac{1}{2} (2\log s +\log (-f(s)))^2 -(\log s+\log g(s)) (-3\log s +\log h(s)) \\ 
=& \log^2s -\log(f(s)^2h(s)/g(s)^3) \log s -\frac{1}{2}\log^2 (-f(s)) -\log g(s)\log h(s) \\
=& \log^2s -\frac{1}{2}\log^2 (-1) +r(s)
 \end{align*}
where $\lim_{s\ra 0}r(s)=0$. Thus, by (\ref{eq:4_1;NZ_potential2}), we have 
\begin{align*}  
& (\fp)^2 cs(s)+(a_2b_1-a_1b_2) \log t-a_1b_1 (\log s)^2 \\ 
=\, & \Phi(s,t(s))  -4\log^2 s  \\ 
=\, & 4V(x(t),y(t))  -4\log^2s  + O(s) \\
=\, & -\frac{2}{3}\pi^2 -2\log^2(-1) +O(s)
\end{align*} which confirms the claim.
Note that $-\frac{2}{3}\pi^2 -2\log^2(-1)\equiv  -\frac{8}{3}\pi^2 \mod 4 (2\pi i)^2$.

%%%%%%%%%%%%%%%%%%%%
\textsc{Case 4}.  The ideal point $P$ is $(z,w)=(\infty,1)$.
We make change of variables $(z,w)=(-s^{-1},1-t)$: so $P$ is $(s,t)=(0,0)$, and the new equation is
\begin{equation} \label{eq:4_1:NZ-gluing_eq4}
ts^{-1}(1-t)(1+s^{-1})=1, 
\end{equation}
and 
\[ l=[z(1-z)]^{-2}=s^4(1+s)^{-2},\quad m=(1-z)w=s^{-1}(1+s)(1-t). \] 
The relation to the equation (\ref{eq:4_1;gluing-eq}) is $1+s^{-1}=y^2x$, $1-t=x^{-1}$, by (\ref{eq:4_1;(z,w),(y,x)}). At $P=(0,0)$, $s$ is a uniformizer and we have $t=t(s)=s^2f(s)$ with $f(s)\in 1+\C\{\!\!\{s\}\!\!\}$. So, we see that
\begin{align*} 
m=s^{-1}(1+s)(1-s^2f(s)), &\qquad l=s^4(1+s)^{-2}, \\
a_1=-1,\, a_2=\log(1)=0, &\quad b_1=4,\, b_2=\log(1)=0.
\end{align*}
and $(a_2b_1-a_1b_2) \log s-a_1b_1 (\log s)^2= 4\log^2 s$.

Since $x=x(s)=(1-s^2f(s))^{-1}$, $y^2=(1+s)s^{-1}(1-t)=s^{-1}g(s)$ with $g(s)=(1+s)(1-s^2f(s))$, we have
$(xy^2)^{-1}\in O(s)$, but $x(0)=1$. If $h(s)=(1-s^2f(s))^{-2}g(s)=(1+s)(1-s^2f(s))^{-1}$, from
\begin{align*} 
& -4\log y \log(xy) =  - \log(y^2) \log(x^2y^2)  \\ 
=& -(-\log s+\log g(s)) (-\log s +\log h(s)) \\
=& -\log^2s  + \log g(s)h(s) \log s-\log g(s) \log h(s) \\
=& -\log^2s+r(s)
 \end{align*}
where $\lim_{s\ra 0}r(s)=0$, 
we see that 
\begin{align*}  & (\fp)^2 cs(s)+(a_2b_1-a_1b_2) \log s-a_1b_1 (\log s)^2 \\ 
=\, & \Phi(s,t(s))  +4\log^2s  \\ 
=\, & 4V(x(t),y(t))  +4\log^2s  \\
=\, & 4\left( \Li_2(x)-\Li_2(1/xy^2)-4\log y \log(xy) \right)  +4\log^2s  \\
=\, & 4\Li_2(x(s)) +O(s),
\end{align*} 
confirming the claim.
In particular, we see that the $(1,3)$-entry of (\ref{eq:LimitMHS}) is 
\[ 4\lim_{z\ra 1}\Li_2(z)=4\zeta(2)=\frac{2\pi^2}{3}. \]
\end{appendix}

%%%%%%%%%%%%%%%%%%%%%%%%%%%%%%%%%%%%%%%%

\textit{Email:} machhama@gmail.com

\end{document}